\definecolor{lightblue}{rgb}{0.22,0.45,0.70}
\definecolor{lightgreen}{rgb}{0.22,0.50,0.25}
\definecolor{darkred}{rgb}{0.82,0.15,0.20}
\definecolor{darkblue}{rgb}{0.82,0.15,0.12}
\newcommand*{\Scale}[2][4]{\scalebox{#1}{$#2$}} 
\renewenvironment{proof}{\noindent{\it Proof.}}{\hfill$\square$}
\numberwithin{equation}{section}
\numberwithin{figure}{section}
\numberwithin{table}{section}
\numberwithin{lemma}{section}
\numberwithin{corollary}{section}
\numberwithin{theorem}{section}
\numberwithin{remark}{section}
\numberwithin{proposition}{section}
\newcommand{\vertiii}[1]{{\left\vert\kern-0.25ex\left\vert\kern-0.25ex\left\vert #1 
		\right\vert\kern-0.25ex\right\vert\kern-0.25ex\right\vert}}
\begin{document}
	\titlerunning{ VEM for  Boussinesq  equation}   
	\title{A stabilized virtual element framework for the steady state Boussinesq  equation with temperature-dependent parameters}    
	\authorrunning{ S. Mishra, S. Natarajan, Natarajan E.}
	\author{Sudheer Mishra \and Sundararajan Natarajan \and Natarajan E }
	\institute{
		Sudheer Mishra \at Department of Mathematics, Indian Institute of Space Science and Technology, Thiruvananthapuram, 695547, India\\
		\email{sudheermishra.20@res.iist.ac.in}.\\
		Sundararajan Natarajan \at Department of Mechanical Engineering, Indian Institute of Technology Madras, Chennai--600036, India\\
		\email{snatarajan@iitm.ac.in} \\
		Natarajan E \at
		Department of Mathematics, Indian Institute of Space Science and Technology, Thiruvananthapuram, 695547, India \\
		\email{thanndavam@iist.ac.in}.\\
		\and
	}
	\date{}
	\maketitle

\begin{abstract}
		This work presents a new conforming stabilized virtual element method for the generalized Boussinesq equation with temperature-dependent viscosity and thermal conductivity. A gradient-based local projection stabilization method is introduced in the discrete formulation to circumvent the violation of the discrete inf-sup condition. The well-posedness of the continuous problem is established under sufficiently small datum. We derive a stabilized virtual element problem for the Boussinesq equation using equal-order virtual element approximations. The proposed method has several advantages, such as being more straightforward to implement, free from higher-order derivative terms, providing separate stabilization terms without introducing coupling between solution components, and minimizing the number of globally coupled degrees of freedom. The existence of a discrete solution to the stabilized virtual element problem is demonstrated using the Brouwer fixed-point theorem. The error estimates are derived in the energy norm. Additionally, several numerical examples are presented to show the efficiency and robustness of the proposed method, confirming the theoretical results.
	
\end{abstract}

	\keywords{ Boussinesq equation \and Stabilized virtual elements \and Fixed point theorem  \and General polygons\\}
	\subclass{ 	65N12, 65N30, 76D07}

	\section{Introduction}
 \label{sec-1}
Free convection plays a crucial role in various fields, such as science, engineering, industrial and geophysical applications. For instance, it is employed in the cooling of electrical and electronic devices, temperature distribution in the Earth's mantle, petrochemical processes, solar ponds, food processing, heat exchanger design, thermo-protection systems, climate modeling, and the study of ocean currents, and so on. The mathematical description of the above processes is usually given by a non-isothermal thermo-fluid dynamic problem--- the Boussinesq equation, which involves the coupling of the incompressible Navier-Stokes equation with a convection-diffusion transport equation. The coupling occurs through two key mechanisms: the buoyancy force, which appears in the momentum equation and drives the flow due to temperature-induced density variations, and the convective heat transfer term, which captures the transport of thermal energy by the fluid motion. Due to its significance in various applications, several computational methods have been developed, including the Finite Element Method (FEM) for the Boussinesq problem with scalar or temperature-dependent (nonlinear) coefficients. In particular, some novel literature on addressing the scalar and nonlinear coefficients is listed as follows: stream function-vorticity based finite approximations \cite{stream1,stream2,stream3,stream4}, primal formulation based FEM \cite{hiroko1992non,bernardi1995,oyarzua2014exactly,oyarzua2017analysis,dallmann2016stabilized} and mixed-FEM \cite{colmenares2016,colmenares2020banach,almonacid2018,almonacid2020fully,almonacid2020new}. 

To the best of the authors’ knowledge, the work by Bernardi et al. \cite{bernardi1995} represents one of the earliest contributions to the analysis of finite element methods for the Boussinesq problem, where the existence of solutions is demonstrated by employing topological degree theory. Furthermore, optimal-order convergence estimates are derived using the equal-order finite element subspaces for both the velocity vector field and the temperature field. In \cite{ccibik2011projection}, the authors developed a local projection based stabilized FEM for the Boussinesq problem with scalar coefficients, where the stability and optimal-convergence estimates are established using the Taylor-Hood elements for discrete velocity and pressure. In \cite{dallmann2016stabilized}, a combination of streamline-upwind (SUPG)-type stabilizations and the grad-div stabilizations methods is employed to stabilize the Oberbeck-Boussinesq problem (unsteady Boussinesq equation with constant coefficients). Furthermore, a higher-order stabilized FEM derived from the Variational Multiscale Stabilization (VMS) method has been discussed for the transient natural convection problem by Rebollo et al. in \cite{chacon2018high}, employing equal-order approximations. However, the stability of \cite{chacon2018high} is shown for P2-finite elements, whereas stability for the lowest order P1-finite elements has not been investigated. In \cite{allendes2018divergence}, the authors proposed a lowest-order stabilized FEM for nonlinear variable coefficients using the lowest-order Raviart–Thomas space and pressure jump. Notably, almost all works are limited to simple-geometrical shaped elements, such as triangles and quadrilaterals in $2\text{D}$, and {tetrahedrons and hexahedron} in $3\text{D}$.

In the recent past, the development and extension of polygonal-based numerical methods have achieved significant attention from mathematicians, physicists, geologists, and engineers. Virtual Element Method (VEM) was first developed for the Poisson problem in \cite{vem1}, representing an extension of FEM to polygonal and polyhedral elements. In \cite{vem25}, the authors extended \cite{vem1} to a 3D framework, introducing the enhanced nodal virtual element space. In short, the virtual element spaces consist of polynomial and non-polynomial functions equipped with suitable choices of degrees of freedom. The remarkable property of VEM is that it does not require the explicit knowledge of basis functions to compute the stiffness matrix and load vector.  Due to its flexibility, VEM has been extended to address several problems, such as parabolic and hyperbolic equations \cite{vem40,vem32,vem8,vem33}, elasticity \cite{vem2,mvem19}, stabilized VEM \cite{vem4,vem04,vem08,mishra2025supg}, nonconforming VEM \cite{vem26,adak2024nonconforming}, and divergence-free VEM for the Stokes or Navier-Stokes problem \cite{mvem9,mvem11,mvem12,mvem3,adak2021virtual,mvem16}. However, the implementation of VEM can be found in  \cite{vem3,dassi2025vem++}. 

In this work, a novel extension of the classical VEM to $H^1$-conforming stabilized VEM is proposed and analyzed to address the Boussinesq problem with temperature-dependent viscosity and thermal conductivity using equal-order virtual element approximations. The stabilization technique is mainly derived from \cite{dallmann2016stabilized,vem28m}. Recently, in \cite{mvem3}, the authors investigated the divergence-free VEM for the coupled Navier-Stokes-heat equation. In \cite{da2023fully}, the authors discussed a stream-function based VEM formulation for the Boussinesq problem with constant coefficients. Gatica et. al \cite{gatica2024banach} developed a fully mixed-VEM for the Boussinesq problem with constant coefficients, and its extension to the temperature-dependent coefficients case is analyzed in \cite{gharibi2025mixed} using the five-field formulation in combination with mixed-VEM. Based on the aforementioned studies, it can be concluded that most of the VEM/stabilized-FEM literature is limited to the constant coefficient case, and its extension to stabilized VEM has not been explored/developed yet. The proposed method has several advantages: easier to implement, provides separate stabilization terms, reduces the coupling between virtual element triplets, is free from higher-order derivative terms, is efficient for addressing higher-order virtual elements, and minimizes the global degrees of freedom. {In addition, the proposed method can be extended to 3D case, which will be considered as future work.}

The contribution of the proposed method in the field of computational fluid dynamics can be drawn as follows:
\begin{itemize}
	\item the well-posedness of the weak formulation is demonstrated under sufficiently small datum. 
	\item an equivalence relation is established between the gradient-based pressure stabilizing term (introduced in the proposed work) and the local mass-based pressure stabilization technique discussed in \cite{vem028}. In contrast, only the numerical investigation has been carried out in \cite{mishra2024unified} for both stabilization methods.
	\item the existence of a stabilized discrete solution is derived employing the Brouwer fixed-point theorem, and its uniqueness is achieved by restricting the data.
	\item the error estimates are derived in the energy norm with optimal-order convergence rates.
	\item the efficiency and robustness of the proposed method are investigated through several examples, including the convergence studies on convex and non-convex domains, validating the theoretical estimates. 
\end{itemize}    

\subsection{Structure of the paper}
The rest structure of the present work is planned as follows. \ref{sec-2} presents the Boussinesq equation and the well-posedness of its weak formulation. In Section \ref{sec-3}, we propose a stabilized virtual element problem for the weak formulation. Section \ref{sec-4} focuses on the theoretical results for the stabilized virtual element problem, including well-posedness and error estimates in the energy norm. In Section \ref{sec-5}, we present numerical results validating the theoretical aspects. In Section \ref{sec-6}, we will briefly discuss the major conclusions and scope of the proposed work.
-
\section{Model problem } \label{sec-2}
Let {$\Omega \subset \mathbb R^2$} be an open and bounded polygonal domain with a Lipschitz boundary $\partial \Omega$. We consider the following generalized Boussinesq equation, given as follows: find a velocity field $\mathbf{u}$, a pressure field $p$, and a temperature field $\theta$, such that 
\begin{empheq}[left=(P) \empheqlbrace]{align} 
		\quad - \nabla \cdot (\mu(\theta) \nabla \mathbf{u}) + (\nabla \mathbf{u}) \mathbf{u} + \nabla p - \alpha \theta \mathbf{g}  &= \mathbf{f}\quad \text{in} \quad \Omega, \label{stoke-1}\\
		\nabla \cdot \mathbf{u} &= 0 \quad  \text{in} \quad \Omega, \label{stoke-2}\\
		-\nabla \cdot (\kappa(\theta)\nabla \theta) + \mathbf{u} \cdot \nabla \theta &= \mathcal{Q} \quad \text{in} \quad \Omega, \label{heat-1}\\
		\mathbf{u} &= \mathbf{0}  \quad \text{on} \quad  \partial \Omega, \label{stoke-3}\\
		\theta &= \theta_D \quad  \text{on} \quad \partial \Omega, \label{heat-2} 
\end{empheq}
where the functions $\mu : \mathbb{R} \rightarrow \mathbb{R}^{+}$ and $\kappa: \mathbb{R} \rightarrow \mathbb{R}^{+}$ denote temperature-dependent viscosity and thermal conductivity of the fluid, respectively.  Further,  $\mathbf{f}$ and $\mathcal{Q}$ are the external source terms. Additionally, the vector-valued function $\mathbf{g}$ is a given external force per unit mass, $\theta_D$
is a given non-vanishing boundary temperature, and $\alpha$ is a positive constant 
associated with the coefficient of volume expansion.

Throughout this work, we use the standard notations for the Sobolev spaces. {In short, we denote the  $L^2$-inner product and $L^2$-norm by $(\cdot, \cdot)_\Omega$ and $\|\cdot\|_{0,\Omega}$. On any $\omega \subset \Omega$, $W^k_p(\omega)$ denotes the Sobolev space of order $k \in \mathbb N \cup \{0\}$ with the Lebesgue regularity index $p \geq1$. The $W^k_p$-norm and its semi-norm are denoted by $\|\cdot\|_{k,p,\omega}$ and $|\cdot|_{k,p,\omega}$, respectively. We will relax the subscript $\Omega$ in $(\cdot,\cdot)_\Omega$ when no confusion arises.} We adopt bold fonts or bold symbols to represent vector-valued functions or tensors. The positive constant $C$ is independent of the mesh sizes and can differ for different occurrences. The Poincar\'e constant is given by $C_p$. Further, the constant $C_q>0$ depends on $\Omega$, which satisfies 
\begin{align*}
	\| w \|_{1,\Omega} \leq C_q \|\nabla w\|_{0,\Omega} \qquad \text{for all} \,\, w \in W^1_2(\Omega).
\end{align*}  
Note that for $p=2$, the Sobolev space $W^k_p({\Omega})$ reduces to $H^k({\Omega})$. Additionally, the compact embedding $H^1(\Omega) \hookrightarrow L^r(\Omega)$ for $r\geq 1$ is given by 
\begin{align*}
	\| w \|_{0,r,\Omega} \leq C_{1 \hookrightarrow r} \| w\|_{1,\Omega} \qquad \text{for all} \,\, w \in H^1(\Omega),
\end{align*}  
where $C_{1 \hookrightarrow r}$ denotes the compact embedding constant. 

We emphasize that, for simplicity, the homogeneous Dirichlet boundary condition is imposed for the velocity vector field $\mathbf{u}$. However, other boundary conditions can be applied as well. {In particular, when Neumann boundary conditions are imposed the weak formulation involves boundary integrals that can be addressed using the trace inequalities. The core arguments for stability and the convergence estimates are expected to remain valid with different constants, although the proofs become more technical. The extension to Neumann boundary conditions can be incorporated following \cite{colmenares2016dual}.} Hereafter, we suppose that the data of problem $(P)$ satisfy the following assumptions:

\noindent \textbf{(A0) Data assumptions:}
\begin{itemize}
	\item temperature-dependent viscosity $\mu$ is bounded and Lipschitz continuous. More precisely, there exist positive constants $\mu_\ast$, $\mu^\ast$ and $L_\mu$, such that
	\begin{align*}
		0<\mu_\ast \leq \mu(\xi) &\leq \mu^\ast \qquad \,\, \text{for all} \,\, \xi \in \mathbb R,\\
		|\mu(\xi_1)- \mu(\xi_2)| &\leq  L_\mu |\xi_1 - \xi_2|  \qquad \,\, \text{for all} \,\, \xi_1, \xi_2 \in \mathbb R;
	\end{align*}
	\item thermal conductivity $\kappa$ is bounded and Lipschitz continuous. In short, there exist positive constants $\kappa_\ast$, $\kappa^\ast$ and $L_\kappa$, such that
	\begin{align*}
		0<\kappa_\ast \leq \kappa(\xi) &\leq \kappa^\ast \qquad \,\, \text{for all} \,\, \xi \in \mathbb R,\\
		|\kappa(\xi_1)- \kappa(\xi_2)| &\leq  L_\kappa |\xi_1 - \xi_2|  \qquad \,\, \text{for all} \,\, \xi_1, \xi_2 \in \mathbb R;
	\end{align*}
	\item external loads satisfy $\mathbf{g} \in [L^2(\Omega)]^2$, $\mathbf{f} \in [L^2(\Omega)]^2$ and $\mathcal{Q} \in L^2(\Omega)$;
	\item the datum satisfies $\theta_D \in H^{1/2}(\partial \Omega)$.
\end{itemize}

\noindent We now introduce the following functional setting for the Boussinesq equation:  
	\begin{center}
		$\mathbf{V} := [H_0^1(\Omega)]^2$,  \qquad  $Q := L_0^2(\Omega) = \big\{ p \in L^2(\Omega): \int_{\Omega} p d \Omega = 0 \big\}$, \qquad \quad \,\,\,\,\hfill \hfill \\ $\Sigma:=H^1(\Omega)$, \qquad $\Sigma_0:=H_0^1(\Omega)$,  \qquad $\Sigma_D:=\big\{
		\phi \in H^1(\Omega): \phi_{|\partial\Omega} = \theta_D\}$,
	\end{center}
endowed with their natural norms. In addition, we define the following forms:
\begin{itemize}
	\item $a_V(\cdot; \cdot,\cdot): \Sigma \times \mathbf{V} \times \mathbf{V}  \rightarrow \mathbb R$, \quad \,\,\, $a_V(\phi;\mathbf{v},\mathbf{w}):= \int_{\Omega} \mu(\phi) \nabla \mathbf{v}: \nabla \mathbf{w}\, d\Omega$;
	\item $b(\cdot,\cdot):\mathbf{V} \times Q  \rightarrow \mathbb R$, \qquad \qquad \qquad \quad \,\, $b(\mathbf{v},q):= \int_{\Omega} (\nabla \cdot \mathbf{v}) q\, d\Omega$;
	\item $c_V(\cdot; \cdot,\cdot): \mathbf{V} \times \mathbf{V} \times \mathbf{V} \rightarrow \mathbb R$, \qquad  $c_V(\mathbf{v}; \mathbf{w},\mathbf{z}):= \int_{\Omega} (\nabla \mathbf{w})\mathbf{v} \cdot \mathbf{z} \, d\Omega$;  
	\item $a_T(\cdot; \cdot,\cdot):\Sigma \times \Sigma \times \Sigma \rightarrow \mathbb R$, \qquad  \,\, $a_T(\theta;\phi,\psi):= \int_{\Omega} \kappa(\theta) \nabla \phi \cdot \nabla \psi \, d\Omega$;
	\item $c_T(\cdot; \cdot,\cdot): \mathbf{V} \times \Sigma \times \Sigma  \rightarrow \mathbb R$, \qquad \, $c_T(\mathbf{v}; \phi,\psi):= \int_{\Omega} (\mathbf{v} \cdot \nabla \phi) \psi \, d\Omega$;
	\item For given $\phi \in \Sigma$, define $F_\phi: \mathbf{V} \rightarrow \mathbb R$, \, $F_\phi(\mathbf{v}):= \int_{\Omega} (\alpha \mathbf{g}\phi)\cdot \mathbf{v} \, d\Omega + \int_{\Omega} \mathbf{f} \cdot \mathbf{v} \, d\Omega$.
\end{itemize}

We remark that under the assumption \textbf{(A0)}, the above forms are well-defined. For any $\phi \in \Sigma$, the maps $a_V(\phi; \cdot,\cdot)$ and $a_T(\phi; \cdot,\cdot)$ are bilinear forms. In addition, the forms above satisfy the following stability properties: 
\begin{itemize}
	\item continuity and coercivity of $a_V(\cdot; \cdot, \cdot)$: for any given $\phi \in \Sigma$, we have the following
	\begin{align}
		a_V(\phi; \mathbf{v}, \mathbf{w}) \leq \mu^\ast \|\nabla \mathbf{v}\|_{0,\Omega} \|\nabla \mathbf{w}\|_{0,\Omega}, \qquad a_V(\phi; \mathbf{w}, \mathbf{w}) \geq \mu_\ast \|\nabla \mathbf{w}\|^2_{0,\Omega}, \quad \,\, \text{for all} \,\, \mathbf{v}, \mathbf{w} \in \mathbf{V}. 	\label{con-av}
	\end{align}
	\item continuity and coercivity of $a_T(\cdot; \cdot, \cdot)$: for any given $\theta \in \Sigma$, we obtain
	\begin{align}
		a_T(\theta; \phi, \psi) \leq \kappa^\ast \|\nabla \phi\|_{0,\Omega} \|\nabla \psi\|_{0,\Omega}, \qquad a_T(\theta; \phi, \phi) \geq \kappa_\ast \|\nabla \phi\|^2_{0,\Omega}, \quad \,\, \text{for all} \,\, \phi, \psi \in \Sigma. 	\label{con-at}
	\end{align}
	
	\item bilinear form \( b(\cdot, \cdot) \) satisfies the continuity property, ensuring boundedness,  
	\begin{align}
		|b(\mathbf{v}, q)| \leq  {\sqrt{2}} \|\nabla \mathbf{v}\|_{0,\Omega} \|q\|_{0,\Omega} \qquad \,\, \text{for all} \,\, \mathbf{v} \in \mathbf{V} \,\, \text{and} \,\, q \in Q. \label{con-b}
	\end{align}
	Additionally, it satisfies the inf-sup condition   
	\begin{align}
		\sup_{\mathbf{0} \neq \mathbf{v} \in \mathbf{V}} \dfrac{b(\mathbf{v}, q)}{\|\mathbf{v}\|_{1,\Omega}} \geq \beta \|q\|_{0,\Omega}, \label{con-binf}
	\end{align}
	where  $\beta>0$ represents the inf-sup constant, for more details see~\cite{bookgirault}.
	
	\item for any $\mathbf{v} \in \mathbf{V}$ with $\nabla \cdot \mathbf{v}=0$, the bilinear forms $c_V(\mathbf{v}; \cdot, \cdot)$ and $c_T(\mathbf{v}; \cdot, \cdot)$ satisfy the skew-symmetric property, 
	\begin{align}
		c_V(\mathbf{v}; \mathbf{w}, \mathbf{z}) &= -	c_V(\mathbf{v}; \mathbf{z}, \mathbf{w}) \qquad \,\, \text{for all} \,\, \mathbf{w}, \mathbf{z} \in \mathbf{V}, \label{skew-v}\\ 
		c_T(\mathbf{v}; \phi, \psi) &= - c_T(\mathbf{v}; \psi, \phi) \qquad \,\, \text{for all} \,\, \phi, \psi \in \Sigma. \label{skew-t}
	\end{align}
	Therefore, we introduce the following skew-symmetric forms corresponding to convective terms as follows:
	\begin{align}
		c^S_V(\mathbf{v}; \mathbf{w}, \mathbf{z}) &= \frac{1}{2} \Big[	c_V(\mathbf{v}; \mathbf{w}, \mathbf{z}) - 	c_V(\mathbf{v}; \mathbf{z}, \mathbf{w}) \Big]  \qquad \,\, \text{for all} \,\, \mathbf{w}, \mathbf{z} \in \mathbf{V}, \\ 
		c^S_T(\mathbf{v}; \phi,\psi)&= \frac{1}{2} \Big[c_T(\mathbf{v}; \phi,\psi) - c_T(\mathbf{v}; \psi, \phi) \Big] \qquad \,\, \text{for all} \,\, \phi, \psi \in \Sigma.
	\end{align}
	Additionally, they are continuous, i.e., 
	\begin{align}
		c^S_V(\mathbf{v}; \mathbf{w}, \mathbf{z}) & \leq N_0 \|\nabla \mathbf{v}\|_{0,\Omega} \| \nabla \mathbf{w}\|_{0,\Omega} \| \nabla \mathbf{z}\|_{0,\Omega}  \qquad \,\, \text{for all} \,\, \mathbf{w}, \mathbf{z} \in \mathbf{V}, \label{cnt-sv}\\ 
		c^S_T(\mathbf{v}; \phi,\psi)&\leq N_0 \|\nabla \mathbf{v}\|_{0,\Omega} \|\nabla \phi\|_{0,\Omega} \|\nabla \psi\|_{0,\Omega} \qquad \,\, \text{for all} \,\, \phi \in \Sigma, \psi \in \Sigma_0, \label{cnt-st}
	\end{align}
	where $N_0>0$ is the continuity constant. 
\end{itemize}  
Finally, we introduce the following forms:
\begin{align*}
	A_V[\theta; (\mathbf{v},q), (\mathbf{w}, r)]&:= a_V(\theta;\mathbf{v},\mathbf{w}) - b(\mathbf{w},q) + b(\mathbf{v},r) \qquad \, \text{for all} \,\, \theta \in \Sigma \,\, \text{and} \,\, (\mathbf{v},q), (\mathbf{w}, r) \in \mathbf{V} \times Q, \\
	A_T ( \theta, \mathbf{v}; \phi, \psi )&:= a_T(\theta; \phi,\psi) +  c^S_T(\mathbf{v}; \phi,\psi) \qquad  \qquad  \quad \text{ for all} \,\, \mathbf{v} \in \mathbf{V} \,\, \text{and}\,\, \theta, \phi, \psi \in \Sigma.
\end{align*}
The primal variational formulation of the Boussinesq equation $(P)$ is given by 
\begin{align}
	\begin{cases}
		\text{find } (\mathbf{u}, p, \theta) \in \mathbf{V} \times Q \times \Sigma_D, \text{ such that} \\[1ex]
		\begin{aligned}
			a_V(\theta;\mathbf{u},\mathbf{v}) - b(\mathbf{v},p) + c^S_V(\mathbf{u}; \mathbf{u}, \mathbf{v}) &= F_\theta(\mathbf{v}) 
			&& \quad \text{for all } \mathbf{v} \in \mathbf{V}, \\ 
			b(\mathbf{u}, q ) &= 0 
			&& \quad \text{for all } q \in Q,\\
			A_T(\theta, \mathbf{u}; \theta, \psi) &= (\mathcal{Q}, \psi) 
			&& \quad \text{for all } \psi \in \Sigma_0.
		\end{aligned}
	\end{cases}
	\label{variation-1}
\end{align}

\begin{remark}
	An equivalent primal formulation of the problem $(P)$ can be given as follows:
	\begin{align}
		\begin{cases}
			\text{find } (\mathbf{u}, p, \theta) \in \mathbf{V} \times Q \times \Sigma_D, \text{ such that} \\[1ex]
			\begin{aligned}
				A_V[\theta;(\mathbf{u},p),(\mathbf{v},q)] + c^S_V(\mathbf{u}; \mathbf{u}, \mathbf{v}) &= F_\theta(\mathbf{v}) 
				&& \quad \text{for all } \,\, (\mathbf{v},q) \in \mathbf{V} \times Q, \\ 
				A_T(\theta, \mathbf{u}; \theta, \psi) &= (\mathcal{Q}, \psi) 
				&& \quad \text{for all } \psi \in \Sigma_0.
			\end{aligned}
		\end{cases}
		\label{variation-01}
	\end{align}
\end{remark}
\textbf{Decoupled Problem:} The decoupled form of the problem \eqref{variation-01} can be given as follows: \newline
$\bullet$ For given $\widehat{\theta} \in \Sigma$ and $\widehat{\mathbf{u}} \in \mathbf{V}$, find $(\mathbf{u},p) \in \mathbf{V} \times Q$ such that
\begin{align}
	A_V[\widehat{\theta};(\mathbf{u},p),(\mathbf{v},q)] + c^S_V(\widehat{\mathbf{u}}; \mathbf{u}, \mathbf{v}) = F_{\widehat{\theta}}(\mathbf{v}) \qquad \text{for all } \,\, (\mathbf{v},q) \in \mathbf{V} \times Q. \label{dvar-1}
\end{align}
$\bullet$ For given $\widehat{\theta} \in \Sigma$ and $\widehat{\mathbf{u}} \in \mathbf{V}$, find $\theta \in \Sigma_D$ such that
\begin{align}
	A_T(\widehat{\theta}, \widehat{\mathbf{u}}; \theta, \psi) &= (\mathcal{Q}, \psi)  \qquad \text{for all } \psi \in \Sigma_0. \label{dvar-2}
\end{align}

\subsection{Existence of a solution}
Hereafter, we first introduce the following space
\begin{align}
	\mathbf{V}_{div} := \big\{\mathbf{v} \in [H^1_0(\Omega)]^2\,\, \text{such that}\,\, \nabla\cdot \mathbf{v}=0 \big\}.
\end{align}
On the space \( \mathbf{V}_{{div}} \), we can obtain a reduced problem, which is equivalent to the primal formulation \eqref{variation-1} or \eqref{variation-01}, and is given by:
\begin{align}
	\begin{cases}
		\text{find } (\mathbf{u}, \theta) \in \mathbf{V}_{div} \times \Sigma_D, \text{ such that} \\[1ex]
		\begin{aligned}
			a_V( \theta; \mathbf{u}, \mathbf{v}) + c^S_V(\mathbf{u}; \mathbf{u}, \mathbf{v}) &= F_\theta(\mathbf{v}) 
			&& \quad \text{for all } \mathbf{v} \in \mathbf{V}_{div}, \\ 
			A_T(\theta, \mathbf{u}; \theta, \psi) &= (\mathcal{Q}, \psi) 
			&& \quad \text{for all } \psi \in \Sigma_0.
		\end{aligned}
	\end{cases}
	\label{variation-2}
\end{align}
Furthermore, we introduce the following decomposition of the temperature $\theta \in \Sigma_D$:
\begin{align}
	\theta= \theta_0 + \theta_1 \quad \text{with}\quad \theta_0 \in \Sigma_0 \quad \text{and} \quad \theta_1 \in \Sigma \quad \text{such that} \,\, {\theta_1}_{| \partial \Omega}= \theta_D.
\end{align}

The existence of a continuous solution is given as follows:
\begin{proposition} \label{existenceP}
	Under the assumption \textbf{(A0)}, the primal formulation \eqref{variation-1} has at least a solution $(\mathbf{u},p,\theta) \in \mathbf{V} \times Q \times \Sigma_D$ such that it satisfies
	\begin{align}
		\|\nabla \mathbf{u}\|^2_{0,\Omega} + \|\nabla \theta\|^2_{0,\Omega} &\leq C_{exist}\Big[ (1+ c^2 C^{-1}_{exist}) \|\theta_D\|^2_{H^{1/2}(\partial\Omega)} + \|\mathbf{f}\|^2_{0,\Omega} + \|\mathcal{Q}\|^2_{0,\Omega} \,+ \nonumber \\ &  \qquad \qquad \quad \big(\|\theta_D\|^2_{H^{1/2}(\partial \Omega)} +    \|\mathcal{Q}\|^2_{0, \Omega}  + 1 \big)  \|\mathbf{g}\|^2_{0,\Omega} \Big], \label{exist-0}
	\end{align}
	where the constants $c$ depends on $\Omega$, and $C_{exist}$ is given by \eqref{cexist}.
\end{proposition}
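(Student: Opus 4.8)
The plan is to prove existence via a fixed-point argument on the decoupled problems \eqref{dvar-1}--\eqref{dvar-2}, combined with \emph{a priori} bounds and the Leray--Schauder (or Schauder) fixed-point theorem, together with the equivalent divergence-free formulation \eqref{variation-2}. First I would define a solution operator $\mathcal{S}$ as follows: given $(\widehat{\mathbf{u}}, \widehat{\theta}) \in \mathbf{V}_{div} \times \Sigma$, solve the linear heat problem \eqref{dvar-2} for $\theta \in \Sigma_D$ (well-posed by the coercivity \eqref{con-at} of $a_T$ and the skew-symmetry \eqref{skew-t} which kills the convective contribution, after lifting the boundary datum $\theta_D$ via the decomposition $\theta = \theta_0 + \theta_1$ and using Lax--Milgram on $\theta_0 \in \Sigma_0$); then, with this $\theta$ in hand, solve the linearized flow problem \eqref{dvar-1} for $(\mathbf{u}, p) \in \mathbf{V} \times Q$, which is well-posed by coercivity \eqref{con-av} of $a_V(\theta; \cdot, \cdot)$, the inf-sup condition \eqref{con-binf}, and the fact that $c^S_V(\widehat{\mathbf{u}}; \cdot, \cdot)$ is skew-symmetric (hence $c^S_V(\widehat{\mathbf{u}}; \mathbf{v}, \mathbf{v}) = 0$) so it does not disturb coercivity. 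Setting $\mathcal{S}(\widehat{\mathbf{u}}, \widehat{\theta}) := (\mathbf{u}, \theta)$, a fixed point of $\mathcal{S}$ yields a solution of \eqref{variation-2}, and then \eqref{variation-1} follows by recovering the pressure $p$ through the inf-sup condition.

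Next I would derive the \emph{a priori} estimate \eqref{exist-0}, which also furnishes the ball that $\mathcal{S}$ maps into itself. Testing \eqref{dvar-2} with $\psi = \theta_0 = \theta - \theta_1$: the convective term $c^S_T(\widehat{\mathbf{u}}; \theta, \theta_0)$ is handled by splitting $\theta_0 = \theta - \theta_1$ and using skew-symmetry so only $c^S_T(\widehat{\mathbf{u}}; \theta_1, \theta)$-type terms survive, which are bounded by \eqref{cnt-st} against $\|\nabla \widehat{\mathbf{u}}\|_{0,\Omega}$; choosing $\theta_1$ as a bounded lifting of $\theta_D$ (so $\|\theta_1\|_{1,\Omega} \lesssim \|\theta_D\|_{H^{1/2}(\partial\Omega)}$), absorbing terms with Young's inequality and using coercivity $\kappa_\ast \|\nabla\theta\|_{0,\Omega}^2 \leq \dots$, gives a bound on $\|\nabla\theta\|_{0,\Omega}$ in terms of $\|\mathcal{Q}\|_{0,\Omega}$, $\|\theta_D\|_{H^{1/2}(\partial\Omega)}$, and $\|\nabla\widehat{\mathbf{u}}\|_{0,\Omega}$. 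Then testing \eqref{dvar-1} with $(\mathbf{v}, q) = (\mathbf{u}, p)$: $b$-terms cancel, $c^S_V(\widehat{\mathbf{u}}; \mathbf{u}, \mathbf{u}) = 0$ by skew-symmetry, coercivity gives $\mu_\ast \|\nabla\mathbf{u}\|_{0,\Omega}^2 \leq F_\theta(\mathbf{u})$, and $|F_\theta(\mathbf{u})| \leq (\alpha C_{1\hookrightarrow 4}^2 \|\mathbf{g}\|_{0,\Omega}\|\theta\|_{1,\Omega} + C_p\|\mathbf{f}\|_{0,\Omega})\|\nabla\mathbf{u}\|_{0,\Omega}$ via Hölder, the embedding $H^1 \hookrightarrow L^4$, and Poincaré. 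Combining the two bounds (inserting the $\theta$-estimate into the $\mathbf{u}$-estimate, with $\widehat{\theta}, \widehat{\mathbf{u}}$ replaced by $\theta, \mathbf{u}$ at the fixed point) and collecting constants into $C_{exist}$ yields \eqref{exist-0}; for the fixed-point map itself one works on a closed ball of radius $R$ depending on the data and checks $\mathcal{S}$ maps it into itself, which is where a smallness-of-data hypothesis is typically invoked to close the nonlinear estimate (though here Proposition~\ref{existenceP} only claims existence, so a compactness argument à la Bernardi et al.\ suffices without smallness).

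Finally I would verify the hypotheses of the fixed-point theorem: $\mathcal{S}$ is continuous and compact. Compactness comes from the compact embedding $H^1(\Omega) \hookrightarrow L^r(\Omega)$ (and $\mathbf{V}_{div} \hookrightarrow\hookrightarrow [L^4(\Omega)]^2$): the image $(\mathbf{u},\theta)$ inherits $H^1$-bounds from the \emph{a priori} estimate, so $\mathcal{S}$ maps bounded sets into relatively compact ones. Continuity is checked by taking $(\widehat{\mathbf{u}}_n, \widehat{\theta}_n) \to (\widehat{\mathbf{u}}, \widehat{\theta})$ strongly, passing to the limit in the linear problems using the Lipschitz continuity of $\mu$ and $\kappa$ (assumption \textbf{(A0)}) to handle the convergence of $a_V(\widehat{\theta}_n; \cdot, \cdot)$ and $a_T(\widehat{\theta}_n; \cdot, \cdot)$, and strong $L^4$-convergence to handle the trilinear convective terms. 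With an \emph{a priori} bound on all possible fixed points of $t\,\mathcal{S}$ for $t \in [0,1]$ (which is exactly estimate \eqref{exist-0}, uniform in $t$), the Leray--Schauder theorem delivers a fixed point, hence a solution of \eqref{variation-1}. The main obstacle I anticipate is the bookkeeping in the \emph{a priori} estimate: the buoyancy term couples $\|\nabla\mathbf{u}\|$ to $\|\nabla\theta\|$, and $\|\nabla\theta\|$ in turn is bounded using $\|\nabla\widehat{\mathbf{u}}\|$, so one must carefully chain the inequalities and track how $\|\mathbf{g}\|_{0,\Omega}$, $\|\theta_D\|_{H^{1/2}(\partial\Omega)}$ and $\|\mathcal{Q}\|_{0,\Omega}$ enter multiplicatively (explaining the cross terms like $\|\mathcal{Q}\|^2_{0,\Omega}\|\mathbf{g}\|^2_{0,\Omega}$ and the $(1 + c^2 C_{exist}^{-1})$ factor in \eqref{exist-0}) without circularity — this is delicate but routine, and the lifting constant $c$ for $\theta_D$ must be isolated explicitly.
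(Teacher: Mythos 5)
Your overall architecture (a decoupled solution operator $\mathcal{S}$ plus Leray--Schauder) is genuinely different from the paper's, which works monolithically: it defines a single map $\chi$ on $\mathbf{V}_{div}\times\Sigma_0$ encoding the whole reduced system \eqref{variation-2}, shows $\langle\chi(\mathbf{u},\theta_0),(\mathbf{u},\theta_0)\rangle\geq 0$ on a sphere, applies the Brouwer ``zero of a vector field'' lemma on Galerkin subspaces, passes to the limit by compactness, and only then recovers the pressure via the inf--sup condition. That structural difference by itself would be acceptable. The genuine gap is in your a priori estimate, and it is not mere bookkeeping. You take $\theta_1$ to be a generic bounded lifting with $\|\theta_1\|_{1,\Omega}\lesssim\|\theta_D\|_{H^{1/2}(\partial\Omega)}$ and bound the surviving convective term $c^S_T(\widehat{\mathbf{u}};\theta_1,\theta_0)$ through \eqref{cnt-st}, i.e.\ by $N_0\|\nabla\widehat{\mathbf{u}}\|_{0,\Omega}\|\nabla\theta_1\|_{0,\Omega}\|\nabla\theta_0\|_{0,\Omega}$. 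This gives $\|\nabla\theta_0\|_{0,\Omega}\lesssim\|\mathcal{Q}\|_{0,\Omega}+\|\theta_D\|_{H^{1/2}(\partial\Omega)}+\|\nabla\widehat{\mathbf{u}}\|_{0,\Omega}\|\theta_D\|_{H^{1/2}(\partial\Omega)}$, and after inserting it into the momentum estimate $\|\nabla\mathbf{u}\|_{0,\Omega}\lesssim\|\mathbf{f}\|_{0,\Omega}+\|\mathbf{g}\|_{0,\Omega}\|\nabla\theta\|_{0,\Omega}$ at a fixed point you are left with $\|\nabla\mathbf{u}\|_{0,\Omega}\lesssim(\text{data})+C\|\mathbf{g}\|_{0,\Omega}\|\theta_D\|_{H^{1/2}(\partial\Omega)}\|\nabla\mathbf{u}\|_{0,\Omega}$, which closes only under a smallness condition on $\|\mathbf{g}\|_{0,\Omega}\|\theta_D\|_{H^{1/2}(\partial\Omega)}$ --- a hypothesis Proposition \ref{existenceP} does not make. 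Your parenthetical appeal to a compactness argument in the spirit of Bernardi et al.\ does not repair this: compactness delivers the fixed point once the uniform bound on all fixed points of $t\mathcal{S}$ is available; it does not produce that bound.

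The missing idea is the $\epsilon$-parametrized Hopf-type extension \eqref{hopf}, with the key property $\|\theta_1\|_{0,3,\Omega}\leq c\epsilon\|\theta_D\|_{H^{1/2}(\partial\Omega)}$: one bounds the dangerous term as $c_T(\widehat{\mathbf{u}};\theta_0,\theta_1)\leq\|\widehat{\mathbf{u}}\|_{0,6,\Omega}\|\nabla\theta_0\|_{0,\Omega}\|\theta_1\|_{0,3,\Omega}$ (not via the $H^1$-norm of $\theta_1$) and chooses $\epsilon$ so small, as in \eqref{exist-co}, that its coefficient is absorbed by coercivity for \emph{arbitrary} data, at the price of the $\epsilon^{-4}$ and $\epsilon^{-8}$ factors that end up inside $C_{exist}$ in \eqref{cexist}. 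Without this device the bound \eqref{exist-0} is not reachable under \textbf{(A0)} alone. A secondary, fixable point: for Schauder/Leray--Schauder you must also specify the topology in which $\mathcal{S}$ is compact (e.g.\ by defining it on an $L^4$-type space and using that $H^1$-bounded sets are $L^4$-precompact), since $H^1$-bounded images are not $H^1$-precompact; your sketch gestures at this but does not set it up.
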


\begin{proof}
	We will prove the existence of a continuous solution in the following steps: \newline
	\textbf{Step 1.} {Following \cite[Lemma 3.2]{colmenares2016dual}, for any $\epsilon \in (0,1)$}, there exists an extension $\theta_1 \in \Sigma$ of $\theta_D \in H^{1/2}(\partial \Omega)$ such that
	\begin{align}
		\|\nabla \theta_1\|_{0,\Omega} \leq c \epsilon^{-4} \| \theta_D\|_{H^{1/2}(\partial \Omega)}, \qquad \| \theta_1\|_{0,3,\Omega} \leq c\epsilon  \| \theta_D\|_{H^{1/2}(\partial \Omega)},  \label{hopf}
	\end{align}
	where the positive constant $c$ does not rely on $\epsilon$.\newline
	Further, using the Riesz representation theorem, we define a map $\chi:\mathbf{V}_{div} \times \Sigma_0 \rightarrow (\mathbf{V}_{div} \times \Sigma_0)'$ such that for any fixed $(\mathbf{u}, \theta_0) \in \mathbf{V}_{div} \times \Sigma_0$, we have
	\begin{align}
		\langle \chi(\mathbf{u}, \theta_0), (\mathbf{v}, \psi) \rangle &= 	a_V(\theta_0+ \theta_1; \mathbf{u}, \mathbf{v}) + c^S_V(\mathbf{u}; \mathbf{u}, \mathbf{v}) +	A_T(\theta_0+\theta_1, \mathbf{u}; \theta_0+ \theta_1, \psi) \nonumber \\ & \qquad - F_{\theta_0+\theta_1}(\mathbf{v}) - (\mathcal{Q}, \psi) \qquad \text{for all }\,\, (\mathbf{v}, \psi) \in \mathbf{V}_{div} \times \Sigma_0.
	\end{align}
	\textbf{Step 2.} Continuity of $\chi$:  Let $\big(\mathbf{u}^n, \theta^n_0\big)_{n \in \mathbb N }$ be any sequence of functions in $\mathbf{V}_{div} \times \Sigma_0$ such that $\big(\mathbf{u}^n, \theta^n_0\big) \longrightarrow (\mathbf{u}, \theta_0)$ strongly in $\mathbf{V} \times \Sigma$. We prove that $	\langle \chi(\mathbf{u}^n, \theta_0^n), (\mathbf{v}, \psi) \rangle \longrightarrow	\langle \chi(\mathbf{u}, \theta_0), (\mathbf{v}, \psi) \rangle$. \newline
	$\bullet$ Recalling the assumption \textbf{(A0)}, we can conclude that for any $\mathbf{v} \in \mathbf{V}_{div}$,
	\begin{align}
		\mu(\theta^n_0+\theta_1)\nabla \mathbf{v} \longrightarrow \mu(\theta_0+ \theta_1) \nabla \mathbf{v} \quad\text{a. e. in $\Omega$ \qquad  and } \qquad \|\mu(\theta^n_0+\theta_1) \nabla \mathbf{v}\|_{0,\Omega} \leq \mu^\ast \|\nabla \mathbf{v}\|_{0,\Omega}. 
	\end{align}
	Thus, employing the Lebesgue dominated convergence theorem, we obtain
	\begin{align}
		\lim\limits_{n \rightarrow \infty}\mu(\theta^n_0+\theta_1) \nabla \mathbf{v}=\mu(\theta_0+ \theta_1) \nabla \mathbf{v} \qquad \text{strongly in $\mathbf{V}$ for all $\mathbf{v} \in \mathbf{V}_{div}$}.
	\end{align}
	Therefore,
	\begin{align}
		\lim\limits_{n \rightarrow \infty} a_V(\theta^n_0+\theta_1; \mathbf{u}^n, \mathbf{v}) =a_V(\theta_0+\theta_1; \mathbf{u}, \mathbf{v}).
	\end{align}
	Similarly, using \eqref{con-at}, we can easily obtain 
	\begin{align}
		\lim\limits_{n \rightarrow \infty} a_T(\theta^n_0+\theta_1; \theta^n_0 + \theta_1, \psi) =a_T(\theta_0+\theta_1; \theta_0 + \theta_1, \psi).
	\end{align}
	$\bullet$ Due to strong convergence of $\big(\mathbf{u}^n, \theta^n_0\big)_{n \in \mathbb N }$ in $\mathbf{V} \times \Sigma$ allows us to pass the limit in trilinear forms $c_V^S(\cdot; \cdot, \cdot)$ and $c_T^S(\cdot; \cdot, \cdot)$, see \cite[Chapter IV, Theorem 2.1]{bookgirault}. Furthermore, the convergence in the linear part of $F_{\theta^n_0+\theta_1}(\mathbf{v})$ is direct. Hence, $\chi$ is continuous.
	
	\noindent \textbf{Step 3.} A compact and convex ball: We now aim to find a convex and compact ball where the hypothesis of the Brouwer fixed-point theorem can be fulfilled. In this sequel, we first replace $(\mathbf{v}, \psi)$ by $(\mathbf{u}, \theta_0)$, and then employ the stability properties \eqref{con-av}, \eqref{con-at}, \eqref{skew-v} and \eqref{skew-t}, it holds that 
	\begin{align}
		\langle \chi(\mathbf{u}, \theta_0), (\mathbf{u}, \theta_0) \rangle &\geq \mu_\ast \|\nabla \mathbf{u}\|^2_{0,\Omega} + \kappa_\ast \|\nabla \theta_0\|^2_{0,\Omega}+ a_T( \theta_0+ \theta_1; \theta_1, \theta_0) + c^S_T( \mathbf{u}; \theta_1, \theta_0) - F_{\theta_0+ \theta_1}(\mathbf{u}) - (\mathcal{Q}, \theta_0) \nonumber \\
		&\geq \mu_\ast \|\nabla \mathbf{u}\|^2_{0,\Omega} + \kappa_\ast \|\nabla \theta_0\|^2_{0,\Omega}- \kappa^\ast \|\nabla \theta_1\|_{0,\Omega} \|\nabla\theta_0\|_{0,\Omega} - c_T( \mathbf{u}; \theta_0, \theta_1) \nonumber \\ & \qquad- F_{\theta_0+ \theta_1}(\mathbf{u}) - (\mathcal{Q}, \theta_0) \nonumber \\
		&\geq \mu_\ast \|\nabla \mathbf{u}\|^2_{0,\Omega} + \kappa_\ast \|\nabla \theta_0\|^2_{0,\Omega}- \kappa^\ast \|\nabla \theta_1\|_{0,\Omega} \|\nabla\theta_0\|_{0,\Omega} - {\|\mathbf{u}\|_{0,6, \Omega} \| \nabla \theta_0\|_{0,\Omega} \|\theta_1\|_{0,3,\Omega}} \nonumber \\ & \qquad  - \alpha \|\mathbf{g}\|_{0,\Omega} \|\theta_0\|_{0,4,\Omega} \|\mathbf{u}\|_{0,4,\Omega} - \alpha \|\mathbf{g}\|_{0,\Omega} \|\theta_1\|_{0,3,\Omega} \|\mathbf{u}\|_{0,6,\Omega} \nonumber \\ & \qquad -  \|\mathbf{f}\|_{0,\Omega}\| \mathbf{u}\|_{0,\Omega}- \|\mathcal{Q}\|_{0,\Omega}\|\theta_0\|_{0,\Omega}. \nonumber
	\end{align}
	We use the Sobolev embedding theorem, the Poincar$\acute{\text{e}}$ inequality and the bound \eqref{hopf}:
	\begin{align}
		\langle \chi(\mathbf{u}, \theta_0), (\mathbf{u}, \theta_0) \rangle 	&\geq \min \big\{\mu_\ast, \kappa_\ast  \big\} \big(\|\nabla \mathbf{u}\|^2_{0,\Omega} + \|\nabla \theta_0\|^2_{0,\Omega} \big)- {c \kappa^\ast \epsilon^{-4} \|\nabla\theta_0\|_{0,\Omega} \|\theta_D\|_{H^{1/2}(\partial\Omega)} }\nonumber \\ & \,\, - c\epsilon C_q C_{1 \hookrightarrow 6} \|\nabla \mathbf{u}\|_{0, \Omega} \| \nabla \theta_0\|_{0,\Omega} \|\theta_D\|_{H^{1/2}(\partial\Omega)}   - \alpha C^2_q C^2_{1 \hookrightarrow 4} \|\mathbf{g}\|_{0,\Omega} \|\nabla \theta_0\|_{0,\Omega} \|\nabla \mathbf{u}\|_{0,\Omega} \nonumber \\ & \,\, - c \epsilon\alpha C_q C_{1 \hookrightarrow 6} \|\mathbf{g}\|_{0,\Omega} \|\nabla \mathbf{u}\|_{0,\Omega} \|\theta_D\|_{H^{1/2}(\partial\Omega)} - C_p \|\mathbf{f}\|_{0,\Omega}\| \nabla \mathbf{u}\|_{0,\Omega}- C_p \|\mathcal{Q}\|_{0,\Omega}\| \nabla \theta_0\|_{0,\Omega}. \nonumber
	\end{align}
	For sufficiently small $\epsilon$, we choose 
	\begin{align}
		c\epsilon C_q C_{1 \hookrightarrow 6} \|\theta_D\|_{H^{1/2}(\partial\Omega)} \leq   \min \big\{\mu_\ast, \kappa_\ast  \big\}:=C_o. \label{exist-co}
	\end{align}
	{Employing the Young inequality, we arrive at 
		\begin{align}
			\langle \chi(\mathbf{u}, \theta_0), (\mathbf{u}, \theta_0) \rangle 	&\geq \|\nabla \mathbf{u}\|_{0,\Omega} \Big[\frac{C_o}{2} \|\nabla \mathbf{u}\|_{0,\Omega} -  \alpha C^2_q C^2_{1 \hookrightarrow 4}\|\mathbf{g}\|_{0,\Omega} \|\nabla \theta_0\|_{0,\Omega} - C_o\alpha  \|\mathbf{g}\|_{0,\Omega}  - C_p \|\mathbf{f}\|_{0,\Omega} \Big]\, + \nonumber \\ & \qquad \|\nabla \theta_0\|_{0,\Omega} \Big[ \frac{C_o}{2} \|\nabla \theta_0\|_{0,\Omega} -c \kappa^\ast \epsilon^{-4} \|\theta_D\|_{H^{1/2}(\partial\Omega)} -   C_p \|\mathcal{Q}\|_{0,\Omega}\Big]. 
		\end{align}
		To obtain the non-negativity condition, we proceed as follows
		\begin{align}
			\|\nabla \mathbf{u}\|_{0,\Omega} &\geq \frac{2}{C_o} \big[\alpha C^2_q C^2_{1 \hookrightarrow 4}\|\mathbf{g}\|_{0,\Omega} \|\nabla \theta_0\|_{0,\Omega} + C_o\alpha  \|\mathbf{g}\|_{0,\Omega} + C_p \|\mathbf{f}\|_{0,\Omega} \Big], \label{exn1}\\ 
			\|\nabla \theta_0\|_{0,\Omega} &\geq \frac{2}{ C_o} \Big[c \kappa^\ast \epsilon^{-4} \|\theta_D\|_{H^{1/2}(\partial\Omega)} + C_p \|\mathcal{Q}\|_{0,\Omega} \Big] \equiv F_\theta(\theta_D, \mathcal{Q}). \label{exn2}
		\end{align}
		Using \eqref{exn2} in \eqref{exn1}, we obtain
		\begin{align}
			\|\nabla \mathbf{u}\|_{0,\Omega} &\geq \frac{2}{C_o} \Big[  \big(2c \kappa^\ast \epsilon^{-4}\alpha C_o^{-1} C^2_q C^2_{1 \hookrightarrow 4} \|\theta_D\|_{H^{1/2}(\partial \Omega)} + 2\alpha C_p C_o^{-1} C^2_q C^2_{1 \hookrightarrow 4} \|\mathcal{Q}\|_{0, \Omega}  + C_o\alpha \big)  \|\mathbf{g}\|_{0,\Omega} \nonumber \\
			&\qquad + C_p \|\mathbf{f}\|_{0,\Omega} \Big] \nonumber \\
			& \equiv F_{\mathbf{u}}(\theta_D, \mathcal{Q}, \mathbf{g}, \mathbf{f}). \label{exnf1}
	\end{align}}
	We now choose $\rho >0$ such that $ \rho^2 \geq F^2_\theta(\theta_D, \mathcal{Q}) + F^2_{\mathbf{u}}(\theta_D, \mathcal{Q}, \mathbf{g}, \mathbf{f})$, and we will determine its value later. Then, we conclude that
	\begin{align}
		\langle \chi(\mathbf{u}, \theta_0), (\mathbf{u}, \theta_0) \rangle  \geq 0 \qquad \text{for all} \,\, (\mathbf{u}, \theta_0) \in \mathbf{V}_{div} \times \Sigma_0 \quad \text{such that} \quad \|\nabla \mathbf{u}\|^2_{0,\Omega} + \|\nabla \theta_0\|^2_{0,\Omega} = \rho^2. \label{eee1}
	\end{align}
	
	\noindent \textbf{Step 4.} Finite approximation of $\mathbf{V}_{div} \times \Sigma_0$: Since the spaces $\mathbf{V}_{div}$ and  $\Sigma_0$ are separable, we can construct a monotonic sequence $\{\mathbf{V}^r_{div}\}_{r \in \mathbb N}$ of finite-dimensional subspaces of $\mathbf{V}_{div}$, and an increasing sequence $\{ \Sigma^r_0\}_{r \in \mathbb N }$ of finite-dimensional subspaces of $\Sigma_0$ such that $\bigcup \limits_{r\in \mathbb{N}} (\mathbf{V}^r_{div}\times \Sigma^r_0)$ is dense in $\mathbf{V}_{div} \times \Sigma_0$. Furthermore, we emphasize that for each $r \in \mathbb N$, the  map $\chi:(\mathbf{V}^r_{div}\times \Sigma^r_0) \rightarrow (\mathbf{V}^r_{div}\times \Sigma^r_0)^\prime$ is continuous and satisfies the following: 
	\begin{align}
		\Scale[0.97]{	\langle \chi(\mathbf{u}_r, \theta_{0,r}), (\mathbf{u}_r, \theta_{0,r}) \rangle  \geq 0 \quad \text{for all} \,\, (\mathbf{u}_r, \theta_{0,r}) \in \mathbf{V}^r_{div} \times \Sigma^r_0 \,\,\, \text{such that} \,\,\, \|\nabla \mathbf{u}_r\|^2_{0,\Omega} + \|\nabla \theta_{0,r}\|^2_{0,\Omega} = \rho^2.}
	\end{align}
	Thus, employing the Brouwer fixed-point theorem, for each $r \in \mathbb N$, there exists $(\mathbf{u}_r,\theta_{0,r}) \in \mathbf{V}^r_{div}\times \Sigma^r_0$ such that 
	\begin{align}
		\langle \chi(\mathbf{u}_r, \theta_{0,r}), (\mathbf{v}_r, \psi_{r}) \rangle = 0 \qquad \text{for all} \,\, (\mathbf{v}_r,\psi_r) \in \mathbf{V}^r_{div}\times \Sigma^r_0 \,\,\,\,\text{and} \,\,\,\,  \|\nabla \mathbf{u}_r\|^2_{0,\Omega} + \|\nabla \theta_{0,r}\|^2_{0,\Omega} \leq \rho^2. \label{ext-a}
	\end{align}
	\textbf{Step 5.} Limiting case: From \eqref{ext-a}, the sequence $\{ \mathbf{u}_r, \theta_{0,r}\}_{r\in \mathbb N}$ is uniformly bounded in $\mathbf{V} \times \Sigma_0$. Employing the compact embedding theorem, there exists a subsequence, which we denote by $\{ \mathbf{u}_r, \theta_{0,r}\}_{r\in \mathbb N}$ such that  $\{ \mathbf{u}_r, \theta_{0,r}\}_{r\in \mathbb N} \xlongrightarrow{\text{w}}  \{ \mathbf{u}, \theta_0\}$ weakly in $\mathbf{V} \times \Sigma_0$ and  $\{ \mathbf{u}_r, \theta_{0,r}\}_{r\in \mathbb N} \xlongrightarrow{\text{s}}  \{ \mathbf{u}, \theta_0\}$ strongly in $[L^q(\Omega)]^2 \times L^q(\Omega)$ for $q \geq 1$. Furthermore, for $m\leq r$, we obtain
	\begin{align}
		\langle \chi(\mathbf{u}_r, \theta_{0,r}), (\mathbf{v}_m, \psi_{m}) \rangle = 0 \qquad \text{for all} \,\,  (\mathbf{v}_m,\psi_m) \in \mathbf{V}^m_{div}\times \Sigma^m_0. \label{ext-b}
	\end{align}
	As $r \longrightarrow \infty$, we observe the following: \newline
	$\bullet$ $F_{\theta_{0,r} + \theta_1}(\mathbf{v}_m)- F_{\theta_{0} + \theta_1}(\mathbf{v}_m)= \big(\alpha \mathbf{g}(\theta_{0,r}-\theta_{0}), \mathbf{v}_m\big) \leq  \alpha \|\mathbf{g}\|_{0,\Omega} \|\theta_{0,r}-\theta_{0}\|_{0,4,\Omega} \|\mathbf{v}_m\|_{0,4,\Omega}$. \newline Thus, $F_{\theta_{0,r} + \theta_1}(\mathbf{v}_m) \longrightarrow F_{\theta_{0} + \theta_1}(\mathbf{v}_m)$ as $r \longrightarrow \infty$. \\
	$\bullet$ the trilinear form $c^S_V(\mathbf{u}_r; \mathbf{u}_r, \mathbf{v}_m) \longrightarrow c^S_V(\mathbf{u}; \mathbf{u}, \mathbf{v}_m) $ as $r \longrightarrow \infty$ derived in \cite[Chapter IV, Theorem 2.1]{bookgirault}. Following the same, we obtain $c^S_T(\mathbf{u}_r; \theta_{0,r}+\theta_1, \psi_m) \longrightarrow c^S_T(\mathbf{u}; \theta_0+\theta_1, \mathbf{\psi}_m) $ as $r \longrightarrow \infty$. \newline
	$\bullet$ applying $\theta_{0,r} \xlongrightarrow{\text{s}} \theta_{0}$ strongly in $L^q(\Omega)$ and the continuity of $\mu$, gives that 
	\begin{align}
		\mu(\theta_{0,r}+ \theta_1)\nabla \mathbf{v}_m \xrightarrow{r\rightarrow \infty} \mu(\theta_0+ \theta_1)\nabla \mathbf{v}_m \quad \text{a.e. in} \, \Omega, \qquad 	\mu(\theta_{0,r}+ \theta_1)\nabla \mathbf{v}_m \leq \mu^\ast \|\nabla \mathbf{v}_m\|_{0,\Omega}. \nonumber
	\end{align} 
	Therefore, employing the Lebesgue dominated convergence theorem, we conclude that $a_V(\theta_{0,r}+\theta_1; \mathbf{u}_r, \mathbf{v}_m)$ converges to $a_V(\theta_{0}+\theta_1; \mathbf{u}, \mathbf{v}_m)$ as $r \longrightarrow \infty$. Following the above, we obtain $a_T(\theta_{0,r}+\theta_1; \theta_{0,r}+\theta_1, \psi_m)$ converges to $a_T(\theta_{0}+\theta_1; \theta_{0}+\theta_1, \psi_m)$ as $r \longrightarrow \infty$.
	Finally, combining all the above, we obtain as $r \longrightarrow \infty$
	\begin{align}
		\langle \chi(\mathbf{u}, \theta_{0}), (\mathbf{v}_m, \psi_{m}) \rangle = 0 \qquad \text{for all} \,\,  (\mathbf{v}_m,\psi_m) \in \mathbf{V}^m_{div}\times \Sigma^m_0.  \nonumber
	\end{align}
	Further, using the density argument, we get
	\begin{align}
		\langle \chi(\mathbf{u}, \theta_{0}), (\mathbf{v}, \psi) \rangle = 0 \qquad \text{for all} \,\,  (\mathbf{v},\psi) \in \mathbf{V}_{div}\times \Sigma_0.  \nonumber
	\end{align}
	Thus, the pair $(\mathbf{u}, \theta=\theta_0 + \theta_1)$ satisfies the second and third equation of \eqref{variation-1}. Additionally, it also satisfies
	\begin{align}
		\varPhi(\mathbf{v}):= a_V(\theta; \mathbf{u}, \mathbf{v}) + c^S_V(\mathbf{u}; \mathbf{u}, \mathbf{v}) - F_\theta(\mathbf{v})=0 \qquad \text{for all }\,\,\mathbf{v} \in \mathbf{V}_{div}.
	\end{align}
	\textbf{Step 6.} Existence of a pressure: the linear functional $\varPhi: [H^1_0(\Omega)]^2 \rightarrow \mathbb{R}$ is continuous and $\varPhi(\mathbf{v})=0$  for all $\mathbf{v} \in \mathbf{V}_{div}$. Following \cite[Chapter 4, Proposition 4.6]{ern}, there exists a pressure $p\in Q$ satisfying
	\begin{align}
		\varPhi(\mathbf{v})= (\nabla \cdot \mathbf{v}, p) \qquad \text{for all}\,\, \mathbf{v} \in \mathbf{V}.
	\end{align}
	Thus, the existence of a continuous solution $(\mathbf{u},p,\theta)\in \mathbf{V} \times Q \times \Sigma_D$ is guaranteed by the above analysis. \newline 
	\textbf{Step 7.} Determination of the constant $\rho$: Using the generalized AM-GM inequality, we have
	\begin{align}
		F^2_\theta(\theta_D, \mathcal{Q})& + F^2_{\mathbf{u}}(\theta_D, \mathcal{Q}, \mathbf{g}, \mathbf{f}) \nonumber \\
		&\leq \frac{8}{ C^2_o} \Big[c^2 {\kappa^\ast}^2 \epsilon^{-8}  \|\theta_D\|^2_{H^{1/2}(\partial\Omega)} + C^2_p \|\mathcal{Q}\|^2_{0,\Omega}\Big]+ \frac{16}{C^2_o} \Big[  \big(4c^2 {\kappa^\ast}^2 \epsilon^{-8}\alpha^2 C_o^{-2} C^4_q C^4_{1 \hookrightarrow 4} \|\theta_D\|^2_{H^{1/2}(\partial \Omega)}  \nonumber  \\ & \qquad  + 4\alpha^2 C^2_p C_o^{-2} C^4_q C^4_{1 \hookrightarrow 4} \|\mathcal{Q}\|^2_{0, \Omega} + C^2_o\alpha^2 \big)  \|\mathbf{g}\|^2_{0,\Omega} + C^2_p \|\mathbf{f}\|^2_{0,\Omega} \Big] \nonumber \\
		&\leq C_{exist} \Big[ \|\theta_D\|^2_{H^{1/2}(\partial\Omega)} + \|\mathcal{Q}\|^2_{0,\Omega} + \big(\|\theta_D\|^2_{H^{1/2}(\partial \Omega)} +  \|\mathcal{Q}\|^2_{0, \Omega}  + 1 \big)  \|\mathbf{g}\|^2_{0,\Omega} +  \|\mathbf{f}\|^2_{0,\Omega} \Big] \nonumber \\
		&=: \rho^2, \label{eqnr}
	\end{align}
	where the constant $C_{exist}$ is given by
	\begin{align}
		C_{exist}:=\max \Big\{ \frac{8 c^2 {\kappa^\ast}^2 \epsilon^{-8} }{C^2_o}; \frac{16C^2_p}{C^2_o}; \frac{64c^2 {\kappa^\ast}^2 \epsilon^{-8}\alpha^2  C^4_q C^4_{1 \hookrightarrow 4}}{C_o^4}; \frac{64\alpha^2 C^2_p C^4_q C^4_{1 \hookrightarrow 4}}{C^4_o}; 16 \alpha^2 \Big\}. \label{cexist}
	\end{align}
	Furthermore, the estimate \eqref{exist-0} can be obtained from \eqref{hopf}, \eqref{ext-a}, and \eqref{eqnr}.
	
\end{proof}

{We remark that the existence and uniqueness of the weak solution to the problem $(P)$ has been demonstrated in \cite{lorca1996stationary} with $\mathbf{f}=\mathbf{0}$ and $\mathcal{Q}=0$. For the sake of completeness of this work, we have discussed the well-posedness of the weak solution to problem $(P)$. In contrast to \cite{lorca1996stationary}, we introduce a map $\chi$ and show its continuity (cf. \textbf{Step 2}). To employ the Brouwer fixed-point theorem for the finite approximation of $\mathbf{V}_{div} \times \Sigma_0$, the search for a compact and convex ball is significantly novel due to the map $\chi$ and \eqref{hopf} (cf. \textbf{Step 3.}). Moreover, similarity arises in the finite approximations and limiting case (\textbf{Step 4} to \textbf{6}). Additionally, the present derivation provides an elegant dependence of the physical parameters and the embedding and lifting constants. }

\subsection{Uniqueness} 
We will now investigate the uniqueness of the continuous solution.

\begin{proposition} \label{cunique}
	Under the data assumption \textbf{(A0)}, let $(\mathbf{u},p,\theta) \in \mathbf{V} \times Q \times \Sigma_D $ be any solution to the problem \eqref{variation-01}. Further, we assume the following: \newline
	(i)\,  the data of the problem (P) satisfies
	\begin{align}
		\Scale[0.9]{\sqrt{2} \mu^{-1}_\ast N_0 C^{1/2}_{exist} \Big[ (1+c^2C^{-1}_{exist})\|\theta_D\|^2_{H^{1/2}(\partial\Omega)} + \|\mathcal{Q}\|^2_{0,\Omega} + \|\mathbf{f}\|^2_{0,\Omega} + \big(\|\theta_D\|^2_{H^{1/2}(\partial \Omega)}+  \|\mathcal{Q}\|^2_{0, \Omega}  + 1 \big)  \|\mathbf{g}\|^2_{0,\Omega} \Big]^{1/2} <1}, \label{unique-a}
	\end{align}
	(ii)\,  let $(\mathbf{u}, \theta) \in \mathbf{V} \cap [W^{1,3}(\Omega)]^2 \times \Sigma_D \cap W^{1,3}(\Omega)$ and $\mathbf{g} \in [L^2(\Omega)]^2$ satisfy the following bound
	\begin{align}
		\kappa^{-1}_\ast \big( L_\kappa C_q C_{1\hookrightarrow 6} \|\nabla {\theta}\|_{0,3,\Omega} + \alpha C_q^2 C^2_{1 \hookrightarrow 4} \|\mathbf{g}\|_{0,\Omega} +  L_\mu C_q C_{1 \hookrightarrow 6}  \|\nabla {\mathbf{u}}\|_{0,3,\Omega} \big) < 1, \label{unique-b}
	\end{align}
	then, the problem \eqref{variation-01} has a unique solution $ (\mathbf{u},p,\theta)$.
\end{proposition}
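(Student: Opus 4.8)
The plan is to compare two solutions of \eqref{variation-01} through an energy estimate. Let $(\mathbf{u},p,\theta)$ be a solution meeting hypotheses (i)--(ii), and let $(\widehat{\mathbf{u}},\widehat{p},\widehat{\theta})\in\mathbf{V}\times Q\times\Sigma_D$ be any solution; set $\mathbf{w}:=\mathbf{u}-\widehat{\mathbf{u}}\in\mathbf{V}$, $r:=p-\widehat{p}\in Q$, $\phi:=\theta-\widehat{\theta}$, and observe that $\phi\in\Sigma_0$ since both temperatures carry the trace $\theta_D$. As a preliminary, $(\mathbf{u},\theta)$ itself satisfies the a priori bound \eqref{exist-0}: this follows from the same estimates as in \textbf{Step 3} and \textbf{Step 7} of the proof of Proposition~\ref{existenceP}, now applied with $(\mathbf{u},\theta_0)$ as test function, where $\theta=\theta_0+\theta_1$ and $\theta_1$ is the lifting of \eqref{hopf}. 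Combining \eqref{exist-0} with \eqref{unique-a} and the inequality $\|\nabla\mathbf{u}\|_{0,\Omega}+\|\nabla\theta\|_{0,\Omega}\le\sqrt{2}\,(\|\nabla\mathbf{u}\|^2_{0,\Omega}+\|\nabla\theta\|^2_{0,\Omega})^{1/2}$ yields the consequence
\begin{align}
	N_0\big(\|\nabla\mathbf{u}\|_{0,\Omega}+\|\nabla\theta\|_{0,\Omega}\big)<\mu_\ast, \label{plan-small}
\end{align}
which will be used to absorb the non-coercive velocity term.

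First I subtract the momentum equations of the two solutions and test with $(\mathbf{v},q)=(\mathbf{w},r)$; the pressure contributions cancel. Using trilinearity,
\begin{align}
	a_V(\theta;\mathbf{u},\mathbf{w})-a_V(\widehat{\theta};\widehat{\mathbf{u}},\mathbf{w}) &= a_V(\widehat{\theta};\mathbf{w},\mathbf{w})+\int_\Omega\big(\mu(\theta)-\mu(\widehat{\theta})\big)\nabla\mathbf{u}:\nabla\mathbf{w}\,d\Omega, \nonumber\\
	c^S_V(\mathbf{u};\mathbf{u},\mathbf{w})-c^S_V(\widehat{\mathbf{u}};\widehat{\mathbf{u}},\mathbf{w}) &= c^S_V(\mathbf{w};\mathbf{u},\mathbf{w}), \nonumber
\end{align}
the second identity using $c^S_V(\widehat{\mathbf{u}};\mathbf{w},\mathbf{w})=0$. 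Since $F_\theta(\mathbf{w})-F_{\widehat{\theta}}(\mathbf{w})=\int_\Omega\alpha\,\mathbf{g}\,\phi\cdot\mathbf{w}\,d\Omega$, coercivity \eqref{con-av}, the Lipschitz bound on $\mu$ combined with H\"older's inequality with exponents $(6,3,2)$, continuity \eqref{cnt-sv}, and the constants $C_{1\hookrightarrow 6},C_{1\hookrightarrow 4},C_q$ give
\begin{align}
	\big(\mu_\ast-N_0\|\nabla\mathbf{u}\|_{0,\Omega}\big)\|\nabla\mathbf{w}\|_{0,\Omega}\le\Big(L_\mu C_q C_{1\hookrightarrow 6}\|\nabla\mathbf{u}\|_{0,3,\Omega}+\alpha C_q^2 C^2_{1\hookrightarrow 4}\|\mathbf{g}\|_{0,\Omega}\Big)\|\nabla\phi\|_{0,\Omega}, \label{plan-vel}
\end{align}
where the bracket on the left is strictly positive by \eqref{plan-small}.

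An analogous computation for the temperature equation --- subtract, test with $\psi=\phi\in\Sigma_0$, use $a_T(\theta;\theta,\phi)-a_T(\widehat{\theta};\widehat{\theta},\phi)=a_T(\widehat{\theta};\phi,\phi)+\int_\Omega(\kappa(\theta)-\kappa(\widehat{\theta}))\nabla\theta\cdot\nabla\phi\,d\Omega$ and $c^S_T(\mathbf{u};\theta,\phi)-c^S_T(\widehat{\mathbf{u}};\widehat{\theta},\phi)=c^S_T(\mathbf{w};\theta,\phi)$ together with $c^S_T(\widehat{\mathbf{u}};\phi,\phi)=0$, coercivity \eqref{con-at}, the Lipschitz bound on $\kappa$ and \eqref{cnt-st} --- leads to
\begin{align}
	\big(\kappa_\ast-L_\kappa C_q C_{1\hookrightarrow 6}\|\nabla\theta\|_{0,3,\Omega}\big)\|\nabla\phi\|_{0,\Omega}\le N_0\|\nabla\theta\|_{0,\Omega}\,\|\nabla\mathbf{w}\|_{0,\Omega}, \label{plan-temp}
\end{align}
the left bracket being strictly positive by \eqref{unique-b}. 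Substituting \eqref{plan-temp} into \eqref{plan-vel} yields $\|\nabla\mathbf{w}\|_{0,\Omega}\le\Lambda\,\|\nabla\mathbf{w}\|_{0,\Omega}$, with $\Lambda$ the product of the two constants above; by \eqref{unique-b} one has $L_\mu C_q C_{1\hookrightarrow 6}\|\nabla\mathbf{u}\|_{0,3,\Omega}+\alpha C_q^2 C^2_{1\hookrightarrow 4}\|\mathbf{g}\|_{0,\Omega}<\kappa_\ast-L_\kappa C_q C_{1\hookrightarrow 6}\|\nabla\theta\|_{0,3,\Omega}$, while by \eqref{plan-small} $N_0\|\nabla\theta\|_{0,\Omega}<\mu_\ast-N_0\|\nabla\mathbf{u}\|_{0,\Omega}$, so $\Lambda<1$. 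Hence $\nabla\mathbf{w}=\mathbf{0}$, so $\mathbf{w}=\mathbf{0}$ by Poincar\'e, then $\nabla\phi=\mathbf{0}$ from \eqref{plan-temp}, i.e. $\phi=0$; finally, with $\mathbf{u}=\widehat{\mathbf{u}}$ and $\theta=\widehat{\theta}$ the momentum difference collapses to $b(\mathbf{v},r)=0$ for all $\mathbf{v}\in\mathbf{V}$, whence $r=0$ by the inf-sup condition \eqref{con-binf}, which proves uniqueness.

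The delicate point is the bookkeeping of the trilinear-form splittings: they must be organized so that the frozen arguments in the residual (non-coercive) terms are those of the regular solution $(\mathbf{u},\theta)$ --- this is what allows its $W^{1,3}$-regularity to enter through \eqref{unique-b} and its a priori bound to enter through \eqref{plan-small} --- and one has to check that \eqref{unique-a} and \eqref{unique-b} are precisely the two smallness requirements needed to force $\Lambda<1$; the remaining estimates are routine applications of H\"older's inequality and the Sobolev embeddings.
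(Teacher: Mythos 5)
Your proof is correct and follows essentially the same route as the paper's: subtract the two solutions, test with the differences, derive the coupled inequalities corresponding to \eqref{unique-1}--\eqref{unique-2}, and close using the a priori bound \eqref{exist-0} together with \eqref{unique-a} and \eqref{unique-b}, recovering the pressure via the inf-sup condition \eqref{con-binf}. The only substantive differences are that you freeze the distinguished solution $(\mathbf{u},\theta)$ in the residual terms (which matches hypothesis (ii) as stated, whereas the paper's \eqref{unique-3} carries $\widehat{\mathbf{u}},\widehat{\theta}$ and implicitly applies the smallness hypotheses to the second solution) and that you combine the two inequalities multiplicatively ($\Lambda<1$) rather than additively; both arguments equally rely on the step of invoking the bound \eqref{exist-0} for an arbitrary weak solution.
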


\begin{proof}
	Let us assume $(\mathbf{u},p,\theta)$ and $(\widehat{\mathbf{u}},\widehat{{p}}, \widehat{\theta})$ be the solutions to the problem \eqref{variation-01}. We further set $\mathbf{u}_\ast := \mathbf{u}- \widehat{\mathbf{u}}$, $p_\ast := p- \widehat{p}$ and $\theta_\ast := \theta- \widehat{\theta}$. Notably, $\theta_\ast \in \Sigma_0$. From the second equation of \eqref{variation-01}, we infer
	\begin{align}
		a_T(\theta; \theta, \psi)- a_T(\widehat{\theta}; \widehat{\theta}, \psi)=c^S_T(\widehat{\mathbf{u}}; \widehat{\theta}, \psi) -	c^S_T(\mathbf{u}; \theta, \psi). \nonumber 
	\end{align}
	Replacing $\psi$ with $\theta_\ast \in \Sigma_0$, and adding and subtracting suitable terms, it yields
	\begin{align}
		a_T(\theta; \theta_\ast, \theta_\ast) = -c^S_T(\mathbf{u}_\ast; \widehat{\theta}, \theta_\ast) + 	a_T(\widehat{\theta}; \widehat{\theta}, \theta_\ast) -a_T(\theta; \widehat{\theta}, \theta_\ast). \nonumber 
	\end{align}
	We use the Lipschitz continuity of $\kappa$, the properties \eqref{con-at} and \eqref{cnt-st} and the Sobolev embedding theorem:
	\begin{align}
		\kappa_\ast \|\nabla \theta_\ast\|^2_{0,\Omega} &\leq N_0 \|\nabla \mathbf{u}_\ast\|_{0,\Omega} \|\nabla \widehat{\theta}\|_{0,\Omega} \|\nabla \theta_\ast\|_{0,\Omega} + L_\kappa \|\theta_\ast\|_{0,6,\Omega} \|\nabla \widehat{\theta}\|_{0,3,\Omega} \|\nabla \theta_\ast\|_{0,\Omega} \nonumber \\
		& \leq N_0 \|\nabla \mathbf{u}_\ast\|_{0,\Omega} \|\nabla \widehat{\theta}\|_{0,\Omega} \|\nabla \theta_\ast\|_{0,\Omega} + L_\kappa C_q C_{1\hookrightarrow 6} \|\nabla \widehat{\theta}\|_{0,3,\Omega} \|\nabla \theta_\ast\|^2_{0,\Omega}. \nonumber 
	\end{align}
	Therefore, the following holds:
	\begin{align}
		\Big(\kappa_\ast - L_\kappa C_q C_{1\hookrightarrow 6} \|\nabla \widehat{\theta}\|_{0,3,\Omega} \Big)\|\nabla \theta_\ast\|_{0,\Omega} \leq N_0  \|\nabla \widehat{\theta}\|_{0,\Omega} \|\nabla \mathbf{u}_\ast\|_{0,\Omega}. \label{unique-1}
	\end{align}
	Recalling the first equation in \eqref{variation-01} and replacing $(\mathbf{v}, q)$ by $(\mathbf{u}_\ast, p_\ast)$, we infer 
	\begin{align}
		a_V(\theta; \mathbf{u}, \mathbf{u}_\ast) - a_V(\widehat{\theta}; \widehat{\mathbf{u}}, \mathbf{u}_\ast) + c^S_V(\mathbf{u}; \mathbf{u},\mathbf{u}_\ast) - c^S_V(\widehat{\mathbf{u}}; \widehat{\mathbf{u}},\mathbf{u}_\ast) = F_{\theta}(\mathbf{u}_\ast)- F_{\widehat{\theta}}(\mathbf{u}_\ast). \nonumber
	\end{align}
	Adding and subtracting suitable terms, it holds that
	\begin{align}
		a_V(\theta;\mathbf{u}_\ast, \mathbf{u}_\ast)&=F_{\theta}(\mathbf{u}_\ast)- F_{\widehat{\theta}}(\mathbf{u}_\ast) + a_V(\widehat{\theta}; \widehat{\mathbf{u}}, \mathbf{u}_\ast) -a_V(\theta; \widehat{\mathbf{u}}, \mathbf{u}_\ast) - c^S_V(\mathbf{u}_\ast; \widehat{\mathbf{u}}, \mathbf{u}_\ast) \nonumber \\
		&=(\alpha \mathbf{g}\theta_\ast, \mathbf{u}_\ast) + ((\mu(\widehat{\theta})- \mu(\theta))\nabla \widehat{\mathbf{u}}, \nabla \mathbf{u}_\ast) - c^S_V(\mathbf{u}_\ast; \widehat{\mathbf{u}}, \mathbf{u}_\ast) \nonumber \\
		& \leq \alpha C_q^2 C^2_{1 \hookrightarrow 4} \|\mathbf{g}\|_{0,\Omega} \|\nabla \theta_\ast\|_{0,\Omega} \|\nabla \mathbf{u}_\ast\|_{0,\Omega} + L_\mu C_q C_{1 \hookrightarrow 6} \|\nabla \theta_\ast\|_{0,\Omega} \|\nabla \widehat{\mathbf{u}}\|_{0,3,\Omega} \|\nabla \mathbf{u}_\ast\|_{0,\Omega} + \nonumber \\
		& \qquad N_0  \|\nabla \widehat{\mathbf{u}}\|_{0,\Omega} \|\nabla \mathbf{u}_\ast\|_{0,\Omega}^2. \nonumber 
	\end{align}
	Using the property \eqref{con-av}, we obtain
	\begin{align}
		\Big(\mu_\ast -  N_0  \|\nabla \widehat{\mathbf{u}}\|_{0,\Omega} \Big) \|\nabla \mathbf{u}_\ast\|_{0,\Omega} \leq \Big(\alpha C_q^2 C^2_{1 \hookrightarrow 4} \|\mathbf{g}\|_{0,\Omega}   + L_\mu C_q C_{1 \hookrightarrow 6}  \|\nabla \widehat{\mathbf{u}}\|_{0,3,\Omega}  \Big)\|\nabla \theta_\ast\|_{0,\Omega}. \label{unique-2}
	\end{align}
	Combining \eqref{unique-1} and \eqref{unique-2}, it yields that
	\begin{align}
		\Big(\mu_\ast -  N_0  \big(\|\nabla \widehat{\mathbf{u}}\|_{0,\Omega} + \|\nabla \widehat{\theta}\|_{0,\Omega} \big) \Big) \|\nabla \mathbf{u}_\ast\|_{0,\Omega} &+ \Big( \kappa_\ast - \big( L_\kappa C_q C_{1\hookrightarrow 6} \|\nabla \widehat{\theta}\|_{0,3,\Omega} + \alpha C_q^2 C^2_{1 \hookrightarrow 4} \|\mathbf{g}\|_{0,\Omega} \nonumber \\ & \qquad  +  L_\mu C_q C_{1 \hookrightarrow 6}  \|\nabla \widehat{\mathbf{u}}\|_{0,3,\Omega} \big) \Big)\|\nabla \theta_\ast\|_{0,\Omega} \leq 0. \label{unique-3}
	\end{align}
	Recalling \eqref{exist-0}, it holds that
	\begin{align}
		\|\nabla \mathbf{\widehat{\mathbf{u}}}\|_{0,\Omega} + \|\nabla \widehat{\theta}\|_{0,\Omega} &\leq \sqrt{2} C^{1/2}_{exist} \Big[ (1+c^2C^{-1}_{exist})\|\theta_D\|^2_{H^{1/2}(\partial\Omega)} + \|\mathbf{f}\|^2_{0,\Omega} + \|\mathcal{Q}\|^2_{0,\Omega} \, + \nonumber \\ & \qquad \qquad \qquad \big(\|\theta_D\|^2_{H^{1/2}(\partial \Omega)} +  \|\mathcal{Q}\|^2_{0, \Omega}  + 1 \big)  \|\mathbf{g}\|^2_{0,\Omega} \Big]^{1/2}. \label{unique-4}
	\end{align}
	Thus, employing \eqref{unique-4}, \eqref{unique-a} and \eqref{unique-b} in \eqref{unique-3}, the uniqueness of velocity field and temperature field is evident. Furthermore, the uniqueness of the pressure field can be easily obtained using the inf-sup condition \eqref{con-binf}.  
\end{proof}

\section{Stabilized virtual element framework} \label{sec-3}
In this section, we propose a local projection-based stabilized virtual element formulation for the problem $(P)$. We begin by stating the mesh regularity assumptions and defining the global virtual spaces that approximate the spaces for the velocity vector field, pressure field and temperature field.

\noindent \textbf{(A1)} \textbf{Mesh assumption.} \label{mesh_reg}
Hereafter, let $E$ be any polygonal element such that its area and diameter are given by $|E|$ and $h_E$, respectively. The edge of {$E$} is denoted by $e$.  Moreover, $\mathbf{x}_E \in \mathbb R^2$  and $\mathbf{n}^E$ denote the barycenter and outward unit normal to the boundary $\partial E$ of $E$. Furthermore, we consider a sequence of decompositions of $\Omega$ into non-overlapping general polygonal elements $E$, denoted by $\{\Omega_h\}_{h>0}$. The maximum diameter is defined by $h:=\max_{E \in \Omega_h} h_E$. 
Further, we assume that there exists a constant $\delta_0 >0$, independent of $h$, such that any  $E\in\Omega_h$ satisfies the following  assumptions:
\begin{itemize}
	\item each $E$ is star-shaped with respect to a ball $B_{E}$ of radius  $\geq \delta_0 h_E$;
	\item any edge $e \in \partial E$ has a  finite length $|e| \geq \delta_0  h_E$.
\end{itemize}

Let $\mathbb P_k(E)$ represent the space of polynomials of degree $\leq k$ on $E \in \Omega_h$ with $k \in \mathbb N$. Additionally, we have $\mathbb P_{-1}(E)=\{0\}$. Further, we introduce a set of normalized monomial basis $\mathbb M_{k}(E)$ for the polynomial space $\mathbb P_k(E)$, given by
\begin{align*}
	\mathbb M_k(E)= \Big\{ \Big(\dfrac{ \mathbf{x}-\mathbf{x}_E}{h_E} \Big)^{\mathbf{d}}, |\mathbf{d}| \leq k \Big\},
\end{align*}
where $\mathbf{d}=(d_1,d_2) \in \mathbb{N}^2$ is a multi-index such that $|\mathbf{d}|=d_1+d_2$, and $\mathbf{x}^{\mathbf{d}}= x_1^{d_1}x_2^{d_2}$. {Furthermore, the broken Sobolev and polynomial spaces are defined as follows
	\begin{align}
		W^k_p(\Omega_h)&= \big\{ \psi \in L^2(\Omega) \,\, \text{such that} \,\, \psi_{|E} \in W^k_p(E) \,\, \text{for all}\,\, E \in \Omega_h  \}, \\
		\mathbb{P}_k(\Omega_h)&= \big\{ q \in L^2(\Omega) \,\, \text{such that} \,\, q_{|E} \in \mathbb P_k(E) \,\, \text{for all}\,\, E \in \Omega_h  \}.
\end{align}}

From now on, the local contribution of the continuous forms  $a_V(\cdot; \cdot, \cdot)$, $b(\cdot, \cdot)$,  $c^S_V(\cdot; \cdot, \cdot)$, $a_T(\cdot; \cdot, \cdot)$ and $c^S_T(\cdot; \cdot, \cdot)$ is given by $a^E_V(\cdot; \cdot, \cdot)$, $b^E(\cdot, \cdot)$,  $c^{S,E}_V(\cdot; \cdot, \cdot)$, $a^E_T(\cdot; \cdot, \cdot)$ and $c^{S,E}_T(\cdot; \cdot, \cdot)$, respectively. We will adopt the same decompositions for the load terms. In addition, we have the following
\begin{align*}
	a_{V}(\phi; \mathbf{v},\mathbf{w}) &= \sum_{E \in \Omega_{h}} a^E_{V}(\phi; \mathbf{v},\mathbf{w}),  & b(\mathbf{v},q) &= \sum_{E \in \Omega_{h}} b^{E}(\mathbf{v},q), &
	a_{T}(\theta; \phi, \psi) &= \sum_{E \in \Omega_{h}} a^E_{T}(\theta; \phi, \psi), \\
	c^S_V(\mathbf{v}; \mathbf{w},\mathbf{z}) &= \sum_{E \in \Omega_{h}} c^{S,E}_V(\mathbf{v}; \mathbf{w}, \mathbf{z}), & 
	c^S_T(\mathbf{v}; \phi,\psi) &= \sum_{E \in \Omega_{h}} c^{S,E}_T(\mathbf{v}; \phi, \psi), & F_\phi(\mathbf{v})&= \sum_{E \in \Omega_h} F^E_\phi(\mathbf{v}).
\end{align*}

For any $E \in \Omega_h$, we define the following polynomial projections: 
\begin{itemize}
	\item the $H^1$-energy projection operator  $\Pi^{\nabla,E}_k: H^1(E) \rightarrow \mathbb{P}_k(E)$ can be defined as follows
	\begin{align}
		\begin{cases}
			\int_E \nabla (\psi - \Pi^{\nabla,E}_k \psi) \cdot \nabla m_k \, dE = 0 \qquad \text{for all} \, \psi \in H^1(E)\,\, \text{and} \,\, m_k \in \mathbb{P}_k(E),\\
			\int_{\partial E} (\psi - \Pi^{\nabla,E}_k \psi) \, ds=0,
		\end{cases}
		\label{H1proj}
	\end{align}
	with its extension for the vector-valued function $\boldsymbol{\Pi}^{\nabla,E}_k: [H^1(E)]^2 \rightarrow [\mathbb{P}_k(E)]^2$.
	\item  the $L^2$-projection operator $\Pi^{0,E}_k: L^2(E) \rightarrow \mathbb{P}_k(E)$ can be given as follows
	\begin{align}
		\int_E (\psi- \Pi^{0,E}_k \psi) m_k \, dE = 0 \qquad \text{for all} \,\, \psi \in L^2(E)\,\, \text{and} \,\, m_k \in \mathbb{P}_k(E), \label{l2proj}
	\end{align}
	with its extension for vector-valued function $\boldsymbol{\Pi}^{0,E}_k: [L^2(E)]^2 \rightarrow [\mathbb{P}_k(E)]^2$.
\end{itemize} 
\begin{remark}
	From \cite[Remark 3.1]{mishra2025equal}, the $L^2$-projection operator satisfies the stability property with respect to $L^q$-norm for any $q\geq 2$, i.e., for any $v\in L^2(E)$
	\begin{align}
		\|\Pi^{0,E}_{k} v\|_{L^q(E)} \leq C \|v\|_{L^q(E)} \qquad \,\, \text{for all }\,\, q\geq 2. \label{lqstab}
	\end{align}
\end{remark}

\subsection{Virtual element spaces } 
Following the VEM framework \cite{vem25}, we now outline the $H^1$-conforming virtual space for the velocity vector field, pressure field and temperature field. For any $E \in \Omega_h$, we define the local virtual space as follows:
\begin{align}
	V_h(E):= \Big\{ \psi_h \in H^1(E) \cap C^0(\partial E),& \,\, \text{such that} \nonumber \\ &(i) \,\,\Delta \psi_h \in \mathbb P_k(E), \nonumber \\ &(ii)\, \,  \psi_{h|e} \in \mathbb P_k(e) \,\, \forall \,\, e \in \partial E,  \nonumber \\  &(iii) \,\, \big(\psi_h-\Pi_k^{\nabla,E}\psi_h, m_\gamma \big)=0, \, \forall \, m_\gamma \in \mathbb{M}^\ast_{k-1}(E) \cup \mathbb{M}^\ast_{k}(E) \Big\}, \nonumber
\end{align}
where $\mathbb{M}^\ast_{k}(E)$ represents the set of monomials of degree equal to $k$.

Hereafter, we summarize the following degrees of freedom for the local space $V_h(E)$:
\begin{itemize}
	\item $\mathbf{DoF_1}(\psi_h)$: the values of $\psi_h$ evaluated at each vertex of the polygonal element $E$,
	\item $\mathbf{DoF_2}(\psi_h)$: the values of $\psi_h$ evaluated at the $(k-1)$ distinct points of each edge $e \in \partial E$,
	\item $\mathbf{DoF_3}(\psi_h)$: the internal moments of $\psi_h$ 
	\begin{align*}
		\frac{1}{|E|} \, \int_E \psi_h m_\gamma \, dE \quad \text{for all}\, \, m_\gamma \in \mathbb{M}_{k-2}(E).
	\end{align*}	
\end{itemize}
We thus obtain the global virtual element space by assembling the local spaces $V_h(E)$ as follows
\begin{align}
	V_h:=\{ \psi_h \in H^1(\Omega) \, \, \text{such that}\, \, \psi_{h|E} \in V_h(E) \, \,  \text{for all}\,\, E \in \Omega_h \}. \nonumber
\end{align}
Therefore, the $H^1$-conforming global velocities space $\mathbf{V}_{h}$ and pressure space $Q_{h}$ can be defined by
\begin{align}
	\mathbf{V}_{h} &:= \{ \mathbf{z}_h \in \mathbf{V} \,\, \,\, \text{such that}\,\,\,\, \mathbf{z}_{h|E} \in [V_h(E)]^2 \,\,\,\, \text{for all} \,\, E \in \Omega_h\}, \nonumber \\
	Q_{h} &:= \{ q_h \in Q \,\, \,\, \text{such that}\,\,\,\, q_{h|E} \in V_h(E) \,\, \,\,\text{for all} \,\, E \in \Omega_h\}. \nonumber
\end{align} 
Furthermore, the global temperature space $\Sigma_h$ is defined by
\begin{align}
	\Sigma_{h} &:= \{ \psi_h \in \Sigma \,\,\,\, \text{such that}\,\, \,\, \psi_{h|E} \in V_h(E) \,\,\,\, \text{for all} \,\, E \in \Omega_h\}. \nonumber
\end{align}
Its homogeneous counterpart is defined as
\begin{align}
	\Sigma_{0,h} &:= \{ \psi_h \in \Sigma_h \,\,\,\, \text{such that}\,\, \,\, \psi_{h|\partial \Omega}=0 \}. \nonumber
\end{align}
To address nonhomogeneous Dirichlet data under assumption (A0), we define the ‘quasi’-interpolant $\theta_{D,h}$ of $\theta_{D}$ as  
\begin{itemize}
	\item \(\theta_{D,h} \in C^0(\partial \Omega)\),
	\item piecewise polynomial on each edge: $\theta_{D,h}|_e \in \mathbb{P}_k(e)$ for all $e \subset \partial \Omega$.
\end{itemize}
If $\theta_D \in C^0(\partial \Omega)$, the usual nodal interpolant provides a natural choice for $\theta_{D,h}$.
We are now in a position to define the discrete counterpart of $\Sigma_D$, given by
\begin{align}
	\Sigma_{D,h} &:= \{ \psi_h \in \Sigma_h \,\,\,\, \text{such that}\,\, \,\, \psi_{h|\partial \Omega}=\theta_{D,h} \}. \nonumber
\end{align}

\subsection{Virtual element forms} 
We now focus on the virtual element approximations of the continuous forms and external load terms, which are easily computable through the degrees of freedom \textbf{DoF}s. Let $\mathbf{v}_h, \mathbf{w}_h, \mathbf{z}_h \in \mathbf{V}_h, q_h \in Q_h$, and $\theta_h, \phi_h, \psi_h \in \Sigma_h$, then the local discrete forms corresponding to continuous forms $a_V$, $b$, $c^S_V$, $a_T$ and $c^S_T$ can be given as follows:
\begin{align*}
a^E_{V,h}( \phi_h; \mathbf{v}_h, \mathbf{w}_h) :&=  \int_E \mu(\Pi^{0,E}_k \phi_h) \boldsymbol{\Pi}^{0,E}_{k-1} \nabla \mathbf{v}_h: \boldsymbol{\Pi}^{0,E}_{k-1} \nabla \mathbf{w}_h\, dE + \mu(\Pi^{0,E}_0 \phi_h) \textbf{S}^E_ {\nabla,k} \big(\mathbf{v}_h, \mathbf{w}_h \big),  \\  
b^E_{h}(\mathbf{v}_h, q_h) :&= \int_E \Pi^{0,E}_{k-1} \nabla \cdot \mathbf{v}_h \Pi^{0,E}_{k} q_h\, dE, \\
c^{E}_{V,h}(\mathbf{v}_h; \mathbf{w}_h,\mathbf{z}_h):&= \int_E  \big(\boldsymbol{\Pi}^{0,E}_{k-1} \nabla \mathbf{w}_h \big) \boldsymbol{\Pi}^{0,E}_{k} \mathbf{v}_h \cdot \boldsymbol{\Pi}^{0,E}_k \mathbf{z}_h\, dE,\\
c^{S,E}_{V,h}(\mathbf{v}_h; \mathbf{w}_h,\mathbf{z}_h):&= \dfrac{1}{2} \big[c^{E}_{V,h}(\mathbf{v}_h; \mathbf{w}_h,\mathbf{z}_h) - c^{E}_{V,h}(\mathbf{v}_h; \mathbf{z}_h,\mathbf{w}_h) \big], \\ 
a^E_{T,h}(\theta_h; \phi_h, \psi_h):&= \int_{E} \kappa(\Pi^{0,E}_k \theta_h) \boldsymbol{\Pi}^{0,E}_{k-1} \nabla \phi_h \cdot \boldsymbol{\Pi}^{0,E}_{k-1} \nabla \psi_h \, dE + \kappa(\Pi^{0,E}_0 \theta_h) S^E_{\nabla,k} \big( \phi_h, \psi_h \big), \\
c^{E}_{T,h}(\mathbf{v}_h; \phi_h,\psi_h):&= \int_E  \big[\boldsymbol{\Pi}^{0,E}_{k} \mathbf{v}_h \cdot	\boldsymbol{\Pi}^{0,E}_{k-1} \nabla \phi_h \big] \Pi^{0,E}_k \psi_h\, dE,  \\
c^{S,E}_{T,h}(\mathbf{v}_h; \phi_h,\psi_h):&= \dfrac{1}{2} \big[c^{E}_{T,h}(\mathbf{v}_h; \phi_h,\psi_h) - c^{E}_{T,h}(\mathbf{v}_h; \psi_h,\phi_h) \big],  \nonumber 
\end{align*}
where the local VEM stabilizing terms $\mathbf{S}^E_{\nabla,k}(\cdot, \cdot)$  and ${S}^E_{\nabla,k}(\cdot, \cdot)$ are defined as follows:
\begin{align}
\textbf{S}^E_ {\nabla,k} \big(\mathbf{v}_h, \mathbf{w}_h \big) &:=  {S^E_1} \big((\mathbf{I}-\boldsymbol{\Pi}^{\nabla,E}_k) \mathbf{v}_h,(\mathbf{I}-\boldsymbol{\Pi}^{\nabla,E}_k) \mathbf{w}_h \big), \nonumber\\
S^E_{\nabla,k} \big( \phi_h, \psi_h \big) &:= {S^E_2} \big(( I- {\Pi}^{\nabla,E}_k) \phi_h,({I}- {\Pi}^{\nabla,E}_k) \psi_h \big), \nonumber
\end{align}
and where ${S^E_1}(\cdot,\cdot): \mathbf{V}_{h}(E) \times \mathbf{V}_{h}(E) \rightarrow \mathbb{R}$ and ${S^E_2}(\cdot, \cdot): \Sigma_h(E) \times \Sigma_h(E) \rightarrow \mathbb R$ are the VEM computable symmetric positive definite bilinear forms such that they satisfy 
\begin{align}
\lambda_{1\ast} \ \|\nabla \mathbf{w}_h\|_{0,E}^2 &\leq S^E_\nabla(\mathbf{w}_h, \mathbf{w}_h)	 \leq
\lambda_1^\ast  \|\nabla \mathbf{w}_h\|_{0,E}^2 \qquad  \text{for all} \,\, \mathbf{w}_h \in \mathbf{V}_{h}(E) \cap ker(\boldsymbol{\Pi}^{\nabla,E}_{k}), \label{vem-a} \\
\lambda_{2\ast} \ \| \nabla \phi_h\|_{0,E}^2 &\leq S^E(\phi_h, \phi_h) \leq
\lambda_2^\ast  \| \nabla \phi_h\|_{0,E}^2 \qquad  \text{for all}\, \phi_h \in \Sigma_{h}(E) \cap ker(\Pi^{\nabla,E}_{k}), \label{vem-b}
\end{align}
with the positive constants $\lambda_{1\ast} \leq \lambda_1^\ast$ and $\lambda_{2\ast} \leq \lambda_2^\ast$, which are independent of $h$.
\begin{remark}
Following the standard VEM literature \cite{vem1,mvem9,mvem3}, the local VEM stabilization terms $S^E_1(\cdot,\cdot)$ and $S^E_2(\cdot,\cdot)$ are defined by dofi-dofi stabilization. Let $\vec{\mathbf{v}}_h$, $\vec{\mathbf{w}}_h$, $\vec{\phi}_h$ and $\vec{\psi}_h$ denote the real valued vectors containing the values of the local degrees of freedom associated to ${\mathbf{v}}_h, {\mathbf{w}}_h \in \mathbf{V}_h$  and to ${\phi}_h, {\psi}_h \in \Sigma_h$, then
\begin{align}
	S^E_1(\mathbf{v}_h, \mathbf{w}_h) = \vec{\mathbf{v}}_h \cdot \vec{\mathbf{w}}_h, \qquad \qquad S^E_2(\phi_h, \psi_h)= \vec{\phi}_h \cdot \vec{\psi}_h. \label{dofi}
\end{align}
\end{remark}

\noindent Furthermore, the global discrete forms corresponding to the above local forms are given by
\begin{align*}
a_{V,h}( \phi_h; \mathbf{v}_h,\mathbf{w}_h) &= \sum_{E \in \Omega_{h}} a^E_{V,h}(\phi_h; \mathbf{v}_h,\mathbf{w}_h),  &  b_{h}(\mathbf{v}_h,q_h) & = \sum_{E \in \Omega_{h}} b^{E}_{h}(\mathbf{v}_h,q_h), \\ c^S_{V,h}(\mathbf{v}_h; \mathbf{w}_h,\mathbf{z}_h) &= \sum_{E \in \Omega_{h}} c^{S,E}_{V,h}(\mathbf{v}_h; \mathbf{w}_h,\mathbf{z}_h), &
a_{T,h}(\theta_h; \phi_h, \psi_h) &= \sum_{E \in \Omega_{h}} a^E_{T,h}( \theta_h; \phi_h, \psi_h),  \\
c^S_{T,h}(\mathbf{v}_h; \phi_h,\psi_h) &= \sum_{E \in \Omega_{h}} c^{S,E}_{T,h}(\mathbf{v}_h; \phi_h, \psi_h).
\end{align*}
For given $\psi_h \in \Sigma_h$, the discrete load term corresponding to $F_{\psi_h}(\cdot)$ is defined as follows
\begin{align}
{F}_{h,\psi_h}(\mathbf{v}_h):= \sum_{E \in \Omega_h} \big( \alpha \mathbf{g} \Pi^{0,E}_k \psi_h, \boldsymbol{\Pi}^{0,E}_k \mathbf{v}_h\big) + \sum_{E \in \Omega_h} \big(\mathbf{f}, \boldsymbol{\Pi}^{0,E}_k \mathbf{v}_h\big); \qquad (\mathcal{Q}_h, \psi_h):= \sum_{E \in \Omega_h} \big( \mathcal{Q}, \Pi^{0,E}_k \psi_h\big). \nonumber 
\end{align}
\noindent  \textbf{Stability properties of the discrete forms:} the global discrete form defined above satisfies the following properties: \newline
\noindent $\bullet$ continuity and coercivity of $a_{V,h}(\cdot; \cdot, \cdot)$: for any given $\phi_h \in \Sigma_h$,  there exist two positive constant $\varrho^\ast$ and $\varrho_\ast$ independent of the mesh size such that
\begin{align}
a_{V,h}(\phi_h; \mathbf{v}_h, \mathbf{w}_h) \leq \varrho^\ast \mu^\ast \|\nabla \mathbf{v}_h\|_{0,\Omega} \|\nabla \mathbf{w}_h\|_{0,\Omega}, \quad  	a_{V,h}(\phi_h; \mathbf{w}_h, \mathbf{w}_h) \geq \varrho_\ast \mu_\ast \|\nabla \mathbf{w}_h\|^2_{0,\Omega}, \label{dcon-av}
\end{align}
for all $\mathbf{v}_h, \mathbf{w}_h \in \mathbf{V}_h$. \newline 
$\bullet$ continuity of the bilinear form $b_h(\cdot; \cdot)$: for any $\mathbf{v}_h \in \mathbf{V}_h$ and $q_h \in Q_h$, we obtain
\begin{align}
b_h(\mathbf{v}_h, q_h) \leq {\sqrt{2}} \|\nabla \mathbf{v}_h\|_{0,\Omega} \|q\|_{0,\Omega}. \nonumber
\end{align}
$\bullet$ continuity and coercivity of $a_{T,h}(\cdot; \cdot, \cdot)$: for any given $\theta_h \in \Sigma_h$, there exist positive constants $\wp^\ast$ and $\wp_\ast$ independent of the mesh size such that
\begin{align}
a_{T,h}(\theta_h; \phi_h, \psi_h) \leq \wp^\ast \kappa^\ast \|\nabla \phi_h\|_{0,\Omega} \|\nabla \psi_h\|_{0,\Omega}, \quad  a_{T,h}(\theta_h; \psi_h, \psi_h) \geq \wp_\ast \kappa_\ast \|\nabla \psi_h\|^2_{0,\Omega}, \label{dcon-at}
\end{align}
for all $\phi_h, \psi_h \in \Sigma_h$. \newline 
$\bullet$ continuity of $c^S_{V,h}(\cdot; \cdot, \cdot)$ and $c^S_{T,h}(\cdot; \cdot, \cdot)$: 
\begin{align}
c^S_{V,h}(\mathbf{v}_h; \mathbf{w}_h, \mathbf{z}_h) & \leq \widehat{N_0} \|\nabla \mathbf{v}_h\|_{0,\Omega} \| \nabla \mathbf{w}_h\|_{0,\Omega} \| \nabla \mathbf{z}_h\|_{0,\Omega}  \qquad \,\, \text{for all} \,\, \mathbf{v}_h, \mathbf{w}_h, \mathbf{z}_h \in \mathbf{V}_h, \label{dcnt-sv}\\ 
c^S_{T,h}(\mathbf{v}_h; \phi_h,\psi_h)&\leq \widehat{N_0} \|\nabla \mathbf{v}_h\|_{0,\Omega} \|\nabla \phi_h\|_{0,\Omega} \|\nabla \psi_h\|_{0,\Omega} \qquad \,\, \text{for all} \,\, \mathbf{v}_h \in \mathbf{V}_h, \phi_h \in \Sigma_h, \psi \in \Sigma_{0,h}, \label{dcnt-st}
\end{align}
where $\widehat{N_0}>0$ is the continuity constant independent of the mesh size. Additionally, they satisfy
\begin{align}
c^S_{V,h}(\mathbf{v}_h; \mathbf{w}_h, \mathbf{w}_h) =0,  \qquad 
c^S_{T,h}(\mathbf{v}_h; \psi_h,\psi_h)= 0 \qquad \,\, \text{for all} \,\, \mathbf{v}_h,\mathbf{w}_h \in \mathbf{V}_h, \psi_h \in \Sigma_{0,h}. \label{dcon-cv}
\end{align} 

\subsection{ Stabilized virtual element problem }
\label{sec-3.4}
Since the equal-order VEM approximation is employed, the discrete solution is strongly polluted by non-physical oscillations or moving fronts. To address this issue, we define the following stabilization terms:

\noindent $\bullet$ the velocity field stabilizing term:
\begin{align}
\mathcal{L}_{1,h}(\mathbf{w}_h,\mathbf{z}_h) &:= \sum_{E \in \Omega_h}\tau_{1,E} \big[\big( \widehat{\mathbf{r}}_h(\nabla \mathbf{w}_h), \widehat{\mathbf{r}}_h(\nabla \mathbf{z}_h) \big)  + \mathbf{S}^E_{\nabla,k}\big(\mathbf{w}_h,  \mathbf{z}_h \big)\big]\qquad \text{for all}\,\, \mathbf{w}_h, \mathbf{z}_h \in \mathbf{V}_h,  \label{vel-1}
\end{align}
where $\widehat{\mathbf{r}}_h := \boldsymbol{\Pi}^{0,E}_k - \boldsymbol{\Pi}^{0,E}_{k-1}$, for more details see \cite{vem28m}.\newline
\noindent $\bullet$ the pressure stabilizing term:
\begin{align}
\mathcal{L}_{2,h}(p_h,q_h) &:= \sum_{E \in \Omega_h}\tau_{2,E} \big[\big( \widehat{\mathbf{r}}_h(\nabla p_h), \widehat{\mathbf{r}}_h(\nabla q_h) \big)  + {S^E_{p}\big( p_h - \Pi^{\nabla,E}_{k-1}p_h,  q_h - \Pi^{\nabla,E}_{k-1} q_h \big)}\big] \qquad \text{for all}\,\, p_h, q_h \in Q_h,  \label{press-1}
\end{align}
{where the local VEM stabilization term $S^E_p(\cdot,\cdot) : Q_h(E) \times Q_h(E)$ is defined by
\begin{align}
	\lambda_{3\ast} \ \|\nabla q_h\|_{0,E}^2 \leq S^E_p(q_h, q_h) \leq
	\lambda_3^\ast  \|\nabla q_h\|_{0,E}^2 \qquad  \forall\, q_h \in Q_{h}(E) \cap ker({\Pi}^{\nabla,E}_{k-1}), \label{vem-c}
\end{align}
with the positive constants $\lambda_{3\ast} \leq \lambda_3^\ast$ which are independent of the mesh size.} \newline
\noindent $\bullet$ the transport temperature equation is stabilized by
\begin{align}
\mathcal{L}_{T,h}(\phi_h,\psi_h) &:= \sum_{E \in \Omega_h}\tau_{E} \big[\big( \widehat{\mathbf{r}}_h(\nabla \phi_h), \widehat{\mathbf{r}}_h(\nabla \psi_h) \big)  + S^E_{\nabla,k}\big(\phi_h, \psi_h \big)\big] \qquad \text{for all}\,\, \phi_h, \psi_h \in \Sigma_{h},  \label{trans_1}
\end{align}
where $\tau_{i,E}$ and $\tau_E$ are stabilization parameters.

Let $\mathcal{L}_{h}[(\mathbf{w}_h, p_h), (\mathbf{z}_h,q_h)]:= \mathcal{L}_{1,h}(\mathbf{w}_h,\mathbf{z}_h) + \mathcal{L}_{2,h}(p_h, q_h)$. In light of the preceding discussion, we define the following settings: 
\begin{align}
A_{V,h}[\phi_h;(\mathbf{w}_h, p_h), (\mathbf{z}_h,q_h) ] &:= a_{V,h}(\phi_h; \mathbf{w}_h, \mathbf{z}_h)- b_h(\mathbf{z}_h, p_h) + b_h(\mathbf{w}_h, q_h) +\mathcal{L}_{h}[(\mathbf{w}_h, p_h), (\mathbf{z}_h,q_h)], \nonumber \\
A_{T,h}(\theta_h, \mathbf{w}_h; \phi_h, \psi_h) &:= a_{T,h}(\theta_h; \phi_h, \psi_h)+ c^S_{T,h}(\mathbf{w}_h; \phi_h, \psi_h)+\mathcal{L}_{T,h}(\phi_h, \psi_h). \nonumber
\end{align}
Therefore, the stabilized virtual element problem for the primal formulation \eqref{variation-1} or \eqref{variation-01} is given as follows:
\begin{align}
\begin{cases}
	\text{Find}\,\, (\mathbf{u}_h, p_h,\theta_h) \in \mathbf{V}_h \times Q_h \times \Sigma_{D,h},\,\, \text{such that} \\
	A_{V,h}[ \theta_h; (\mathbf{u}_h, p_h), (\mathbf{v}_h, q_h)]+ c^S_{V,h}(\mathbf{u}_h; \mathbf{u}_h, \mathbf{v}_h)= F_{h,\theta_h}(\mathbf{v}_h) \quad \text{for all} \, (\mathbf{v}_h, q_h) \in \mathbf{V}_h \times Q_h, \\
	A_{T,h}(\theta_h, \mathbf{u}_h; \theta_h, \psi_h)=(\mathcal{Q}_h, \psi_h) \quad \text{for all} \,\, \psi_h \in \Sigma_{0,h}.
\end{cases}
\label{nvem}
\end{align}

\begin{remark}
In VEM literature, a mass-based local projection stabilization technique has been developed by J. Guo et al. in \cite{vem028} for the Stokes equation, where the stabilization term is given by
\begin{align}
	\mathcal{L}^\ast_{2,h}(p_h,q_h) &:= \sum_{E \in \Omega_h} \big[\big( \Pi^{0,E}_k p_h - \Pi^{0,E}_{k-1} p_h, \Pi^{0,E}_k q_h - \Pi^{0,E}_{k-1} q_h \big)   + S^E_{0,k-1}\big( p_h,  q_h \big)\big],   \label{press-01}
\end{align}
where $S^E_{0,k-1}\big( p_h,  q_h \big):=S^E((I- \Pi^{0,E}_{k-1})p_h, (I-\Pi^{0,E}_{k-1}) q_h)$ for all $p_h,q_h \in Q_h$. {Following \eqref{vem-c}, there exist positive constants $\lambda_{4\ast} \leq \lambda_4^\ast$ independent of the mesh size such that the following holds
	\begin{align}
		\lambda_{4\ast} \ \| q_h\|_{0,E}^2 \leq S^E_{0,k-1}(q_h, q_h) \leq
		\lambda_4^\ast  \| q_h\|_{0,E}^2 \qquad  \forall\, q_h \in Q_{h}(E) \cap ker({\Pi}^{0,E}_{k-1}). \label{vem-d}
\end{align}}
\end{remark}

We now aim to investigate the relationship between gradient-based local projection stabilization techniques \eqref{press-1} and mass-based local projection stabilization methods \eqref{press-01}. Under the suitable choice of $\tau_{2,E}$, both stabilization methods are equivalent.
\begin{lemma} \label{pressure}
The pressure stabilization techniques \eqref{press-1} and \eqref{press-01} are equivalent when $\tau_{2,E} \sim h^2_E$ for all $E \in \Omega_h$, i.e., there exist positive constants $\alpha_1$ and $\alpha_2$ independent of the mesh size, satisfying 
\begin{align}
	\alpha_1 \mathcal{L}_{2,h}(q_h,q_h) \leq \mathcal{L}^\ast_{2,h}(q_h, q_h) \leq \alpha_2 \mathcal{L}_{2,h}(q_h,q_h) \qquad \text{for all}\,\, q_h \in Q_h. 
\end{align}
\end{lemma}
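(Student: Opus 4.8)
The plan is to establish the equivalence locally on each element $E$, since both $\mathcal{L}_{2,h}$ and $\mathcal{L}^\ast_{2,h}$ are sums of element contributions, and then sum. Fix $q_h \in Q_h$ and restrict attention to a single $E \in \Omega_h$. The two quantities to compare are
\begin{align}
	\mathcal{L}^E_{2,h}(q_h,q_h) &= \tau_{2,E}\big[\|\widehat{\mathbf{r}}_h(\nabla q_h)\|^2_{0,E} + S^E_p(q_h - \Pi^{\nabla,E}_{k-1}q_h, q_h - \Pi^{\nabla,E}_{k-1}q_h)\big], \nonumber \\
	\mathcal{L}^{\ast,E}_{2,h}(q_h,q_h) &= \|\Pi^{0,E}_k q_h - \Pi^{0,E}_{k-1}q_h\|^2_{0,E} + S^E_{0,k-1}(q_h - \Pi^{0,E}_{k-1}q_h, q_h - \Pi^{0,E}_{k-1}q_h). \nonumber
\end{align}
I would compare them term by term. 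First, for the projection-difference parts, note $\widehat{\mathbf{r}}_h(\nabla q_h) = \boldsymbol{\Pi}^{0,E}_k \nabla q_h - \boldsymbol{\Pi}^{0,E}_{k-1}\nabla q_h$, which is a gradient-type object, while $\Pi^{0,E}_k q_h - \Pi^{0,E}_{k-1}q_h$ is a pressure-type object; the bridge is a scaled Poincaré/inverse inequality on the finite-dimensional polynomial space $\mathbb{P}_k(E)$, giving $\|\nabla r\|_{0,E} \sim h_E^{-1}\|r - \bar r\|_{0,E}$ for polynomials, so that $\|\widehat{\mathbf{r}}_h(\nabla q_h)\|_{0,E} \sim h_E^{-1}\|\Pi^{0,E}_k q_h - \Pi^{0,E}_{k-1}q_h\|_{0,E}$ (up to comparing with the polynomial $\Pi^{0,E}_{k-1}q_h$, which absorbs the lower moments). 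This is where the scaling $\tau_{2,E}\sim h_E^2$ enters: it exactly compensates the $h_E^{-2}$ gap between a gradient-squared and a function-squared term.

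Second, for the stabilization parts, I would use the norm equivalences \eqref{vem-c} and \eqref{vem-d}: $S^E_p(v,v) \sim \|\nabla v\|^2_{0,E}$ on $\ker \Pi^{\nabla,E}_{k-1}$, and $S^E_{0,k-1}(w,w) \sim \|w\|^2_{0,E}$ on $\ker \Pi^{0,E}_{k-1}$. Applying these to $v = q_h - \Pi^{\nabla,E}_{k-1}q_h$ and $w = q_h - \Pi^{0,E}_{k-1}q_h$ respectively, and then invoking a Poincaré inequality on $E$ (valid under the mesh regularity assumption \textbf{(A1)}) of the form $\|w\|_{0,E} \lesssim h_E \|\nabla w\|_{0,E}$ for $w$ with vanishing mean (or vanishing lower moments), together with standard comparisons between the two projections $\Pi^{\nabla,E}_{k-1}$ and $\Pi^{0,E}_{k-1}$ acting on $H^1(E)$, one gets $\tau_{2,E}\,S^E_p(\cdot,\cdot) \sim h_E^2\,\|\nabla(\cdot)\|^2_{0,E} \sim h_E^2 \cdot h_E^{-2}\|\cdot\|^2_{0,E} \sim S^E_{0,k-1}(\cdot,\cdot)$, with the constants depending only on $\delta_0$, $k$, and the $\lambda$'s. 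Summing over $E \in \Omega_h$ yields the claimed two-sided bound with global constants $\alpha_1,\alpha_2$.

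The main obstacle I expect is handling the mismatch between the two families of projectors ($\Pi^{\nabla,E}_{k-1}$ used in \eqref{press-1} versus $\Pi^{0,E}_{k-1}$ used in \eqref{press-01}) cleanly and with mesh-independent constants: one must show that $q_h - \Pi^{\nabla,E}_{k-1}q_h$ and $q_h - \Pi^{0,E}_{k-1}q_h$ differ by a polynomial in $\mathbb{P}_{k-1}(E)$ whose contribution is controlled by the other terms already present, so that the equivalence is not destroyed. This is a routine but slightly delicate bookkeeping argument using the optimality properties \eqref{H1proj}, \eqref{l2proj}, the fact that both projectors are the identity on $\mathbb{P}_{k-1}(E)$, and polynomial inverse estimates on $E$; I would isolate it as the technical core of the proof and keep the rest at the level of invoking \eqref{vem-c}, \eqref{vem-d}, and the scaled Poincaré inequality.
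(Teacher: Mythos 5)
Your overall strategy coincides with the paper's: both directions follow from the norm equivalences \eqref{vem-c} and \eqref{vem-d} combined with a scaled Poincar\'e inequality (for $\mathcal{L}^\ast_{2,h}\lesssim \mathcal{L}_{2,h}$) and an inverse inequality (for the reverse direction), with $\tau_{2,E}\sim h_E^2$ exactly absorbing the factors $h_E^{\pm 2}$ these two inequalities produce. Two points in your sketch need repair before it becomes a proof. First, the claimed term-by-term two-sided equivalence $\|\widehat{\mathbf{r}}_h(\nabla q_h)\|_{0,E}\sim h_E^{-1}\|\Pi^{0,E}_k q_h-\Pi^{0,E}_{k-1}q_h\|_{0,E}$ is not available: the $L^2$ projection does not commute with the gradient, so $\widehat{\mathbf{r}}_h(\nabla q_h)$ is not the gradient of $\Pi^{0,E}_k q_h-\Pi^{0,E}_{k-1}q_h$, and either quantity can vanish while the other does not. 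The paper instead bounds each of the two pieces of one stabilization by the \emph{sum} of both pieces of the other, via triangle inequalities that pass through $\Pi^{\nabla,E}_{k-1}q_h$ and exploit the best-approximation properties of the $L^2$- and $H^1$-projectors; this is the cross-control you allude to with ``absorbs the lower moments'' and ``controlled by the other terms already present,'' but it must be made explicit, since the lemma only asserts equivalence of the full sums, not of the individual terms. Second, the inverse estimate needed in the lower bound is $\|\nabla(I-\Pi^{0,E}_{k-1})q_h\|_{0,E}\lesssim h_E^{-1}\|(I-\Pi^{0,E}_{k-1})q_h\|_{0,E}$ for a \emph{virtual} function $q_h\in Q_h(E)$, not for a polynomial; this is the nontrivial VEM inverse inequality of Lemma \ref{inverse}, and a ``polynomial inverse estimate on $E$'' does not suffice. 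With these two adjustments your argument reduces to the one given in the Appendix.
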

\begin{proof}
See the Appendix.
\end{proof}


\section{Theoretical analysis} \label{sec-4}
This section {focuses} on demonstrating the well-posedness of the stabilized virtual element problem \eqref{nvem} and the error estimates in the energy norm. 
\subsection{Preliminary results}
Let us define mesh-dependent energy norms over $\mathbf{V}_h \times Q_h$ and $\Sigma_{h}$ as follows:  
\begin{align}
\vertiii{(\mathbf{w}_h, q_h)}^2 &=  \|\nabla \mathbf{w}_h\|^2_{0, \Omega} + \|q_h\|^2_{0, \Omega} + \mathcal{L}_{1,h}(\mathbf{w}_h,\mathbf{w}_h) + \mathcal{L}_{2,h}(q_h,q_h), \nonumber \\
\vertiii{\psi}^2 & =  \|\nabla \psi_h\|^2_{0, \Omega} + \mathcal{L}_{T,h}(\psi_h,\psi_h). \nonumber
\end{align}

\begin{lemma}(\cite{scott})
\label{lemmaproj1}
Under the assumption \textbf{(A1)}, for any functions $\psi \in H^s(E)$ with $E\in \Omega_h$, satisfying the following
\begin{align}
	\|\psi -\Pi^{\nabla,E}_k \psi\|_{l,E} &\leq C h^{s-l}_E |\psi|_{s,E} \quad s, l \in \mathbb{N}, \,\,\, s \geq 1,\,\, \, l \leq s \leq k+1,\label{eqproj1}	 \\
	\|\psi - \Pi^{0,E}_k \psi \|_{l,E} &\leq C h^{s-l}_E |\psi|_{s,E} \quad s, l \in \mathbb{N},\,\,\, l \leq s \leq k+1.
\end{align}
\end{lemma}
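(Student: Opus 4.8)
The plan is to derive both estimates from the classical polynomial approximation theory on star-shaped domains, exploiting that each operator reproduces polynomials of degree $\leq k$. First I would invoke the mesh regularity assumption \textbf{(A1)}: since every $E \in \Omega_h$ is star-shaped with respect to a ball $B_E$ of radius $\geq \delta_0 h_E$, the Bramble--Hilbert/Dupont--Scott machinery applies with constants depending only on $\delta_0$ and $k$, and not on $h_E$. Concretely, for $\psi \in H^s(E)$ with $1 \leq s \leq k+1$ there exists an averaged Taylor polynomial $q_\psi \in \mathbb{P}_k(E)$ such that
\begin{align}
	\|\psi - q_\psi\|_{l,E} \leq C h_E^{s-l} |\psi|_{s,E}, \qquad 0 \leq l \leq s. \nonumber
\end{align}
This is the single analytic input; the remaining work is to transfer it to the two projections.

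For the $L^2$-projection $\Pi^{0,E}_k$ I would first treat $l=0$. Since $\Pi^{0,E}_k q_\psi = q_\psi$ and $\Pi^{0,E}_k$ is an $L^2$-orthogonal projection, hence a contraction in the $L^2$-norm, writing $\psi - \Pi^{0,E}_k \psi = (\psi - q_\psi) - \Pi^{0,E}_k(\psi - q_\psi)$ gives $\|\psi - \Pi^{0,E}_k \psi\|_{0,E} \leq 2\|\psi - q_\psi\|_{0,E} \leq C h_E^s |\psi|_{s,E}$. For $1 \leq l \leq s$ the same decomposition reduces the claim to the $H^l$-stability bound $\|\Pi^{0,E}_k w\|_{l,E} \leq C h_E^{-l}\|w\|_{0,E}$; I would obtain this by passing to a reference configuration via the scaling $\hat{\mathbf{x}} = (\mathbf{x}-\mathbf{x}_E)/h_E$, using that on a shape-regular star-shaped domain all norms on the finite-dimensional space $\mathbb{P}_k$ are equivalent (an inverse inequality), and then scaling back. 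Combining the triangle inequality $\|\psi - \Pi^{0,E}_k\psi\|_{l,E} \leq \|\psi - q_\psi\|_{l,E} + \|\Pi^{0,E}_k(\psi - q_\psi)\|_{l,E}$ with these two bounds yields the stated estimate.

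For the energy projection $\Pi^{\nabla,E}_k$ I would exploit its variational characterization \eqref{H1proj}: the gradient orthogonality makes $\nabla \Pi^{\nabla,E}_k\psi$ the $L^2$-best approximation of $\nabla\psi$ among gradients of $\mathbb{P}_k(E)$, so $|\psi - \Pi^{\nabla,E}_k\psi|_{1,E} \leq |\psi - q_\psi|_{1,E} \leq C h_E^{s-1}|\psi|_{s,E}$, which settles the case $l=1$. The boundary-average condition $\int_{\partial E}(\psi - \Pi^{\nabla,E}_k\psi)\,ds = 0$ then supplies a Poincar\'e-type inequality on the star-shaped element, controlling $\|\psi - \Pi^{\nabla,E}_k\psi\|_{0,E}$ by $C h_E|\psi - \Pi^{\nabla,E}_k\psi|_{1,E}$ and giving the case $l=0$; the intermediate indices $0 < l < s$ follow by the polynomial-invariance-plus-scaling argument exactly as above.

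The main obstacle is the higher-order stability of the operators, i.e.\ the bound $\|\Pi^{0,E}_k w\|_{l,E} \leq C h_E^{-l}\|w\|_{0,E}$ and its energy-projection analogue for $l \geq 1$. Unlike the $l=0$ case, these do not follow from mere orthogonality and require a careful scaling to a reference polygon together with inverse inequalities on $\mathbb{P}_k$; keeping the constants uniform in $h$ is precisely where assumption \textbf{(A1)} (the radius lower bound $\delta_0 h_E$ and the edge-length condition $|e| \geq \delta_0 h_E$) is essential, since shape degeneration would blow up the inverse constants. Once this uniform stability is secured, all remaining steps are routine applications of the triangle inequality and the defining relations \eqref{H1proj} and \eqref{l2proj}.
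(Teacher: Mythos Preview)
Your argument is correct and is precisely the standard route to these estimates: Bramble--Hilbert/averaged Taylor polynomial on a star-shaped element, polynomial reproduction of each projector, the $L^2$-contraction and the $H^1$-minimality of $\Pi^{\nabla,E}_k$ for the lowest norms, and scaling/inverse inequalities on $\mathbb{P}_k$ for the higher-order bounds. There is no gap.

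Note, however, that the paper does not supply its own proof of this lemma: it is stated with a citation to \cite{scott} and treated as a known result. So there is nothing to compare your argument against within the paper itself; what you have written is essentially the proof one finds in the cited reference (Brenner--Scott) adapted to polygonal star-shaped elements, and it is the expected justification here.
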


\begin{lemma}\label{inverse}
(Inverse inequality \cite{vem28}) 
Under the assumption \textbf{(A1)}, for any virtual element function $\psi_h \in V_h(E)$ defined on $E \in \Omega_{h}$, it holds that
\begin{align}
	|\psi_h|_{1,E} \leq C_{inv} h^{-1}_E \|{\psi}_h\|_{0,E},
\end{align}
where the positive constant $C_{inv}$ is independent of the mesh size.
\end{lemma}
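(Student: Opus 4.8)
\emph{Proof proposal.} The plan is to prove the estimate by a scaling argument that reduces it to an $h$-independent inverse inequality on a reference element of unit diameter, and then to secure the uniformity of the resulting constant over the family of admissible element shapes by a compactness argument. First I would introduce the dilation $\mathbf{x} \mapsto \widehat{\mathbf{x}} = (\mathbf{x}-\mathbf{x}_E)/h_E$, which maps $E$ onto a polygon $\widehat{E}$ of unit diameter, and set $\widehat{\psi}_h(\widehat{\mathbf{x}}) = \psi_h(\mathbf{x})$. In two dimensions the $H^1$-seminorm is invariant under this dilation while the $L^2$-norm scales by a single power of $h_E$; a direct change of variables gives $|\psi_h|_{1,E} = |\widehat{\psi}_h|_{1,\widehat{E}}$ and $\|\psi_h\|_{0,E} = h_E \|\widehat{\psi}_h\|_{0,\widehat{E}}$. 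Because the defining conditions of $V_h(E)$ --- namely $\Delta\psi_h \in \mathbb{P}_k(E)$, the edgewise polynomial restrictions, and the moment constraints against $\Pi^{\nabla,E}_k\psi_h$ --- are preserved under affine scaling, one has $\widehat{\psi}_h \in V_h(\widehat{E})$. Hence the claim is equivalent to the scale-free estimate $|\widehat{\psi}_h|_{1,\widehat{E}} \leq C_{inv}\|\widehat{\psi}_h\|_{0,\widehat{E}}$ on the reference element.

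For a fixed reference polygon $\widehat{E}$, the space $V_h(\widehat{E})$ is finite-dimensional, and both $|\cdot|_{1,\widehat{E}}$ and $\|\cdot\|_{0,\widehat{E}}$ are continuous on it, since all seminorms on a finite-dimensional space are mutually continuous. We only need the one direction, seminorm controlled by norm, which is automatic here: as $\|\cdot\|_{0,\widehat{E}}$ is a genuine norm, the quantity $C(\widehat{E}) := \sup\{ |\widehat{\psi}_h|_{1,\widehat{E}} : \widehat{\psi}_h \in V_h(\widehat{E}),\ \|\widehat{\psi}_h\|_{0,\widehat{E}} = 1\}$ is attained on the compact unit sphere and is therefore finite. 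This already yields the inverse inequality for each individual element.

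The remaining and principal difficulty is to show that $C(\widehat{E})$ can be bounded by a single constant depending only on $\delta_0$ and $k$, rather than on the particular element. Under assumption \textbf{(A1)} the admissible reference shapes are star-shaped with respect to a ball of radius $\geq \delta_0$ and have edges of length $\geq \delta_0$; star-shapedness bounds the perimeter (for unit diameter) by $C\delta_0^{-1}$, so the number of vertices is bounded by some $N_0 = N_0(\delta_0)$. For each fixed vertex count $n \leq N_0$ the admissible polygons are parametrized by vertex coordinates ranging over a compact set, the star-shapedness and edge-length bounds excluding all degeneracies. I would then argue that the local solution operator defining $V_h(\widehat{E})$ --- the Poisson problem with $\mathbb{P}_k$ right-hand side and piecewise-polynomial Dirichlet data --- has stability constants, through Poincar\'e and trace inequalities on star-shaped domains, that depend continuously on these vertex coordinates, so that $C(\widehat{E})$ varies continuously over each compact parameter set and attains a finite maximum; taking the maximum over the finitely many values $n = 3,\dots,N_0$ produces the sought uniform $C_{inv}$. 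The delicate point, and the one I expect to require the most care, is precisely this continuous (or at least uniformly bounded) dependence of the virtual space and its norm-equivalence constant on the element geometry, since the space itself changes with the shape; this is where the star-shapedness hypothesis of \textbf{(A1)} is essential, and is the reason the estimate is attributed to \cite{vem28}.
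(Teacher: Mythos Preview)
The paper itself gives no proof of this lemma: it is stated with a citation to \cite{vem28} and nothing more. Your proposal therefore supplies far more than the paper does.

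Your route --- scale to a unit-diameter reference polygon, observe that in two dimensions the $H^1$-seminorm is dilation-invariant while the $L^2$-norm picks up a factor $h_E$, and then use finite-dimensionality plus a compactness argument over the admissible shape class --- is a legitimate and classical strategy for inverse inequalities. The scaling computation is correct, and your remark that the enhancement condition in the definition of $V_h(E)$ transforms cleanly is right, since the monomial basis $\mathbb{M}_k(E)$ is already written in the scaled coordinate $(\mathbf{x}-\mathbf{x}_E)/h_E$.

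You correctly identify the one genuine obstacle: uniformity of $C(\widehat{E})$ over all admissible reference shapes. Your compactness sketch is plausible but incomplete as written, because the space $V_h(\widehat{E})$ itself varies with $\widehat{E}$, so one cannot simply invoke continuity of a fixed finite-dimensional norm ratio; one needs continuous dependence of the local Poisson solution operator on the polygonal domain, which is true under star-shapedness but requires a separate argument. The proofs in the VEM literature to which the paper defers typically sidestep this by a more constructive route: they establish a two-sided equivalence between $\|\psi_h\|_{0,E}$ and an $h_E$-scaled discrete norm built from the degrees of freedom, and then bound $|\psi_h|_{1,E}$ by the same dof norm, with all constants traced explicitly through $\delta_0$ and $k$ via a uniform sub-triangulation of the star-shaped element. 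That approach yields an explicit $C_{inv}(\delta_0,k)$ without any domain-perturbation or compactness machinery; yours is conceptually cleaner but non-constructive and leaves the hardest step as an exercise.
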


\begin{lemma} \label{estimate2}
For all  $(\mathbf{v}_h,q_h) \in \mathbf{V}_h \times Q_h$, $\theta_h, \phi_h \in \Sigma_h$ and $\psi_h \in \Sigma_{0,h}$, there holds the following:
\begin{align}
	|F_{h,\theta_h}(\mathbf{v}_h)| &\leq C_\mathbf{\alpha}\big(\|\mathbf{f}\|_{0,\Omega} + \|\mathbf{g}\|_{0,\Omega} \vertiii{\theta_h} \big) \vertiii{(\mathbf{v}_h, q_h)}, \label{est1} \\
	|a_{T,h}(\theta_h; \phi_h, \psi_h) + \mathcal{L}_{T,h}(\phi_h, \psi_h)| &\leq C_{\kappa,\lambda} \|\nabla \phi_h\|_{0,\Omega}\|\nabla \psi_h\|_{0,\Omega}, \label{est2}\\
	|A_{T,h}(\theta_h, \mathbf{v}_h; \psi_h, \psi_h) | &\geq C_{o,T} \vertiii{ \psi_h}^2, \label{est3}\\
	\vertiii{\psi_h}&\leq C_{\lambda} \|\nabla \psi_h\|_{0,\Omega}, \label{est4}
\end{align}
where $C_\alpha:=\max\{C_p, \alpha C_q^2 C_{1\hookrightarrow 4}^2 \}$, $C_{\kappa, \lambda}:= \big(\wp^\ast \kappa^\ast + (1+ \lambda^\ast_2) \max_{E \in \Omega_h} \tau_{E} \big)$, $C_{o,T}:=\min\{ 1, \wp_\ast \kappa_\ast \}$ and $C_\lambda:= \sqrt{1+(1+ \lambda^\ast_2) \max_{E \in \Omega_h} \tau_{E}}$.
\end{lemma}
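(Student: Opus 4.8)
\emph{Proof proposal.} The four inequalities are independent and each follows by combining the stability properties of the discrete forms collected in Section~\ref{sec-3} with the $L^q$-stability of the $L^2$-projection \eqref{lqstab} and the Poincar\'e/Sobolev embedding constants. The plan is as follows.

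For \eqref{est1}, I would split $F_{h,\theta_h}(\mathbf{v}_h)$ into its buoyancy part $\sum_E(\alpha\mathbf{g}\,\Pi^{0,E}_k\theta_h,\boldsymbol{\Pi}^{0,E}_k\mathbf{v}_h)$ and its body-force part $\sum_E(\mathbf{f},\boldsymbol{\Pi}^{0,E}_k\mathbf{v}_h)$. On each $E$ the buoyancy integrand is a triple product, so I would apply H\"older with exponents $(2,4,4)$, then use \eqref{lqstab} with $q=4$ to remove the two projections, sum over $E$ by a discrete Cauchy--Schwarz, and pass to global norms via the compact embedding $H^1(\Omega)\hookrightarrow L^4(\Omega)$ and $\|w\|_{1,\Omega}\le C_q\|\nabla w\|_{0,\Omega}$; for the temperature factor I use $\|\nabla\theta_h\|_{0,\Omega}\le\vertiii{\theta_h}$, which holds by definition of the energy norm. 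For the body-force part, Cauchy--Schwarz together with $L^2$-stability of $\boldsymbol{\Pi}^{0,E}_k$ and $\|\mathbf{v}_h\|_{0,\Omega}\le C_p\|\nabla\mathbf{v}_h\|_{0,\Omega}\le C_p\vertiii{(\mathbf{v}_h,q_h)}$ suffices; collecting constants yields $C_\alpha=\max\{C_p,\alpha C_q^2 C_{1\hookrightarrow4}^2\}$.

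For \eqref{est2} and \eqref{est4}, the only point needing care is the stabilization term. First note that $\widehat{\mathbf{r}}_h=\boldsymbol{\Pi}^{0,E}_k-\boldsymbol{\Pi}^{0,E}_{k-1}$ acts as $(\mathbf{I}-\boldsymbol{\Pi}^{0,E}_{k-1})$ on $\boldsymbol{\Pi}^{0,E}_k\nabla\psi_h$, so $\|\widehat{\mathbf{r}}_h(\nabla\psi_h)\|_{0,E}\le\|\nabla\psi_h\|_{0,E}$; and by \eqref{vem-b} with Cauchy--Schwarz for the SPD form $S^E_2$, together with $\|\nabla(I-\Pi^{\nabla,E}_k)\psi_h\|_{0,E}\le\|\nabla\psi_h\|_{0,E}$, one gets $S^E_{\nabla,k}(\phi_h,\psi_h)\le\lambda_2^\ast\|\nabla\phi_h\|_{0,E}\|\nabla\psi_h\|_{0,E}$. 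Summing the $\tau_E$-weighted sum over $E$ gives $|\mathcal{L}_{T,h}(\phi_h,\psi_h)|\le(1+\lambda_2^\ast)\max_{E}\tau_E\,\|\nabla\phi_h\|_{0,\Omega}\|\nabla\psi_h\|_{0,\Omega}$; adding the continuity bound \eqref{dcon-at} for $a_{T,h}$ yields \eqref{est2} with $C_{\kappa,\lambda}=\wp^\ast\kappa^\ast+(1+\lambda_2^\ast)\max_{E}\tau_E$. Taking $\phi_h=\psi_h$ in the $\mathcal{L}_{T,h}$ estimate and inserting it into $\vertiii{\psi_h}^2=\|\nabla\psi_h\|^2_{0,\Omega}+\mathcal{L}_{T,h}(\psi_h,\psi_h)$ gives \eqref{est4} with $C_\lambda=\sqrt{1+(1+\lambda_2^\ast)\max_{E}\tau_E}$.

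For \eqref{est3}, I would write $A_{T,h}(\theta_h,\mathbf{v}_h;\psi_h,\psi_h)=a_{T,h}(\theta_h;\psi_h,\psi_h)+c^S_{T,h}(\mathbf{v}_h;\psi_h,\psi_h)+\mathcal{L}_{T,h}(\psi_h,\psi_h)$: the convective term vanishes by \eqref{dcon-cv} since $\psi_h\in\Sigma_{0,h}$, the diffusive term is bounded below by $\wp_\ast\kappa_\ast\|\nabla\psi_h\|^2_{0,\Omega}$ by \eqref{dcon-at}, and $\mathcal{L}_{T,h}(\psi_h,\psi_h)\ge0$ because it is a $\tau_E$-weighted sum of $\|\widehat{\mathbf{r}}_h(\nabla\psi_h)\|^2_{0,E}$ and of $S^E_{\nabla,k}(\psi_h,\psi_h)\ge0$ (SPD). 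Hence $A_{T,h}(\theta_h,\mathbf{v}_h;\psi_h,\psi_h)\ge\wp_\ast\kappa_\ast\|\nabla\psi_h\|^2_{0,\Omega}+\mathcal{L}_{T,h}(\psi_h,\psi_h)\ge\min\{1,\wp_\ast\kappa_\ast\}\vertiii{\psi_h}^2$, which is \eqref{est3}. No step is genuinely difficult; the main points of care are the H\"older bookkeeping and the correct invocation of \eqref{lqstab} in \eqref{est1}, and ensuring throughout that the $S^E_{\nabla,k}$ contributions are controlled by $\|\nabla\cdot\|_{0,E}$ rather than the full $H^1$-norm.
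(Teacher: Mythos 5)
Your proposal is correct and follows essentially the same route as the paper: continuity/coercivity of $a_{T,h}$ together with projector stability for \eqref{est1}--\eqref{est2}, nonnegativity of the stabilization plus vanishing of the skew-symmetric convective term for \eqref{est3}, and the $\mathcal{L}_{T,h}$ bound reused for \eqref{est4}. The only difference is that you spell out details the paper leaves implicit (the H\"older exponents in the buoyancy term and the identity $\widehat{\mathbf{r}}_h=(\mathbf{I}-\boldsymbol{\Pi}^{0,E}_{k-1})\boldsymbol{\Pi}^{0,E}_k$), and the constants you obtain match exactly.
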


\begin{proof}
Using the stability of the projectors, the Poincar$\acute{\text{e}}$ inequality and the Sobolev embedding theorem, we have
\begin{align}
	|F_{h,\theta_h}(\mathbf{v}_h)| &\leq C_p \|\mathbf{f}\|_{0,\Omega} \|\nabla \mathbf{v}_h\|_{0,\Omega} + \alpha C^2_{1\hookrightarrow 4} \|\mathbf{g}\|_{0,\Omega} \|\mathbf{v}_h\|_{1,\Omega} \|\theta_h\|_{1,\Omega} \nonumber \\
	& \leq \max\{C_p, \alpha C_q^2 C_{1\hookrightarrow 4}^2 \} \big( \|\mathbf{f}\|_{0,\Omega}  +  \|\mathbf{g}\|_{0,\Omega} \vertiii{\theta_h}\big) \vertiii{(\mathbf{v}_h,q_h)},	\nonumber
\end{align}
which proves the result \eqref{est1}. Concerning \eqref{est2}, we use \eqref{vem-b}, \eqref{dcon-at} and the properties of the projectors:
\begin{align}
	|a_{T,h}(\theta_h; \phi_h, \psi_h) + \mathcal{L}_{T,h}(\phi_h, \psi_h)| &\leq \wp^\ast \kappa^\ast \|\nabla \phi_h\|_{0,\Omega} \|\nabla \psi_h\|_{0,\Omega} + (1+ \lambda^\ast_2) \max_{E \in \Omega_h} \tau_{E} \|\nabla \phi_h\|_{0,\Omega} \|\nabla \psi_h\|_{0,\Omega} \nonumber \\
	&\leq \big(\wp^\ast \kappa^\ast + (1+ \lambda^\ast_2) \max_{E \in \Omega_h} \tau_{E} \big) \|\nabla \phi_h\|_{0,\Omega} \|\nabla \psi_h\|_{0,\Omega} \nonumber,
\end{align}
which completes the proof of \eqref{est2}. The bound \eqref{est3}  readily follows from \eqref{dcon-at} and the definition of $\vertiii{\cdot}$. Furthermore, the bound \eqref{est4} can be easily obtained by following the estimation of \eqref{est2}. 
\end{proof}

\begin{lemma}\label{vacca}
Let $\widehat{\theta}_h, \theta_h \in \Sigma_{D,h}$. For sufficiently smooth $(\mathbf{v}_h, \phi_h) \in \mathbf{V}_h\times \Sigma_h$, there exist constants $C_v$ and $C_t$ depending on $\Omega$ and $k$ only and the mesh regularity constant $\delta_0$ in assumption \textbf{(A1)} such that there holds that for all $\mathbf{w}_h \in \mathbf{V}_h$ and $\psi_h \in \Sigma_{0,h}$
\begin{align}
	|a_{V,h}(\widehat{\theta}_h; \mathbf{v}_h, \mathbf{w}_h)- a_{V,h}(\theta_h; \mathbf{v}_h, \mathbf{w}_h)| & \leq C_v C_q L_\mu C_{1 \hookrightarrow 6} \|\nabla\mathbf{v}_h\|_{0,3, \Omega} \|\nabla(\widehat{\theta}_h - \theta_h)\|_{0,\Omega} \|\nabla \mathbf{w}_h\|_{0,\Omega}, \\ 
	|a_{T,h}(\widehat{\theta}_h; \phi_h, \psi_h)- a_{T,h}(\theta_h; \phi_h, \psi_h)| & \leq C_t C_q L_\kappa C_{1 \hookrightarrow 6} \|\nabla\phi_h\|_{0,3, \Omega} \|\nabla(\widehat{\theta}_h - \theta_h)\|_{0,\Omega} \|\nabla \psi_h\|_{0,\Omega}. 
\end{align}
\end{lemma}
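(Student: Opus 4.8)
The plan is to establish both inequalities with a single argument, carried out for $a_{V,h}$; the bound for $a_{T,h}$ then follows verbatim under the replacements $\mu\mapsto\kappa$, $L_\mu\mapsto L_\kappa$, $\lambda_1^\ast\mapsto\lambda_2^\ast$ and $(\mathbf v_h,\mathbf w_h)\mapsto(\phi_h,\psi_h)$. First I would record that $\widehat\theta_h,\theta_h\in\Sigma_{D,h}$ share the same boundary interpolant, so $\eta_h:=\widehat\theta_h-\theta_h\in\Sigma_{0,h}\subset H^1_0(\Omega)$, whence the Poincar\'e inequality and $H^1(\Omega)\hookrightarrow L^6(\Omega)$ give $\|\eta_h\|_{0,6,\Omega}\le C_{1\hookrightarrow6}C_q\|\nabla\eta_h\|_{0,\Omega}$. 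Since the $L^2$-projected gradients and the virtual stabilization $\mathbf S^E_{\nabla,k}$ do not depend on the temperature argument, on each $E\in\Omega_h$ the difference $a^E_{V,h}(\widehat\theta_h;\mathbf v_h,\mathbf w_h)-a^E_{V,h}(\theta_h;\mathbf v_h,\mathbf w_h)$ splits cleanly into a consistency term carrying the factor $\mu(\Pi^{0,E}_k\widehat\theta_h)-\mu(\Pi^{0,E}_k\theta_h)$ times $\boldsymbol\Pi^{0,E}_{k-1}\nabla\mathbf v_h:\boldsymbol\Pi^{0,E}_{k-1}\nabla\mathbf w_h$, plus a stabilization term carrying $\mu(\Pi^{0,E}_0\widehat\theta_h)-\mu(\Pi^{0,E}_0\theta_h)$ times $\mathbf S^E_{\nabla,k}(\mathbf v_h,\mathbf w_h)$; I bound the two pieces separately.

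For the consistency term I would invoke the Lipschitz continuity of $\mu$, then H\"older on $E$ with the exponent triple $(6,3,2)$ to isolate $\|\Pi^{0,E}_k\eta_h\|_{0,6,E}$, $\|\boldsymbol\Pi^{0,E}_{k-1}\nabla\mathbf v_h\|_{0,3,E}$ and $\|\boldsymbol\Pi^{0,E}_{k-1}\nabla\mathbf w_h\|_{0,E}$, and finally the $L^q$-stability of the $L^2$-projections \eqref{lqstab} (together with the trivial $L^2$-stability of $\boldsymbol\Pi^{0,E}_{k-1}$) to drop the projections. Summing over $E$ and applying the discrete H\"older inequality with the same triple $(6,3,2)$ yields a constant times $L_\mu\|\eta_h\|_{0,6,\Omega}\|\nabla\mathbf v_h\|_{0,3,\Omega}\|\nabla\mathbf w_h\|_{0,\Omega}$, and the first step converts $\|\eta_h\|_{0,6,\Omega}$ into $C_{1\hookrightarrow6}C_q\|\nabla\eta_h\|_{0,\Omega}$.

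For the stabilization term I would first bound $|\mathbf S^E_{\nabla,k}(\mathbf v_h,\mathbf w_h)|\le\lambda_1^\ast\|\nabla\mathbf v_h\|_{0,E}\|\nabla\mathbf w_h\|_{0,E}$, using the Cauchy--Schwarz inequality for the symmetric positive bilinear form $S^E_1$, the upper bound in \eqref{vem-a} applied to $(\mathbf I-\boldsymbol\Pi^{\nabla,E}_k)\mathbf v_h\in ker(\boldsymbol\Pi^{\nabla,E}_k)$, and the $H^1$-seminorm stability of $\boldsymbol\Pi^{\nabla,E}_k$. The prefactor is, by Lipschitz continuity, at most $L_\mu|\Pi^{0,E}_0\eta_h|$, and since $\Pi^{0,E}_0\eta_h$ is constant on $E$ one has $|\Pi^{0,E}_0\eta_h|=|E|^{-1/6}\|\Pi^{0,E}_0\eta_h\|_{0,6,E}\le|E|^{-1/6}\|\eta_h\|_{0,6,E}$ by \eqref{lqstab}; multiplying this by the local H\"older inequality $\|\nabla\mathbf v_h\|_{0,E}\le|E|^{1/6}\|\nabla\mathbf v_h\|_{0,3,E}$ makes the powers of $|E|$ cancel, leaving the same summable product $\|\eta_h\|_{0,6,E}\|\nabla\mathbf v_h\|_{0,3,E}\|\nabla\mathbf w_h\|_{0,E}$ as before, which is then closed identically. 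Adding the two estimates gives the claim, with $C_v$ absorbing the $L^q$-projection-stability constant of \eqref{lqstab} and $\lambda_1^\ast$ (whence the dependence on $k$ and on the mesh-regularity constant $\delta_0$ announced in the statement). The step requiring the most care is precisely this treatment of the stabilization term: the naive estimate $|\Pi^{0,E}_0\eta_h|\lesssim h_E^{-1/3}\|\eta_h\|_{0,6,E}$ is not summable on its own, so one has to offset the negative power of the mesh size against the local $L^3$--$L^2$ inequality for $\nabla\mathbf v_h$, which is also the reason the hypothesis requests $\mathbf v_h$ (resp.\ $\phi_h$) sufficiently smooth, so that $\nabla\mathbf v_h$ belongs to $[L^3(\Omega)]^{2\times2}$.
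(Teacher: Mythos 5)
Your proof is correct. The paper itself does not spell out an argument here at all: it simply defers to \cite[Lemma 4.3]{mvem3}, so your write-up supplies the details that the paper leaves to the reference, and it does so along the standard lines one would expect that cited lemma to follow. The decomposition into the consistency part (carrying $\mu(\Pi^{0,E}_k\widehat\theta_h)-\mu(\Pi^{0,E}_k\theta_h)$) and the stabilization part (carrying $\mu(\Pi^{0,E}_0\widehat\theta_h)-\mu(\Pi^{0,E}_0\theta_h)$) is exactly the right split, since the projected gradients and $\mathbf S^E_{\nabla,k}$ are independent of the temperature argument; the H\"older triple $(6,3,2)$ elementwise and then discretely over $E\in\Omega_h$, combined with \eqref{lqstab} and the embedding $\|\eta_h\|_{0,6,\Omega}\le C_{1\hookrightarrow6}C_q\|\nabla\eta_h\|_{0,\Omega}$ for $\eta_h=\widehat\theta_h-\theta_h\in\Sigma_{0,h}$, gives precisely the claimed right-hand side. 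The one delicate point — that $|\Pi^{0,E}_0\eta_h|$ alone costs a negative power of $|E|$ which must be compensated by the local $L^2$--$L^3$ H\"older inequality for $\nabla\mathbf v_h$ — is handled correctly and is indeed where a careless argument would break; note that you could reach $|\Pi^{0,E}_0\eta_h|\le|E|^{-1/6}\|\eta_h\|_{0,6,E}$ even more directly by H\"older applied to the mean value, without invoking \eqref{lqstab}. Your closing remark also correctly identifies why the hypothesis of ``sufficiently smooth'' $\mathbf v_h$ (resp.\ $\phi_h$) is needed, namely to make $\|\nabla\mathbf v_h\|_{0,3,\Omega}$ finite, and the dependence of $C_v$, $C_t$ on $k$ and $\delta_0$ through $\lambda_1^\ast$, $\lambda_2^\ast$ and the projection-stability constants matches the statement.
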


\begin{proof}
The proof of Lemma \ref{vacca} follows from \cite[Lemma 4.3]{mvem3}. 
\end{proof}
\subsection{Well-posedness of the decoupled problems}
The stabilized virtual element problem \eqref{nvem} can be decoupled as follows:
\begin{itemize}
\item For a given $\widehat{\theta}_h \in \Sigma_h$ and $\widehat{\mathbf{u}}_h \in \mathbf{V}_h$, find $(\mathbf{u}_h, p_h) \in \mathbf{V}_h \times Q_h$, such that it satisfies 
\begin{align}
	A_{V,h}[ \widehat{\theta}_h; (\mathbf{u}_h, p_h), (\mathbf{v}_h, q_h)]+ c^S_{V,h}(\widehat{\mathbf{u}}_h; \mathbf{u}_h, \mathbf{v}_h)= F_{h,\widehat{\theta}_h}(\mathbf{v}_h) \quad \text{for all} \, \, (\mathbf{v}_h, q_h) \in \mathbf{V}_h \times Q_h.	\label{dnvem1}
\end{align}
Notably, the linearized problem \eqref{dnvem1} is the Oseen problem.
\item For given $\widehat{\mathbf{u}}_h \in \mathbf{V}_h$ and $\widehat{\theta}_h \in \Sigma_h$, find $\theta_h \in \Sigma_{D,h}$, such that 
\begin{align}
	A_{T,h}(\widehat{\theta}_h, \widehat{\mathbf{u}}_h; \theta_h, \psi_h)=(\mathcal{Q}_h, \psi_h) \quad \text{for all} \,\, \psi_h \in \Sigma_{0,h}.
	\label{dnvem2}
\end{align} 
\end{itemize}

We now introduce the weak inf-sup condition to establish the well-posedness of the
decoupled problem \eqref{dnvem1}.

\begin{lemma}(weak inf-sup condition) \label{infsup}
Under the assumptions \textbf{(A1)}, there exist two positive constants $\beta_1$ and $\beta_2$ independent of $h$, such that there holds that
\begin{align}
	\sup \limits_{\mathbf{0} \neq \mathbf{w}_h \in \mathbf{V}_h} \dfrac{b_{h}(\mathbf{w}_h, q_h)}{\|\mathbf{w}_h\|_{1,\Omega}} \geq {\beta}_1 \|q_h\|_{0,\Omega} - {\beta}_2 [\mathcal{L}_{2,h}(q_h,q_h)]^{\frac{1}{2}} \qquad \text{for all} \,\, q_h \in Q_h.  \label{inf-sup1}
\end{align}
\end{lemma}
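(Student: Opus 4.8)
The plan is to derive the modified inf-sup condition \eqref{inf-sup1} from the continuous inf-sup condition \eqref{con-binf} by combining a Fortin-type argument with a careful analysis of the consistency error introduced by the projectors hidden inside $b_h(\cdot,\cdot)$. First I would fix $q_h \in Q_h$ and apply the continuous inf-sup condition \eqref{con-binf}: there exists $\mathbf{v} \in \mathbf{V}$ with $\|\mathbf{v}\|_{1,\Omega}=1$ such that $b(\mathbf{v},q_h) \geq \beta \|q_h\|_{0,\Omega}$. The next step is to interpolate $\mathbf{v}$ into $\mathbf{V}_h$, obtaining $\mathbf{v}_I := \mathcal{I}_h \mathbf{v} \in \mathbf{V}_h$ with $\|\mathbf{v}_I\|_{1,\Omega} \leq C \|\mathbf{v}\|_{1,\Omega}$, and then to split
\begin{align}
	b_h(\mathbf{v}_I, q_h) = b(\mathbf{v}, q_h) + \big(b_h(\mathbf{v}_I, q_h) - b(\mathbf{v}_I, q_h)\big) + \big(b(\mathbf{v}_I, q_h) - b(\mathbf{v}, q_h)\big). \nonumber
\end{align}
The last difference is controlled by standard interpolation error estimates (Lemma \ref{lemmaproj1}) and the continuity \eqref{con-b}, yielding a term bounded by $C h \, |q_h|_{?}$; but since $q_h$ is only in $L^2$, a cleaner route is to choose $\mathbf{v}_I$ via a Fortin operator satisfying $b(\mathbf{v}_I, q_h) = b(\mathbf{v}, q_h)$ for all $q_h \in Q_h$, which is available for the VEM pair under assumption \textbf{(A1)} by known results for equal-order (or divergence-compatible) virtual element spaces.

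The main work is then estimating the consistency term $b_h(\mathbf{v}_I, q_h) - b(\mathbf{v}_I, q_h)$. Recalling $b^E_h(\mathbf{w}_h, q_h) = \int_E \Pi^{0,E}_{k-1}(\nabla\cdot\mathbf{w}_h)\,\Pi^{0,E}_k q_h \, dE$ while $b^E(\mathbf{w}_h, q_h) = \int_E (\nabla\cdot\mathbf{w}_h)\, q_h \, dE$, I would add and subtract $\int_E \Pi^{0,E}_{k-1}(\nabla\cdot\mathbf{w}_h)\, q_h\,dE$ and use the orthogonality of $\Pi^{0,E}_{k-1}$ against $\mathbb P_{k-1}(E)$ to rewrite the difference purely in terms of $(I - \Pi^{0,E}_{k-1})\nabla\cdot\mathbf{w}_h$ paired against $(I - \Pi^{0,E}_{k-1})q_h$ or against $q_h - \Pi^{0,E}_k q_h$. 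The key observation is that the quantity $\|(I-\Pi^{0,E}_{k-1})q_h\|_{0,E}$, and likewise $\|(I-\Pi^{0,E}_k)q_h\|_{0,E}$, is (up to mesh-independent constants and a factor $h_E$) exactly what the pressure stabilization $\mathcal{L}_{2,h}(q_h,q_h)$ controls: from \eqref{press-1}, \eqref{vem-c}, the inverse inequality (Lemma \ref{inverse}), and the $L^2$-stability of the projectors, one gets
\begin{align}
	\sum_{E \in \Omega_h} \tau_{2,E}^{-1} \| (I-\Pi^{0,E}_{k-1}) q_h \|_{0,E}^2 \;\lesssim\; \mathcal{L}_{2,h}(q_h,q_h), \nonumber
\end{align}
so with $\tau_{2,E} \sim h_E^2$ (the scaling fixed in Lemma \ref{pressure}), the factor $h_E$ picked up from the approximation of $\mathbf{v}_I$ balances exactly, giving $|b_h(\mathbf{v}_I,q_h) - b(\mathbf{v}_I,q_h)| \leq \beta_2 [\mathcal{L}_{2,h}(q_h,q_h)]^{1/2} \|\mathbf{v}_I\|_{1,\Omega}$.

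Putting the pieces together: $b_h(\mathbf{v}_I, q_h) \geq \beta\|q_h\|_{0,\Omega} - \beta_2 [\mathcal{L}_{2,h}(q_h,q_h)]^{1/2}\|\mathbf{v}_I\|_{1,\Omega}$, and dividing by $\|\mathbf{v}_I\|_{1,\Omega} \leq C$ yields \eqref{inf-sup1} with $\beta_1 = \beta/C$ and a renamed $\beta_2$. The step I expect to be the main obstacle is the consistency estimate for $b_h - b$: one must be careful that the error is expressed against projector residuals of $q_h$ alone (not its gradient or higher norms, which are not available), and that the power of $h_E$ extracted from $\mathbf{v}_I$ matches the $\tau_{2,E} \sim h_E^2$ scaling so that all constants are genuinely mesh-independent; the availability and boundedness of a Fortin/interpolation operator compatible with $b_h$ for the equal-order VEM spaces is the other point needing care, and I would cite the relevant VEM references (e.g.\ \cite{mvem9,mvem3}) for it.
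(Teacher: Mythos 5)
The paper itself disposes of this lemma with a one-line citation to an external reference, so there is no in-paper argument to compare against step by step; your overall strategy (continuous inf-sup \eqref{con-binf} $+$ quasi-interpolation $+$ consistency of $b_h$ versus $b$) is indeed the standard local-projection-stabilization route that such a reference would follow. However, your proposal has a genuine gap at its load-bearing step. A Fortin operator $\mathcal{I}_h$ satisfying $b(\mathcal{I}_h\mathbf{v},q_h)=b(\mathbf{v},q_h)$ for all $q_h\in Q_h$ together with $\|\mathcal{I}_h\mathbf{v}\|_{1,\Omega}\leq C\|\mathbf{v}\|_{1,\Omega}$ cannot exist uniformly in $h$ for the equal-order pair $\mathbf{V}_h\times Q_h$: by Fortin's criterion its existence is \emph{equivalent} to the uniform discrete inf-sup condition for $b$ on this pair, and that condition fails for equal-order continuous velocity--pressure spaces --- this failure is precisely the reason the stabilization $\mathcal{L}_{2,h}$ is introduced in the first place. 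The divergence-compatible VEM constructions you point to (\cite{mvem9,mvem3}) use a discontinuous, lower-order pressure space and do not transfer here. Your first route is also blocked, as you note, since $q_h$ carries no extra regularity that is uniformly controlled. So the term $b(\mathbf{v}-\mathbf{v}_I,q_h)$ cannot be made to vanish and must be estimated; this is where the \emph{second} and dominant contribution to the defect $-\beta_2[\mathcal{L}_{2,h}(q_h,q_h)]^{1/2}$ comes from, and it is absent from your argument. The correct treatment integrates by parts elementwise (legitimate because $q_h\in H^1(\Omega)$ globally and $\mathbf{v}-\mathbf{v}_I\in[H^1_0(\Omega)]^2$, so the edge terms cancel), giving
\begin{align}
	b(\mathbf{v}-\mathbf{v}_I,q_h)=-\sum_{E\in\Omega_h}\int_E(\mathbf{v}-\mathbf{v}_I)\cdot\nabla q_h\,dE, \nonumber
\end{align}
and then requires a quasi-interpolant with the elementwise orthogonality $\int_E(\mathbf{v}-\mathbf{v}_I)\cdot\mathbf{m}\,dE=0$ for $\mathbf{m}\in[\mathbb P_{k-1}(E)]^2$, so that only the fluctuation $(I-\boldsymbol{\Pi}^{0,E}_{k-1})\nabla q_h$ survives; this fluctuation is split as $\widehat{\mathbf{r}}_h(\nabla q_h)+(I-\boldsymbol{\Pi}^{0,E}_{k})\nabla q_h$, and the second piece is absorbed through $\|\nabla q_h-\boldsymbol{\Pi}^{0,E}_k\nabla q_h\|_{0,E}\leq\|\nabla(q_h-\Pi^{\nabla,E}_{k-1}q_h)\|_{0,E}$ and \eqref{vem-c}, with the factor $\|\mathbf{v}-\mathbf{v}_I\|_{0,E}\lesssim h_E\|\mathbf{v}\|_{1,\omega_E}$ matching $\tau_{2,E}\sim h_E^2$. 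Constructing such an interpolant on polygons (the internal moments of $V_h(E)$ only reach degree $k-2$, and for $k=1$ there are none) is the technical heart of the cited lemma and cannot be waved through.

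Two further remarks. First, your consistency estimate for $b_h-b$ is essentially sound --- using the orthogonality of the projectors one gets $b^E_h(\mathbf{v}_I,q_h)-b^E(\mathbf{v}_I,q_h)=\int_E(\Pi^{0,E}_{k-1}-I)(\nabla\cdot\mathbf{v}_I)\,(I-\Pi^{0,E}_{k-1})q_h\,dE$, and $\|(I-\Pi^{0,E}_{k-1})q_h\|_{0,E}\leq C_p h_E\|\nabla(q_h-\Pi^{\nabla,E}_{k-1}q_h)\|_{0,E}$ combined with $\tau_{2,E}\sim h_E^2$ and \eqref{vem-c} yields the desired $[\mathcal{L}_{2,h}(q_h,q_h)]^{1/2}\|\nabla\mathbf{v}_I\|_{0,\Omega}$ bound --- but your displayed intermediate inequality is mis-scaled: what one actually proves is $\sum_{E}\|(I-\Pi^{0,E}_{k-1})q_h\|_{0,E}^2\lesssim\mathcal{L}_{2,h}(q_h,q_h)$, without the factor $\tau_{2,E}^{-1}$ (with that factor the bound degrades by $h^{-2}$). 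Second, note that the lemma as stated assumes only \textbf{(A1)}, yet both your argument and any correct one need the scaling $\tau_{2,E}\sim h_E^2$, which the paper only fixes later; this is worth making explicit.
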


\begin{proof}
The proof follows from \cite[Lemma 4.3]{mishra2025equal}.
\end{proof}

\begin{lemma} \label{wellposed-1}
Under the assumption \textbf{(A0)} and \textbf{(A1)}, and let $\widehat{\mathbf{u}}_h \in \mathbf{V}_h$ and $\widehat{\theta}_h \in \Sigma_h$ be given. Then for any $(\mathbf{u}_h, p_h) \in \mathbf{V}_h \times Q_h$, there exists a positive constant $C_{o,V}$, independent of $h$, satisfying the following:
\begin{align}
	\sup \limits_{ (\mathbf{0},0) \neq (\mathbf{v}_h, q_h) \in \mathbf{V}_h \times Q_h} \dfrac{A_{V,h} [ \widehat{\theta}_h; (\mathbf{u}_h, p_h), (\mathbf{v}_h, q_h)] + c^S_{V,h}(\widehat{\mathbf{u}}_h; \mathbf{u}_h, \mathbf{v}_h) }{\vertiii{(\mathbf{v}_h, q_h)} }  &\geq C_{o,V} \vertiii{(\mathbf{u}_h, p_h)}, \label{d1well1} \\
	|A_{V,h} [ \widehat{\theta}_h; (\mathbf{u}_h, p_h), (\mathbf{v}_h, q_h)]| &\leq C \vertiii{(\mathbf{u}_h, p_h)}\, \vertiii{(\mathbf{v}_h, q_h)},   \label{d1well2}
\end{align}
where the constant $C_{o,V}$ depends on $\mu_\ast$, $\mu^\ast$, $\lambda_{1\ast}$, $\beta_1$ and $\beta_2$.
\end{lemma}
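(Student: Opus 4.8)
The plan is to prove the discrete inf-sup estimate \eqref{d1well1} by testing the bilinear map
$\mathcal{B}[(\mathbf{u}_h,p_h),(\mathbf{v}_h,q_h)] := A_{V,h}[\widehat{\theta}_h;(\mathbf{u}_h,p_h),(\mathbf{v}_h,q_h)] + c^S_{V,h}(\widehat{\mathbf{u}}_h;\mathbf{u}_h,\mathbf{v}_h)$
against a pair assembled from $(\mathbf{u}_h,p_h)$ and a pressure-recovery field furnished by the weak inf-sup of Lemma \ref{infsup}, whereas the boundedness \eqref{d1well2} will follow from a routine termwise Cauchy--Schwarz argument. First I would carry out the \emph{diagonal test}: choosing $(\mathbf{v}_h,q_h)=(\mathbf{u}_h,p_h)$ makes the two copies of $b_h$ cancel, the identity \eqref{dcon-cv} makes $c^S_{V,h}(\widehat{\mathbf{u}}_h;\mathbf{u}_h,\mathbf{u}_h)$ vanish, and the coercivity in \eqref{dcon-av} gives $\mathcal{B}[(\mathbf{u}_h,p_h),(\mathbf{u}_h,p_h)] \ge \varrho_\ast \mu_\ast \|\nabla\mathbf{u}_h\|^2_{0,\Omega} + \mathcal{L}_{1,h}(\mathbf{u}_h,\mathbf{u}_h) + \mathcal{L}_{2,h}(p_h,p_h)$, which already controls every term of $\vertiii{(\mathbf{u}_h,p_h)}^2$ apart from $\|p_h\|^2_{0,\Omega}$.

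Next I would recover the pressure. By Lemma \ref{infsup} there is $\mathbf{w}_{p_h}\in\mathbf{V}_h$ with $\|\mathbf{w}_{p_h}\|_{1,\Omega}=\|p_h\|_{0,\Omega}$ and $b_h(\mathbf{w}_{p_h},p_h) \ge \beta_1\|p_h\|^2_{0,\Omega} - \beta_2[\mathcal{L}_{2,h}(p_h,p_h)]^{1/2}\|p_h\|_{0,\Omega}$. Testing with $(\mathbf{v}_h,q_h)=(\mathbf{u}_h-\delta\mathbf{w}_{p_h},p_h)$ for a small $\delta>0$ to be fixed, the cancellations above give $\mathcal{B}[(\mathbf{u}_h,p_h),(\mathbf{u}_h-\delta\mathbf{w}_{p_h},p_h)] = \mathcal{B}[(\mathbf{u}_h,p_h),(\mathbf{u}_h,p_h)] + \delta\, b_h(\mathbf{w}_{p_h},p_h) - \delta\, a_{V,h}(\widehat{\theta}_h;\mathbf{u}_h,\mathbf{w}_{p_h}) - \delta\,\mathcal{L}_{1,h}(\mathbf{u}_h,\mathbf{w}_{p_h}) - \delta\, c^S_{V,h}(\widehat{\mathbf{u}}_h;\mathbf{u}_h,\mathbf{w}_{p_h})$. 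Using $\|\nabla\mathbf{w}_{p_h}\|_{0,\Omega}\le\|p_h\|_{0,\Omega}$, the $L^2$-stability of the projectors and the upper bound in \eqref{vem-a} (which yield $\mathcal{L}_{1,h}(\mathbf{w}_{p_h},\mathbf{w}_{p_h})\le C\|p_h\|^2_{0,\Omega}$), the continuity \eqref{dcon-av} of $a_{V,h}$, Cauchy--Schwarz for the semidefinite form $\mathcal{L}_{1,h}$, and \eqref{dcnt-sv} for $c^S_{V,h}$, each correction term is bounded by $\|p_h\|_{0,\Omega}$ times one of $\|\nabla\mathbf{u}_h\|_{0,\Omega}$, $[\mathcal{L}_{1,h}(\mathbf{u}_h,\mathbf{u}_h)]^{1/2}$, or $\widehat{N_0}\|\nabla\widehat{\mathbf{u}}_h\|_{0,\Omega}\|\nabla\mathbf{u}_h\|_{0,\Omega}$. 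Young's inequality then absorbs these into $\varrho_\ast\mu_\ast\|\nabla\mathbf{u}_h\|^2_{0,\Omega}$, $\mathcal{L}_{1,h}(\mathbf{u}_h,\mathbf{u}_h)$ and $\mathcal{L}_{2,h}(p_h,p_h)$, leaving a multiple $(\delta\beta_1 - C\delta^2)\|p_h\|^2_{0,\Omega}$; fixing $\delta$ small enough renders this positive, so $\mathcal{B}[(\mathbf{u}_h,p_h),(\mathbf{u}_h-\delta\mathbf{w}_{p_h},p_h)] \ge c_1\vertiii{(\mathbf{u}_h,p_h)}^2$. Since the same bounds on $\mathbf{w}_{p_h}$ give $\vertiii{(\mathbf{u}_h-\delta\mathbf{w}_{p_h},p_h)}\le c_2\vertiii{(\mathbf{u}_h,p_h)}$, dividing yields \eqref{d1well1} with $C_{o,V}=c_1/c_2$.

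For \eqref{d1well2} I would simply estimate $A_{V,h}$ term by term: \eqref{dcon-av} for $a_{V,h}$, the boundedness $|b_h(\mathbf{v}_h,r_h)|\le\sqrt{2}\,\|\nabla\mathbf{v}_h\|_{0,\Omega}\|r_h\|_{0,\Omega}$ for the two $b_h$ terms, and Cauchy--Schwarz for the symmetric positive semidefinite forms $\mathcal{L}_{1,h}$ and $\mathcal{L}_{2,h}$; every piece is dominated by $\vertiii{(\mathbf{u}_h,p_h)}\,\vertiii{(\mathbf{v}_h,q_h)}$, which produces the constant $C$.

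The hard part will be the convective correction $\delta\, c^S_{V,h}(\widehat{\mathbf{u}}_h;\mathbf{u}_h,\mathbf{w}_{p_h})$ in the pressure-recovery step: unlike the diagonal test it does not vanish, and it forces the admissible $\delta$ (hence $C_{o,V}$) to depend on $\|\nabla\widehat{\mathbf{u}}_h\|_{0,\Omega}$. In the subsequent fixed-point argument $\widehat{\mathbf{u}}_h$ is confined to a ball of data-controlled radius, so $C_{o,V}$ is uniform there and in particular independent of $h$. A secondary subtlety is that Lemma \ref{infsup} supplies only the \emph{weak} inf-sup, with the defect $-\beta_2[\mathcal{L}_{2,h}(q_h,q_h)]^{1/2}$; one must therefore keep the pressure-stabilization term $\mathcal{L}_{2,h}(p_h,p_h)$ alive through the estimates and spend part of it to absorb that defect.
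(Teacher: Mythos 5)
Your argument is the standard diagonal test (where \eqref{dcon-cv} kills the convective term and \eqref{dcon-av} plus the stabilizers give coercivity up to $\|p_h\|_{0,\Omega}$) followed by pressure recovery through the weak inf-sup of Lemma \ref{infsup} with a small parameter $\delta$; this is exactly the route the paper takes, except that the paper does not write it out at all and simply delegates to \cite[Theorem~4.2]{mishra2025unified}. The termwise continuity argument for \eqref{d1well2} is likewise routine and correct.

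The one substantive point is the caveat you raise yourself: the correction $\delta\, c^S_{V,h}(\widehat{\mathbf{u}}_h;\mathbf{u}_h,\mathbf{w}_{p_h})$ does not vanish, and via \eqref{dcnt-sv} it forces the admissible $\delta$, hence $C_{o,V}$, to depend on $\|\nabla\widehat{\mathbf{u}}_h\|_{0,\Omega}$. This is in tension with the lemma's claim that $C_{o,V}$ depends only on $\mu_\ast$, $\mu^\ast$, $\lambda_{1\ast}$, $\beta_1$ and $\beta_2$, and it matters downstream: the radius of the ball $\mathcal{M}$ in Lemma \ref{fball} and the smallness condition \eqref{cond1} are both expressed in terms of $C_{o,V}^{-1}$, so letting $C_{o,V}$ depend on the ball radius would be circular. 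Either an a priori bound on $\|\nabla\widehat{\mathbf{u}}_h\|_{0,\Omega}$ (e.g.\ $\widehat{\mathbf{u}}_h\in\mathcal{M}$, or a smallness assumption as in \eqref{cond1}) must be added to the hypotheses of the lemma, or the stated list of dependencies is incomplete. That is a gap in the paper's statement rather than in your proof, and your proposal is the more honest account of what can actually be established.
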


\begin{proof}
The proof follows from  \cite[Theorem 4.2]{mishra2025unified}. 
\end{proof}

We now present the well-posedness of the problem \eqref{dnvem1} as follows:
\begin{lemma}  \label{lm-ddV}
Under the assumption \textbf{(A0)}, for any given $\widehat{\theta}_h \in \Sigma_h$ 
and  $\widehat{\mathbf{u}}_h \in \mathbf{V}_h$, the problem \eqref{dnvem1} has a unique solution $(\mathbf{u}_h,p_h) \in \mathbf{V}_h \times Q_h$ such that it satisfies
\begin{align}
	\vertiii{(\mathbf{u}_h, p_h)} \leq C^{-1}_{o,V}C_\alpha \big[ \|\mathbf{f}\|_{0,\Omega} +  \|\mathbf{g}\|_{0,\Omega} \vertiii{\widehat{\theta}_h} \big].  \label{d1b0}
\end{align}
\end{lemma}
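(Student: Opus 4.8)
The plan is to observe that, for fixed data $\widehat{\theta}_h \in \Sigma_h$ and $\widehat{\mathbf{u}}_h \in \mathbf{V}_h$, problem \eqref{dnvem1} is a \emph{linear} (Oseen-type) problem posed on the finite-dimensional product space $\mathbf{V}_h \times Q_h$. Hence existence, uniqueness and the a priori bound \eqref{d1b0} all follow from a Banach--Ne\v{c}as--Babu\v{s}ka (inf-sup) argument once we verify continuity and a global inf-sup condition for the bilinear form
\begin{align*}
	\mathcal{B}\big[(\mathbf{w}_h, r_h), (\mathbf{v}_h, q_h)\big] := A_{V,h}\big[\widehat{\theta}_h; (\mathbf{w}_h, r_h), (\mathbf{v}_h, q_h)\big] + c^S_{V,h}(\widehat{\mathbf{u}}_h; \mathbf{w}_h, \mathbf{v}_h)
\end{align*}
with respect to the energy norm $\vertiii{(\cdot, \cdot)}$, together with boundedness of the right-hand side functional $\mathbf{v}_h \mapsto F_{h,\widehat{\theta}_h}(\mathbf{v}_h)$.

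First I would record the continuity of $\mathcal{B}$ by combining the continuity estimate \eqref{d1well2} for $A_{V,h}[\widehat{\theta}_h; \cdot, \cdot]$ with the trilinear bound \eqref{dcnt-sv} for $c^S_{V,h}$ evaluated at the frozen velocity $\widehat{\mathbf{u}}_h$, which gives $|\mathcal{B}[(\mathbf{w}_h, r_h), (\mathbf{v}_h, q_h)]| \le (C + \widehat{N_0}\|\nabla \widehat{\mathbf{u}}_h\|_{0,\Omega})\,\vertiii{(\mathbf{w}_h, r_h)}\,\vertiii{(\mathbf{v}_h, q_h)}$. The global inf-sup condition for $\mathcal{B}$ is then \emph{exactly} the statement \eqref{d1well1} of Lemma \ref{wellposed-1}, which already incorporates the convective contribution $c^S_{V,h}(\widehat{\mathbf{u}}_h; \cdot, \cdot)$; the reason this estimate is uniform in $\widehat{\mathbf{u}}_h$ is the skew-symmetry property \eqref{dcon-cv}, so that the convection term does not spoil coercivity on the relevant part of the space.

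Next, since the trial and test spaces coincide and are finite-dimensional, the inf-sup estimate \eqref{d1well1} forces the linear operator induced by $\mathcal{B}$ to be injective, and a square injective linear map between equal finite-dimensional spaces is automatically bijective; consequently \eqref{dnvem1} has a unique solution $(\mathbf{u}_h, p_h) \in \mathbf{V}_h \times Q_h$. Finally, inserting this solution into \eqref{d1well1}, using the identity \eqref{dnvem1} to replace $\mathcal{B}[(\mathbf{u}_h, p_h), (\mathbf{v}_h, q_h)]$ by $F_{h,\widehat{\theta}_h}(\mathbf{v}_h)$, and applying the bound \eqref{est1} of Lemma \ref{estimate2}, namely $|F_{h,\widehat{\theta}_h}(\mathbf{v}_h)| \le C_\alpha\big(\|\mathbf{f}\|_{0,\Omega} + \|\mathbf{g}\|_{0,\Omega}\vertiii{\widehat{\theta}_h}\big)\vertiii{(\mathbf{v}_h, q_h)}$, we obtain $C_{o,V}\vertiii{(\mathbf{u}_h, p_h)} \le C_\alpha(\|\mathbf{f}\|_{0,\Omega} + \|\mathbf{g}\|_{0,\Omega}\vertiii{\widehat{\theta}_h})$, which is precisely \eqref{d1b0} after dividing by $C_{o,V}$.

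I do not expect a genuine obstacle in this lemma: the analytically substantial ingredients --- the weak inf-sup of $b_h$, the coercivity of $a_{V,h}$ on the appropriate kernel, and the way the stabilization forms $\mathcal{L}_{1,h}$ and $\mathcal{L}_{2,h}$ enter the triple-bar norm --- have all been packaged into Lemma \ref{wellposed-1} and Lemma \ref{estimate2}. The single point deserving a line of care is the passage from the one-sided inf-sup condition to bijectivity, which is valid here \emph{only} because $\mathbf{V}_h \times Q_h$ is finite-dimensional; in an infinite-dimensional setting one would additionally need the transposed inf-sup / nondegeneracy condition.
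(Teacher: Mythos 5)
Your proposal is correct and follows essentially the same route as the paper: the paper likewise derives existence and uniqueness directly from the inf-sup estimate \eqref{d1well1} of Lemma \ref{wellposed-1} (citing \cite{bookgirault} for the standard finite-dimensional well-posedness argument you spell out), and then obtains \eqref{d1b0} by combining \eqref{d1well1} with the equation \eqref{dnvem1} and the bound \eqref{est1}. The only difference is that you make explicit the continuity of the combined form and the ``injective implies bijective in finite dimensions'' step, which the paper leaves to the reference.
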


\begin{proof}
The existence and uniqueness of the discrete solution to problem \eqref{dnvem1} can be easily derived from Lemma \ref{wellposed-1}, for more details see \cite{bookgirault}. Further, employing the bound \eqref{d1well1}, we obtain
\begin{align}
	C_{o,V} \vertiii{(\mathbf{u}_h, p_h)} \vertiii{(\mathbf{v}_h, q_h)} \leq A_{V,h} [ \widehat{\theta}_h; (\mathbf{u}_h, p_h), (\mathbf{v}_h, q_h)] + c^S_{V,h}(\widehat{\mathbf{u}}_h; \mathbf{u}_h, \mathbf{v}_h) = F_{h,\widehat{\theta}_h}(\mathbf{v}_h). \nonumber
\end{align} 
Thus, the result \eqref{d1b0} readily follows from \eqref{est1}. 
\end{proof}

Hereafter, we will show the well-posedness of \eqref{dnvem2}. To do this, we first introduce $\theta_h= \theta^0_h+ \theta^\partial_h \in \Sigma_{D,h}$ such that $\theta_h^0 \in \Sigma_{0,h}$ and $\theta^\partial_h \in \Sigma_{D,h}$.
\begin{lemma} \label{lm-ddT}
Under the assumption \textbf{(A0)} and \textbf{(A1)}, for any given $\widehat{\mathbf{u}}_h \in \mathbf{V}_h$ and $\widehat{\theta}_h \in \Sigma_h$, the decoupled problem \eqref{dnvem2} has a unique solution $\theta_h \in \Sigma_{D,h}$ satisfying
\begin{align}
	\vertiii{\theta_h} \leq C^{-1}_{o,T} \big[ C_p \|\mathcal{Q}\|_{0,\Omega} + \widetilde{C}_\Omega \big(C_{k,\lambda} + C_\lambda C_{o,T} + \widehat{N_0} \vertiii{(\widehat{\mathbf{u}}_h, \widehat{{p}}_h)}\big)  \|\theta_{D}\|_{H^{1/2}(\partial \Omega)}\big]. \label{dd2-f0}
\end{align}
\end{lemma}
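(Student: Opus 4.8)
The plan is to treat \eqref{dnvem2} as a \emph{linear} problem in the unknown $\theta_h$, since $\widehat{\mathbf{u}}_h$ and $\widehat{\theta}_h$ are frozen, and to reduce it to a homogeneous problem on $\Sigma_{0,h}$ to which Lax--Milgram applies. Using the splitting $\theta_h = \theta_h^0 + \theta_h^\partial$ with $\theta_h^0 \in \Sigma_{0,h}$ and $\theta_h^\partial \in \Sigma_{D,h}$, I would first fix a discrete lifting $\theta_h^\partial$ of the datum $\theta_{D,h}$ satisfying $\|\nabla \theta_h^\partial\|_{0,\Omega} \le \widetilde{C}_\Omega \|\theta_D\|_{H^{1/2}(\partial\Omega)}$; such a bounded discrete extension exists by a standard construction (edge-wise prescription on $\partial\Omega$ followed by a discrete--harmonic-type extension inside, controlled through the mesh regularity \textbf{(A1)} and the quasi-interpolant defining $\theta_{D,h}$), with $\widetilde{C}_\Omega$ depending only on $\Omega$ and $\delta_0$. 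Substituting, \eqref{dnvem2} becomes: find $\theta_h^0 \in \Sigma_{0,h}$ with
\[
A_{T,h}(\widehat{\theta}_h, \widehat{\mathbf{u}}_h; \theta_h^0, \psi_h) = (\mathcal{Q}_h, \psi_h) - A_{T,h}(\widehat{\theta}_h, \widehat{\mathbf{u}}_h; \theta_h^\partial, \psi_h) \qquad \text{for all } \psi_h \in \Sigma_{0,h}.
\]

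Next I would check the hypotheses of Lax--Milgram for $(\phi_h,\psi_h)\mapsto A_{T,h}(\widehat{\theta}_h,\widehat{\mathbf{u}}_h;\phi_h,\psi_h)$ on $\Sigma_{0,h}\times\Sigma_{0,h}$. Coercivity: on the diagonal the convective contribution vanishes by \eqref{dcon-cv}, so \eqref{est3} (equivalently \eqref{dcon-at} with the definition of $\vertiii{\cdot}$) gives $A_{T,h}(\widehat{\theta}_h,\widehat{\mathbf{u}}_h;\psi_h,\psi_h)\ge C_{o,T}\vertiii{\psi_h}^2\ge C_{o,T}\|\nabla\psi_h\|_{0,\Omega}^2$. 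Boundedness: \eqref{est2} controls the $a_{T,h}+\mathcal{L}_{T,h}$ part by $C_{\kappa,\lambda}\|\nabla\phi_h\|_{0,\Omega}\|\nabla\psi_h\|_{0,\Omega}$ and \eqref{dcnt-st} controls the convective part by $\widehat{N_0}\|\nabla\widehat{\mathbf{u}}_h\|_{0,\Omega}\|\nabla\phi_h\|_{0,\Omega}\|\nabla\psi_h\|_{0,\Omega}$, finite since $\widehat{\mathbf{u}}_h$ is given. The right-hand side is a bounded linear functional: $|(\mathcal{Q}_h,\psi_h)|\le\|\mathcal{Q}\|_{0,\Omega}\|\psi_h\|_{0,\Omega}\le C_p\|\mathcal{Q}\|_{0,\Omega}\|\nabla\psi_h\|_{0,\Omega}$ by the $L^2$-stability \eqref{lqstab} of $\Pi^{0,E}_k$ and Poincar\'e, and the $\theta_h^\partial$-terms are bounded by the same continuity estimates. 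Hence $\theta_h^0$ exists and is unique, and so is $\theta_h=\theta_h^0+\theta_h^\partial\in\Sigma_{D,h}$; uniqueness is independent of the chosen lifting because the difference of two solutions of \eqref{dnvem2} with the same boundary data lies in $\Sigma_{0,h}$ and satisfies the homogeneous equation, so it vanishes by coercivity.

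For the a priori bound I would test with $\psi_h=\theta_h^0$. Coercivity gives $C_{o,T}\vertiii{\theta_h^0}^2$ on the left, while on the right $|(\mathcal{Q}_h,\theta_h^0)|\le C_p\|\mathcal{Q}\|_{0,\Omega}\vertiii{\theta_h^0}$, $|a_{T,h}(\widehat{\theta}_h;\theta_h^\partial,\theta_h^0)+\mathcal{L}_{T,h}(\theta_h^\partial,\theta_h^0)|\le C_{\kappa,\lambda}\|\nabla\theta_h^\partial\|_{0,\Omega}\vertiii{\theta_h^0}$ by \eqref{est2}, and $|c^S_{T,h}(\widehat{\mathbf{u}}_h;\theta_h^\partial,\theta_h^0)|\le\widehat{N_0}\vertiii{(\widehat{\mathbf{u}}_h,\widehat{p}_h)}\|\nabla\theta_h^\partial\|_{0,\Omega}\vertiii{\theta_h^0}$ by \eqref{dcnt-st} (using $\|\nabla\widehat{\mathbf{u}}_h\|_{0,\Omega}\le\vertiii{(\widehat{\mathbf{u}}_h,\widehat{p}_h)}$). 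Cancelling one factor $\vertiii{\theta_h^0}$, inserting $\|\nabla\theta_h^\partial\|_{0,\Omega}\le\widetilde{C}_\Omega\|\theta_D\|_{H^{1/2}(\partial\Omega)}$, and then using $\vertiii{\theta_h}\le\vertiii{\theta_h^0}+\vertiii{\theta_h^\partial}$ with \eqref{est4} (so that $\vertiii{\theta_h^\partial}\le C_\lambda\widetilde{C}_\Omega\|\theta_D\|_{H^{1/2}(\partial\Omega)}=C^{-1}_{o,T}\widetilde{C}_\Omega\,C_\lambda C_{o,T}\|\theta_D\|_{H^{1/2}(\partial\Omega)}$) collects the terms into exactly \eqref{dd2-f0}, with $C_{k,\lambda}=C_{\kappa,\lambda}$.

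The argument is essentially routine once the pieces are in place; the only step needing care is the bounded discrete lifting $\theta_h^\partial$ with $\|\nabla\theta_h^\partial\|_{0,\Omega}\le\widetilde{C}_\Omega\|\theta_D\|_{H^{1/2}(\partial\Omega)}$, the discrete analogue of the extension used in \textbf{Step 1} of Proposition \ref{existenceP}, which is obtained by a standard VEM construction or cited from the literature. One should also keep in mind that the continuity constant of $A_{T,h}(\widehat{\theta}_h,\widehat{\mathbf{u}}_h;\cdot,\cdot)$ in the Lax--Milgram step depends on the given $\widehat{\mathbf{u}}_h$, which is harmless here; everything else follows directly from Lemma \ref{estimate2} and the stability properties \eqref{dcon-at}, \eqref{dcnt-st}, \eqref{dcon-cv}.
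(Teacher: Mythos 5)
Your proposal is correct and follows essentially the same route as the paper: the splitting $\theta_h=\theta_h^0+\theta_h^\partial$ with a bounded discrete lifting, Lax--Milgram on the reduced homogeneous problem via \eqref{est3}, \eqref{est2} and \eqref{dcnt-st}, testing with $\theta_h^0$, and reassembling the bound through $\vertiii{\theta_h}\le\vertiii{\theta_h^0}+\vertiii{\theta_h^\partial}$ and \eqref{est4}. The only cosmetic difference is that the paper realizes the lifting concretely as the discrete harmonic extension of $\theta_{D,h}$ (its Step 1), whereas you postulate the bounded lifting and defer its construction to standard arguments, which is an acceptable shortcut.
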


\begin{proof}
We will derive the inequality \eqref{dd2-f0} in the following two steps: \newline
\noindent \textbf{Step 1.} Let $\theta^\partial_h \in \Sigma_{D,h}$ be the solution of problem 
\begin{align}
	\begin{cases}
		\text{find}\,\, \theta^\partial_h \in \Sigma_{D,h},\,\, \text{such that} \\
		\int_\Omega \nabla \theta^\partial_h \cdot \nabla \psi_h =0 \quad \text{for all} \, \psi_h \in \Sigma_{0,h}. \nonumber
	\end{cases}
\end{align}
Let $\theta^\partial_h:= \theta^\partial_{h,0} + \theta^\partial_{h,1}$ such that $\theta^\partial_{h,1}$ be a discrete lifting of $\theta^\partial_{h} \in \Sigma_{D,h}$, i.e., $\theta^\partial_{{h,1}{|\partial \Omega}}=\theta_{D,h}$. Moreover, it holds
\begin{align}
	\|\theta^\partial_{h,1}\|_{1,\Omega} \leq C_{0} \|\theta_{D,h}\|_{H^{1/2}(\partial \Omega)}, \label{dd2-f2}		
\end{align}	
where $C_0>0$ depends on $\Omega$.	\newline
Employing this decomposition of $\theta^\partial_{h}$, and applying the Cauchy-Schwarz inequality, we infer
\begin{align}
	\|\nabla \theta^\partial_{h,0}\|_\Omega \leq \|\nabla \theta^\partial_{h,1} \|_\Omega. \label{dd2-f1}
\end{align}
Recalling the Poincar$\acute{\text{e}}$ inequality on $\Sigma_{0,h}$, yields $\|\theta^\partial_{h,0}\|_{1,\Omega} \leq (1+C_p) \| \nabla\theta^\partial_{h,0}\|_{0,\Omega}$. Therefore, combining the above, it follows that
\begin{align}
	\|\theta^\partial_{h}\|_{1,\Omega} & \leq (1+C_p) \| \nabla\theta^\partial_{h,0}\|_{0,\Omega} +  \|\theta^\partial_{h,1}\|_{1,\Omega}  \leq \big( 2 + C_p\big) \|\theta^\partial_{h,1}\|_{1,\Omega} \leq  C_{\Omega} \|\theta_{D,h}\|_{H^{1/2}(\Omega)}, \label{dbound}
\end{align}	
where $C_{\Omega}:= C_0 \big( 2 + C_p\big)$. Note that the existence and uniqueness of $\theta^\partial_h \in \Sigma_{D,h}$ is guaranteed by Lax-Milgram lemma, which satisfies \eqref{dbound}. \newline
	\noindent \textbf{Step 2.} Recalling the decomposition of $\theta_h= \theta^0_h+ \theta^\partial_h$, the decoupled problem \eqref{dnvem2} can be given as follows
\begin{align}
	\begin{cases}
		\text{find}\,\, \theta^0_h \in \Sigma_{0,h},\,\, \text{such that} \\
		A_{T,h}(\widehat{\theta}_h, \widehat{\mathbf{u}}_h; \theta^0_h, \psi_h)= l(\psi_h) \quad \text{for all} \,\, \psi_h \in \Sigma_{0,h}, \label{td2}
	\end{cases}
\end{align}
where $l(\psi_h):=(\mathcal{Q}_h, \psi_h) - A_{T,h}(\widehat{\theta}_h, \widehat{\mathbf{u}}_h; \theta^\partial_h, \psi_h)$.
Now replacing $\psi_h$ by $\theta^0_h$ and employing \eqref{est3}, gives
\begin{align}
	A_{T,h}(\widehat{\theta}_h, \widehat{\mathbf{u}}_h; \theta^0_h, \theta^0_h) \geq C_{o,T} \vertiii{\theta_0}^2. \nonumber
\end{align}
Note that $l(\psi_h)$ is a bounded linear functional on $\Sigma_{0,h}$. Then, using the Lax-Milgram Lemma, the problem \eqref{td2} has a unique solution $\theta^0_h \in \Sigma_{0,h}$. Consequently, problem \eqref{dnvem2} has a unique solution $\theta_h \in \Sigma_{D,h}$. Furthermore, replacing $\psi_h$ by $\theta^0_h$, we obtain
\begin{align}
	C_{o,T} \vertiii{\theta^0_h}^2 &\leq A_{T,h}(\widehat{\theta}_h, \widehat{\mathbf{u}}_h; \theta^0_h, \theta^0_h) \nonumber \\
	&\leq C_p\|\mathcal{Q}\|_{0,\Omega} \|\nabla \theta^0_h\|_{0,\Omega} + |a_{T,h}(\widehat{\theta}_h; \theta^\partial_h, \theta^0_h) + \mathcal{L}_{T,h}( \theta^\partial_h, \theta^0_h)|+ |c^S_{T,h}(\widehat{\mathbf{u}}_h; \theta^\partial_h, \theta^0_h)| \nonumber\\
	&\leq \big[C_p\|\mathcal{Q}\|_{0,\Omega} + \big(C_{\kappa,\lambda} + \widehat{N_0} \vertiii{(\widehat{\mathbf{u}}_h, p_h)} \big) C_{\Omega} \| \theta_{D,h}\|_{H^{1/2}(\partial \Omega)} \big] \|\nabla \theta^0_h\|_{0,\Omega}, \label{d2well3}
\end{align}
where the last line is obtained using \eqref{dcnt-st}, \eqref{est2} and \eqref{dbound}. From \eqref{est4}, we can write
\begin{align}
	\vertiii{\theta^\partial_h} \leq C_\lambda \|\nabla \theta^\partial_h\|_{0,\Omega} \leq C_\lambda C_\Omega \|\theta_{D,h}\|_{H^{1/2}(\partial\Omega)}. \label{d2well4}
\end{align}

Thus, the bound \eqref{dd2-f0} is evident from \eqref{d2well3}, \eqref{d2well4}
and using the classical polynomial interpolation estimates.  
\end{proof}

\subsection{Well-posedness of the coupled problem}

Hereafter, we aim to find a convex and compact ball to satisfy the hypothesis of the Brouwer fixed-point theorem.
\begin{lemma} \label{fball}
Let $(\mathbf{u}_h, p_h, \theta_h) \in \mathbf{V}_h \times Q_h \times \Sigma_{D,h}$ be any solution of \eqref{nvem} with $\theta_h=\theta^0_h + \theta_h^\partial$, where $\theta^0_h \in \Sigma_{0,h}$ and $\theta^\partial_h \in \Sigma_{D,h}$. Furthermore, we assume the following assumption on the data:
\begin{align}
	\widetilde{C}_\Omega C_\alpha C^{-1}_{o,V}C^{-1}_{o,T} \widehat{N_0} \|\mathbf{g}\|_{0,\Omega} \|\theta_{D}\|_{H^{1/2}(\partial \Omega)} \leq \frac{1}{2}. \label{cond1}
\end{align}
Then, there exist constants $\widehat{C}_{\mathbf{f}, \mathbf{g},\mathcal{Q}}$, $\widehat{C}_\mathbf{u}$ and $\widehat{C}_\theta$ such that
\begin{align}
	\vertiii{(\mathbf{u}_h,p_h)} \leq \widehat{C}_{\mathbf{f}, \mathbf{g},\mathcal{Q}} + \widehat{C}_\mathbf{u} \|\theta_{D}\|_{H^{1/2}(\partial \Omega)}, \qquad \vertiii{\theta_h} \leq C_p C^{-1}_{0,T} \|\mathcal{Q}\|_{0,\Omega} + \widehat{C}_\theta \|\theta_{D}\|_{H^{1/2}(\partial \Omega)},  
\end{align}
where the constants are given by
\begin{align}
	\widehat{C}_{\mathbf{f}, \mathbf{g},\mathcal{Q}}&:= 2C^{-1}_{o,V}C_\alpha \big[ \|\mathbf{f}\|_{0,\Omega} +  C_p C^{-1}_{o,T} \|\mathbf{g}\|_{0,\Omega}  \|\mathcal{Q}\|_{0,\Omega} \big], \nonumber \\
	\widehat{C}_\mathbf{u}&:= 2  \widetilde{C}_\Omega C_\alpha C^{-1}_{o,V}C^{-1}_{o,T} \|\mathbf{g}\|_{0,\Omega} \big(C_{k,\lambda} + C_\lambda C_{o,T}\big), \nonumber\\
	\widehat{C}_\theta&:= \widetilde{C}_\Omega C^{-1}_{o,T}\big(2(C_{k,\lambda} + C_\lambda C_{o,T}) + \widehat{N_0} \widehat{C}_{\mathbf{f},\mathbf{g}, \mathcal{Q}} \big). \nonumber
\end{align}
\end{lemma}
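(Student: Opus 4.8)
The plan is to leverage the fact that a solution of the coupled problem is simultaneously a solution of the two decoupled subproblems with the ``given'' data taken to be the solution itself. Concretely, if $(\mathbf{u}_h,p_h,\theta_h)\in\mathbf{V}_h\times Q_h\times\Sigma_{D,h}$ solves \eqref{nvem}, then comparing the equations shows that $(\mathbf{u}_h,p_h)$ solves the Oseen problem \eqref{dnvem1} with $\widehat{\theta}_h=\theta_h$ and $\widehat{\mathbf{u}}_h=\mathbf{u}_h$, while $\theta_h$ solves the transport problem \eqref{dnvem2} with $\widehat{\mathbf{u}}_h=\mathbf{u}_h$, $\widehat{\theta}_h=\theta_h$, $\widehat{p}_h=p_h$. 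Plugging these choices into the a priori bounds \eqref{d1b0} of Lemma \ref{lm-ddV} and \eqref{dd2-f0} of Lemma \ref{lm-ddT} produces the coupled pair
$\vertiii{(\mathbf{u}_h,p_h)}\leq C^{-1}_{o,V}C_\alpha(\|\mathbf{f}\|_{0,\Omega}+\|\mathbf{g}\|_{0,\Omega}\vertiii{\theta_h})$ and
$\vertiii{\theta_h}\leq C^{-1}_{o,T}\big[C_p\|\mathcal{Q}\|_{0,\Omega}+\widetilde{C}_\Omega(C_{k,\lambda}+C_\lambda C_{o,T}+\widehat{N_0}\vertiii{(\mathbf{u}_h,p_h)})\|\theta_D\|_{H^{1/2}(\partial\Omega)}\big]$, in which each of the two energies appears on the right-hand side through the other. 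The rest of the argument is a bootstrap of this $2\times2$ system using the smallness hypothesis \eqref{cond1}.

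First I would substitute the temperature inequality into the velocity--pressure inequality. This yields $\vertiii{(\mathbf{u}_h,p_h)}$ on both sides, with self-coefficient exactly $\widetilde{C}_\Omega C_\alpha C^{-1}_{o,V}C^{-1}_{o,T}\widehat{N_0}\|\mathbf{g}\|_{0,\Omega}\|\theta_D\|_{H^{1/2}(\partial\Omega)}$, which by \eqref{cond1} is at most $\tfrac12$. Absorbing that term into the left-hand side, multiplying by $2$, and grouping the remaining contributions to match the definitions of $\widehat{C}_{\mathbf{f},\mathbf{g},\mathcal{Q}}$ (the pieces carrying $\|\mathbf{f}\|_{0,\Omega}$ and $\|\mathbf{g}\|_{0,\Omega}\|\mathcal{Q}\|_{0,\Omega}$) and $\widehat{C}_\mathbf{u}$ (the piece carrying $\|\mathbf{g}\|_{0,\Omega}(C_{k,\lambda}+C_\lambda C_{o,T})$) delivers the data-only bound $\vertiii{(\mathbf{u}_h,p_h)}\leq\widehat{C}_{\mathbf{f},\mathbf{g},\mathcal{Q}}+\widehat{C}_\mathbf{u}\|\theta_D\|_{H^{1/2}(\partial\Omega)}$, which is the first asserted estimate.

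The step that needs the most care is the final one: inserting this affine-in-$\|\theta_D\|_{H^{1/2}(\partial\Omega)}$ bound for $\vertiii{(\mathbf{u}_h,p_h)}$ back into the temperature inequality creates, through the product $\widehat{N_0}\widehat{C}_\mathbf{u}\|\theta_D\|^2_{H^{1/2}(\partial\Omega)}$, a term quadratic in $\|\theta_D\|_{H^{1/2}(\partial\Omega)}$, whereas the target bound is linear. The remedy is to invoke \eqref{cond1} a second time: rewrite $\widetilde{C}_\Omega C^{-1}_{o,T}\widehat{N_0}\widehat{C}_\mathbf{u}\|\theta_D\|^2_{H^{1/2}(\partial\Omega)}$ as $2\big(\widetilde{C}_\Omega C_\alpha C^{-1}_{o,V}C^{-1}_{o,T}\widehat{N_0}\|\mathbf{g}\|_{0,\Omega}\|\theta_D\|_{H^{1/2}(\partial\Omega)}\big)\cdot\widetilde{C}_\Omega C^{-1}_{o,T}(C_{k,\lambda}+C_\lambda C_{o,T})\|\theta_D\|_{H^{1/2}(\partial\Omega)}$ and bound the parenthesised factor by $\tfrac12$, so the quadratic term is dominated by $\widetilde{C}_\Omega C^{-1}_{o,T}(C_{k,\lambda}+C_\lambda C_{o,T})\|\theta_D\|_{H^{1/2}(\partial\Omega)}$. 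Collecting the three resulting multiples of $\|\theta_D\|_{H^{1/2}(\partial\Omega)}$ reproduces precisely $\widehat{C}_\theta=\widetilde{C}_\Omega C^{-1}_{o,T}\big(2(C_{k,\lambda}+C_\lambda C_{o,T})+\widehat{N_0}\widehat{C}_{\mathbf{f},\mathbf{g},\mathcal{Q}}\big)$, while the $\|\mathcal{Q}\|_{0,\Omega}$-term retains its coefficient $C_pC^{-1}_{o,T}$ since it is never involved in an absorption. The only remaining bookkeeping is to recall that the lifting constant $C_\Omega$, the Poincaré constant, and the interpolation estimate $\|\theta_{D,h}\|_{H^{1/2}(\partial\Omega)}\le C\|\theta_D\|_{H^{1/2}(\partial\Omega)}$ used in the proof of Lemma \ref{lm-ddT} have all been absorbed into $\widetilde{C}_\Omega$; no further ingredients are needed.
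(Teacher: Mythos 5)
Your proposal is correct and follows essentially the same route as the paper's proof: both treat the coupled solution as a solution of the two decoupled problems, invoke the a priori bounds of Lemmas \ref{lm-ddV} and \ref{lm-ddT} to obtain the $2\times 2$ system, absorb the self-term in the velocity--pressure estimate via \eqref{cond1}, and then tame the quadratic $\|\theta_D\|^2_{H^{1/2}(\partial\Omega)}$ term by a second application of \eqref{cond1} together with the definition of $\widehat{C}_\mathbf{u}$. The bookkeeping of the constants matches the paper's exactly, so no further comment is needed.
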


\begin{proof}
Let $(\mathbf{u}_h,p_h,\theta_h) \in \mathbf{V}_h \times Q_h \times \Sigma_{D,h}$ be any solution of the problem \eqref{nvem}. 
Employing Lemma \ref{lm-ddV}, we obtain
\begin{align}
	\vertiii{(\mathbf{u}_h, p_h)} \leq C^{-1}_{o,V}C_\alpha \big[ \|\mathbf{f}\|_{0,\Omega} +  \|\mathbf{g}\|_{0,\Omega} \vertiii{\theta_h} \big]. \label{ball1}
\end{align}
Again, replacing $\psi_h \in \Sigma_{0,h}$ by $\theta^0_h \in \Sigma_{0,h}$, there holds that
\begin{align}
	A_{T,h}(\theta_h, \mathbf{u}_h; \theta^0_h,\theta^0_h) = (\mathcal{Q}, \theta^0_h) -A_{T,h}(\theta_h, \mathbf{u}_h; \theta^\partial_h,\theta^0_h).\nonumber
\end{align}
Applying Lemma \ref{lm-ddT}, we arrive at
\begin{align}
	\vertiii{\theta_h} \leq C^{-1}_{o,T} \big[ C_p \|\mathcal{Q}\|_{0,\Omega} + \widetilde{C}_\Omega \big(C_{k,\lambda} + C_\lambda C_{o,T} + \widehat{N_0} \vertiii{(\mathbf{u}_h, p_h)}\big)  \|\theta_{D}\|_{H^{1/2}( \partial \Omega)}\big].  \label{balll2}
\end{align}
We now use \eqref{balll2} in \eqref{ball1}, we infer
\begin{align}
	\vertiii{(\mathbf{u}_h, p_h)} &\leq C^{-1}_{o,V}C_\alpha \big[ \|\mathbf{f}\|_{0,\Omega} +  C^{-1}_{o,T} \|\mathbf{g}\|_{0,\Omega}  \big( C_p \|\mathcal{Q}\|_{0,\Omega} + \widetilde{C}_\Omega \big(C_{k,\lambda} + C_\lambda C_{o,T}\, + \nonumber \\
	& \qquad  \widehat{N_0} \vertiii{(\mathbf{u}_h, p_h)}\big)  \|\theta_{D}\|_{H^{1/2}(\partial \Omega)}\big) \big]. \nonumber 
	\intertext{Employing \eqref{cond1}, we arrive at}
	\vertiii{(\mathbf{u}_h, p_h)} &\leq 2C^{-1}_{o,V}C_\alpha \big[ \|\mathbf{f}\|_{0,\Omega} + C^{-1}_{o,T} \|\mathbf{g}\|_{0,\Omega}  \big( C_p \|\mathcal{Q}\|_{0,\Omega} + \widetilde{C}_\Omega \big(C_{k,\lambda} + C_\lambda C_{o,T} \big)\big)  \|\theta_{D}\|_{H^{1/2}(\partial \Omega)}\big) \big] \nonumber \\
	&\leq 2C^{-1}_{o,V}C_\alpha \big[ \|\mathbf{f}\|_{0,\Omega} +  C_p C^{-1}_{o,T} \|\mathbf{g}\|_{0,\Omega}  \|\mathcal{Q}\|_{0,\Omega} \big] \nonumber \\
	& \qquad + 2 \widetilde{C}_\Omega C_\alpha C^{-1}_{o,V}C^{-1}_{o,T} \|\mathbf{g}\|_{0,\Omega} \big(C_{k,\lambda} + C_\lambda C_{o,T}\big)  \|\theta_{D}\|_{H^{1/2}(\partial \Omega)} \nonumber \\
	& \leq \widehat{C}_{\mathbf{f},\mathbf{g}, \mathcal{Q}} + \widehat{C}_\mathbf{u} \|\theta_{D}\|_{H^{1/2}(\partial \Omega)}. \label{ball3}
\end{align}
Substituting \eqref{ball3} in \eqref{balll2}, it holds that 
\begin{align}
	\vertiii{\theta_h} &\leq C^{-1}_{o,T} \big[ C_p \|\mathcal{Q}\|_{0,\Omega} + \widetilde{C}_\Omega \big(C_{k,\lambda} + C_\lambda C_{o,T} + \widehat{N_0} \widehat{C}_{\mathbf{f},\mathbf{g}, \mathcal{Q}} \big)  \|\theta_{D}\|_{H^{1/2}(\partial \Omega)} \nonumber \\ & \qquad +  \widetilde{C}_\Omega \widehat{N_0} \widehat{C}_\mathbf{u} \|\theta_{D}\|_{H^{1/2}(\partial \Omega)} \|\theta_{D}\|_{H^{1/2}(\partial \Omega)}\big]. \nonumber
\end{align}
Using \eqref{cond1} and the definition of $\widehat{C}_\mathbf{u}$, we get
\begin{align}
	\vertiii{\theta_h} &\leq C^{-1}_{o,T} \big[ C_p \|\mathcal{Q}\|_{0,\Omega} + \widetilde{C}_\Omega \big(2(C_{k,\lambda} + C_\lambda C_{o,T}) + \widehat{N_0} \widehat{C}_{\mathbf{f},\mathbf{g}, \mathcal{Q}} \big)  \|\theta_{D}\|_{H^{1/2}(\partial \Omega)}\big] . \nonumber 
\end{align}
which completes the proof. 
\end{proof}

Hereafter, we introduce the following compact and convex ball, defined by
\begin{align}
\mathcal{M} :=\Big\{(\widehat{\mathbf{u}}_h, \widehat{p}_h, \widehat{\theta}_h) \in \mathbf{V}_h \times Q_h \times \Sigma_{D,h}& \,\, \text{such that} \,\, \vertiii{(\widehat{\mathbf{u}}_h, \widehat{p}_h)} \leq \widehat{C}_{\mathbf{f}, \mathbf{g},\mathcal{Q}} + \widehat{C}_\mathbf{u} \|\theta_{D}\|_{H^{1/2}(\partial \Omega)}, \nonumber \\ &\vertiii{\widehat{\theta}_h} \leq C_p C^{-1}_{0,T} \|\mathcal{Q}\|_{0,\Omega} + \widehat{C}_\theta \|\theta_{D}\|_{H^{1/2}(\partial \Omega)} \Big\}. \label{defball}
\end{align}
We now define a mapping $\mathbb K : \mathcal{M} \rightarrow \mathcal{M} $ such that
\begin{align}
(\widehat{\mathbf{u}}_h,\widehat{{p}}_h, \widehat{\theta}_h) \rightarrow \mathbb K((\widehat{\mathbf{u}}_h,\widehat{{p}}_h, \widehat{\theta}_h)) = ({\mathbf{u}}_h,{{p}}_h, {\theta}_h), \nonumber
\end{align}
where $({\mathbf{u}}_h,{{p}}_h, {\theta}_h)$ denotes the solution of the decoupled problems \eqref{dnvem1} and \eqref{dnvem2}.

\begin{lemma} \label{selfcont}
Under the assumptions of Lemma \ref{fball}, for any $(\widehat{\mathbf{u}}_h,\widehat{{p}}_h, \widehat{\theta}_h) \in \mathcal{M}$, the mapping $\mathbb K$ is well-defined and $ \mathbb{K}(\mathcal{M}) \subset \mathcal{M}$.
\end{lemma}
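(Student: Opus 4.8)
The plan is to verify the two assertions in turn. For well-definedness, I would invoke the results already established for the decoupled subproblems: given any $(\widehat{\mathbf{u}}_h,\widehat{p}_h,\widehat{\theta}_h)\in\mathcal{M}$, Lemma \ref{lm-ddV} produces a unique $(\mathbf{u}_h,p_h)\in\mathbf{V}_h\times Q_h$ solving \eqref{dnvem1}, while Lemma \ref{lm-ddT} produces a unique $\theta_h\in\Sigma_{D,h}$ solving \eqref{dnvem2}; hence $\mathbb{K}((\widehat{\mathbf{u}}_h,\widehat{p}_h,\widehat{\theta}_h))=(\mathbf{u}_h,p_h,\theta_h)$ is a well-defined element of $\mathbf{V}_h\times Q_h\times\Sigma_{D,h}$.

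For the self-mapping property $\mathbb{K}(\mathcal{M})\subset\mathcal{M}$, I would first bound $\vertiii{\theta_h}$. Starting from the a priori estimate \eqref{dd2-f0} of Lemma \ref{lm-ddT} and replacing $\vertiii{(\widehat{\mathbf{u}}_h,\widehat{p}_h)}$ by its $\mathcal{M}$-membership bound $\widehat{C}_{\mathbf{f},\mathbf{g},\mathcal{Q}}+\widehat{C}_{\mathbf{u}}\|\theta_D\|_{H^{1/2}(\partial\Omega)}$, one gets, besides the required linear terms, a term quadratic in $\|\theta_D\|_{H^{1/2}(\partial\Omega)}$ with coefficient $C^{-1}_{o,T}\widetilde{C}_\Omega\widehat{N_0}\widehat{C}_{\mathbf{u}}$. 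Unpacking the definition of $\widehat{C}_{\mathbf{u}}$ exhibits in this coefficient the factor $\widetilde{C}_\Omega C_\alpha C^{-1}_{o,V}C^{-1}_{o,T}\widehat{N_0}\|\mathbf{g}\|_{0,\Omega}\|\theta_D\|_{H^{1/2}(\partial\Omega)}$, which is $\le\tfrac12$ by the smallness hypothesis \eqref{cond1}; absorbing it leaves exactly $\vertiii{\theta_h}\le C_pC^{-1}_{o,T}\|\mathcal{Q}\|_{0,\Omega}+\widehat{C}_\theta\|\theta_D\|_{H^{1/2}(\partial\Omega)}$, i.e.\ the second constraint defining $\mathcal{M}$.

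Next I would bound $\vertiii{(\mathbf{u}_h,p_h)}$ using \eqref{d1b0} together with \eqref{est1}, so that $\vertiii{(\mathbf{u}_h,p_h)}\le C^{-1}_{o,V}C_\alpha\big(\|\mathbf{f}\|_{0,\Omega}+\|\mathbf{g}\|_{0,\Omega}\vertiii{\widehat{\theta}_h}\big)$. Inserting the $\mathcal{M}$-bound on $\vertiii{\widehat{\theta}_h}$, the $\mathbf{f}$- and $\mathcal{Q}$-dependent part reproduces exactly $\tfrac12\widehat{C}_{\mathbf{f},\mathbf{g},\mathcal{Q}}$, while the $\widehat{C}_\theta$-dependent part splits, using the definition of $\widehat{C}_\theta$, into $\widehat{C}_{\mathbf{u}}\|\theta_D\|_{H^{1/2}(\partial\Omega)}$ plus $\widehat{C}_{\mathbf{f},\mathbf{g},\mathcal{Q}}$ times the very factor controlled by \eqref{cond1}, hence at most $\tfrac12\widehat{C}_{\mathbf{f},\mathbf{g},\mathcal{Q}}$. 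Summing the pieces gives the first constraint defining $\mathcal{M}$, so $(\mathbf{u}_h,p_h,\theta_h)\in\mathcal{M}$ and $\mathbb{K}(\mathcal{M})\subset\mathcal{M}$.

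I do not expect a serious obstacle here: this is essentially the computation already carried out in Lemma \ref{fball}, but reorganized so that the estimates are fed from the ``hat'' data rather than self-referentially. The only delicate point is that the $\mathcal{M}$-membership hypotheses introduce terms quadratic in $\|\theta_D\|_{H^{1/2}(\partial\Omega)}$ which must be reabsorbed into the linear bounds, and the data restriction \eqref{cond1} is precisely what makes this possible --- it has to be invoked once in each of the two estimates to re-close the defining inequalities of $\mathcal{M}$ with the stated constants.
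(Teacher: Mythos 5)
Your proposal is correct and follows essentially the same route as the paper: well-definedness from Lemmas \ref{lm-ddV} and \ref{lm-ddT}, then feeding the $\mathcal{M}$-membership bounds on the ``hat'' data into \eqref{d1b0} and \eqref{dd2-f0}, unpacking $\widehat{C}_\theta$ and $\widehat{C}_{\mathbf{u}}$, and invoking the smallness condition \eqref{cond1} once in each estimate to absorb the quadratic $\|\theta_D\|_{H^{1/2}(\partial\Omega)}$ terms and re-close the defining inequalities of $\mathcal{M}$. The only (immaterial) difference is the order in which the two estimates are carried out.
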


\begin{proof}
From Lemmas \eqref{lm-ddV} and \eqref{lm-ddT}, map $\mathbb K$ is well-defined. Let $(\widehat{\mathbf{u}}_h,\widehat{{p}}_h, \widehat{\theta}_h) \in \mathcal{M}$  such that $K((\widehat{\mathbf{u}}_h,\widehat{{p}}_h, \widehat{\theta}_h)) = ({\mathbf{u}}_h,{{p}}_h, {\theta}_h)$ is a solution of the decoupled problems \eqref{dnvem1} and \eqref{dnvem2}. We derive the second part in two steps: \newline
\textbf{Step 1.} Since for given $(\widehat{\mathbf{u}}_h,\widehat{{p}}_h, \widehat{\theta}_h) \in \mathcal{M}$, $ ({\mathbf{u}}_h,{{p}}_h)$ is a solution of \eqref{dnvem1}, Lemma \ref{lm-ddV} gives that
\begin{align}
	\vertiii{(\mathbf{u}_h, p_h)} &\leq C^{-1}_{o,V}C_\alpha \big[ \|\mathbf{f}\|_{0,\Omega} +  \|\mathbf{g}\|_{0,\Omega} \big( C_p C^{-1}_{0,T} \|\mathcal{Q}\|_{0,\Omega} + \widehat{C}_\theta \|\theta_{D}\|_{H^{1/2}(\partial \Omega)}   \big) \big] \nonumber \\
	&\leq \frac{1}{2} \widehat{C}_{\mathbf{f}, \mathbf{g}, \mathcal{Q}} +  C^{-1}_{o,V} C_\alpha  \widehat{C}_\theta \|\mathbf{g}\|_{0,\Omega} \|\theta_{D}\|_{H^{1/2}(\partial \Omega)} \nonumber \\
	&\leq \frac{1}{2} \widehat{C}_{\mathbf{f}, \mathbf{g}, \mathcal{Q}} + C_\alpha C^{-1}_{o,V} \|\mathbf{g}\|_{0,\Omega} \big[\widetilde{C}_\Omega C^{-1}_{o,T}\big(2(C_{k,\lambda} + C_\lambda C_{o,T}) + \widehat{N_0} \widehat{C}_{\mathbf{f},\mathbf{g}, \mathcal{Q}} \big)\big] \|\theta_{D}\|_{H^{1/2}(\partial \Omega)} \nonumber \\
	&\leq \frac{1}{2} \widehat{C}_{\mathbf{f}, \mathbf{g}, \mathcal{Q}} + 2 \widetilde{C}_\Omega C_\alpha C^{-1}_{o,V} C^{-1}_{o,T} \|\mathbf{g}\|_{0,\Omega} (C_{k,\lambda} + C_\lambda C_{o,T}) \|\theta_{D}\|_{H^{1/2}(\partial \Omega)} \nonumber \\ 
	& \qquad + \big( \widetilde{C}_\Omega C_\alpha C^{-1}_{o,V} C^{-1}_{o,T} \|\mathbf{g}\|_{0,\Omega} \widehat{N_0}  \|\theta_{D}\|_{H^{1/2}(\partial \Omega)}) \widehat{C}_{\mathbf{f},\mathbf{g}, \mathcal{Q}}. \nonumber
	\intertext{Using \eqref{cond1} and  Lemma \ref{fball}, we can conclude that } 
	\vertiii{(\mathbf{u}_h, p_h)} & \leq \frac{1}{2} \widehat{C}_{\mathbf{f}, \mathbf{g}, \mathcal{Q}} + \widehat{C}_\mathbf{u} \|\theta_{D}\|_{H^{1/2}(\partial \Omega)} + \frac{1}{2} \widehat{C}_{\mathbf{f}, \mathbf{g}, \mathcal{Q}}. \label{cond2}
\end{align}
\textbf{Step 2.}  Since for given $(\widehat{\mathbf{u}}_h,\widehat{{p}}_h, \widehat{\theta}_h) \in \mathcal{M}$, $ \theta_h$ is a solution of \eqref{dnvem2}, Lemma \ref{lm-ddT} and Lemma \ref{fball} claim that
\begin{align}
	\vertiii{\theta_h} &\leq C^{-1}_{o,T} \big[ C_p \|\mathcal{Q}\|_{0,\Omega} + \widetilde{C}_\Omega \big(C_{k,\lambda} + C_\lambda C_{o,T} \big)\|\theta_{D}\|_{H^{1/2}(\Omega)}\big] + \widetilde{C}_\Omega C^{-1}_{o,T} \widehat{N_0} \vertiii{(\widehat{\mathbf{u}}_h, \widehat{{p}}_h)}  \|\theta_{D}\|_{H^{1/2}(\Omega)} \nonumber \\
	&\leq C^{-1}_{o,T} \big[ C_p \|\mathcal{Q}\|_{0,\Omega} + \widetilde{C}_\Omega \big(C_{k,\lambda} + C_\lambda C_{o,T} +  \widehat{N_0} C_{\mathbf{f}, \mathbf{g},\mathcal{Q}}\big) \|\theta_{D}\|_{H^{1/2}(\Omega)}\big] \, + \nonumber \\ & \qquad \widetilde{C}_\Omega C^{-1}_{o,T} \widehat{N_0} \widehat{C}_{\mathbf{u}}  \|\theta_{D}\|^2_{H^{1/2}(\Omega)}. \nonumber 
	\intertext{Using the definition of $\widehat{C}_\mathbf{u}$ and \eqref{cond1}, we have}
	\vertiii{\theta_h} &\leq C^{-1}_{o,T} \big[ C_p \|\mathcal{Q}\|_{0,\Omega} + \widetilde{C}_\Omega \big(C_{k,\lambda} + C_\lambda C_{o,T} +  \widehat{N_0} C_{\mathbf{f}, \mathbf{g},\mathcal{Q}}\big) \|\theta_{D}\|_{H^{1/2}(\Omega)}\big] \, + \nonumber \\ & \qquad \widetilde{C}_\Omega C^{-1}_{o,T} \big(C_{k,\lambda} + C_\lambda C_{o,T} \big)  \|\theta_{D}\|_{H^{1/2}(\Omega)} \nonumber \\
	&\leq C^{-1}_{o,T} \big[ C_p \|\mathcal{Q}\|_{0,\Omega} + \widetilde{C}_\Omega \big(2(C_{k,\lambda} + C_\lambda C_{o,T}) +  \widehat{N_0} C_{\mathbf{f}, \mathbf{g},\mathcal{Q}}\big) \|\theta_{D}\|_{H^{1/2}(\Omega)}\big]. \label{cond3}
\end{align}
Thus, the bounds \eqref{cond2} and \eqref{cond3} guarantee that $\mathbb K(\mathcal{M}) \subset \mathcal{M}$. 
\end{proof}
\begin{lemma} \label{continuity}
The mapping $\mathbb K$ is a continuous map on $\mathcal{M}$.
\end{lemma}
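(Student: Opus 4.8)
The plan is to show that if $(\widehat{\mathbf{u}}_h^n, \widehat{p}_h^n, \widehat{\theta}_h^n) \to (\widehat{\mathbf{u}}_h, \widehat{p}_h, \widehat{\theta}_h)$ in $\mathcal{M}$ (with respect to the product energy norm), then the images $({\mathbf{u}}_h^n, {p}_h^n, {\theta}_h^n) = \mathbb{K}(\widehat{\mathbf{u}}_h^n, \widehat{p}_h^n, \widehat{\theta}_h^n)$ converge to $\mathbb{K}(\widehat{\mathbf{u}}_h, \widehat{p}_h, \widehat{\theta}_h) = ({\mathbf{u}}_h, {p}_h, {\theta}_h)$. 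Because $\mathcal{M}$ is a closed, bounded subset of a finite-dimensional space, the images stay in a bounded set and it suffices to prove convergence along subsequences; equivalently, I will directly estimate $\vertiii{(\mathbf{u}_h^n - \mathbf{u}_h, p_h^n - p_h)}$ and $\vertiii{\theta_h^n - \theta_h}$ in terms of the data differences $\widehat{\mathbf{u}}_h^n - \widehat{\mathbf{u}}_h$ and $\widehat{\theta}_h^n - \widehat{\theta}_h$. The two decoupled problems are handled separately: first the Oseen-type velocity--pressure problem \eqref{dnvem1}, then the temperature transport problem \eqref{dnvem2}.

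For the velocity--pressure part, I would subtract the equation \eqref{dnvem1} satisfied by $({\mathbf{u}}_h^n, {p}_h^n)$ (with data $\widehat{\theta}_h^n, \widehat{\mathbf{u}}_h^n$) from the one satisfied by $({\mathbf{u}}_h, {p}_h)$ (with data $\widehat{\theta}_h, \widehat{\mathbf{u}}_h$), and test with $(\mathbf{v}_h, q_h) = (\mathbf{u}_h^n - \mathbf{u}_h, p_h^n - p_h)$. After adding and subtracting appropriate intermediate terms, the left-hand side is controlled below by $C_{o,V}\vertiii{(\mathbf{u}_h^n - \mathbf{u}_h, p_h^n - p_h)}^2$ via the coercivity/inf-sup bound \eqref{d1well1} of Lemma \ref{wellposed-1}, while the right-hand side collects: (i) the difference $a_{V,h}(\widehat{\theta}_h^n; \cdot,\cdot) - a_{V,h}(\widehat{\theta}_h; \cdot,\cdot)$, bounded by Lemma \ref{vacca} in terms of $\|\nabla(\widehat{\theta}_h^n - \widehat{\theta}_h)\|_{0,\Omega}$ (here the $W^{1,3}$ factor $\|\nabla \mathbf{u}_h\|_{0,3,\Omega}$ is finite since $\mathbf{u}_h$ lives in a fixed finite-dimensional space, so inverse inequalities bound it by $\vertiii{(\mathbf{u}_h,p_h)}$, itself bounded on $\mathcal{M}$); (ii) the convective difference $c^S_{V,h}(\widehat{\mathbf{u}}_h^n; \cdot,\cdot) - c^S_{V,h}(\widehat{\mathbf{u}}_h; \cdot,\cdot)$, bounded by \eqref{dcnt-sv} times $\|\nabla(\widehat{\mathbf{u}}_h^n - \widehat{\mathbf{u}}_h)\|_{0,\Omega}$; (iii) the load difference $F_{h,\widehat{\theta}_h^n}(\cdot) - F_{h,\widehat{\theta}_h}(\cdot) = (\alpha \mathbf{g}\,\Pi^{0,E}_k(\widehat{\theta}_h^n - \widehat{\theta}_h), \boldsymbol\Pi^{0,E}_k \cdot)$, bounded using \eqref{lqstab} and the Sobolev embedding as in Lemma \ref{estimate2}. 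Dividing through, this yields $\vertiii{(\mathbf{u}_h^n - \mathbf{u}_h, p_h^n - p_h)} \le C\big(\|\nabla(\widehat{\theta}_h^n - \widehat{\theta}_h)\|_{0,\Omega} + \|\nabla(\widehat{\mathbf{u}}_h^n - \widehat{\mathbf{u}}_h)\|_{0,\Omega}\big) \to 0$.

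For the temperature part, I proceed analogously: subtract the two instances of \eqref{dnvem2}, noting that $\theta_h^n - \theta_h \in \Sigma_{0,h}$ (the boundary data $\theta_{D,h}$ is the same), and test with $\psi_h = \theta_h^n - \theta_h$. Coercivity \eqref{est3} gives $C_{o,T}\vertiii{\theta_h^n - \theta_h}^2$ on the left. On the right I get: the diffusion difference $a_{T,h}(\widehat{\theta}_h^n; \cdot,\cdot) - a_{T,h}(\widehat{\theta}_h; \cdot,\cdot)$, controlled by Lemma \ref{vacca} (second estimate), again using that $\|\nabla \theta_h\|_{0,3,\Omega}$ is finite on the finite-dimensional space; the convective difference $c^S_{T,h}(\widehat{\mathbf{u}}_h^n; \theta_h^n, \cdot) - c^S_{T,h}(\widehat{\mathbf{u}}_h; \theta_h, \cdot)$, split as $c^S_{T,h}(\widehat{\mathbf{u}}_h^n - \widehat{\mathbf{u}}_h; \theta_h^n, \cdot) + c^S_{T,h}(\widehat{\mathbf{u}}_h; \theta_h^n - \theta_h, \cdot)$ and bounded via \eqref{dcnt-st} (the second piece is absorbed into the left side for $h$-independent small... actually it contributes $\widehat{N_0}\|\nabla\widehat{\mathbf{u}}_h\|_{0,\Omega}\vertiii{\theta_h^n-\theta_h}$, which is handled because we already know $\vertiii{\theta_h^n-\theta_h}$ appears linearly and can be moved — alternatively one simply keeps it on the right multiplied by the already-established bound on the velocity difference); and the load difference $(\mathcal{Q}_h,\cdot)$ which cancels entirely. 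Combining with the velocity estimate from the first step gives $\vertiii{\theta_h^n - \theta_h} \to 0$, so $\mathbb{K}$ is continuous.

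The main obstacle is the handling of the temperature-dependent coefficient differences: $a_{V,h}(\widehat{\theta}_h^n; \cdot,\cdot) - a_{V,h}(\widehat{\theta}_h; \cdot,\cdot)$ and its $a_{T,h}$ analogue genuinely require the Lipschitz continuity of $\mu, \kappa$ together with a higher-integrability ($L^3$ gradient) factor, and one must justify that this factor is uniformly bounded — which is exactly where finite-dimensionality of the discrete spaces (inverse estimates, Lemma \ref{inverse}) and the boundedness of $\mathcal{M}$ enter. A secondary but routine subtlety is correctly splitting the trilinear convective terms so that no term with the unknown difference sits on the right with a coefficient that is not controlled; this is standard Oseen-type bookkeeping. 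Everything else (Lax--Milgram/inf-sup coercivity, projector stability, Sobolev embeddings) is already packaged in Lemmas \ref{estimate2}, \ref{vacca}, \ref{wellposed-1}, \ref{lm-ddV}, \ref{lm-ddT}.
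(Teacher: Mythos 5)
Your proposal is correct and follows essentially the same route as the paper: subtract the two instances of each decoupled problem, bound the coefficient differences via Lemma \ref{vacca}, the convective differences via \eqref{dcnt-sv}--\eqref{dcnt-st}, and close with the discrete stability bounds \eqref{d1well1} and \eqref{est3}. Two small clean-ups: for the Oseen part one must genuinely use the sup over all test pairs in \eqref{d1well1} (testing $A_{V,h}$ with the difference itself does not control $\|p_h^n-p_h\|_{0,\Omega}$, so the bound comes out linear in $\vertiii{\cdot}$, not quadratic), and the term $c^S_{T,h}(\widehat{\mathbf{u}}_h;\theta_h^n-\theta_h,\theta_h^n-\theta_h)$ you hesitate over simply vanishes by the skew-symmetry property \eqref{dcon-cv}.
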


\begin{proof} The proof consists of the following steps: \newline
\textbf{Step 1.}	Let $\big\{(\widehat{\mathbf{u}}_{h,m}, \widehat{p}_{h,m}, \widehat{\theta}_{h,m}) \big\}_{m \in \mathbb N} \subset \mathcal{M}$ be a sequence converging to $(\widehat{\mathbf{u}}_{h}, \widehat{p}_{h}, \widehat{\theta}_{h})  \in \mathcal{M}$, i.e.
\begin{align}
	\vertiii{(\widehat{\mathbf{u}}_{h,m}, \widehat{p}_{h,m}) - (\widehat{\mathbf{u}}_{h}, \widehat{p}_{h})} \xrightarrow{m\rightarrow \infty} \mathbf{0} \qquad \text{and} \,\, \quad \vertiii{\widehat{\theta}_{h,m} - \widehat{\theta}_h} \xrightarrow{ m \rightarrow \infty} 0. \label{cnts1}
\end{align}
Further, we assume that $(\mathbf{u}_h, p_h, \theta_h) \in \mathcal{M}$ and $\big\{(\mathbf{u}_{h,m}, p_{h,m}, \theta_{h,m}) \big\}_{m \in \mathbb N} \subset \mathcal{M}$ such that for each $m \in \mathbb N$
\begin{align}
	\mathbb K((\widehat{\mathbf{u}}_{h,m}, \widehat{p}_{h,m}, \widehat{\theta}_{h,m}) )=(\mathbf{u}_{h,m}, p_{h,m}, \theta_{h,m})\quad  \text{and} \quad \mathbb K((\widehat{\mathbf{u}}_{h}, \widehat{p}_{h}, \widehat{\theta}_h)) = (\mathbf{u}_h, p_h, \theta_h). \label{cnts2}
\end{align}
Then, we will show that
\begin{align}
	\vertiii{({\mathbf{u}}_{h,m},{p}_{h,m}) - ({\mathbf{u}}_{h}, {p}_{h})} \xrightarrow{m\rightarrow \infty} \mathbf{0} \qquad \text{and} \,\, \quad \vertiii{{\theta}_{h,m} - {\theta}_h} \xrightarrow{ m \rightarrow \infty} 0. \label{cnts3}
\end{align}
From \eqref{cnts2}, we obtain the following systems
\begin{align}
	\begin{cases}
		A_{V,h}[ \widehat{\theta}_{h,m}; (\mathbf{u}_{h,m}, p_{h,m}), (\mathbf{v}_h, q_h)]+ c^S_{V,h}(\widehat{\mathbf{u}}_{h,m}; \mathbf{u}_{h,m}, \mathbf{v}_h)= F_{h,\widehat{\theta}_{h,m}}(\mathbf{v}_h), \\
		A_{T,h}(\widehat{\theta}_{h,m}, \widehat{\mathbf{u}}_{h,m}; \theta_{h,m}, \psi_h)=(\mathcal{Q}_h, \psi_h), 
	\end{cases}
	\label{dnvem3}
\end{align}
and
\begin{align}
	\begin{cases}
		A_{V,h}[ \widehat{\theta}_{h}; (\mathbf{u}_{h}, p_{h}), (\mathbf{v}_h, q_h)]+ c^S_{V,h}(\widehat{\mathbf{u}}_{h}; \mathbf{u}_h, \mathbf{v}_h)= F_{h,\widehat{\theta}_{h}}(\mathbf{v}_h),  \\
		A_{T,h}(\widehat{\theta}_{h}, \widehat{\mathbf{u}}_{h}; \theta_h, \psi_h)=(\mathcal{Q}_h, \psi_h).
	\end{cases}
	\label{dnvem4}
\end{align}	
\textbf{Step 2.} Subtracting the first equation of \eqref{dnvem4} from the first equation of \eqref{dnvem3}, we arrive at
\begin{align}
	A_{V,h}[\widehat{\theta}_{h,m}; &(\mathbf{u}_{h,m} - \mathbf{u}_h, {p}_{h,m} -p_h),(\mathbf{v}_h, q_h) ] + c^S_{V,h}(\widehat{\mathbf{u}}_{h,m}; \mathbf{u}_{h,m}-\mathbf{u}_h, \mathbf{v}_h)\nonumber \\
	&= F_{h,\widehat{\theta}_{h,m}}(\mathbf{v}_h) - F_{h,\widehat{\theta}_{h}}(\mathbf{v}_h) + 	A_{V,h}[ \widehat{\theta}_{h}; (\mathbf{u}_{h}, p_{h}), (\mathbf{v}_h, q_h)] -   A_{V,h}[\widehat{\theta}_{h,m}; (\mathbf{u}_h, p_h),(\mathbf{v}_h, q_h) ] \nonumber \\ & \qquad + c^S_{V,h}(\widehat{\mathbf{u}}_{h}; \mathbf{u}_h, \mathbf{v}_h) -c^S_{V,h}(\widehat{\mathbf{u}}_{h,m}; \mathbf{u}_{h}, \mathbf{v}_h) \nonumber \\
	&= F_{h,\widehat{\theta}_{h,m}}( \mathbf{v}_h) - F_{h,\widehat{\theta}_{h}}(\mathbf{v}_h) + 	a_{V,h} (\widehat{\theta}_{h}; \mathbf{u}_{h}, \mathbf{v}_h) - 
	a_{V,h}(\widehat{\theta}_{h,m}; \mathbf{u}_h, \mathbf{v}_h) \nonumber \\ & \qquad + c^S_{V,h}(\widehat{\mathbf{u}}_{h} - \widehat{\mathbf{u}}_{h,m}; \mathbf{u}_h, \mathbf{v}_h). \nonumber
\end{align} 
Using Lemmas \ref{wellposed-1} and \ref{vacca}, the Sobolev embedding theorem and \eqref{dcnt-sv}, we infer
\begin{align}
	C_{o,V} \vertiii{\mathbf{u}_{h,m} - \mathbf{u}_h, {p}_{h,m} -p_h} &\leq \big[\alpha C_q^2 C^2_{1 \hookrightarrow 4} \|\mathbf{g}\|_{0,\Omega} \vertiii{\widehat{\theta}_{h,m} - \widehat{\theta}_h} +  C \|\nabla \mathbf{u}_{h}\|_{0,3, \Omega} \vertiii{\widehat{\theta}_{h,m} - \widehat{\theta}_h}  \nonumber \\& + \widehat{N_0} \|\nabla \mathbf{u}_{h}\|_{0, \Omega} \vertiii{(\widehat{\mathbf{u}}_{h} - \widehat{\mathbf{u}}_{h,m}, \widehat{p}_{h} - \widehat{p}_{h,m}} \big]. \nonumber 
\end{align}
Applying \eqref{cnts1}, we obtain $\vertiii{(\mathbf{u}_{h,m} - \mathbf{u}_h, {p}_{h,m} -p_h)} \xrightarrow{m \rightarrow \infty } \mathbf 0$. \newline
\textbf{Step 3.} The remaining task is to prove that $\vertiii{\theta_{h,m}- \theta_h}  \xrightarrow{m \rightarrow \infty} 0$. Concerning this, we subtract the second equation of \eqref{dnvem4} from the second equation of \eqref{dnvem3}:
\begin{align}
	A_{T,h}(\widehat{\theta}_{h,m}, \widehat{\mathbf{u}}_{h,m}; \theta_{h,m}-\theta_h, \psi_h ) &= A_{T,h}(\widehat{\theta}_{h}, \widehat{\mathbf{u}}_{h}; \theta_h, \psi_h ) - A_{T,h}(\widehat{\theta}_{h,m}, \widehat{\mathbf{u}}_{h,m}; \theta_h, \psi_h ) \nonumber \\
	&= a_{T,h}(\widehat{\theta}_{h}; \theta_h, \psi_h ) - a_{T,h}(\widehat{\theta}_{h,m}; \theta_h, \psi_h )  +  c^S_{T,h}(\widehat{\mathbf{u}}_h-\widehat{\mathbf{u}}_{h,m}; \theta_h, \psi_h). \nonumber   
\end{align}
Replacing $\psi_h=\theta_{h,m}-\theta_h \in \Sigma_{0,h}$, and using Lemma \ref{estimate2}, Lemma \ref{vacca} and \eqref{dcnt-st}, we obtain
\begin{align}
	C_{o,T} \vertiii{\theta_{h,m} - \theta_h}^2 \leq  \big[ C \|\nabla \theta_{h}\|_{0,3, \Omega} \vertiii{\widehat{\theta}_{h,m} - \widehat{\theta}_h} + \widehat{N_0} \|\nabla \theta_h\|_{0,\Omega} \|\nabla(\widehat{\mathbf{u}}_h-\widehat{\mathbf{u}}_{h,m}) \|_{0,\Omega}  \big] \| \nabla(\theta_{h,m} - \theta_h)\|_{0,\Omega}. \nonumber 
\end{align}
Employing \eqref{cnts1}, we get $\vertiii{\theta_{h,m}- \theta_h}  \xrightarrow{m \rightarrow \infty} 0$. Thus, the mapping $\mathbb K$ is continuous on $\mathcal{M}$. 
\end{proof}
\begin{theorem}
Let $\mathcal{M}$ be defined by \eqref{defball}. Under the assumption \eqref{cond1}, there exists a solution $(\mathbf{u}_h, p_h, \theta_h) \in \mathbf{V}_h \times Q_h \times \Sigma_{D,h}$ such that it satisfies
\begin{align}
	\vertiii{({\mathbf{u}}_h, {p}_h)} \leq \widehat{C}_{\mathbf{f}, \mathbf{g},\mathcal{Q}} + \widehat{C}_\mathbf{u} \|\theta_{D}\|_{H^{1/2}(\partial \Omega)}, \qquad \vertiii{{\theta}_h} \leq C_p C^{-1}_{0,T} \|\mathcal{Q}\|_{0,\Omega} + \widehat{C}_\theta \|\theta_{D}\|_{H^{1/2}(\partial \Omega)}. \label{dexistence}
\end{align}
\end{theorem}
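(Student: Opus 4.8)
The plan is to realize the sought discrete triple as a fixed point of the map $\mathbb{K}:\mathcal{M}\to\mathcal{M}$ introduced above, via the Brouwer fixed-point theorem, so that essentially nothing remains but to collect the pieces already assembled. First I would record that, after identifying each virtual element function with the finite real vector of its degrees of freedom, the affine space $\mathbf{V}_h\times Q_h\times\Sigma_{D,h}$ is isomorphic to a Euclidean space, and the set $\mathcal{M}$ of \eqref{defball} is the intersection with it of two closed balls for the norms $\vertiii{(\cdot,\cdot)}$ and $\vertiii{\cdot}$; hence $\mathcal{M}$ is convex, closed and bounded, therefore compact. It is also nonempty: the right-hand sides in \eqref{defball} are nonnegative, so $(\mathbf{0},0)$ paired with any discrete lifting of $\theta_{D,h}$ of controlled energy (for instance the discrete harmonic extension used in Step 1 of the proof of Lemma \ref{lm-ddT}) belongs to $\mathcal{M}$ once the lifting and interpolation constants are absorbed.

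I would then invoke the three properties of $\mathbb{K}$ established earlier: by Lemma \ref{lm-ddV} and Lemma \ref{lm-ddT} the decoupled problems \eqref{dnvem1} and \eqref{dnvem2} are uniquely solvable for every datum in $\mathcal{M}$, so $\mathbb{K}$ is well-defined on $\mathcal{M}$; by Lemma \ref{selfcont} we have $\mathbb{K}(\mathcal{M})\subset\mathcal{M}$; and by Lemma \ref{continuity} the map $\mathbb{K}$ is continuous on $\mathcal{M}$. Brouwer's theorem then yields a fixed point $(\mathbf{u}_h,p_h,\theta_h)\in\mathcal{M}$ with $\mathbb{K}\big((\mathbf{u}_h,p_h,\theta_h)\big)=(\mathbf{u}_h,p_h,\theta_h)$.

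Finally I would verify that this fixed point solves \eqref{nvem}. By definition of $\mathbb{K}$, the pair $(\mathbf{u}_h,p_h)$ solves \eqref{dnvem1} with $\widehat{\theta}_h=\theta_h$ and $\widehat{\mathbf{u}}_h=\mathbf{u}_h$, and $\theta_h$ solves \eqref{dnvem2} with the same data; these two identities are precisely the first and second equations of the coupled problem \eqref{nvem}. The estimates \eqref{dexistence} are then nothing but the defining inequalities of $\mathcal{M}$. The only genuinely substantive ingredient in the whole argument is the self-mapping property of Lemma \ref{selfcont}, which is where the smallness hypothesis \eqref{cond1} enters — to absorb the quadratic coupling term $\widehat{N_0}\vertiii{(\widehat{\mathbf{u}}_h,\widehat{p}_h)}\,\|\theta_{D}\|_{H^{1/2}(\partial\Omega)}$ when the a priori bound on $\theta_h$ is fed back into the bound on $(\mathbf{u}_h,p_h)$; the rest is bookkeeping. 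Note that uniqueness of the discrete solution is not asserted here; it would require a separate contraction-type estimate under a stronger restriction on the data, analogous to Proposition \ref{cunique} at the continuous level.
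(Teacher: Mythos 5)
Your proposal is correct and follows essentially the same route as the paper, which likewise obtains the solution as a Brouwer fixed point of $\mathbb{K}$ using Lemma \ref{selfcont} (self-mapping) and Lemma \ref{continuity} (continuity); your additional remarks on the compactness, convexity and nonemptiness of $\mathcal{M}$ and on the identification of a fixed point with a solution of \eqref{nvem} merely make explicit what the paper leaves implicit.
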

\begin{proof}
The proof readily follows from the Brouwer fixed-point theorem and Lemmas \ref{selfcont} and \ref{continuity}. 
\end{proof}


\begin{theorem}\label{WELLP}
Under the data assumptions \textbf{(A0)} and \textbf{(A1)}, let $(\mathbf{u}_h,p_h,\theta_h) \in \mathbf{V}_h \times Q_h \times \Sigma_{D,h} $ be the solution of problem \eqref{nvem}. Further, we assume the following: \newline
(i)\,  the data satisfies
\begin{align}
	C_{0,V}^{-1}  \widehat{N_0}\big[ \widehat{C}_{\mathbf{f}, \mathbf{g},Q} + C_p C_{0,T}^{-1} \|\mathcal{Q}\|_{0,\Omega} + (\widehat{C}_{\mathbf{u}} + \widehat{C}_\theta ) \|\theta_{D}\|_{H^{1/2}(\partial\Omega)} \big] <1, \label{disc1}
\end{align}
(ii)\,  let {$(\mathbf{u}_h, \theta_h) \in \mathbf{V}_h \times \Sigma_{D,h} \cap [W^{1,3}(\Omega_h)]^2 \times W^{1,3}(\Omega_h)$} and $\mathbf{g} \in [L^2(\Omega)]^2$, satisfy the following bound
\begin{align}
	C_{0,T}^{-1} \big( 
	\alpha C_q^2 C^2_{1\hookrightarrow4} \|\mathbf{g}\|_{0,\Omega} \| + C_v L_\mu C_q C_{1\hookrightarrow 6} \|\nabla \mathbf{u}_h\|_{0,3,\Omega} + C_t L_\kappa C_q C_{1 \hookrightarrow 6} \|\nabla \theta_h\|_{0,3,\Omega}  \big) < 1, \label{disc2}
\end{align}
then, the problem \eqref{nvem} has a unique solution $ (\mathbf{u}_h,p_h,\theta_h)$.
\end{theorem}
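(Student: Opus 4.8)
The plan is to establish uniqueness by assuming two solutions $(\mathbf{u}_h,p_h,\theta_h)$ and $(\widehat{\mathbf{u}}_h,\widehat{p}_h,\widehat{\theta}_h)$ of problem \eqref{nvem} and showing their difference vanishes, mirroring the strategy of Proposition \ref{cunique} at the continuous level but now using the discrete stability tools. Set $\mathbf{u}_\ast:=\mathbf{u}_h-\widehat{\mathbf{u}}_h$, $p_\ast:=p_h-\widehat{p}_h$, $\theta_\ast:=\theta_h-\widehat{\theta}_h \in \Sigma_{0,h}$. Both solutions lie in $\mathcal{M}$, so they obey the a priori bounds \eqref{dexistence}, which is what feeds the smallness hypothesis \eqref{disc1}. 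First I would subtract the temperature equations: writing $A_{T,h}(\theta_h,\mathbf{u}_h;\theta_h,\psi_h)-A_{T,h}(\widehat{\theta}_h,\widehat{\mathbf{u}}_h;\widehat{\theta}_h,\psi_h)=0$, add and subtract cross terms to isolate $A_{T,h}(\theta_h,\mathbf{u}_h;\theta_\ast,\psi_h)$ on one side and the ``data-difference'' terms $a_{T,h}(\widehat{\theta}_h;\widehat{\theta}_h,\psi_h)-a_{T,h}(\theta_h;\widehat{\theta}_h,\psi_h)$ and $c^S_{T,h}(\mathbf{u}_\ast;\widehat{\theta}_h,\psi_h)$ on the other. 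Taking $\psi_h=\theta_\ast$, I use the coercivity bound \eqref{est3} (which gives $C_{o,T}\vertiii{\theta_\ast}^2$ from the left), Lemma \ref{vacca} for the thermal-conductivity Lipschitz term, and the discrete trilinear continuity \eqref{dcnt-st}. This yields an estimate of the shape
\begin{align}
\big(C_{o,T} - C_t L_\kappa C_q C_{1\hookrightarrow6}\|\nabla\widehat{\theta}_h\|_{0,3,\Omega}\big)\vertiii{\theta_\ast} \leq \widehat{N_0}\|\nabla\widehat{\theta}_h\|_{0,\Omega}\|\nabla\mathbf{u}_\ast\|_{0,\Omega}. \nonumber
\end{align}

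Next I would subtract the velocity-pressure equations. Writing the difference of the first lines of \eqref{nvem} for the two solutions, taking the test pair $(\mathbf{v}_h,q_h)=(\mathbf{u}_\ast,p_\ast)$, and using the skew-symmetry property \eqref{dcon-cv} (so that $c^S_{V,h}(\mathbf{u}_h;\mathbf{u}_\ast,\mathbf{u}_\ast)=0$ after the usual add/subtract trick), I isolate $a_{V,h}(\theta_h;\mathbf{u}_\ast,\mathbf{u}_\ast)$ plus the stabilization contributions $\mathcal{L}_{1,h}(\mathbf{u}_\ast,\mathbf{u}_\ast)+\mathcal{L}_{2,h}(p_\ast,p_\ast)$, and on the right the terms $F_{h,\theta_h}(\mathbf{u}_\ast)-F_{h,\widehat{\theta}_h}(\mathbf{u}_\ast)$ (which contributes $\alpha(\mathbf{g}\theta_\ast,\boldsymbol{\Pi}^{0}_k\mathbf{u}_\ast)$, bounded by $\alpha C_q^2 C^2_{1\hookrightarrow4}\|\mathbf{g}\|_{0,\Omega}\|\nabla\theta_\ast\|_{0,\Omega}\|\nabla\mathbf{u}_\ast\|_{0,\Omega}$ via \eqref{lqstab}), the viscosity-Lipschitz difference $a_{V,h}(\widehat{\theta}_h;\widehat{\mathbf{u}}_h,\mathbf{u}_\ast)-a_{V,h}(\theta_h;\widehat{\mathbf{u}}_h,\mathbf{u}_\ast)$ handled by Lemma \ref{vacca}, and the convective defect $c^S_{V,h}(\mathbf{u}_\ast;\widehat{\mathbf{u}}_h,\mathbf{u}_\ast)$ bounded by \eqref{dcnt-sv}. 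Invoking the discrete coercivity \eqref{dcon-av}, I obtain
\begin{align}
\big(\varrho_\ast\mu_\ast - \widehat{N_0}\|\nabla\widehat{\mathbf{u}}_h\|_{0,\Omega}\big)\|\nabla\mathbf{u}_\ast\|_{0,\Omega} \leq \big(\alpha C_q^2 C^2_{1\hookrightarrow4}\|\mathbf{g}\|_{0,\Omega} + C_v L_\mu C_q C_{1\hookrightarrow6}\|\nabla\widehat{\mathbf{u}}_h\|_{0,3,\Omega}\big)\|\nabla\theta_\ast\|_{0,\Omega}. \nonumber
\end{align}

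Then I would combine the two chained inequalities: substitute the velocity bound into the temperature bound (or add them after suitable rearrangement, as in \eqref{unique-3}), producing a single inequality of the form $\Lambda_1\|\nabla\mathbf{u}_\ast\|_{0,\Omega} + \Lambda_2\|\nabla\theta_\ast\|_{0,\Omega}\leq 0$ where $\Lambda_1,\Lambda_2$ are the coefficients appearing in \eqref{disc1}--\eqref{disc2}. Here the a priori estimates \eqref{dexistence} control $\|\nabla\widehat{\mathbf{u}}_h\|_{0,\Omega}+\|\nabla\widehat{\theta}_h\|_{0,\Omega}$ (and the relevant norms are equivalent to $\vertiii{\cdot}$ up to the $\lambda^\ast$ constants), so hypothesis \eqref{disc1} forces $\Lambda_1>0$; similarly \eqref{disc2} forces $\Lambda_2>0$. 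Consequently $\nabla\mathbf{u}_\ast=\mathbf{0}$ and $\nabla\theta_\ast=\mathbf{0}$, hence $\mathbf{u}_\ast=\mathbf{0}$ (Poincar\'e, homogeneous boundary data) and $\theta_\ast=0$ (same, since $\theta_\ast\in\Sigma_{0,h}$). Finally, $p_\ast=0$ follows from the weak inf-sup condition \eqref{inf-sup1}: with $\mathbf{u}_\ast=\mathbf{0}$ the residual equation reads $b_h(\mathbf{w}_h,p_\ast) + \mathcal{L}_{2,h}(p_\ast,q_h)\text{-type terms}=0$, and \eqref{inf-sup1} together with the vanishing of $\mathcal{L}_{2,h}(p_\ast,p_\ast)$ (which one extracts by testing with $q_h=p_\ast$ in the stabilization-augmented form) gives $\|p_\ast\|_{0,\Omega}=0$.

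The main obstacle I anticipate is bookkeeping the coefficients so that the smallness conditions \eqref{disc1} and \eqref{disc2} exactly line up with the coefficients $\Lambda_1,\Lambda_2$ that emerge — in particular tracking how the $W^{1,3}$ seminorms of the \emph{fixed} solution (not the difference) enter through Lemma \ref{vacca}, and ensuring the $L^q$-stability \eqref{lqstab} of the projectors is what legitimizes replacing $\|\boldsymbol{\Pi}^{0}_k\mathbf{u}_\ast\|$ by $\|\mathbf{u}_\ast\|$ in the buoyancy and convective terms. A secondary subtlety is that the discrete bilinear form $a_{V,h}$ has coercivity constant $\varrho_\ast\mu_\ast$ rather than $\mu_\ast$, and similarly $a_{T,h}$ has $\wp_\ast\kappa_\ast$, so the condition as stated in \eqref{disc1}--\eqref{disc2} should be read with $C_{0,V},C_{0,T}$ absorbing these VEM stability constants; I would simply note this and proceed, since all such constants are $h$-independent.
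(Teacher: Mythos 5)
Your proposal is correct and follows essentially the same route as the paper: assume two solutions, derive chained inequalities for the temperature and velocity differences (the paper's \eqref{dwella}--\eqref{dwellb}) using Lemma \ref{vacca}, the continuity bounds \eqref{dcnt-sv}--\eqref{dcnt-st} and the a priori bounds on $\mathcal{M}$, then close with the smallness conditions \eqref{disc1}--\eqref{disc2}. The only real divergence is the pressure: the paper tests the momentum difference against arbitrary $(\mathbf{v}_h,q_h)$ and invokes the stability estimate \eqref{d1well1} of Lemma \ref{wellposed-1}, so $\vertiii{(\widetilde{\mathbf{u}}_h,\widetilde{p}_h)}$ (pressure included) is controlled in one step with exactly the constant $C_{o,V}$ appearing in \eqref{disc1}, whereas you test with $(\mathbf{u}_\ast,p_\ast)$, obtain only $\varrho_\ast\mu_\ast$-coercivity plus $\mathcal{L}_{2,h}(p_\ast,p_\ast)$, and must recover $p_\ast=0$ afterwards from the weak inf-sup condition \eqref{inf-sup1} --- a valid but slightly longer path whose constants do not literally match those written in \eqref{disc1}, as you yourself note.
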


\begin{proof}
Let $(\mathbf{u}_h, p_h,\theta_h) $ and $(\mathbf{u}^\ast_h, p^\ast_h,\theta^\ast_h) $ be two solutions of the coupled problem \eqref{nvem}. We now set $\widetilde{\mathbf{u}}_h := \mathbf{u}_h - \mathbf{u}_h^\ast$,  $\widetilde{p}_h:= p_h - p_h^\ast$ and $\widetilde{\theta}_h:=\theta_h - \theta_h^\ast$. Replacing $(\mathbf{v}_h,q_h)$ by $(\widetilde{\mathbf{u}}_h, \widetilde{p}_h)$ in the first equation of \eqref{nvem}, we infer
\begin{align}
	C_{o,V} \vertiii{(\widetilde{\mathbf{u}}_h,\widetilde{p}_h)}\, \vertiii{(\mathbf{v}_h, q_h)} &\leq  	A_{V,h}[\theta_h; (\widetilde{\mathbf{u}}_h, \widetilde{p}_h), (\mathbf{v}_h, q_h)] + c_{V,h}^S(\mathbf{u}_h; \widetilde{\mathbf{u}}_h, \mathbf{v}_h) \nonumber \\
	&= F_{h,\theta_h}(\mathbf{v}_h) - F_{h,\theta^\ast_h}(\mathbf{v}_h) + 	A_{V,h}[\theta^\ast_h; (\mathbf{u}^\ast_h, p^\ast_h), (\mathbf{v}_h, q_h)] \,- \nonumber \\ & \quad  A_{V,h}[\theta_h; (\mathbf{u}^\ast_h, p^\ast_h), (\mathbf{v}_h, q_h)] + c^S_{V,h}(\mathbf{u}^\ast_h; \mathbf{u}^\ast_h, \mathbf{v}_h) - c^S_{V,h}(\mathbf{u}_h; \mathbf{u}^\ast_h, \mathbf{v}_h) \nonumber \\
	&= F_{h,\theta_h}(\mathbf{v}_h) - F_{h,\theta^\ast_h}({\mathbf{v}}_h) + 	a_{V,h}(\theta^\ast_h; \mathbf{u}^\ast_h, \mathbf{v}_h) - a_{V,h}(\theta_h; \mathbf{u}^\ast_h, \mathbf{v}_h) \nonumber \\ & \qquad + c^S_{V,h}(\mathbf{u}^\ast_h -\mathbf{u}_h; \mathbf{u}^\ast_h, \mathbf{v}_h).  \nonumber 
\end{align}
Using Lemmas \ref{vacca}, the embedding theorems and the property \eqref{dcnt-sv}, it holds that
\begin{align}
	C_{o,V} \vertiii{(\widetilde{\mathbf{u}}_h,\widetilde{p}_h)} &\leq \alpha C_q^2 C^2_{1\hookrightarrow4} \|\mathbf{g}\|_{0,\Omega} \| \nabla\widetilde{\theta}_h\|_{0,\Omega}  + C_v L_\mu C_q C_{1\hookrightarrow 6} \|\nabla \mathbf{u}^\ast_h\|_{0,3,\Omega} \|\nabla \widetilde{\theta}_h\|_{0,\Omega}  \nonumber \\ & \qquad + \widehat{N_0} \|\nabla \mathbf{u}^\ast_h\|_{0,\Omega} \|\nabla \widetilde{\mathbf{u}}_h\|_{0,\Omega}. \nonumber
\end{align}
Rearranging the above terms using the definition of energy norms, we obtain
\begin{align}
	\Big(C_{o,V} - \widehat{N_0} \vertiii{(\mathbf{u}^\ast_h,p^\ast_h)}\Big) \vertiii{(\widetilde{\mathbf{u}}_h,\widetilde{p}_h)} & \leq \Big(\alpha C_q^2 C^2_{1\hookrightarrow4} \|\mathbf{g}\|_{0,\Omega}  + C_v L_\mu C_q C_{1\hookrightarrow 6} \|\nabla \mathbf{u}^\ast_h\|_{0,3,\Omega} \Big) \vertiii{\widetilde{\theta}_h}. \label{dwella}
\end{align}
Again, replacing $\psi_h$ by $\widetilde{\theta}_h \in \Sigma_{0,h}$ in the second equation of \eqref{nvem}, it gives
\begin{align}
	A_{T,h}(\theta_h, \mathbf{u}_h; \widetilde{\theta}_h, \widetilde{\theta}_h)&= A_{T,h}(\theta_h^\ast, \mathbf{u}^\ast_h; \theta^\ast_h, \widetilde{\theta}_h) - A_{T,h}(\theta_h, \mathbf{u}_h; \theta^\ast_h, \widetilde{\theta}_h) \nonumber \\
	&=a_{T,h}(\theta_h^\ast; \theta^\ast_h, \widetilde{\theta}_h) - a_{T,h}(\theta_h; \theta^\ast_h, \widetilde{\theta}_h) + c^S_{T,h}(\mathbf{u}^\ast_h- \mathbf{u}_h; \theta^\ast_h,\widetilde{\theta}_h) \nonumber \\
	&\leq C_t L_\kappa C_q C_{1 \hookrightarrow 6} \|\nabla \theta^\ast_h\|_{0,3,\Omega} \|\nabla \widetilde{\theta}_h\|^2_{0,\Omega} + \widehat{N_0}\|\nabla \theta^\ast_h\|_{0,\Omega} \|\nabla \widetilde{\mathbf{u}}_h\|_{0,\Omega} \|\nabla \widetilde{\theta}_h\|_{0,\Omega}, \nonumber
\end{align}
where the last line is obtained employing \eqref{dcnt-st} and Lemma \ref{vacca}. Further, applying Lemma \ref{estimate2}, we arrive at
\begin{align}
	\Big(C_{o,T} - C_t L_\kappa C_q C_{1 \hookrightarrow 6} \|\nabla \theta^\ast_h\|_{0,3,\Omega} \Big) \vertiii{\widetilde{\theta}_h} \leq \widehat{N_0} \vertiii{\theta^\ast_h} \,\vertiii{ (\widetilde{\mathbf{u}}_h, \widetilde{p}_h)}. \label{dwellb}
\end{align}
Adding \eqref{dwella} and \eqref{dwellb}, and then applying \eqref{disc1} and \eqref{disc2}, we get  $\vertiii{ (\widetilde{\mathbf{u}}_h, \widetilde{p}_h)}=0$  and  $\vertiii{\widetilde{\theta}_h}=0$. 
\end{proof}

\subsection{Error estimates in the energy norm}
This section focuses on deriving the error estimates in the energy norm. We begin with the following approximations. 

\begin{lemma}	\label{lemmaproj2}
(\cite{vem31}) Under the assumption \textbf{(A1)}, for any $\psi \in H^{s+1}(E)$ there exists $\psi_I \in V_h(E)$ for all $E\in \Omega_h$ such that there holds the following
\begin{equation}
	\| \psi - \psi_I\|_{0,E} + h_E |\psi - \psi_I|_{1,E} \leq C h^{1+s}_E \, \|\psi \|_{s+1,E},\quad 0 \leq s\leq k, \label{projlll2}
\end{equation}
where the constant $C>0$ depends only on $k$ and $\delta_0$.
\end{lemma}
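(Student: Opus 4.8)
The plan is to take $\psi_I$ to be the canonical degrees-of-freedom interpolant of $\psi$ in $V_h(E)$, namely the unique element of $V_h(E)$ whose values of $\mathbf{DoF_1}$, $\mathbf{DoF_2}$ and $\mathbf{DoF_3}$ coincide with those of $\psi$; this is well defined by the unisolvence of the degrees of freedom on $V_h(E)$ under \textbf{(A1)} (in the borderline case $s=0$, where point and edge evaluations are not controlled by $H^1(E)$ in two dimensions, one replaces these by a Cl\'ement/Scott--Zhang-type averaging, as in \cite{vem31}). Write $I_h^E\colon \psi \mapsto \psi_I$ for the resulting local interpolation operator. Two properties drive the proof: $I_h^E$ is linear and reproduces polynomials of degree $\le k$, i.e. $I_h^E q = q$ for all $q \in \mathbb{P}_k(E)$; and $I_h^E$ is element-wise $H^1$-stable in the scale-invariant form
\[
|I_h^E \phi|_{1,E} \le C\big(|\phi|_{1,E} + h_E^{-1}\|\phi\|_{0,E}\big), \qquad \|I_h^E\phi\|_{0,E} \le C\big(\|\phi\|_{0,E} + h_E|\phi|_{1,E}\big),
\]
with $C$ depending only on $k$ and $\delta_0$. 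The second property I would obtain by expressing $I_h^E\phi$ through its degree-of-freedom vector, estimating that vector by scaled trace inequalities on the edges of $E$ and by the definition of the internal moments, and controlling the implicitly defined (non-polynomial) part via property (iii) of the space $V_h(E)$ together with the inverse inequality of Lemma \ref{inverse}; this is exactly the content of \cite{vem31}, which I would invoke rather than re-derive the polygonal norm equivalences.

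Granting these, the estimate follows from a standard Bramble--Hilbert / Dupont--Scott argument. By \textbf{(A1)}, $E$ is star-shaped with respect to a ball of radius $\ge \delta_0 h_E$, so the Dupont--Scott theory furnishes a polynomial $\pi \in \mathbb{P}_s(E)$ (the averaged Taylor polynomial of $\psi$) with
\[
\|\psi - \pi\|_{m,E} \le C\, h_E^{\,s+1-m}\, |\psi|_{s+1,E}, \qquad 0 \le m \le s+1,
\]
the constant depending only on $s$, $k$ and $\delta_0$. Since $s \le k$ we have $\pi \in \mathbb{P}_k(E) \subset V_h(E)$, hence $I_h^E\pi = \pi$ and $\psi_I - \pi = I_h^E(\psi - \pi)$. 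A triangle inequality then gives
\begin{align*}
	\|\psi - \psi_I\|_{0,E} + h_E|\psi - \psi_I|_{1,E} &\le \|\psi - \pi\|_{0,E} + h_E|\psi - \pi|_{1,E} \\ & \quad + \|I_h^E(\psi - \pi)\|_{0,E} + h_E|I_h^E(\psi - \pi)|_{1,E}.
\end{align*}
The first two terms are bounded by $C h_E^{1+s}|\psi|_{s+1,E}$ directly from the Dupont--Scott estimate with $m=0,1$. For the last two, apply the stability of $I_h^E$ and then the Dupont--Scott estimate to $\psi-\pi$: $|I_h^E(\psi-\pi)|_{1,E} \le C(|\psi-\pi|_{1,E} + h_E^{-1}\|\psi-\pi\|_{0,E}) \le C h_E^{s}|\psi|_{s+1,E}$, and similarly $\|I_h^E(\psi-\pi)\|_{0,E} \le C h_E^{1+s}|\psi|_{s+1,E}$. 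Summing the contributions and using $|\psi|_{s+1,E} \le \|\psi\|_{s+1,E}$ yields the claim.

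The hard part is the scale-invariant, $h$-uniform stability of $I_h^E$ on the general polygons admitted by \textbf{(A1)}: bounding a virtual function by its degrees of freedom with constants depending only on $\delta_0$ and $k$, even when $E$ has many short edges, requires the scaled trace and norm-equivalence inequalities on star-shaped polygons and the control of the non-polynomial content of $V_h(E)$ developed in \cite{vem31}. Once that is in hand, the remainder is the routine Bramble--Hilbert reduction above. A secondary technicality is the low-regularity endpoint $s=0$, which is handled by the Cl\'ement/Scott--Zhang modification of the degrees of freedom noted above.
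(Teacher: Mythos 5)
The paper does not actually prove this lemma: it is stated as a known result imported from \cite{vem31}, so there is no in-paper argument to compare yours against. Your reconstruction is the standard and correct one for such VEM interpolation estimates: canonical DoF interpolant (with the Cl\'ement/Scott--Zhang modification at the $s=0$ endpoint, which you rightly flag, since point and edge evaluations are not bounded on $H^1$ in two dimensions), polynomial reproduction of $\mathbb{P}_k(E)\subset V_h(E)$, scale-invariant $L^2$/$H^1$ stability of the interpolation operator, and the Bramble--Hilbert/Dupont--Scott reduction via the averaged Taylor polynomial on a star-shaped domain. The chain of inequalities is correct, including the exponents when the stability bound is applied to $\psi-\pi$. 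The one thing worth being candid about is that the genuinely hard ingredient --- the $h$-uniform, $\delta_0$-dependent bound of a virtual function by its degrees of freedom on general polygons, including the control of the non-polynomial content of $V_h(E)$ --- is itself deferred to \cite{vem31}; so your argument reduces the lemma to essentially the same external input the paper invokes, rather than giving a self-contained proof. That is a legitimate way to present it, but the reduction step (Dupont--Scott plus stability) is routine, and the cited stability is where all the polygonal-geometry work lives.
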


We now denote the virtual interpolant of $(\mathbf{u}, p, \theta) \in \mathbf{V} \times Q \times \Sigma_D$ by $(\mathbf{u}_I, p_I,\theta_I) \in \mathbf{V}_h \times Q_h \times \Sigma_{D,h}$. Let us introduce the following notations
\begin{align*}
e^\mathbf{u}&:=\mathbf{u}- \mathbf{u}_h, & e^p&:=p-p_h, & e^\theta&:=\theta- \theta_h, &
e^\mathbf{u}_I &:= \mathbf{u}-\mathbf{u}_I, &  e^p_I &:= p-p_I,  &
e^\theta_I&:=\theta - \theta_I,\\
e^\mathbf{u}_h &:= \mathbf{u}_h-\mathbf{u}_I,&  e^p_h &:= p_h - p_I,  & e^\theta_h&:= \theta_h - \theta_I.
\end{align*}

\noindent \textbf{(A3)} We impose the following regularity assumptions:
\begin{align*}
&\mathbf{u} \in \mathbf{V} \cap [H^{k+1}(\Omega_h)]^2,   && p \in Q \cap H^{k}(\Omega_h),  
&& \theta \in \Sigma_{D} \cap H^{k+1}(\Omega_h),  
&& \mathbf{f} \in [H^{k}(\Omega_h)]^2, \\ 
&\mathbf{g} \in [H^{k}(\Omega_h)]^2, 
&& \mathcal{Q} \in H^{k}(\Omega_h),  
&& \mu \in W^{k,\infty}(\Omega_h),  
&& \kappa \in W^{k,\infty}(\Omega_h).
\end{align*}

In addition, under the assumption \textbf{(A3)}, we have 
\begin{align}
\|\nabla (I - \Pi^{\nabla,E}_k) \theta_I\|_{0,E} &\leq \|\nabla \theta_I - \boldsymbol{\Pi}^{0,E}_k \nabla \theta_I\|_{0,E} \nonumber \\
& \leq \|\nabla \theta-\nabla \theta_I\|_{0,E} + \|\nabla \theta - \boldsymbol{\Pi}^{0,E}_{k-1}  \nabla \theta\|_{0,E} + \|\boldsymbol{\Pi}^{0,E}_{k-1}  (\nabla \theta - \nabla \theta_I)\|_{0,E} \nonumber \\
& \leq h^k_E \|\theta\|_{k+1,E}. \label{err0}
\end{align}
Hereafter, we will first derive the following three lemmas to establish the convergence estimates in the energy norm, assuming that the assumptions \textbf{(A0)} and \textbf{(A1)} are valid.
\begin{lemma} \label{velocity}
Under the assumption \textbf{(A2)}, let $(\mathbf{u},p)$ and $(\mathbf{u}_h,p_h)$ be the solution of \eqref{dvar-1} and \eqref{dnvem1} with $\theta \in \Sigma_D$ and $\theta_h \in \Sigma_{D,h}$, respectively. Further we assume that the stabilization parameters are $\tau_{1,E} \sim h_E$ and $\tau_{2,E} \sim h^2_E$ for all $E \in \Omega_h$. Then for any $(\mathbf{v}_h,q_h) \in \mathbf{V}_h \times Q_h$, the following holds
\begin{align}
	A_{V}[\theta; (\mathbf{u},p), (\mathbf{v}_h, q_h)]-  A_{V,h}[\theta_h; (\mathbf{u}_I,p_I ), (\mathbf{v}_h, q_h)] &\leq Ch^k \Big( \big(1+\mu^\ast + \max_{E\in\Omega_h} \|\mu \|_{k,\infty,E}  \big)\| \mathbf{u} \|_{k+1,\Omega} \,+ \nonumber \\ & \quad L_\mu  \| \nabla \mathbf{u}\|_{0,3,\Omega}  \|\theta\|_{k+1,\Omega} + \| p\|_{k,\Omega} \Big) \vertiii{(\mathbf{v}_h, q_h)} \nonumber \\ & \, + L_\mu C_q C_{1\hookrightarrow 6} \| \nabla \mathbf{u}\|_{0,3,\Omega} \| \nabla e^\theta\|_{0,\Omega} \vertiii{(\mathbf{v}_h, q_h)}, \label{err16}
\end{align}
where the positive constant $C$ does not rely on $h$.
\end{lemma}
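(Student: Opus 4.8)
The plan is to expand the left side of \eqref{err16} along the definitions of $A_V$ and $A_{V,h}$ into four groups,
\begin{align*}
	& A_{V}[\theta;(\mathbf{u},p),(\mathbf{v}_h,q_h)]-A_{V,h}[\theta_h;(\mathbf{u}_I,p_I),(\mathbf{v}_h,q_h)] \\
	&\quad = \big[a_V(\theta;\mathbf{u},\mathbf{v}_h)-a_{V,h}(\theta_h;\mathbf{u}_I,\mathbf{v}_h)\big] -\big[b(\mathbf{v}_h,p)-b_h(\mathbf{v}_h,p_I)\big] \\
	&\qquad +\big[b(\mathbf{u},q_h)-b_h(\mathbf{u}_I,q_h)\big] -\mathcal{L}_h[(\mathbf{u}_I,p_I),(\mathbf{v}_h,q_h)],
\end{align*}
and to bound each group, checking that every contribution either has one of the three shapes on the right of \eqref{err16} or is of order $h^{k+1/2}$ (hence absorbed into $Ch^k$).

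For the diffusion group I would insert the intermediate arguments $a_{V,h}(\theta;\mathbf{u}_I,\mathbf{v}_h)$ and $a_{V,h}(\theta_I;\mathbf{u}_I,\mathbf{v}_h)$ and split it as
\begin{align*}
	a_V(\theta;\mathbf{u},\mathbf{v}_h)-a_{V,h}(\theta_h;\mathbf{u}_I,\mathbf{v}_h)
	&= a_V(\theta;\mathbf{u}-\mathbf{u}_I,\mathbf{v}_h)+\big[a_V(\theta;\mathbf{u}_I,\mathbf{v}_h)-a_{V,h}(\theta;\mathbf{u}_I,\mathbf{v}_h)\big] \\
	&\quad +\big[a_{V,h}(\theta;\mathbf{u}_I,\mathbf{v}_h)-a_{V,h}(\theta_I;\mathbf{u}_I,\mathbf{v}_h)\big]+\big[a_{V,h}(\theta_I;\mathbf{u}_I,\mathbf{v}_h)-a_{V,h}(\theta_h;\mathbf{u}_I,\mathbf{v}_h)\big].
\end{align*}
The first term is $\leq\mu^\ast\|\nabla(\mathbf{u}-\mathbf{u}_I)\|_{0,\Omega}\|\nabla\mathbf{v}_h\|_{0,\Omega}\leq C\mu^\ast h^k\|\mathbf{u}\|_{k+1,\Omega}\|\nabla\mathbf{v}_h\|_{0,\Omega}$ by \eqref{con-av} and Lemma \ref{lemmaproj2}. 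The second is the variable-coefficient VEM consistency error: inserting $\boldsymbol{\Pi}^{0,E}_{k-1}$ on the virtual gradients, replacing $\mu(\theta)$ on each $E$ by its $L^\infty(E)$-best degree-$k$ polynomial approximation (within $Ch_E^k\|\mu\|_{k,\infty,E}$ since $\mu\in W^{k,\infty}(\Omega_h)$ by \textbf{(A3)}), and controlling the $\mathbf{S}^E_{\nabla,k}$-part via \eqref{vem-a} with \eqref{err0}, bounds it by $Ch^k\big(1+\mu^\ast+\max_E\|\mu\|_{k,\infty,E}\big)\|\mathbf{u}\|_{k+1,\Omega}\|\nabla\mathbf{v}_h\|_{0,\Omega}$; this is the standard argument, cf.\ \cite[Lemma 4.3]{mvem3}. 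The third term I would treat with the Lipschitz bound $|\mu(\Pi^{0,E}_k\theta)-\mu(\Pi^{0,E}_k\theta_I)|\leq L_\mu|\Pi^{0,E}_k(\theta-\theta_I)|$, the $L^q$-stability \eqref{lqstab}, a Hölder split with exponents $(6,3,2)$, the embedding $\|\theta-\theta_I\|_{0,6,\Omega}\leq C_{1\hookrightarrow6}\|\theta-\theta_I\|_{1,\Omega}\leq Ch^k\|\theta\|_{k+1,\Omega}$ and $\|\nabla\mathbf{u}_I\|_{0,3,\Omega}\leq C\|\nabla\mathbf{u}\|_{0,3,\Omega}$, the $\mathbf{S}^E$-contribution being $O(h^{2k})$; this yields $CL_\mu h^k\|\nabla\mathbf{u}\|_{0,3,\Omega}\|\theta\|_{k+1,\Omega}\|\nabla\mathbf{v}_h\|_{0,\Omega}$. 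The fourth term is exactly Lemma \ref{vacca} with $\widehat{\theta}_h\leftarrow\theta_I$, $\mathbf{v}_h\leftarrow\mathbf{u}_I$, $\mathbf{w}_h\leftarrow\mathbf{v}_h$: it is $\leq C_vC_qL_\mu C_{1\hookrightarrow6}\|\nabla\mathbf{u}_I\|_{0,3,\Omega}\|\nabla(\theta_I-\theta_h)\|_{0,\Omega}\|\nabla\mathbf{v}_h\|_{0,\Omega}$, and with $\|\nabla(\theta_I-\theta_h)\|_{0,\Omega}\leq Ch^k\|\theta\|_{k+1,\Omega}+\|\nabla e^\theta\|_{0,\Omega}$ and $\|\nabla\mathbf{u}_I\|_{0,3,\Omega}\leq C\|\nabla\mathbf{u}\|_{0,3,\Omega}$ this supplies precisely the terms $L_\mu\|\nabla\mathbf{u}\|_{0,3,\Omega}\|\theta\|_{k+1,\Omega}h^k$ and $L_\mu C_qC_{1\hookrightarrow6}\|\nabla\mathbf{u}\|_{0,3,\Omega}\|\nabla e^\theta\|_{0,\Omega}$ appearing in \eqref{err16}.

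For $b(\mathbf{v}_h,p)-b_h(\mathbf{v}_h,p_I)=b(\mathbf{v}_h,p-p_I)+[b(\mathbf{v}_h,p_I)-b_h(\mathbf{v}_h,p_I)]$ the first part is $\leq\sqrt2\|\nabla\mathbf{v}_h\|_{0,\Omega}\|p-p_I\|_{0,\Omega}\leq Ch^k\|p\|_{k,\Omega}\|\nabla\mathbf{v}_h\|_{0,\Omega}$ by \eqref{con-b} and Lemma \ref{lemmaproj2}, and the second, using the $L^2$-orthogonality of $\Pi^{0,E}_{k-1}$ (so degree-$(k-1)$ polynomials may be freely inserted into/removed from both factors), equals $\sum_E\int_E(\nabla\cdot\mathbf{v}_h-\Pi^{0,E}_{k-1}\nabla\cdot\mathbf{v}_h)(p_I-\Pi^{0,E}_{k-1}p_I)\leq Ch^k\|p\|_{k,\Omega}\|\nabla\mathbf{v}_h\|_{0,\Omega}$; the group $b(\mathbf{u},q_h)-b_h(\mathbf{u}_I,q_h)$ is symmetric and $\leq Ch^k\|\mathbf{u}\|_{k+1,\Omega}\|q_h\|_{0,\Omega}$. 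For the stabilization group, Cauchy--Schwarz in the semi-definite forms gives
\begin{align*}
	\big|\mathcal{L}_h[(\mathbf{u}_I,p_I),(\mathbf{v}_h,q_h)]\big|\leq\Big(\mathcal{L}_{1,h}(\mathbf{u}_I,\mathbf{u}_I)^{1/2}+\mathcal{L}_{2,h}(p_I,p_I)^{1/2}\Big)\vertiii{(\mathbf{v}_h,q_h)};
\end{align*}
with $\tau_{1,E}\sim h_E$, the estimate $\|\widehat{\mathbf{r}}_h(\nabla\mathbf{u}_I)\|_{0,E}\leq Ch_E^{k}\|\mathbf{u}\|_{k+1,E}$ (Lemmas \ref{lemmaproj1}, \ref{lemmaproj2}) and $\mathbf{S}^E_{\nabla,k}(\mathbf{u}_I,\mathbf{u}_I)\leq\lambda_1^\ast\|\nabla(\mathbf{I}-\boldsymbol{\Pi}^{\nabla,E}_k)\mathbf{u}_I\|_{0,E}^2\leq Ch_E^{2k}\|\mathbf{u}\|_{k+1,E}^2$ (by \eqref{vem-a}, \eqref{err0}) give $\mathcal{L}_{1,h}(\mathbf{u}_I,\mathbf{u}_I)\leq Ch^{2k+1}\|\mathbf{u}\|_{k+1,\Omega}^2$, and with $\tau_{2,E}\sim h_E^2$ and \eqref{vem-c} the same computation gives $\mathcal{L}_{2,h}(p_I,p_I)\leq Ch^{2k}\|p\|_{k,\Omega}^2$, so this group is $\leq Ch^k(\|\mathbf{u}\|_{k+1,\Omega}+\|p\|_{k,\Omega})\vertiii{(\mathbf{v}_h,q_h)}$. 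Summing the four groups, using $\|\nabla\mathbf{v}_h\|_{0,\Omega}+\|q_h\|_{0,\Omega}\leq\sqrt2\,\vertiii{(\mathbf{v}_h,q_h)}$ and absorbing the $h^{k+1/2}$ contributions into $Ch^k$, yields \eqref{err16}.

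The only genuinely delicate step is the variable-coefficient VEM consistency estimate for the second diffusion term (and, in a milder form, the third): one has to handle simultaneously the non-polynomial coefficient $\mu(\theta)$ — replaced by a degree-$k$ polynomial approximation, which is where $\|\mu\|_{k,\infty,\Omega_h}$ and the extra regularity of $\mu$ enter — and the polynomial projection of the virtual gradients, while verifying that all ``interpolation error $\times$ projection error'' cross terms are $O(h^k)$ or better so that the bound closes; this parallels \cite[Lemma 4.3]{mvem3}. A secondary technical point is the $W^{1,3}$-stability $\|\nabla\mathbf{u}_I\|_{0,3,\Omega}\leq C\|\nabla\mathbf{u}\|_{0,3,\Omega}$ of the virtual element interpolant, needed to pass from $\|\nabla\mathbf{u}_I\|_{0,3,\Omega}$ to $\|\nabla\mathbf{u}\|_{0,3,\Omega}$.
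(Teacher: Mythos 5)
Your proposal is correct and follows the same overall strategy as the paper: the identical four-group decomposition into diffusion, two divergence terms, and the stabilization term, with each group controlled by projection/interpolation estimates, a H\"older split with exponents $(6,3,2)$ for the temperature-dependent coefficient, and the scalings $\tau_{1,E}\sim h_E$, $\tau_{2,E}\sim h_E^2$ for the LPS terms. The only substantive difference is in how the diffusion group is cut up. The paper expands $a_V(\theta;\mathbf{u},\mathbf{v}_h)-a_{V,h}(\theta_h;\mathbf{u}_I,\mathbf{v}_h)$ directly into five terms by inserting the projections $\boldsymbol{\Pi}^{0,E}_{k-1}\nabla\mathbf{u}$ and $\boldsymbol{\Pi}^{0,E}_{k-1}(\mu(\theta)\nabla\mathbf{u})$, so that the single coefficient difference $\mu(\theta)-\mu(\Pi^{0,E}_k\theta_h)$ is estimated in one H\"older step against $\|\nabla\mathbf{u}\|_{0,3,E}$, i.e.\ against the \emph{exact} velocity. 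You instead insert the intermediate arguments $\theta$ and $\theta_I$ in the discrete form and invoke Lemma~\ref{vacca} for the step $\theta_I\to\theta_h$; this reuses an existing lemma cleanly and produces the same two $L_\mu$-terms of \eqref{err16}, but it leaves $\|\nabla\mathbf{u}_I\|_{0,3,\Omega}$ in the bound and therefore requires the $W^{1,3}$-stability $\|\nabla\mathbf{u}_I\|_{0,3,\Omega}\leq C\|\nabla\mathbf{u}\|_{0,3,\Omega}$ of the virtual interpolant, which you correctly flag but which is an extra (nontrivial, $p\neq 2$) ingredient the paper's ordering of insertions avoids entirely. Apart from that added hypothesis, every estimate you state (the $Ch^{k}(\mu^\ast+\max_E\|\mu\|_{k,\infty,E})$ consistency bound via \eqref{err0} and \eqref{vem-a}, the $Ch^k\|p\|_{k,\Omega}$ and $Ch^k\|\mathbf{u}\|_{k+1,\Omega}$ bounds for the $b$-terms, and $\mathcal{L}_{1,h}(\mathbf{u}_I,\mathbf{u}_I)^{1/2}\leq Ch^{k+1/2}\|\mathbf{u}\|_{k+1,\Omega}$, $\mathcal{L}_{2,h}(p_I,p_I)^{1/2}\leq Ch^{k}\|p\|_{k,\Omega}$) matches the paper's, and the assembly into \eqref{err16} closes as claimed.
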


\begin{proof} The proof of \eqref{err16} can be derived in the following steps:
\begin{align}
	A_I:&=	A_{V}[\theta; (\mathbf{u},p), (\mathbf{v}_h, q_h)]-  A_{V,h}[\theta_h; (\mathbf{u}_I,p_I ), (\mathbf{v}_h, q_h)] \nonumber \\
	&=  a_V(\theta; \mathbf{u}, \mathbf{v}_h) - a_{V,h}(\theta_h; \mathbf{u}_I, \mathbf{v}_h)  +  b_h(\mathbf{v}_h, p_I) -  b(\mathbf{v}_h, p)  +   b(\mathbf{u},q_h) - b_h(\mathbf{u}_I, q_h) - \mathcal{L}[(\mathbf{u}_I,p_I),(\mathbf{v}_h, q_h)]  \nonumber \\ 
	& =: A_{I,1} + A_{I,2} + A_{I,3} - A_{I,4}. \label{err5}
\end{align}
\textbf{Step 1.} Adding and subtracting suitable terms, we have 
\begin{align}
	A_{I,1} &= \sum_{E \in \Omega_h} \Big( \big( \mu(\theta) \nabla \mathbf{u}, \nabla \mathbf{v}_h \big) -  \big( \mu(\Pi^{0,E}_k \theta_h) \boldsymbol{\Pi}^{0,E}_{k-1} \nabla\mathbf{u}_I, \boldsymbol{\Pi}^{0,E}_{k-1} \nabla \mathbf{v}_h  \big) - \mu(\Pi^{0,E}_0 \theta_h) \mathbf{S}^E_{\nabla,k} \big(  \mathbf{u}_I, \mathbf{v}_h\big) \Big) \nonumber \\
	&= \sum_{E \in \Omega_h} \Big( \big( \mu(\theta) (\nabla\mathbf{u} - \boldsymbol{\Pi}^{0,E}_{k-1} \nabla \mathbf{u}), \nabla \mathbf{v}_h \big) + \big( \mu(\theta) \boldsymbol{\Pi}^{0,E}_{k-1} \nabla \mathbf{u}, \nabla \mathbf{v}_h - \boldsymbol{\Pi}^{0,E}_{k-1} \nabla \mathbf{v}_h  \big)\, + \nonumber \\ & \qquad \big( (\mu(\theta)- \mu(\Pi^{0,E}_k \theta_h)) \boldsymbol{\Pi}^{0,E}_{k-1} \nabla \mathbf{u}, \boldsymbol{\Pi}^{0,E}_{k-1} \nabla \mathbf{v}_h \big) + \big( \mu(\Pi^{0,E}_k \theta_h) \boldsymbol{\Pi}^{0,E}_{k-1} (\nabla \mathbf{u}-\nabla \mathbf{u}_I), \boldsymbol{\Pi}^{0,E}_{k-1} \nabla \mathbf{v}_h \big)  \nonumber \\ & \qquad  -\mu(\Pi^{0,E}_0 \theta_h) \mathbf{S}^E_{\nabla,k} \big( \mathbf{u}_I,  \mathbf{v}_h\big) \Big)  \nonumber \\
	&= \sum_{E \in \Omega_h} \Big( \big( \mu(\theta) (\nabla \mathbf{u} - \boldsymbol{\Pi}^{0,E}_{k-1} \nabla \mathbf{u}), \nabla \mathbf{v}_h \big) + \big( \mu(\theta) \boldsymbol{\Pi}^{0,E}_{k-1} \nabla \mathbf{u} - \boldsymbol{\Pi}^{0,E}_{k-1}(\mu(\theta) \nabla \mathbf{u}), \nabla \mathbf{v}_h  - \boldsymbol{\Pi}^{0,E}_{k-1} \nabla \mathbf{v}_h  \big) \nonumber \\ & \qquad + \big( (\mu(\theta)- \mu(\Pi^{0,E}_k \theta_h)) \boldsymbol{\Pi}^{0,E}_{k-1} \nabla \mathbf{u}, \boldsymbol{\Pi}^{0,E}_{k-1} \nabla \mathbf{v}_h \big) + \big( \mu(\Pi^{0,E}_k \theta_h) \boldsymbol{\Pi}^{0,E}_{k-1} (\nabla \mathbf{u}-\nabla \mathbf{u}_I), \boldsymbol{\Pi}^{0,E}_{k-1} \nabla \mathbf{v}_h \big)  \nonumber \\ & \qquad  -\mu(\Pi^{0,E}_0 \theta_h) \mathbf{S}^E_{\nabla,k} \big( \mathbf{u}_I,  \mathbf{v}_h\big) \Big)  \nonumber \\
	& =: I_1 + I_2 + I_3 + I_4 + I_5. \label{err6}
\end{align}
$\bullet$	Employing the data assumption \textbf{(A0)}, the triangle inequality, \eqref{err0} and Lemma \ref{lemmaproj1}, it holds that
\begin{align}
	|I_1 + I_2  + I_4 + I_5| &\leq Ch^k \big( \mu^\ast + \max_{E\in\Omega_h} \|\mu \|_{k,\infty,E}  \big)\| \mathbf{u} \|_{k+1,\Omega} \|\nabla \mathbf{v}_h\|_{0,\Omega}. \label{err7}
\end{align} 
$\bullet$	Concerning $I_3$, we apply the bound \eqref{lqstab}, the Lipschitz continuity of $\mu$ and Lemma \ref{lemmaproj1}, we obtain
\begin{align}
	|I_3| &\leq \sum_{E \in \Omega_h } \|\mu(\theta)- \mu(\Pi^{0,E}_k \theta_h)\|_{0,6,E} \| \nabla \mathbf{u}\|_{0,3,E} \| \nabla \mathbf{v}_h \|_{0,E} \nonumber \\ 
	&\leq \sum_{E \in \Omega_h } L_\mu  \Big( \| e^\theta\|_{0,6,E} + \|\theta- \Pi^{0,E}_k \theta\|_{0,6,E}  \Big) \| \nabla \mathbf{u}\|_{0,3,E} \| \nabla \mathbf{v}_h \|_{0,E} \nonumber \\
	& \leq  L_\mu  \Big( C h^{k} |\theta|_{k,6,\Omega} + \| e^\theta\|_{0,6,\Omega} \Big) \| \nabla \mathbf{u}\|_{0,3,\Omega} \| \nabla \mathbf{v}_h \|_{0,\Omega} \nonumber  \\
	& \leq  L_\mu  \Big( C h^{k}\|\theta\|_{k+1,\Omega} + C_q C_{1\hookrightarrow 6}\| \nabla e^\theta\|_{0,\Omega} \Big) \| \nabla \mathbf{u}\|_{0,3,\Omega} \| \nabla \mathbf{v}_h \|_{0,\Omega}. \label{err8}
\end{align}
Thus, adding \eqref{err7} and \eqref{err8}, it gives
\begin{align}
	|A_{I,1}| &\leq Ch^k \Big( \big(\mu^\ast + \max_{E\in\Omega_h} \|\mu \|_{k,\infty,E}  \big)\| \mathbf{u} \|_{k+1,\Omega} + L_\mu \| \nabla \mathbf{u}\|_{0,3,\Omega}  \|\theta\|_{k+1,\Omega} \Big) \|\nabla \mathbf{v}_h\|_{0,\Omega} \nonumber \\ & \qquad + L_\mu C_q C_{1\hookrightarrow 6} \| \nabla \mathbf{u}\|_{0,3,\Omega} \| \nabla e^\theta\|_{0,\Omega} \|\nabla \mathbf{v}_h\|_{0,\Omega}. \label{err9}
\end{align}
\textbf{Step 2.} Adding and subtracting suitable terms, and using the stability of the projectors and Lemma \ref{lemmaproj1}, it yields that
\begin{align}
	A_{I,2} &= \sum_{E \in \Omega_h} \Big( \big( {\Pi}^{0,E}_{k-1}\nabla \cdot \mathbf{v}_h, \Pi^{0,E}_k p_I\big) - \big( \nabla \cdot \mathbf{v}_h, p \big) \Big)  \nonumber \\
	& \leq \sum_{E \in \Omega_h} \Big( \big( \nabla \cdot \mathbf{v}_h, \Pi^{0,E}_{k-1} (p_I- p)\big) + \big( \nabla \cdot \mathbf{v}_h, \Pi^{0,E}_{k-1} p -p\big) \Big)  \nonumber \\
	&\leq \sum_{E \in \Omega_h}  \Big( \| \nabla \mathbf{v}_h\|_{0,E} \| p-p_I\|_{0,E} + \| \nabla \mathbf{v}_h\|_{0,E} \|p-\Pi^{0,E}_{k-1} p\|_{0,E}  \Big) \nonumber \\
	& \leq Ch^k_E \|p\|_{k,\Omega} \| \nabla \mathbf{v}_h\|_{0,\Omega}. \label{err10}
\end{align}
\textbf{Step 3.} Using the estimation of $A_{I,2}$, we obtain
\begin{align}
	A_{I,3} \leq C h^k \| \mathbf{u}\|_{k+1,\Omega} \| q_h\|_{0,\Omega}. \label{err11}
\end{align}
\textbf{Step 4.} Concerning $A_{I,4}$, we proceed as follows
\begin{align}
	A_{I,4} &= \mathcal{L}_h[(\mathbf{u}_I, p_I), (\mathbf{v}_h, q_h)]=  \mathcal{L}_{1,h}(\mathbf{u}_I, \mathbf{v}_h) + \mathcal{L}_{2,h}(p_I, q_h) \nonumber \\
	&=: \mathcal{L}_1 +\mathcal{L}_2. \label{err12}
\end{align} 
We use the triangle inequality, $\tau_{1,E} \sim h_E$, the bound \eqref{err0} and Lemmas \ref{lemmaproj1} and \ref{lemmaproj2}:
\begin{align}
	\mathcal{L}_1 &= \sum_{E \in \Omega_h} \tau_{1,E} \Big( \big(\widehat{\mathbf{r}}_h(\nabla \mathbf{u}_I),  \widehat{\mathbf{r}}_h(\nabla \mathbf{v}_h)\big) + \mathbf{S}^E_{\nabla,k} \big( \mathbf{u}_I,  \mathbf{v}_h\big) \Big) \nonumber \\
	&\leq {C}  \sum_{E \in \Omega_h} h^{1/2}_E \Big( \| \widehat{\mathbf{r}}_h(\nabla e^\mathbf{u}_I)\|_{0,E} + \|\widehat{\mathbf{r}}_h(\nabla \mathbf{u})\|_{0,E} + \lambda_1^\ast \|\nabla(\mathbf{I}-  \boldsymbol{\Pi}^{\nabla,E}_{k}) \mathbf{u}_I\|_{0,E} \Big) \vertiii{(\mathbf{v}_h, q_h)}_{E}  \nonumber \\
	&\leq {C}\sum_{E \in \Omega_h} h^{1/2}_E \Big( \|\nabla e^\mathbf{u}_I\|_{0,E} + \|(\mathbf{I} -  \boldsymbol{\Pi}^{0,E}_k)\nabla \mathbf{u}\|_{0,E} + \|(\mathbf{I} -  \boldsymbol{\Pi}^{0,E}_{k-1}) \nabla \mathbf{u}\|_{0,E}\,+ \nonumber \\ & \qquad  \lambda_1^\ast h^k_E \| \mathbf{u}\|_{k+1,E} \Big) \vertiii{(\mathbf{v}_h, q_h)}_E\nonumber \\
	&\leq Ch^{k+1/2}  \| \mathbf{u}\|_{k+1,\Omega} \vertiii{(\mathbf{v}_h, q_h)}. \label{err13}
\end{align}
Using the estimation of $\mathcal{L}_1$ and the fact $\tau_{2,E} \sim h^2_E$, we obtain
\begin{align}
	\mathcal{L}_2 &\leq C h^k\| p\|_{k,\Omega} \vertiii{(\mathbf{v}_h, q_h)}. \label{err15}
\end{align}
Finally, combining \eqref{err9}--\eqref{err11}, \eqref{err13} and \eqref{err15}, we easily obtain \eqref{err16}. 
\end{proof}

\begin{lemma} \label{convect}
Let $\mathbf{u} \in [H^{k+1}(\Omega_h)]^2 \cap \mathbf{V}$ and $\mathbf{u}_h \in \mathbf{V}_h$. Then for any $\mathbf{v}_h \in \mathbf{V}_h$, there holds that
\begin{align}
	c^S_{V}(\mathbf{u}; \mathbf{u}, \mathbf{v}_h) - c^S_{V,h}(\mathbf{u}_h; \mathbf{u}_I, \mathbf{v}_h) &\leq  C h^k \Big( \|\mathbf{u}\|_{k,\Omega}  + \| \nabla \mathbf{u}\|_{0,\Omega}\Big) \|\mathbf{u}\|_{k+1,\Omega} \|\nabla \mathbf{v}_h\|_{0,\Omega}\, + \nonumber \\ & \qquad \widetilde{N_0} \|\nabla \mathbf{u}\|_{0,\Omega} \|\nabla e^\mathbf{u}\|_{0,\Omega} \| \nabla \mathbf{v}_h\|_{0,\Omega}. \label{err21}
\end{align}
\end{lemma}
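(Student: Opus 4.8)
The plan is to prove \eqref{err21} by first reducing to the non--skew--symmetrised trilinear forms and then splitting the resulting error into a \emph{consistency} part (which sees only the exact velocity $\mathbf{u}$, its interpolant $\mathbf{u}_I$ and the local $L^2$--projections) and a \emph{linearisation} part (which sees the error $e^{\mathbf u}=\mathbf{u}-\mathbf{u}_h$). Since $c^S_V$ and $c^S_{V,h}$ are obtained from $c_V$ and $c_{V,h}$ by the same skew--symmetrisation, and this operation is linear and leaves the first (transport) slot untouched, it suffices to bound $c_V(\mathbf{u};\mathbf{u},\mathbf{v}_h)-c_{V,h}(\mathbf{u}_h;\mathbf{u}_I,\mathbf{v}_h)$ and the analogous quantity with $\mathbf{v}_h$ and $\mathbf{u}/\mathbf{u}_I$ interchanged in the last two slots, the two being handled by identical arguments. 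For the first I would introduce the ``fully projected, exact'' form $\widetilde c_{V,h}(\mathbf{u};\mathbf{u},\mathbf{v}_h):=\sum_{E\in\Omega_h}\int_E(\boldsymbol{\Pi}^{0,E}_{k-1}\nabla\mathbf{u})(\boldsymbol{\Pi}^{0,E}_{k}\mathbf{u})\cdot(\boldsymbol{\Pi}^{0,E}_{k}\mathbf{v}_h)$ (well defined, since the discrete convective form uses only $L^2$--projections) and write $c_V(\mathbf{u};\mathbf{u},\mathbf{v}_h)-c_{V,h}(\mathbf{u}_h;\mathbf{u}_I,\mathbf{v}_h) = \big(c_V(\mathbf{u};\mathbf{u},\mathbf{v}_h)-\widetilde c_{V,h}(\mathbf{u};\mathbf{u},\mathbf{v}_h)\big) + \big(\widetilde c_{V,h}(\mathbf{u};\mathbf{u},\mathbf{v}_h)-c_{V,h}(\mathbf{u}_h;\mathbf{u}_I,\mathbf{v}_h)\big)$.

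For the consistency term I would telescope element by element, replacing in turn $\nabla\mathbf{u}$ by $\boldsymbol{\Pi}^{0,E}_{k-1}\nabla\mathbf{u}$, the transport factor $\mathbf{u}$ by $\boldsymbol{\Pi}^{0,E}_{k}\mathbf{u}$, and $\mathbf{v}_h$ by $\boldsymbol{\Pi}^{0,E}_{k}\mathbf{v}_h$, in each step exploiting $L^2$--orthogonality of the projection against polynomials of the relevant degree (exactly as in the treatment of $A_{I,1}$ in Lemma \ref{velocity}), so that each remainder pairs an approximation error $\|(\mathbf{I}-\boldsymbol{\Pi}^{0,E}_{\bullet})(\cdot)\|_{0,E}$ against either $\nabla\mathbf{v}_h$ or a bounded quantity. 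The first two remainders are estimated by Hölder with exponents $(2,4,4)$, the $L^q$--stability \eqref{lqstab} of the $L^2$--projections, the embedding $H^1\hookrightarrow L^4$, Lemma \ref{lemmaproj2} and the bound \eqref{err0}; after a discrete Hölder inequality over $\Omega_h$ they contribute $Ch^k\|\nabla\mathbf{u}\|_{0,\Omega}\|\mathbf{u}\|_{k+1,\Omega}\|\nabla\mathbf{v}_h\|_{0,\Omega}$. The delicate remainder is the one in which $\mathbf{v}_h$ is replaced by $\boldsymbol{\Pi}^{0,E}_k\mathbf{v}_h$: by orthogonality it becomes $\sum_E\int_E(\mathbf{I}-\boldsymbol{\Pi}^{0,E}_k)\big[(\nabla\mathbf{u})\mathbf{u}\big]\cdot(\mathbf{I}-\boldsymbol{\Pi}^{0,E}_k)\mathbf{v}_h$, and a two--dimensional Sobolev product estimate (valid under the regularity \textbf{(A3)}, $k\ge1$) gives $\|(\nabla\mathbf{u})\mathbf{u}\|_{k,E}\le C(\|\mathbf{u}\|_{k,E}+\|\nabla\mathbf{u}\|_{0,E})\|\mathbf{u}\|_{k+1,E}$; combined with Lemma \ref{lemmaproj1} this yields the $Ch^k(\|\mathbf{u}\|_{k,\Omega}+\|\nabla\mathbf{u}\|_{0,\Omega})\|\mathbf{u}\|_{k+1,\Omega}\|\nabla\mathbf{v}_h\|_{0,\Omega}$ contribution. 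When the last two slots are swapped one argues identically, now projecting $\nabla\mathbf{v}_h$ and applying the product estimate to the tensor $\mathbf{u}\otimes\mathbf{u}$.

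For the linearisation term I would use multilinearity to write $\widetilde c_{V,h}(\mathbf{u};\mathbf{u},\mathbf{v}_h)-c_{V,h}(\mathbf{u}_h;\mathbf{u}_I,\mathbf{v}_h) = c_{V,h}(e^{\mathbf u};\mathbf{u},\mathbf{v}_h)+c_{V,h}(\mathbf{u}_h;e^{\mathbf u}_I,\mathbf{v}_h)$. The first piece is bounded, via the $L^q$--stability of the projections and $H^1\hookrightarrow L^4$ (the same argument that proves \eqref{dcnt-sv}), by $\widetilde N_0\|\nabla\mathbf{u}\|_{0,\Omega}\|\nabla e^{\mathbf u}\|_{0,\Omega}\|\nabla\mathbf{v}_h\|_{0,\Omega}$, which is exactly the last term of \eqref{err21}. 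The second piece is handled by the same continuity estimate together with $\|\nabla e^{\mathbf u}_I\|_{0,\Omega}\le Ch^k\|\mathbf{u}\|_{k+1,\Omega}$ (Lemma \ref{lemmaproj2}) and the a priori bound $\|\nabla\mathbf{u}_h\|_{0,\Omega}\le C(\|\nabla\mathbf{u}\|_{0,\Omega}+1)$ coming from the well--posedness estimates; it is therefore of higher order and absorbed into the two displayed terms of \eqref{err21}. Collecting the bounds and applying the argument to both orderings forced by the skew--symmetrisation gives \eqref{err21}.

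The main obstacle I anticipate is precisely the remainder produced when $\mathbf{v}_h$ (respectively $\nabla\mathbf{v}_h$) is projected: since $\mathbf{v}_h$ is only $H^1$, no $O(h^k)$ approximation can be extracted from it directly, so one must transfer the approximation error onto $(\nabla\mathbf{u})\mathbf{u}$ (respectively $\mathbf{u}\otimes\mathbf{u}$) by $L^2$--orthogonality and then control its $H^k$--seminorm by a Sobolev multiplication estimate in two dimensions --- this is where the hypothesis $k\ge1$ and the regularity \textbf{(A3)} are used, and where the factor $\|\mathbf{u}\|_{k,\Omega}$ in \eqref{err21} originates. A secondary, purely bookkeeping difficulty is keeping the Hölder exponents $(2,4,4)$ consistent through every telescoping step so that the element-wise estimates reassemble, after a discrete Hölder inequality, into global norms with mesh-independent embedding constants $C_{1\hookrightarrow4}$ and $C_q$.
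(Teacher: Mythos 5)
Your proposal is correct and follows essentially the same route as the paper: the same three-way splitting into a consistency term $c^S_V(\mathbf{u};\mathbf{u},\mathbf{v}_h)-c^S_{V,h}(\mathbf{u};\mathbf{u},\mathbf{v}_h)$, an interpolation term carrying $e^{\mathbf{u}}_I$, and a linearisation term carrying $e^{\mathbf{u}}$ in the transport slot, each bounded exactly as you describe. The only difference is cosmetic: the paper dispatches the consistency term by citing the corresponding lemma from the divergence-free Navier--Stokes VEM literature rather than re-deriving the telescoping/Sobolev-product argument you spell out, and its add-and-subtract ordering places $\mathbf{u}_I$ (rather than $\mathbf{u}_h$) in the transport slot of the interpolation piece, which is why the constant $\widetilde{N_0}$ absorbs $\|\nabla\mathbf{u}_I\|_{0,\Omega}\lesssim\|\nabla\mathbf{u}\|_{0,\Omega}$ instead of an a priori bound on $\|\nabla\mathbf{u}_h\|_{0,\Omega}$.
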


\begin{proof}
Concerning \eqref{err21}, we add and subtract the appropriate terms as outlined below:
\begin{align}
	C_{I}:&= c^S_{V}(\mathbf{u}; \mathbf{u}, \mathbf{v}_h) - c^S_{V,h}(\mathbf{u}_h; \mathbf{u}_I, \mathbf{v}_h)  \nonumber\\
	&=c^S_{V}(\mathbf{u}; \mathbf{u}, \mathbf{v}_h) - c^S_{V,h}(\mathbf{u}; \mathbf{u}_I, \mathbf{v}_h) +  c^S_{V,h}(e^\mathbf{u};\mathbf{u}_I, \mathbf{v}_h) \nonumber\\ 
	&= c^S_{V}(\mathbf{u}; \mathbf{u}, \mathbf{v}_h) - c^S_{V,h}(\mathbf{u}; \mathbf{u}, \mathbf{v}_h) +  c^S_{V,h}(\mathbf{u}; e^\mathbf{u}_I, \mathbf{v}_h)  + c^S_{V,h}( e^\mathbf{u}; \mathbf{u}_I, \mathbf{v}_h)  \nonumber\\
	&=: C_{I,1} + C_{I,2} + C_{I,3}. \label{err17}
\end{align} 
We apply \cite[Lemma 4.3]{mvem9} to approximate $C_{I,1}$:
\begin{align}
	C_{I,1} &\leq Ch^k \big(  \|\mathbf{u}\|_{k,\Omega} +   \| \nabla \mathbf{u}\|_{0,\Omega}  \big) \|\mathbf{u}\|_{k+1,\Omega}  \| \nabla \mathbf{v}_h\|_{0,\Omega}. \label{err18}
\end{align}
Employing the stability of the projectors, the bound \eqref{dcnt-st} and Lemma \ref{lemmaproj2}, we infer
\begin{align}
	C_{I,2} & \leq \widehat{N_0} \|\nabla \mathbf{u}\|_{0,\Omega} \|\nabla e^\mathbf{u}_I\|_{0,\Omega} \| \nabla \mathbf{v}_h\|_{0,\Omega}  \nonumber \\
	&\leq  C h^{k} \|\nabla \mathbf{u}\|_{0,\Omega} \| \mathbf{u}\|_{k+1,\Omega} \| \nabla \mathbf{v}_h\|_{0,\Omega}. \label{err19}
\end{align}
Following $C_{I,2}$, we can arrive at
\begin{align}
	C_{I,3} &\leq \widehat{N_0} \|\nabla \mathbf{u}_I\|_{0,\Omega} \|\nabla e^\mathbf{u}\|_{0,\Omega} \| \nabla \mathbf{v}_h\|_{0,\Omega} \nonumber \\
	&\leq \widetilde{N_0} \|\nabla \mathbf{u}\|_{0,\Omega} \|\nabla e^\mathbf{u}\|_{0,\Omega} \| \nabla \mathbf{v}_h\|_{0,\Omega}. \label{err20}
\end{align}
Thus, the estimate \eqref{err21} can be obtained by adding \eqref{err18}--\eqref{err20}. 
\end{proof}

\begin{lemma}\label{temp}
Under the assumption \textbf{(A2)}, let $\theta$ and $\theta_h$ be the solution of \eqref{dvar-1} and \eqref{dnvem1} with $\mathbf{u} \in \mathbf{V}$ and $\mathbf{u}_h \in \mathbf{V}_h$, respectively. Further, we assume that the stabilization parameter is $\tau_{E} \sim h_E$, for all $E \in \Omega_h$. Then for any $\psi \in \Sigma_{0,h}$, the following holds
\begin{align}
	\Scale[0.9]{A_{T}(\theta; \theta, \psi_h) - A_{T,h}(\theta_h; \theta_I, \psi_h)} & \Scale[0.9]{\leq Ch^k \Big( \big(1+\kappa^\ast + \max_{E\in\Omega_h} \|\kappa \|_{k,\infty,E}  \big)\| \theta \|_{k+1,\Omega} + L_\kappa \| \nabla \theta\|_{0,3,\Omega}  \|\theta\|_{k+1,\Omega}} \nonumber \\& \qquad \Scale[0.9]{ +\, \big( \|\mathbf{u}\|_{k,\Omega}   + \| \nabla \mathbf{u}\|_{0,\Omega}\big) \|\theta\|_{k+1,\Omega}  + \| \nabla \theta\|_{0,\Omega} \| \mathbf{u} \|_{k+1,\Omega}  \Big) \|\nabla \psi_h \|_{0,\Omega}}\,+ \nonumber \nonumber \\ & \qquad \Scale[0.9]{+\, \big(L_\kappa C_q C_{1\hookrightarrow 6} \| \nabla \theta\|_{0,3,\Omega} \| \nabla e^\theta\|_{0,\Omega} + \widetilde{N_0} \| \nabla \theta\|_{0,\Omega}  \|\nabla e^\mathbf{u} \|_{0,\Omega}\big) \|\nabla \psi_h \|_{0,\Omega}}. \label{errtemp}
\end{align}
\end{lemma}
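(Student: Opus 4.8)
The plan is to mimic the structure of Lemmas \ref{velocity} and \ref{convect}, splitting the consistency expression according to $A_T(\theta,\mathbf{u};\theta,\psi_h)=a_T(\theta;\theta,\psi_h)+c^S_T(\mathbf{u};\theta,\psi_h)$ and $A_{T,h}(\theta_h,\mathbf{u}_h;\theta_I,\psi_h)=a_{T,h}(\theta_h;\theta_I,\psi_h)+c^S_{T,h}(\mathbf{u}_h;\theta_I,\psi_h)+\mathcal{L}_{T,h}(\theta_I,\psi_h)$. I would write the difference as $T_1+T_2-T_3$ with $T_1:=a_T(\theta;\theta,\psi_h)-a_{T,h}(\theta_h;\theta_I,\psi_h)$, $T_2:=c^S_T(\mathbf{u};\theta,\psi_h)-c^S_{T,h}(\mathbf{u}_h;\theta_I,\psi_h)$, and $T_3:=\mathcal{L}_{T,h}(\theta_I,\psi_h)$, and estimate each separately.

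For $T_1$ I would reproduce verbatim the argument for $A_{I,1}$ in Lemma \ref{velocity}, with the triple $(\mu,\mathbf{u},\mathbf{v}_h)$ replaced by $(\kappa,\theta,\psi_h)$: add and subtract $\boldsymbol{\Pi}^{0,E}_{k-1}\nabla\theta$ and $\boldsymbol{\Pi}^{0,E}_{k-1}(\kappa(\theta)\nabla\theta)$ inside the local integral, then insert $\nabla\theta_I$ and $\kappa(\Pi^{0,E}_k\theta_h)$. The polynomial-consistency pieces together with the VEM stabilization term yield, via Lemma \ref{lemmaproj1} and \eqref{err0}, the bound $Ch^k\big(1+\kappa^\ast+\max_{E\in\Omega_h}\|\kappa\|_{k,\infty,E}\big)\|\theta\|_{k+1,\Omega}\|\nabla\psi_h\|_{0,\Omega}$, while the term carrying $\kappa(\theta)-\kappa(\Pi^{0,E}_k\theta_h)$ is controlled by the Lipschitz bound on $\kappa$, the $L^q$-stability \eqref{lqstab} of $\Pi^{0,E}_k$, Lemma \ref{lemmaproj1} and the embedding $H^1\hookrightarrow L^6$, producing $Ch^kL_\kappa\|\nabla\theta\|_{0,3,\Omega}\|\theta\|_{k+1,\Omega}\|\nabla\psi_h\|_{0,\Omega}+L_\kappa C_qC_{1\hookrightarrow 6}\|\nabla\theta\|_{0,3,\Omega}\|\nabla e^\theta\|_{0,\Omega}\|\nabla\psi_h\|_{0,\Omega}$.

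For $T_2$ I would follow Lemma \ref{convect} and split $T_2=\big[c^S_T(\mathbf{u};\theta,\psi_h)-c^S_{T,h}(\mathbf{u};\theta,\psi_h)\big]+c^S_{T,h}(\mathbf{u};e^\theta_I,\psi_h)+c^S_{T,h}(e^\mathbf{u};\theta_I,\psi_h)$. The first bracket is the consistency error of the discrete trilinear form; an adaptation of \cite[Lemma 4.3]{mvem9} (now with the scalar field $\theta$ transported instead of a velocity component) gives $Ch^k\big((\|\mathbf{u}\|_{k,\Omega}+\|\nabla\mathbf{u}\|_{0,\Omega})\|\theta\|_{k+1,\Omega}+\|\nabla\theta\|_{0,\Omega}\|\mathbf{u}\|_{k+1,\Omega}\big)\|\nabla\psi_h\|_{0,\Omega}$. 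The second term uses continuity \eqref{dcnt-st} and Lemma \ref{lemmaproj2}: $c^S_{T,h}(\mathbf{u};e^\theta_I,\psi_h)\le\widehat{N_0}\|\nabla\mathbf{u}\|_{0,\Omega}\|\nabla e^\theta_I\|_{0,\Omega}\|\nabla\psi_h\|_{0,\Omega}\le Ch^k\|\nabla\mathbf{u}\|_{0,\Omega}\|\theta\|_{k+1,\Omega}\|\nabla\psi_h\|_{0,\Omega}$, already inside the asserted bound. The third term uses \eqref{dcnt-st} once more together with the interpolation stability $\|\nabla\theta_I\|_{0,\Omega}\le C\|\nabla\theta\|_{0,\Omega}$ to give $\widetilde{N_0}\|\nabla\theta\|_{0,\Omega}\|\nabla e^\mathbf{u}\|_{0,\Omega}\|\nabla\psi_h\|_{0,\Omega}$.

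Finally, for $T_3=\mathcal{L}_{T,h}(\theta_I,\psi_h)$ I would argue exactly as for the bound on $\mathcal{L}_1$ in Lemma \ref{velocity}: with $\tau_E\sim h_E$, apply the triangle inequality to $\widehat{\mathbf{r}}_h(\nabla\theta_I)$, the estimate \eqref{err0}, Lemmas \ref{lemmaproj1} and \ref{lemmaproj2}, and \eqref{est4} to pass from $\vertiii{\psi_h}$ to $\|\nabla\psi_h\|_{0,\Omega}$, obtaining $Ch^{k+1/2}\|\theta\|_{k+1,\Omega}\|\nabla\psi_h\|_{0,\Omega}$, which is absorbed into the $h^k$-term. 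Collecting the three bounds yields \eqref{errtemp}. The main obstacle is the consistency error for the discrete convective term inside $T_2$: establishing the analogue of \cite[Lemma 4.3]{mvem9} for the transported scalar $\theta$ requires careful bookkeeping of the several $L^2$-projections appearing in $c^{S,E}_{T,h}$ (on $\mathbf{u}_h$, on $\nabla\theta$, and on $\psi_h$), and this is precisely where the cross term $\|\nabla\theta\|_{0,\Omega}\|\mathbf{u}\|_{k+1,\Omega}$ arises; the remaining steps are a routine transcription of the velocity-field arguments.
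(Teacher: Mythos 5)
Your decomposition into the diffusive consistency term, the convective consistency term, and the stabilization term, together with the way each is estimated (transcribing the $A_{I,1}$ argument with $(\kappa,\theta,\psi_h)$ in place of $(\mu,\mathbf{u},\mathbf{v}_h)$, splitting the convective difference as in Lemma \ref{convect} with \cite[Lemma 4.3]{mvem9}, and bounding $\mathcal{L}_{T,h}(\theta_I,\psi_h)$ by $Ch^{k+1/2}\|\theta\|_{k+1,\Omega}\|\nabla\psi_h\|_{0,\Omega}$ using $\tau_E\sim h_E$), is exactly the paper's proof. The proposal is correct and follows essentially the same route.
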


\begin{proof} We begin with definition of $A_T$ and $A_{T,h}$
\begin{align}
	T_A :&= A_{T}(\theta; \theta, \psi_h) - A_{T,h}(\theta_h; \theta_I, \psi_h) \nonumber \\
	& = a_T(\theta; \theta,\psi_h) - a_{T,h}(\theta_h; \theta_I, \psi_h) + c^S_T(\mathbf{u}; \theta, \psi_h) - c^S_{T,h}(\mathbf{u}_h; \theta_I, \psi_h) - \mathcal{L}_{T,h}(\theta_I,\psi_h) \nonumber \\
	&=: T_{A,1} + T_{A,2} +T_{A,3}. \label{err23}
\end{align}
We will estimate the above terms in the following steps: \newline
\noindent \textbf{Step 1.} Adding and subtracting suitable terms, we have 
\begin{align}
	T_{A,1} &= \sum_{E \in \Omega_h} \Big( \big( \kappa(\theta) \nabla \theta, \nabla \psi_h \big) -  \big( \kappa(\Pi^{0,E}_k \theta_h) \boldsymbol{\Pi}^{0,E}_{k-1} \nabla\theta_I, \boldsymbol{\Pi}^{0,E}_{k-1} \nabla \psi_h  \big) - \kappa(\Pi^{0,E}_0 \theta_h) S^E_{\nabla,k} \big(  \theta_I, \psi_h\big) \Big) \nonumber \\
	&=  \sum_{E \in \Omega_h} \Big( \big( \kappa(\theta) (\nabla\theta - \boldsymbol{\Pi}^{0,E}_{k-1} \nabla \theta), \nabla \psi_h \big) + \big( \kappa(\theta) \boldsymbol{\Pi}^{0,E}_{k-1} \nabla \theta, \nabla \psi_h -   \boldsymbol{\Pi}^{0,E}_{k-1} \nabla \psi_h  \big) + \nonumber \\ & \qquad  \big( (\kappa(\theta)- \kappa(\Pi^{0,E}_k \theta_h)) \boldsymbol{\Pi}^{0,E}_{k-1} \nabla \theta, \boldsymbol{\Pi}^{0,E}_{k-1} \nabla \psi_h \big) + \big( \kappa(\Pi^{0,E}_k \theta_h) \boldsymbol{\Pi}^{0,E}_{k-1} (\nabla \theta-\nabla \theta_I), \boldsymbol{\Pi}^{0,E}_{k-1} \nabla \psi_h \big)  \nonumber \\ & \qquad  -\kappa(\Pi^{0,E}_0 \theta_h) S^E_{\nabla,k} \big( \theta_I,  \psi_h\big) \Big)  \nonumber \\
	&= \sum_{E \in \Omega_h} \Big( \big( \kappa(\theta) (\nabla \theta - \boldsymbol{\Pi}^{0,E}_{k-1} \nabla \theta), \nabla \psi_h \big) + \big( \kappa(\theta) \boldsymbol{\Pi}^{0,E}_{k-1} \nabla \theta - \boldsymbol{\Pi}^{0,E}_{k-1}(\kappa(\theta) \nabla \theta), \nabla \psi_h  - \boldsymbol{\Pi}^{0,E}_{k-1} \nabla \psi_h  \big) \nonumber \\ & \qquad \big( (\kappa(\theta)- \kappa(\Pi^{0,E}_k \theta_h)) \boldsymbol{\Pi}^{0,E}_{k-1} \nabla \theta, \boldsymbol{\Pi}^{0,E}_{k-1} \nabla \psi_h \big) + \big( \kappa(\Pi^{0,E}_k \theta_h) \boldsymbol{\Pi}^{0,E}_{k-1} (\nabla \theta-\nabla \theta_I), \boldsymbol{\Pi}^{0,E}_{k-1} \nabla \psi_h \big)  \nonumber \\ & \qquad  -\kappa(\Pi^{0,E}_0 \theta_h) S^E_{\nabla,k} \big( \theta_I,  \psi_h\big) \Big)  \nonumber \\
	& =: T_{A,11} + T_{A,12} + T_{A,13} + T_{A,14} + T_{A,15}. \label{err24}
\end{align}
$\bullet$ Under the data assumption \textbf{(A0)}, we use the triangle inequality, \eqref{err0} and Lemmas \ref{lemmaproj1} and \ref{lemmaproj2}:
\begin{align}
	|T_{A,11} + T_{A,12}  + T_{A,14} + T_{A,15}| &\leq Ch^k \big( \kappa^\ast + \max_{E\in\Omega_h} \|\kappa \|_{k,\infty,E}  \big)\| \theta \|_{k+1,\Omega} \|\nabla \psi_h\|_{0,\Omega}. \label{err25}
\end{align} 
$\bullet$ To estimate $T_{A,13}$, we employ \eqref{lqstab} and the Sobolev embedding theorem, we obtain
\begin{align}
	|T_{A,13}| &\leq \sum_{E \in \Omega_h } \|\kappa(\theta)- \kappa(\Pi^{0,E}_k \theta_h)\|_{0,6,E} \| \nabla \theta\|_{0,3,E} \| \nabla \psi_h \|_{0,E} \nonumber \\ 
	&\leq \sum_{E \in \Omega_h } L_\kappa  \Big( \| e^\theta\|_{0,6,E} + \|\theta- \Pi^{0,E}_k \theta\|_{0,6,E}  \Big) \| \nabla \theta\|_{0,3,E} \| \nabla \psi_h \|_{0,E} \nonumber \\
	& \leq  L_\kappa  \Big( C h^{k} |\theta|_{k,6,\Omega} + \| e^\theta\|_{0,6,\Omega} \Big) \| \nabla \theta\|_{0,3,\Omega} \| \nabla \psi_h \|_{0,\Omega} \nonumber  \\
	& \leq  L_\kappa  \Big( C h^{k}\|\theta\|_{k+1,\Omega} + C_q C_{1\hookrightarrow 6}\| \nabla e^\theta\|_{0,\Omega} \Big) \| \nabla \theta\|_{0,3,\Omega} \| \nabla \psi_h \|_{0,\Omega}. \label{err26}
\end{align}
We now add \eqref{err25} and \eqref{err26}:
\begin{align}
	|T_{A,1}| &\leq Ch^k \Big( \big(\kappa^\ast + \max_{E\in\Omega_h} \|\kappa \|_{k,\infty,E}  \big)\| \theta \|_{k+1,\Omega} + L_\kappa \| \nabla \theta\|_{0,3,\Omega}  \|\theta\|_{k+1,\Omega} \Big) \|\nabla \psi_h\|_{0,\Omega} \nonumber \\ & \qquad + L_\kappa C_q C_{1\hookrightarrow 6} \| \nabla \theta\|_{0,3,\Omega} \| \nabla e^\theta\|_{0,\Omega} \|\nabla \psi_h\|_{0,\Omega}. \label{err27}
\end{align}
\noindent \textbf{Step 2.} To estimate $T_{A,2}$, we have the following 
\begin{align}
	T_{A,2}&= c^S_{T}(\mathbf{u}; \theta, \psi_h) - c^S_{T,h}(\mathbf{u}_h; \theta_I, \psi_h)  \nonumber\\
	&=c^S_{T}(\mathbf{u}; \theta, \psi_h) - c^S_{T,h}(\mathbf{u}; \theta, \psi_h)+ c^S_{T,h}(\mathbf{u}; e^\theta_I, \psi_h) + c^S_{T,h}(e^\mathbf{u}; \theta_I, \psi_h)  \nonumber\\
	&=: T_{c,1} + T_{c,2} + T_{c,3}. \label{err28}
\end{align} 
$\bullet$	Applying \cite[Lemma 4.3]{mvem9}, we arrive at
\begin{align}
	T_{c,1} &\leq C h^k \Big(  \|\mathbf{u}\|_{k,\Omega}  \|\theta\|_{k+1,\Omega} +  \| \nabla \mathbf{u}\|_{0,\Omega}  \| \theta \|_{k+1,\Omega} + \| \mathbf{u} \|_{k+1,\Omega}  \| \nabla \theta\|_{0,\Omega} \Big) \| \nabla \psi_h\|_{0,\Omega}. \label{err29}
\end{align}
$\bullet$	Using \eqref{dcnt-st} and Lemma \ref{lemmaproj2}, we get
\begin{align}
	T_{c,2} & \leq \widehat{N_0} \|\nabla \mathbf{u}\|_{0,\Omega} \|\nabla e^\theta_I\|_{0,\Omega} \|\nabla \psi_h\|_{0,\Omega} \nonumber \\
	&\leq Ch^k \|\nabla \mathbf{u}\|_{0,\Omega} \| \theta\|_{k+1,\Omega} \| \nabla \psi_h\|_{0,\Omega}. \label{err30}
\end{align}
$\bullet$	Applying \eqref{dcnt-st}, it gives
\begin{align}
	T_{c,3} & \leq  \widetilde{N_0} \|\nabla e^\mathbf{u}\|_{0,\Omega} \| \nabla \theta\|_{0,\Omega} \| \nabla \psi_h\|_{0,\Omega}. \label{err31}
\end{align}
Adding the estimates \eqref{err29}--\eqref{err31}, it gives
\begin{align}
	T_{A,2} &\leq C h^k \Big( \big( \|\mathbf{u}\|_{k,\Omega}   + \| \nabla \mathbf{u}\|_{0,\Omega}\big) \|\theta\|_{k+1,\Omega}  + \| \nabla \theta\|_{0,\Omega} \| \mathbf{u} \|_{k+1,\Omega}  \Big) \|\nabla \psi_h \|_{0,\Omega}  \nonumber \\ & \qquad +  \widetilde{N_0} \| \nabla \theta\|_{0,\Omega}  \|\nabla e^\mathbf{u} \|_{0,\Omega} \|\nabla \psi_h \|_{0,\Omega}. \label{err32}
\end{align}
\noindent \textbf{Step 3.} Following the estimation of \cite[Lemma 5.10]{vem28m} with $\tau_E \sim h_E$, it yields that
\begin{align}
	A_{T,3} \leq C h^{k+1/2}  \|\theta\|_{k+1,\Omega} \| \nabla \psi_h\|_{0,\Omega}. \label{err33} 
\end{align}
Finally, adding the estimates \eqref{err27}, \eqref{err32}, and \eqref{err33}, we readily arrive at the result \eqref{errtemp}. 

\end{proof}

Hereafter, we present the convergence estimate in the energy norm:
\begin{theorem} \label{convergece}
Under the assumptions \textbf{(A0)}, \textbf{(A1)}, and the assumptions of Theorem \ref{cunique} and Theorem \ref{WELLP}, let $(\mathbf{u}, p, \phi) \in \mathbf{V} \times Q \times \Sigma$ and $(\mathbf{u}_h, p_h, \phi_h) \in \mathbf{V}_h \times Q_h \times \Sigma_h$ be the solution of the problems \eqref{variation-1} and \eqref{nvem}. Further we assume that $\mu^\ast$, $\mu_\ast$ and $\kappa_\ast$ in \textbf{(A0)} are such that
\begin{align}	
	1- \frac{\widetilde{N_0} \mu_\ast}{N_0} \big( C^{-1}_{o,V} + C^{-1}_{o,T} \big)  - \kappa_\ast \big( C^{-1}_{o,T} + C^{-1}_{o,V} \big) >0. \label{conv-cod}
\end{align}
Then under the assumption \textbf{(A2)}, we have the following error estimate
\begin{align}
	\vertiii{( \mathbf{u} - \mathbf{u}_h, p-p_h)} + \vertiii{\theta - \theta_h} &\leq C h^k \Big( \big( 1+\|\mathbf{g}\|_{0,\Omega}  + \|\mathbf{g}\|_{k,\Omega} \big)  \|\theta \|_{k+1,\Omega} + \|\mathbf{f}\|_{k,\Omega} + \|\mathcal{Q}\|_{k,\Omega} \, + \nonumber\\ & \qquad \qquad   \|\mathbf{u}\|_{k,\Omega}   \|\theta\|_{k+1,\Omega} + \|\mathbf{u}\|_{k,\Omega} \|\mathbf{u}\|_{k+1,\Omega} + \| p\|_{k,\Omega} \Big), \label{converge0}
\end{align}
where the positive constant $C$ does not rely on $h$. 
\end{theorem}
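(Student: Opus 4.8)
The plan is to use the classical VEM error-splitting: decompose $\mathbf{u}-\mathbf{u}_h = e^\mathbf{u}_I - e^\mathbf{u}_h$, $p-p_h = e^p_I - e^p_h$, $\theta-\theta_h = e^\theta_I - e^\theta_h$, so that by the triangle inequality it is enough to bound the purely discrete errors $\vertiii{(e^\mathbf{u}_h, e^p_h)}$ and $\vertiii{e^\theta_h}$. The interpolation contributions $\vertiii{(e^\mathbf{u}_I, e^p_I)}$ and $\vertiii{e^\theta_I}$ are $O(h^k)$: their $H^1$- and $L^2$-parts come from Lemmas~\ref{lemmaproj1} and \ref{lemmaproj2}, while the stabilization parts are controlled exactly as $\mathcal{L}_1,\mathcal{L}_2$ in Step~4 of Lemma~\ref{velocity} and as $A_{T,3}$ in Step~3 of Lemma~\ref{temp}, using $\tau_{1,E}\sim h_E$, $\tau_{2,E}\sim h_E^2$, $\tau_E\sim h_E$. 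The two discrete errors are then estimated by the discrete stability results of Section~4 together with the consistency Lemmas~\ref{velocity}, \ref{convect}, \ref{temp}, and the resulting coupling is broken using the smallness condition \eqref{conv-cod}, the a~priori bound \eqref{exist-0} and the uniqueness hypotheses \eqref{unique-a}--\eqref{unique-b}, \eqref{disc1}--\eqref{disc2}.

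For the velocity--pressure error I apply the discrete stability bound \eqref{d1well1} of Lemma~\ref{wellposed-1} to $(e^\mathbf{u}_h,e^p_h)\in\mathbf{V}_h\times Q_h$ with data $\widehat{\theta}_h=\theta_h$, $\widehat{\mathbf{u}}_h=\mathbf{u}_h$. Writing the numerator $A_{V,h}[\theta_h;(e^\mathbf{u}_h,e^p_h),(\mathbf{v}_h,q_h)] + c^S_{V,h}(\mathbf{u}_h;e^\mathbf{u}_h,\mathbf{v}_h)$ via the discrete problem \eqref{nvem} and subtracting the continuous problem \eqref{variation-01} tested against $(\mathbf{v}_h,q_h)\in\mathbf{V}_h\times Q_h\subset\mathbf{V}\times Q$, it becomes the sum of a load-consistency term $F_{h,\theta_h}(\mathbf{v}_h)-F_\theta(\mathbf{v}_h)$, the form-consistency term $A_V[\theta;(\mathbf{u},p),(\mathbf{v}_h,q_h)]-A_{V,h}[\theta_h;(\mathbf{u}_I,p_I),(\mathbf{v}_h,q_h)]$ and the convective-consistency term $c^S_V(\mathbf{u};\mathbf{u},\mathbf{v}_h)-c^S_{V,h}(\mathbf{u}_h;\mathbf{u}_I,\mathbf{v}_h)$; the latter two are exactly Lemmas~\ref{velocity} and \ref{convect}, and the first is handled by $L^2$-orthogonality, the $L^q$-stability \eqref{lqstab} of the projections and the Sobolev embeddings, producing data terms of order $h^k$ plus a term $\le \alpha C_q^2 C_{1\hookrightarrow 4}^2\|\mathbf{g}\|_{0,\Omega}\|\nabla e^\theta\|_{0,\Omega}\vertiii{(\mathbf{v}_h,q_h)}$. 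Dividing by $\vertiii{(\mathbf{v}_h,q_h)}$ and then splitting $\|\nabla e^\mathbf{u}\|_{0,\Omega}\le\|\nabla e^\mathbf{u}_I\|_{0,\Omega}+\vertiii{(e^\mathbf{u}_h,e^p_h)}$ and $\|\nabla e^\theta\|_{0,\Omega}\le\|\nabla e^\theta_I\|_{0,\Omega}+\vertiii{e^\theta_h}$ yields $\big(C_{o,V}-\widetilde{N_0}\|\nabla\mathbf{u}\|_{0,\Omega}\big)\vertiii{(e^\mathbf{u}_h,e^p_h)}\le Ch^k\Theta + \big(L_\mu C_q C_{1\hookrightarrow 6}\|\nabla\mathbf{u}\|_{0,3,\Omega}+\alpha C_q^2 C_{1\hookrightarrow 4}^2\|\mathbf{g}\|_{0,\Omega}\big)\vertiii{e^\theta_h}$, where $\Theta$ is the data/regularity factor appearing in \eqref{converge0}.

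For the temperature error I use the coercivity \eqref{est3}, $C_{o,T}\vertiii{e^\theta_h}^2\le A_{T,h}(\theta_h,\mathbf{u}_h;e^\theta_h,e^\theta_h)$, expand $A_{T,h}(\theta_h,\mathbf{u}_h;e^\theta_h,e^\theta_h)=(\mathcal{Q}_h,e^\theta_h)-A_{T,h}(\theta_h,\mathbf{u}_h;\theta_I,e^\theta_h)$ from \eqref{nvem}, and subtract the continuous temperature equation tested against $e^\theta_h\in\Sigma_{0,h}\subset\Sigma_0$; the right-hand side is $(\mathcal{Q}_h-\mathcal{Q},e^\theta_h)$, bounded by $Ch^k\|\mathcal{Q}\|_{k,\Omega}\|\nabla e^\theta_h\|_{0,\Omega}$ via $L^2$-orthogonality, plus precisely the quantity of Lemma~\ref{temp} with $\psi_h=e^\theta_h$. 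Dividing once by $\vertiii{e^\theta_h}$ (using $\|\nabla e^\theta_h\|_{0,\Omega}\le\vertiii{e^\theta_h}$) and again splitting $e^\theta$ and $e^\mathbf{u}$ into interpolation and discrete parts gives $\big(C_{o,T}-L_\kappa C_q C_{1\hookrightarrow 6}\|\nabla\theta\|_{0,3,\Omega}\big)\vertiii{e^\theta_h}\le Ch^k\Theta + \widetilde{N_0}\|\nabla\theta\|_{0,\Omega}\vertiii{(e^\mathbf{u}_h,e^p_h)}$.

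It remains to close the $2\times2$ system. Multiplying the velocity inequality by $C_{o,V}^{-1}$, the temperature inequality by $C_{o,T}^{-1}$ and adding, the coefficient of $\vertiii{(e^\mathbf{u}_h,e^p_h)}$ is bounded below by $1-(C_{o,V}^{-1}+C_{o,T}^{-1})\widetilde{N_0}\big(\|\nabla\mathbf{u}\|_{0,\Omega}+\|\nabla\theta\|_{0,\Omega}\big)$ and that of $\vertiii{e^\theta_h}$ by $1-(C_{o,V}^{-1}+C_{o,T}^{-1})\big(L_\kappa C_q C_{1\hookrightarrow 6}\|\nabla\theta\|_{0,3,\Omega}+L_\mu C_q C_{1\hookrightarrow 6}\|\nabla\mathbf{u}\|_{0,3,\Omega}+\alpha C_q^2 C_{1\hookrightarrow 4}^2\|\mathbf{g}\|_{0,\Omega}\big)$; by \eqref{exist-0} together with \eqref{unique-a} one has $\|\nabla\mathbf{u}\|_{0,\Omega}+\|\nabla\theta\|_{0,\Omega}<\mu_\ast/N_0$, and by \eqref{unique-b} the parenthesis in the second coefficient is $<\kappa_\ast$, so both coefficients are at least the strictly positive quantity on the left of \eqref{conv-cod}. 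Dividing by it and adding back the $O(h^k)$ interpolation errors yields \eqref{converge0}. The main obstacle is not any individual estimate but the bookkeeping across Lemmas~\ref{velocity}--\ref{temp}: one has to split every appearance of $e^\mathbf{u}$ and $e^\theta$ at the right moment, carry all the embedding and stability constants, and check that the combined hypotheses \eqref{conv-cod}, \eqref{unique-a}--\eqref{unique-b} (with the discrete versions \eqref{disc1}--\eqref{disc2} guaranteeing the $W^{1,3}$-bounds on $\mathbf{u}_h,\theta_h$ needed to apply Lemmas~\ref{velocity} and \ref{temp}) are exactly what is required to absorb the error-proportional terms on the right-hand sides.
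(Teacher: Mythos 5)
Your proposal is correct and follows essentially the same route as the paper: the same interpolation/discrete error splitting, the same use of the discrete inf-sup stability (Lemma \ref{wellposed-1}) and coercivity \eqref{est3} combined with the consistency Lemmas \ref{velocity}, \ref{convect} and \ref{temp}, and the same closing of the coupled $2\times 2$ system by dividing by $C_{o,V}$, $C_{o,T}$, invoking the a priori bounds $\|\nabla\mathbf{u}\|_{0,\Omega}+\|\nabla\theta\|_{0,\Omega}\le\mu_\ast/N_0$ and the $\kappa_\ast$-bound from the uniqueness hypotheses, and absorbing via \eqref{conv-cod}. No substantive differences from the paper's argument.
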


\begin{proof}
Under the assumption of Theorem \ref{cunique} and using \eqref{unique-4}, we obtain the following
\begin{align}
	\|\nabla \mathbf{u}\|_{0,\Omega} + \| \nabla \theta\|_{0,\Omega} &\leq \frac{\mu_\ast}{N_0}, \label{cerr1}\\
	\big( L_\kappa C_q C_{1\hookrightarrow 6} \|\nabla {\theta}\|_{0,3,\Omega} + \alpha C_q^2 C^2_{1 \hookrightarrow 4} \|\mathbf{g}\|_{0,\Omega} +  L_\mu C_q C_{1 \hookrightarrow 6}  \|\nabla {\mathbf{u}}\|_{0,3,\Omega} \big) &< \kappa_\ast. \label{cerr2}
\end{align}
Additionally, we  also have
\begin{align}
	\vertiii{(e^\mathbf{u}_I, e^p_I)} + \vertiii{e^\theta_I} \leq Ch^k\big(  \|\mathbf{u}\|_{k+1,\Omega} + \| p\|_{k,\Omega} + \|\theta\|_{k+1,\Omega} \big).\label{cerr-int}
\end{align}
We will derive the estimate \eqref{converge0} in the following steps: \newline
\textbf{Step 1.}
Subtracting the system \eqref{variation-01} from the system \eqref{nvem}, and adding and subtracting suitable terms, we arrive at
\begin{align}
	A_{V,h}[\theta_h; (e^\mathbf{u}_h,e^p_h), (\mathbf{v}_h, q_h)] &+ c^S_{V,h}(\mathbf{u}_h; e^\mathbf{u}_h,\mathbf{v}_h) \nonumber \\ &= F_{h,\theta_h}(\mathbf{v}_h) - F_{\theta}(\mathbf{v}_h) +  A_{V}[\theta; (\mathbf{u},p), (\mathbf{v}_h, q_h)] \, - \nonumber \\ & \quad A_{V,h}[\theta_h; (\mathbf{u}_I,p_I ), (\mathbf{v}_h, q_h)] + c^S_{V}(\mathbf{u}; \mathbf{u},\mathbf{v}_h) - c^S_{V,h}(\mathbf{u}_h; \mathbf{u}_I,\mathbf{v}_h) \nonumber \\
	&=: F_I + A_I +C_I. \label{cerr3}\\
	A_{T,h}(\theta_h; \theta_h-\theta_I, e^\theta_h) &= (\mathcal{Q}_h, e^\theta_h) - (\mathcal{Q}, e^\theta_h) + A_{T}(\theta; \theta, e^\theta_h) - A_{T,h}(\theta_h; \theta_I, e^\theta_h) \nonumber \\
	&=:T_\mathcal{Q} + T_A. \label{cerr4}
\end{align}
\textbf{Step 2.} In this step, we estimate the error equation \eqref{cerr3}. Concerning this, we proceed as follows: \newline
$\bullet$ 	Applying the bound \eqref{lqstab} and Lemma \ref{lemmaproj1}, we infer
\begin{align}
	F_I:&=F_{h,\theta_h}(\mathbf{v}_h) - F_{\theta}(\mathbf{v}_h) =\sum_{E \in \Omega_h} \big[ \big(\alpha \mathbf{g} \Pi^{0,E}_k \theta_h, \boldsymbol{\Pi}^{0,E}_k \mathbf{v}_h \big) - \big( \alpha \mathbf{g} \theta,  \mathbf{v}_h \big) +\big( \mathbf{f},  \mathbf{v}_h - \boldsymbol{\Pi}^{0,E}_k \mathbf{v}_h \big) \big]  \nonumber \\
	&= \sum_{E \in \Omega_h} \big[ \big(\alpha \mathbf{g} (\Pi^{0,E}_k \theta_h - \theta), \boldsymbol{\Pi}^{0,E}_k \mathbf{v}_h \big) + \big( \alpha \mathbf{g} \theta, \boldsymbol{\Pi}^{0,E}_k \mathbf{v}_h- \mathbf{v}_h \big) +  \big(\mathbf{f}-  \boldsymbol{\Pi}^{0,E}_k \mathbf{f},  \mathbf{v}_h - \boldsymbol{\Pi}^{0,E}_k \mathbf{v}_h \big)\big]  \nonumber \\
	&=\sum_{E \in \Omega_h}  \big[ \alpha \big( \mathbf{g} (\Pi^{0,E}_k \theta_h - \theta), \boldsymbol{\Pi}^{0,E}_k \mathbf{v}_h \big) + \alpha \big( \mathbf{g} \theta - \boldsymbol{\Pi}^{0,E}_k(\mathbf{g} \theta), \boldsymbol{\Pi}^{0,E}_k \mathbf{v}_h- \mathbf{v}_h \big)+ \big(\mathbf{f}-  \boldsymbol{\Pi}^{0,E}_k \mathbf{f},  \mathbf{v}_h - \boldsymbol{\Pi}^{0,E}_k \mathbf{v}_h \big) \big] \nonumber \\
	&\leq \sum_{E \in \Omega_h}  \big[ \alpha \|\mathbf{g}\|_{0,E}  \|\Pi^{0,E}_k \theta_h - \theta\|_{0,4,E} \| \mathbf{v}_h \|_{0,4,E} + \alpha \| \mathbf{g} \theta - \boldsymbol{\Pi}^{0,E}_k(\mathbf{g} \theta)\|_{0,E} \|\boldsymbol{\Pi}^{0,E}_k \mathbf{v}_h- \mathbf{v}_h \|_{0,E} \nonumber \\ 
	& \qquad \qquad  + C h^{k+1}_E \|\mathbf{f}\|_{k,E} \|\nabla \mathbf{v}_h\|_{0,E}  \big]	\nonumber \\
	& \leq \sum_{E \in \Omega_h}  \big[ \alpha \|\mathbf{g}\|_{0,E} \big( \| e^\theta\|_{0,4,E} +  \| \theta- \Pi^{0,E}_k \theta \|_{0,4,E} \big)\| \mathbf{v}_h \|_{0,4,E} + \alpha C h^{k}_E |\mathbf{g} \theta|_{k-1,E} \| \nabla \mathbf{v}_h \|_{0,E} \nonumber \\ 
	& \qquad \qquad  + C h^{k+1}_E \|\mathbf{f}\|_{k,E} \|\nabla \mathbf{v}_h\|_{0,E}  \big]	\nonumber \\
	&\leq Ch^k \big[ \alpha \|\mathbf{g}\|_{0,\Omega}  \| \theta \|_{k,4,\Omega}  + \alpha \|\mathbf{g}\|_{k,\Omega}  \|\theta \|_{k,\Omega} + \|\mathbf{f}\|_{k,\Omega} \big]\| \nabla \mathbf{v}_h \|_{0,\Omega}  +  \alpha \|\mathbf{g}\|_{0,\Omega} \| e^\theta\|_{0,4,\Omega}	\|  \mathbf{v}_h \|_{0,4,\Omega}. \nonumber
	\intertext{Using the Sobolev embedding theorem, it holds that}
	F_I &\leq Ch^k \big[ \alpha (\|\mathbf{g}\|_{0,\Omega}  + \|\mathbf{g}\|_{k,\Omega})  \|\theta \|_{k+1,\Omega} + \|\mathbf{f}\|_{k,\Omega} \big]\| \nabla \mathbf{v}_h \|_{0,\Omega}  +  \alpha C_q^2 C^2_{1 \hookrightarrow 4} \|\mathbf{g}\|_{0,\Omega} \|\nabla e^\theta\|_{0,\Omega}	\| \nabla \mathbf{v}_h \|_{0,\Omega}. \label{cerr5}
\end{align}
$\bullet$ Recalling Lemma \ref{velocity} and the bounds \eqref{cerr1} and \eqref{cerr2}, we obtain
\begin{align}
	A_I &\leq Ch^k \Big( \big(1+\mu^\ast + \max_{E\in\Omega_h} \|\mu \|_{k,\infty,E}  \big)\| \mathbf{u} \|_{k+1,\Omega} + \kappa_\ast C_q^{-1} C_{1\hookrightarrow 6}^{-1} \|\theta\|_{k+1,\Omega} + \| p\|_{k,\Omega} \Big) \vertiii{(\mathbf{v}_h, q_h)} \nonumber \\ & \, + L_\mu C_q C_{1\hookrightarrow 6} \| \nabla \mathbf{u}\|_{0,3,\Omega} \| \nabla e^\theta\|_{0,\Omega} \vertiii{(\mathbf{v}_h, q_h)}. \label{cerr6}
\end{align}
$\bullet$ Concerning $C_I$, we recall Lemma \ref{convect} and \eqref{cerr1}
\begin{align}
	C_I&\leq  C h^k \Big( \|\mathbf{u}\|_{k,\Omega}  + \mu_\ast N_0^{-1}\Big) \|\mathbf{u}\|_{k+1,\Omega} \|\nabla \mathbf{v}_h\|_{0,\Omega}\, +  \widetilde{N_0} \|\nabla \mathbf{u}\|_{0,\Omega} \|\nabla e^\mathbf{u}\|_{0,\Omega} \| \nabla \mathbf{v}_h\|_{0,\Omega}. \label{cerr7}
\end{align}
Now, substituting \eqref{cerr5}, \eqref{cerr6} and \eqref{cerr7} in \eqref{cerr3} and using \eqref{wellposed-1}, it gives 
\begin{align}
	C_{o,V}\vertiii{(e^\mathbf{u}_h, e^p_h)} &\leq Ch^k \Big(  (\|\mathbf{g}\|_{0,\Omega}  + \|\mathbf{g}\|_{k,\Omega})  \|\theta \|_{k+1,\Omega} + \|\mathbf{f}\|_{k,\Omega}   + \| \mathbf{u} \|_{k+1,\Omega} +  \|\theta\|_{k+1,\Omega} \nonumber \\ & \quad \|\mathbf{u}\|_{k,\Omega} \|\mathbf{u}\|_{k+1,\Omega} + \| p\|_{k,\Omega} \Big)  + L_\mu C_q C_{1\hookrightarrow 6} \| \nabla \mathbf{u}\|_{0,3,\Omega} \vertiii{ e^\theta_h}\nonumber \\ & \quad + \alpha C_q^2 C^2_{1 \hookrightarrow 4} \|\mathbf{g}\|_{0,\Omega} \vertiii{ e^\theta_h}   +  \widetilde{N_0} \|\nabla \mathbf{u}\|_{0,\Omega} \vertiii{ (e^\mathbf{u}_h, e^p_h)}. \label{velerr}
\end{align}
\textbf{Step 3.} In this step, we will provide the estimation of \eqref{cerr3}.To obtain this, we proceed as follows: \newline
$\bullet$ Using the stability of the projectors and Lemma \ref{lemmaproj1}, it holds that
\begin{align}
	T_{\mathcal{Q}} &= \sum_{E \in \Omega_h } \Big( \big(\mathcal{Q}, \Pi^{0,E}_k e^\theta_h \big) - \big( \mathcal{Q}, e^\theta_h\big) \Big) =  \sum_{E \in \Omega_h } \big(\mathcal{Q} - \Pi^{0,E}_k \mathcal{Q}, \Pi^{0,E}_k e^\theta_h - e^\theta_h \big) \nonumber \\
	& \leq  \sum_{E \in \Omega_h } \|\mathcal{Q} - \Pi^{0,E}_k  \mathcal{Q}\|_{0,E} \|e^\theta_h - \Pi^{0,E}_k e^\theta_h\|_{0,E} \nonumber \\
	& \leq Ch^{k+1} \|\mathcal{Q}\|_{k,\Omega} \| \nabla e^\theta_h\|_{0,\Omega}. \label{cerr8}
\end{align}
$\bullet$ Recalling Lemma \ref{temp}, the estimates \eqref{cerr1} and \eqref{cerr2}, we have
\begin{align}
	T_A & \leq Ch^k \Big( \big(1+\kappa^\ast + \max_{E\in\Omega_h} \|\kappa \|_{k,\infty,E}  \big)\| \theta \|_{k+1,\Omega} + \kappa_\ast C_q^{-1} C_{1\hookrightarrow 6}^{-1} \|\theta\|_{k+1,\Omega} +   \|\mathbf{u}\|_{k,\Omega}   \|\theta\|_{k+1,\Omega}  \nonumber \\& \qquad  + \mu_\ast N_0^{-1} \big(  \|\theta\|_{k+1,\Omega}  +  \| \mathbf{u} \|_{k+1,\Omega} \big)  \Big) \|\nabla e^\theta_h \|_{0,\Omega}\,+ \nonumber \nonumber \\ & \qquad +\, \big( L_\kappa C_q C_{1\hookrightarrow 6} \| \nabla \theta\|_{0,3,\Omega} \| \nabla e^\theta\|_{0,\Omega} + \widetilde{N_0} \| \nabla \theta\|_{0,\Omega}  \|\nabla e^\mathbf{u} \|_{0,\Omega}\big) \|\nabla e^\theta_h \|_{0,\Omega}. \label{cerr9}
\end{align}
Substituting \eqref{cerr8} and \eqref{cerr9} in \eqref{cerr4}, and recalling \eqref{est3}, we obtain
\begin{align}
	C_{o,T} \vertiii{e^\theta_h} &\leq C h^k \Big( \|\mathcal{Q}\|_{k,\Omega} + \|\theta\|_{k+1,\Omega} +   \|\mathbf{u}\|_{k,\Omega}   \|\theta\|_{k+1,\Omega} +  \| \mathbf{u} \|_{k+1,\Omega} \Big) \, + \nonumber \\& \qquad \big( L_\kappa C_q C_{1\hookrightarrow 6} \| \nabla \theta\|_{0,3,\Omega} \vertiii{ e^\theta_h} + \widetilde{N_0} \| \nabla \theta\|_{0,\Omega}  \vertiii{ (e^\mathbf{u}_h, e^p_h)}\big). \label{temerr}
\end{align}
Finally, combining the estimates \eqref{velerr} and \eqref{temerr}, we can conclude that
\begin{align}
	\vertiii{(e^\mathbf{u}_h, e^p_h)} + \vertiii{e^\theta_h} &\leq C h^k \Big( (\|\mathbf{g}\|_{0,\Omega}  + \|\mathbf{g}\|_{k,\Omega})  \|\theta \|_{k+1,\Omega} + \|\mathbf{f}\|_{k,\Omega} + \|\mathcal{Q}\|_{k,\Omega} + \|\theta\|_{k+1,\Omega}  +  \| \mathbf{u} \|_{k+1,\Omega} \nonumber\\ & \qquad \|\mathbf{u}\|_{k,\Omega}   \|\theta\|_{k+1,\Omega} + \|\mathbf{u}\|_{k,\Omega} \|\mathbf{u}\|_{k+1,\Omega} + \| p\|_{k,\Omega} \Big) \, + \nonumber \\ & \qquad  \widetilde{N_0} \big( C^{-1}_{o,V} \| \nabla \mathbf{u}\|_{0,\Omega} + C^{-1}_{o,T}\| \nabla \theta\|_{0,\Omega} \big) \vertiii{ (e^\mathbf{u}_h, e^p_h)} + \big[ C^{-1}_{o,T} L_\kappa C_q C_{1\hookrightarrow 6} \| \nabla \theta\|_{0,3,\Omega} \nonumber \\ & \qquad + C^{-1}_{o,V} \big( L_\mu C_q C_{1\hookrightarrow 6} \| \nabla \mathbf{u}\|_{0,3,\Omega} + \alpha C_q^2 C^2_{1 \hookrightarrow 4} \|\mathbf{g}\|_{0,\Omega} \big) \big] \vertiii{ e^\theta_h}. \nonumber
	\intertext{Using the estimates \eqref{cerr1} and \eqref{cerr2}, we obtain}
	\vertiii{(e^\mathbf{u}_h, e^p_h)} + \vertiii{e^\theta_h} &\leq C h^k \Big( (\|\mathbf{g}\|_{0,\Omega}  + \|\mathbf{g}\|_{k,\Omega})  \|\theta \|_{k+1,\Omega} + \|\mathbf{f}\|_{k,\Omega} + \|\mathcal{Q}\|_{k,\Omega} + \|\theta\|_{k+1,\Omega}  +  \| \mathbf{u} \|_{k+1,\Omega} \nonumber\\ & \qquad \|\mathbf{u}\|_{k,\Omega}   \|\theta\|_{k+1,\Omega} + \|\mathbf{u}\|_{k,\Omega} \|\mathbf{u}\|_{k+1,\Omega} + \| p\|_{k,\Omega} \Big) \, + \nonumber \\ & \qquad  \frac{\widetilde{N_0} \mu_\ast}{N_0} \big( C^{-1}_{o,V} + C^{-1}_{o,T} \big) \vertiii{ (e^\mathbf{u}_h, e^p_h)} + \kappa_\ast \big( C^{-1}_{o,T} + C^{-1}_{o,V} \big) \vertiii{ e^\theta_h}. \label{conv-fin}
\end{align}
Thus, the estimate \eqref{converge0} readily follows from \eqref{conv-cod}, \eqref{cerr-int} and \eqref{conv-fin}. 
\end{proof}

\section{Numerical results} 
\label{sec-5}
In this section, we present a series of five examples to investigate the efficiency and robustness of the proposed stabilized VEM \eqref{nvem}. Moreover, we also demonstrate its advantages in addressing practical problems.

\subsection{Fixed-point iteration scheme} \label{fixedpoint}
Let $(\mathbf{u}^0_h,p_h^0,\theta_h^0)=(\mathbf{0},0,0)$ be the initial guess. For $n \in \mathbb N$, solve the following system until convergence is achieved
\begin{align}
	\begin{cases}
		\text{find}\,\, (\mathbf{u}^{n+1}_h, p^{n+1}_h,\theta^{n+1}_h) \in \mathbf{V}_h \times Q_h \times \Sigma_{D,h},\,\, \text{such that} \\
		A_{V,h}[ \theta^n_h; (\mathbf{u}^{n+1}_h, p^{n+1}_h), (\mathbf{v}_h, q_h)]+ c^S_{h}(\mathbf{u}^n_h; \mathbf{u}^{n+1}_h, \mathbf{v}_h)= F_{h,\theta^n_h}(\mathbf{v}_h) \quad \text{for all} \, (\mathbf{v}_h, q_h) \in \mathbf{V}_h \times Q_h, \\
		A_{T,h}(\theta^n_h, \mathbf{u}^n_h; \theta^{n+1}_h, \psi_h)=(\mathcal{Q}_h, \psi_h) \quad \text{for all} \,\, \psi_h \in \Sigma_{0,h}.
	\end{cases}
	\label{fixvem}
\end{align}
\begin{proposition}
	Under the assumptions \textbf{(A0)}, \textbf{(A1)} and the assumption of Proposition \ref{WELLP}, let $(\mathbf{u}_h, p_h, \theta_h) \in \mathbf{V}_h \times Q_h \times \Sigma_{D,h}$ be the solution of \eqref{nvem}. Then, the sequence $\big\{\mathbf{u}^n_h,p^n_h,\theta^n_h \big\}$ generated by the fixed-point iteration scheme satisfies the following
	\begin{align}
		\vertiii{(\mathbf{u}^n_h,p^n_h) - (\mathbf{u}_h, p_h)} \xrightarrow{n \rightarrow \infty} 0 \qquad \text{and} \qquad \vertiii{\theta^n_h - \theta_h} \xrightarrow{n \rightarrow \infty} 0. \label{fix1}
	\end{align}
\end{proposition}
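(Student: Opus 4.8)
The plan is to identify the iteration \eqref{fixvem} with the fixed-point map $\mathbb K$ of Section~\ref{sec-3.4} and to upgrade the uniqueness argument of Theorem~\ref{WELLP} into a contraction estimate in the energy norm. Comparing \eqref{fixvem} with the decoupled problems \eqref{dnvem1}--\eqref{dnvem2}, one has $(\mathbf u^{n+1}_h,p^{n+1}_h,\theta^{n+1}_h)=\mathbb K(\mathbf u^{n}_h,p^{n}_h,\theta^{n}_h)$ for every $n\ge 0$, while the solution $(\mathbf u_h,p_h,\theta_h)$ of \eqref{nvem} is a fixed point of $\mathbb K$. By Lemmas~\ref{lm-ddV} and \ref{lm-ddT} each iterate is well defined; moreover $\theta^{n}_h\in\Sigma_{D,h}$ for $n\ge 1$, so that, writing $\mathbf e^{n}_{\mathbf u}:=\mathbf u^{n}_h-\mathbf u_h$, $e^{n}_p:=p^{n}_h-p_h$ and $e^{n}_\theta:=\theta^{n}_h-\theta_h$, we have $e^{n}_\theta\in\Sigma_{0,h}$. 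Using the bounds in Lemmas~\ref{lm-ddV}--\ref{lm-ddT} together with Lemma~\ref{selfcont}, the iterates lie in $\mathcal M$ for $n\ge 1$, and by \eqref{dexistence} and hypothesis~(ii) of Theorem~\ref{WELLP} the quantities $\|\nabla\mathbf u_h\|_{0,\Omega}$, $\|\nabla\theta_h\|_{0,\Omega}$, $\|\nabla\mathbf u_h\|_{0,3,\Omega}$ and $\|\nabla\theta_h\|_{0,3,\Omega}$ are all controlled a priori by the data.

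The first main step is to subtract the first equation of \eqref{nvem} from the first equation of \eqref{fixvem}; after adding and subtracting $a_{V,h}(\theta^{n}_h;\mathbf u_h,\mathbf v_h)$ and $c^S_{V,h}(\mathbf u^{n}_h;\mathbf u_h,\mathbf v_h)$, the pair $(\mathbf e^{n+1}_{\mathbf u},e^{n+1}_p)$ solves an Oseen-type system whose right-hand side consists of $(F_{h,\theta^{n}_h}-F_{h,\theta_h})(\mathbf v_h)$, the difference $a_{V,h}(\theta_h;\mathbf u_h,\mathbf v_h)-a_{V,h}(\theta^{n}_h;\mathbf u_h,\mathbf v_h)$, and $-c^S_{V,h}(\mathbf e^{n}_{\mathbf u};\mathbf u_h,\mathbf v_h)$. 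Estimating these three terms with the $L^q$-stability of the projectors and the Sobolev embeddings, with Lemma~\ref{vacca}, and with \eqref{dcnt-sv} respectively, and bounding the left-hand side from below by taking the supremum over $(\mathbf v_h,q_h)$ and invoking \eqref{d1well1} with $\widehat\theta_h=\theta^{n}_h$, $\widehat{\mathbf u}_h=\mathbf u^{n}_h$, I would obtain
\begin{align*}
	C_{o,V}\vertiii{(\mathbf e^{n+1}_{\mathbf u},e^{n+1}_p)}\le\big(\alpha C_q^2C^2_{1\hookrightarrow 4}\|\mathbf g\|_{0,\Omega}+C_vL_\mu C_qC_{1\hookrightarrow 6}\|\nabla\mathbf u_h\|_{0,3,\Omega}\big)\vertiii{e^{n}_\theta}+\widehat{N_0}\|\nabla\mathbf u_h\|_{0,\Omega}\vertiii{(\mathbf e^{n}_{\mathbf u},e^{n}_p)}.
\end{align*}
Treating the temperature equations in the same way, testing with $\psi_h=e^{n+1}_\theta\in\Sigma_{0,h}$, and using \eqref{est3}, Lemma~\ref{vacca} and \eqref{dcnt-st}, gives
\begin{align*}
	C_{o,T}\vertiii{e^{n+1}_\theta}\le C_tL_\kappa C_qC_{1\hookrightarrow 6}\|\nabla\theta_h\|_{0,3,\Omega}\vertiii{e^{n}_\theta}+\widehat{N_0}\|\nabla\theta_h\|_{0,\Omega}\vertiii{(\mathbf e^{n}_{\mathbf u},e^{n}_p)}.
\end{align*}

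The last step is to combine these two inequalities — as \eqref{dwella} and \eqref{dwellb} are combined in the proof of Theorem~\ref{WELLP}, possibly after assigning a suitable weight to the two summands — so as to produce
\begin{align*}
	\vertiii{(\mathbf e^{n+1}_{\mathbf u},e^{n+1}_p)}+\vertiii{e^{n+1}_\theta}\le\Lambda\big(\vertiii{(\mathbf e^{n}_{\mathbf u},e^{n}_p)}+\vertiii{e^{n}_\theta}\big),
\end{align*}
with $\Lambda$ assembled from $C_{o,V}^{-1}$, $C_{o,T}^{-1}$, $\widehat{N_0}$, the embedding constants, $\|\mathbf g\|_{0,\Omega}$ and the a priori bounds on $(\mathbf u_h,\theta_h)$ recorded in the first paragraph, hence independent of $n$. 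The crux — and the step I expect to be the main obstacle — is to verify that precisely under the smallness hypotheses \eqref{disc1}--\eqref{disc2}, i.e. the very conditions guaranteeing uniqueness, one has $\Lambda<1$; this is exactly where the boundedness of the discrete solution in the energy norm (via \eqref{dexistence}) and in the $W^{1,3}(\Omega_h)$-seminorm is indispensable, since it makes $\Lambda$ uniform in $n$. Once $\Lambda<1$ is established, iterating the estimate from $n=1$ yields $\vertiii{(\mathbf e^{n}_{\mathbf u},e^{n}_p)}+\vertiii{e^{n}_\theta}\le\Lambda^{\,n-1}\big(\vertiii{(\mathbf e^{1}_{\mathbf u},e^{1}_p)}+\vertiii{e^{1}_\theta}\big)\xrightarrow{n\rightarrow\infty}0$, which is \eqref{fix1}; uniqueness of the solution of \eqref{nvem} (Theorem~\ref{WELLP}) then identifies the limit with $(\mathbf u_h,p_h,\theta_h)$.
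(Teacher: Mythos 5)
Your proposal is correct and follows exactly the route the paper intends: its proof of this proposition is a one-line reference to "following the proof of Theorem \ref{WELLP}," and your argument is precisely the expansion of that remark, turning the two coupled uniqueness estimates \eqref{dwella}--\eqref{dwellb} into a one-step contraction between consecutive iterates under the smallness conditions \eqref{disc1}--\eqref{disc2}. The subtlety you flag (checking $\Lambda<1$ from the two coupled inequalities) is real, but the paper's own uniqueness proof handles it no more carefully than you do.
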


\begin{proof}
	The proof of \eqref{fix1} can be easily established by following the proof of Proposition \ref{WELLP}. 
\end{proof}

\begin{figure}[h]
		\centering
		\subfloat[$\Omega_1$]{\includegraphics[height=3cm, width=3cm]{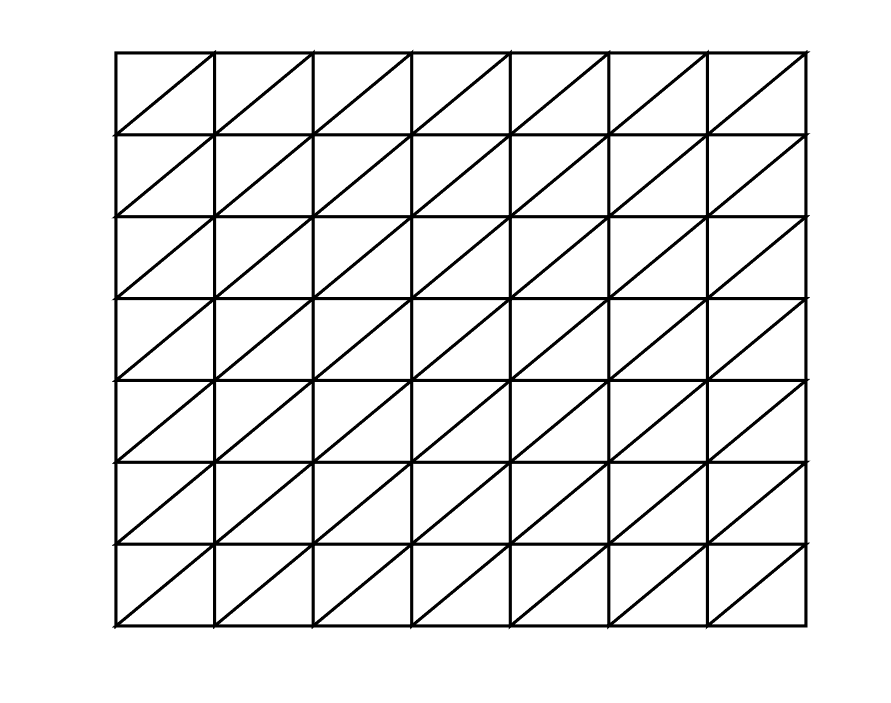}}
		\subfloat[$\Omega_2$]{\includegraphics[height=3cm, width=4cm]{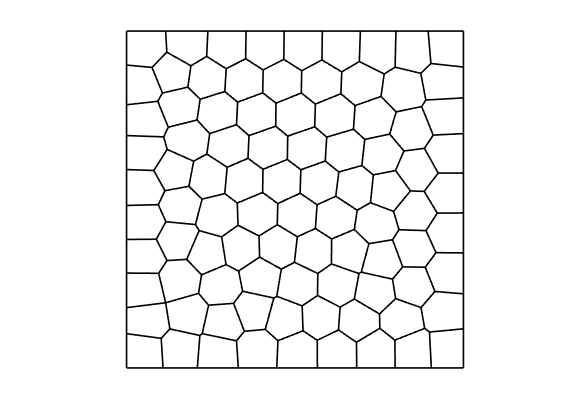}}
		\subfloat[$\Omega_3$]{\includegraphics[height=3cm, width=3cm]{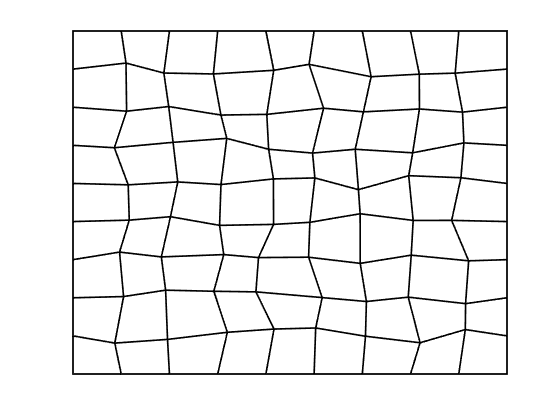}}~~
		\subfloat[$\Omega_4$]{\includegraphics[height=3cm, width=3cm]{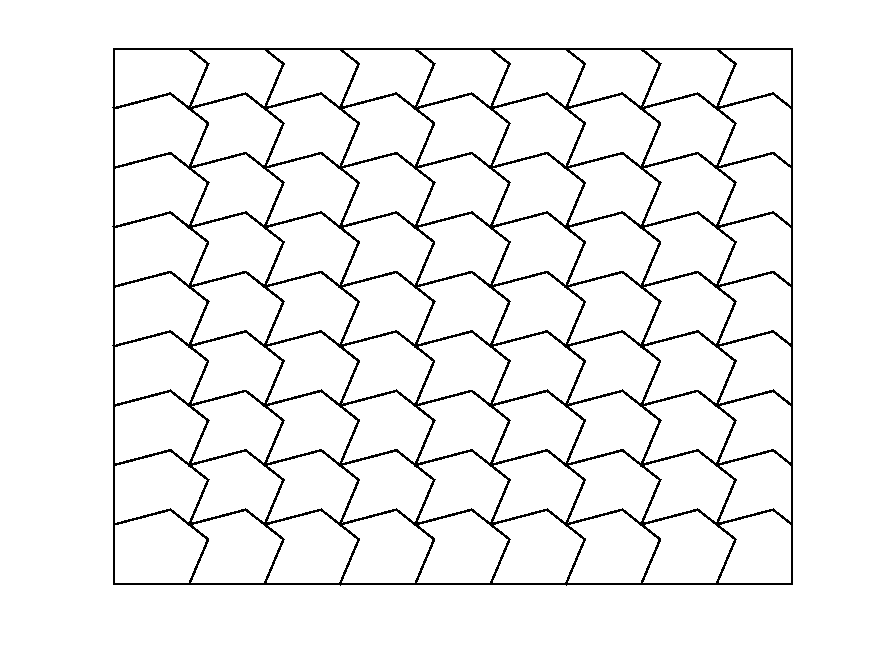}}
		\subfloat[{$\Omega_5$}]{\includegraphics[height=3cm, width=3.5cm]{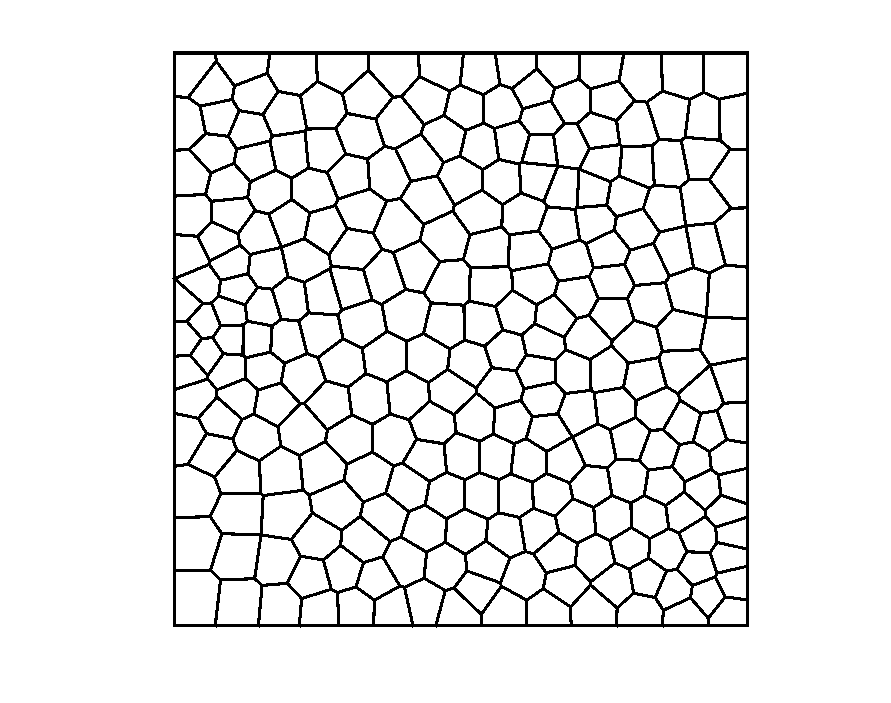}}
	\caption{Schematic representation of different mesh discretizations employed in this study}
	\label{samp} 
\end{figure}

\subsection{Meshes and computational error norms} \label{sec-error}

We present our numerical results for VEM order $k=1$ and $k=2$, employing convex and non-convex meshes. The samples of the computational meshes are shown in Figure \ref{samp}. We employ a sequence of meshes with decreasing sizes 
$h \in \{1/5, 1/10, 1/20, 1/40, 1/80\}$. The stopping criteria for the iterative scheme introduced in Section \ref{fixedpoint} is fixed at $10^{-6}$. The numerical studies of Examples 1 and 3 are performed on the unit square $\Omega=(0,1)^2$. Additionally,  the stabilization parameters are chosen as follows: $\tau_{1,E} \sim h_E$, $\tau_{2,E} \sim h_E^2$ and $\tau_{E} \sim h_E$. 

The relative errors in the $H^1(\Omega)$ semi-norm and $L^2(\Omega)$ norm are defined as follows: 
\begin{align}
	E^{\mathbf{u}}_{H^1} := \frac{\sqrt{\sum\limits_{E\in\Omega_h}  \| \nabla (\mathbf{u} - \boldsymbol{\Pi}^{\nabla,E}_k \mathbf{u}_h ) \|_E^2}}{\sqrt{\sum\limits_{E\in\Omega_h}  \| \nabla \mathbf{u} \|_E^2}}, \qquad \qquad  E^\mathbf{u}_{L^2} :=\frac{\sqrt{\sum\limits_{E\in\Omega_h} \| \mathbf{u}-\boldsymbol{\Pi}^{0,E}_k \mathbf{u}_h\|^2_E}} {\sqrt{\sum\limits_{E\in\Omega_h}  \| \mathbf{u} \|_E^2}}, \nonumber 
\end{align}
where $E^{\mathbf{u}}_{H^1}$ and $E^{\mathbf{u}}_{L^2}$ represent the relative errors in $H^1(\Omega)$ semi-norm and $L^2(\Omega)$ norm for the velocity field, respectively, with exact velocity $\mathbf{u}$ and discrete velocity $\mathbf{u}_h$. Furthermore, the relative errors for temperature and pressure fields can be defined following the above and denoted by $E^{\theta}_{H^1}$, $E^{\theta}_{L^2}$, and $E^{p}_{L^2}$.

\begin{figure}[h]
	\centering
	\subfloat[$\Omega_1$.]{\includegraphics[height=4.5cm, width=6cm]{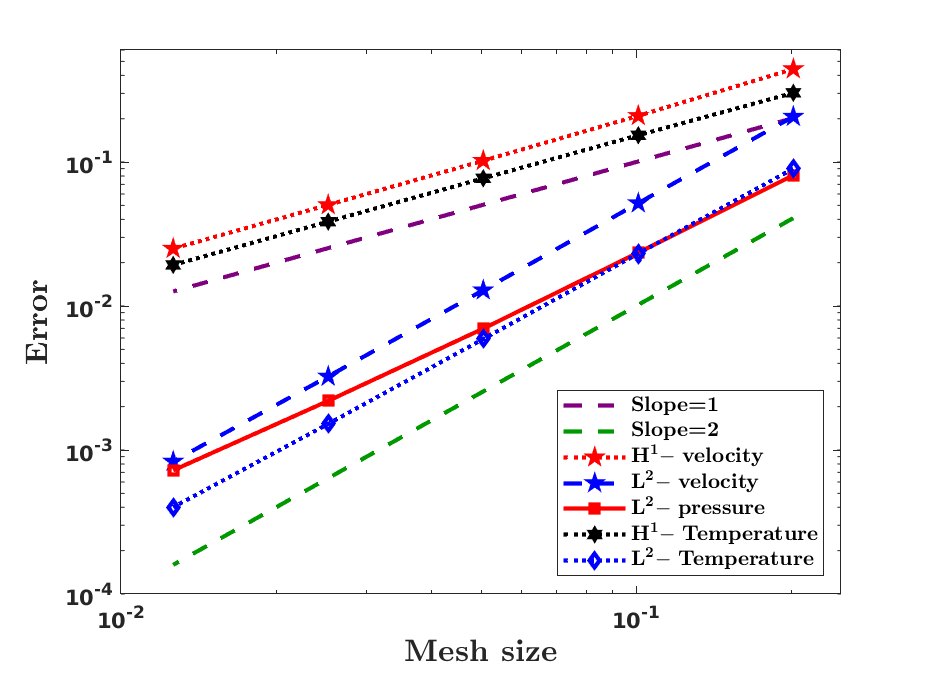}}
	\subfloat[$\Omega_1$.]{\includegraphics[height=4.5cm, width=6cm]{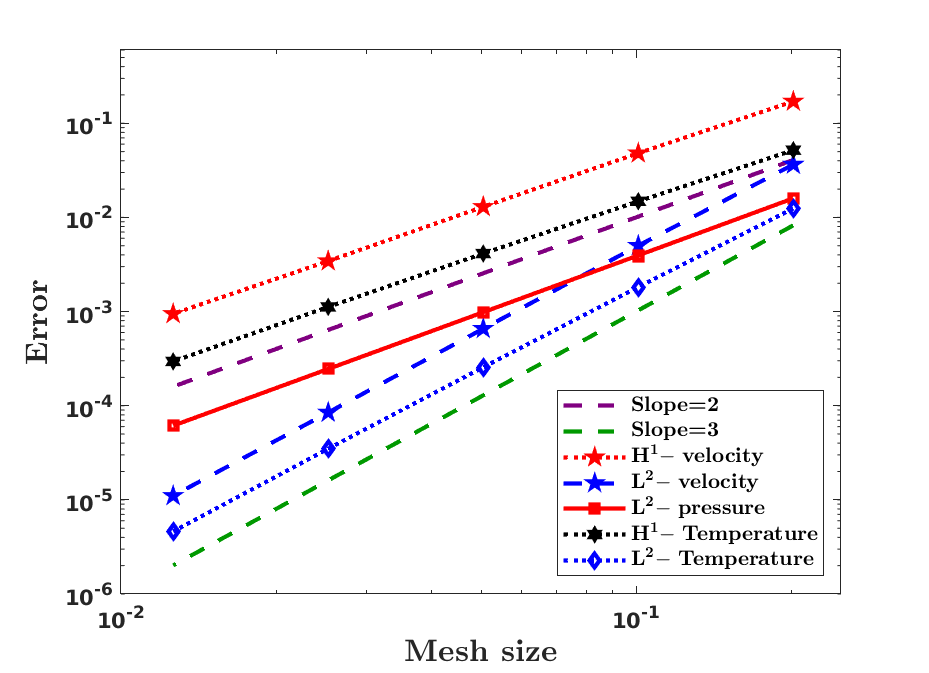}}\\
	\subfloat[$\Omega_2$.]{\includegraphics[height=4.5cm, width=6cm]{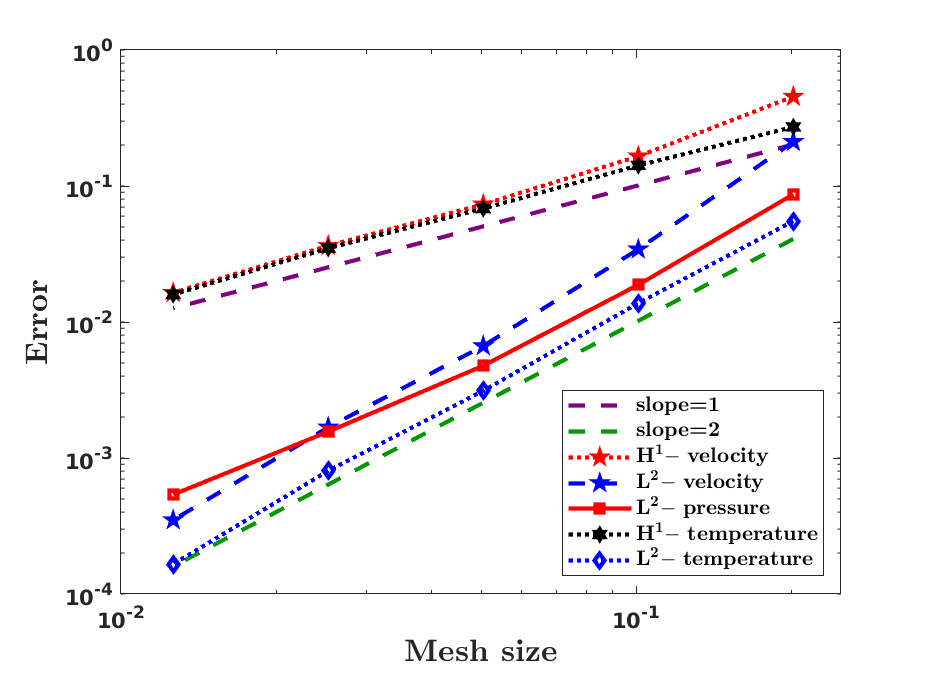}}
	\subfloat[$\Omega_2$.]{\includegraphics[height=4.5cm, width=6cm]{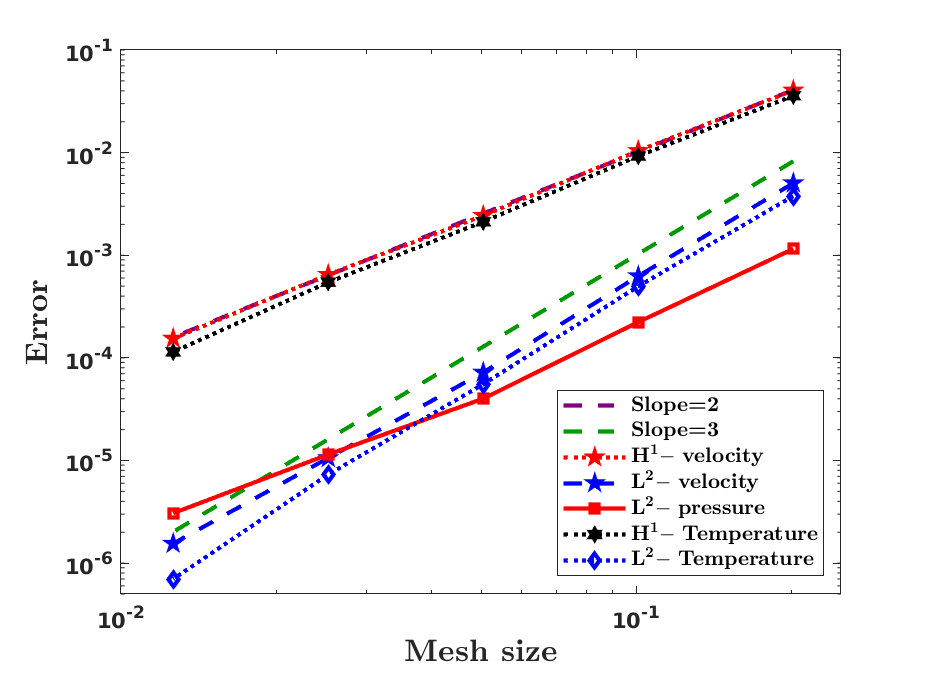}} \\
	\subfloat[$\Omega_3$.]{\includegraphics[height=4.5cm, width=6cm]{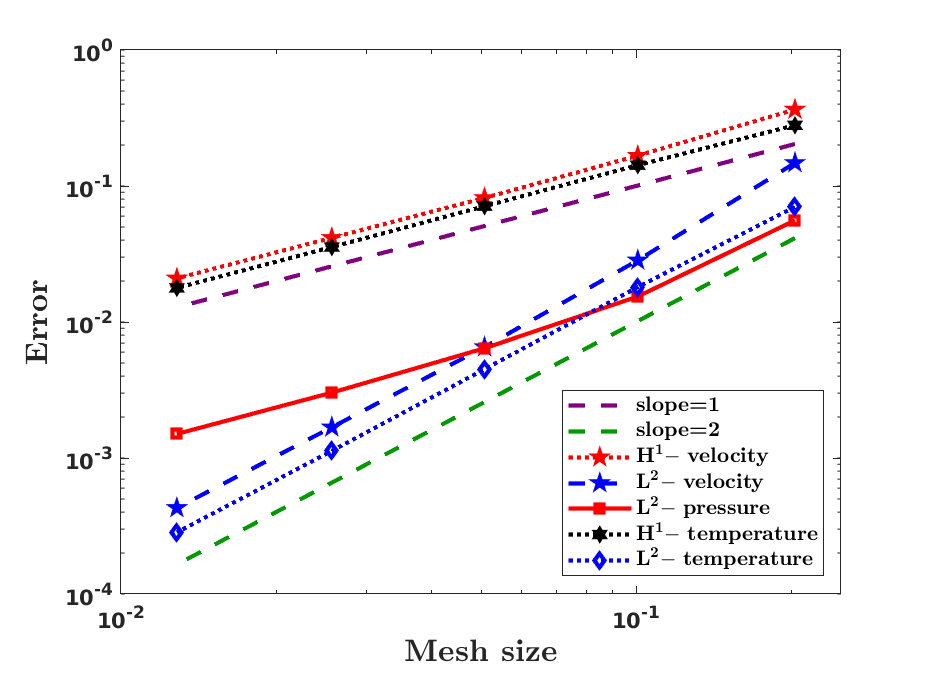}} 
	\subfloat[$\Omega_3$.]{\includegraphics[height=4.5cm, width=6cm]{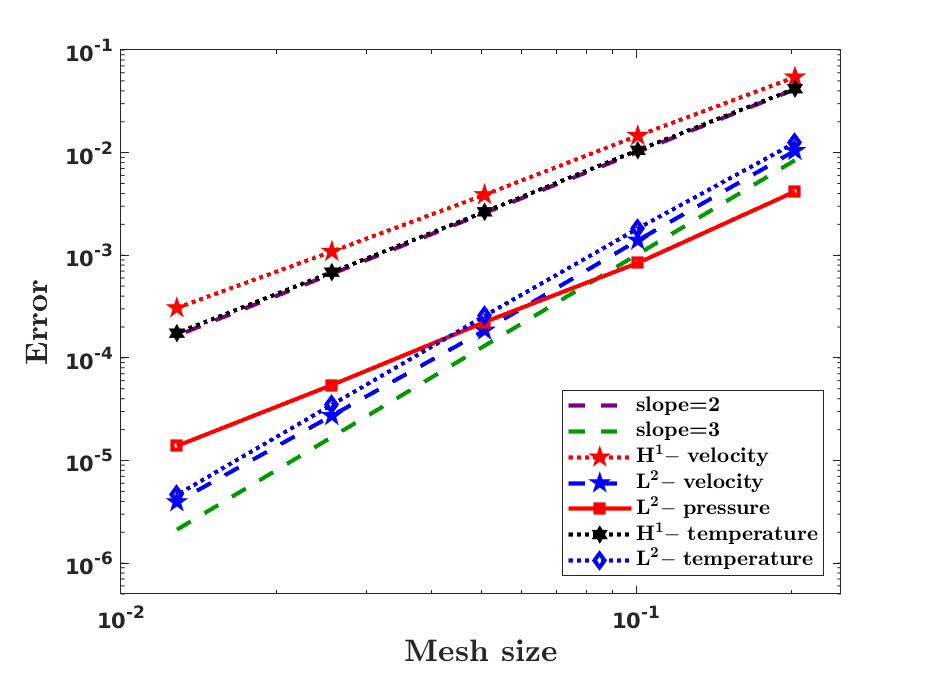}}
	\caption{Example 1: convergence studies on convex domain for VEM order $k=1$ (left) and $k=2$ (right).}
	\label{ex1converg} 
\end{figure}

\subsection{Example 1: Convergence studies on convex domains}
\label{case1}
We begin by investigating the convergence behavior of the proposed method for a regular problem with known solutions on convex domain. The parameters for the considered problem $(P)$ are given as follows:
\begin{align}
	\mu(\theta)= \exp(-\theta),  \qquad \alpha=1, \qquad \mathbf{g}=(0,1)^T, \qquad \kappa(\theta)=1 + \theta^2 + \sin^2(\theta). \nonumber 
\end{align}
The external loads and boundary conditions are given by the exact solution of problem $(P)$: 
\begin{empheq}[left= \empheqlbrace]{align}
	\mathbf{u}(x,y) &= \big[ 10 x^2 y(x-1)^2(y-1)(2y-1), \, -10 xy^2(x-1)(2x-1)(y-1)^2 \big]^T,\nonumber \\
	p(x,y) &= 10 (2x-1)(2y-1), \nonumber\\
	\theta(x,y) &= 10 xy(x-1)(y-1)(x-y)(2xy-x-y-1). \nonumber 
\end{empheq} 
Additionally, we remark that the thermal conductivity $\kappa(\cdot)$ is locally Lipschitz continuous on [-10,10] as $-10 \leq \theta \leq 10$.

We conduct the convergence study of this example for VEM order $k=1$ and $k=2$ using the meshes {$\Omega_1$--$\Omega_5$}. For a regular problem, Proposition \ref{convergece} states that the expected convergence rates are $\mathcal{O}(h^k)$ for the discrete velocity and temperature in the $H^1$ semi-norm and $\mathcal{O}(h^k)$ for the discrete pressure in the $L^2$ norm. Additionally, interpolation estimates claim that the optimal convergence rates for the velocity and temperature fields in the $L^2$ norm may reach $\mathcal{O}(h^{k+1})$. 

The convergence curves using the stabilized VEM are shown in {Figures \ref{ex1converg} and \ref{ex1convergA} for convex and non-convex meshes $\Omega_1$--$\Omega_5$} with VEM order $k=1$  and $k=2$. The convergence curves demonstrate that the proposed stabilized VEM confirms the expected convergence rates as derived in theoretical estimates for $k=1$ and 2. We further observe from {Figures \ref{ex1converg} and \ref{ex1convergA} that the discrete pressure has a convergence rate of $\mathcal{O}(h)$ on distorted $\Omega_3$ and irregular Voronoi $\Omega_5$ meshes} for lowest order $k=1$, whereas it seems to have better convergence for regular meshes $\Omega_1$, $\Omega_2$, and $\Omega_4$. {In addition, the number of the fixed-point iteration for VEM order $k=1$ and $k=2$ is 5 for all meshes $\Omega_1$--$\Omega_5$.} 

\begin{figure}[h]
		\centering
		\subfloat[$\Omega_4$.]{\includegraphics[height=4.5cm, width=6cm]{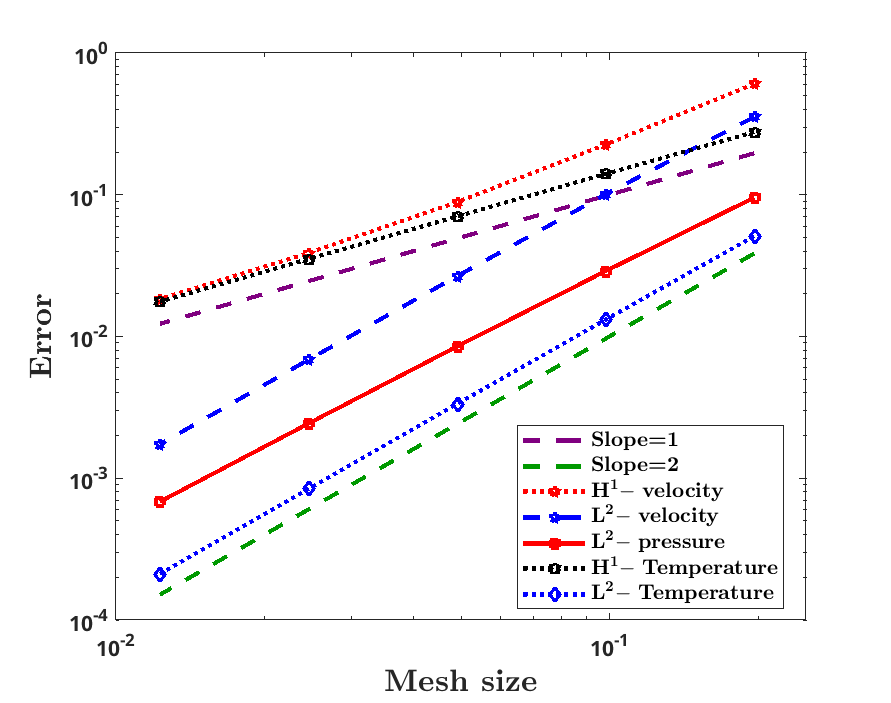}}~~
		\subfloat[$\Omega_4$.]{\includegraphics[height=4.5cm, width=6cm]{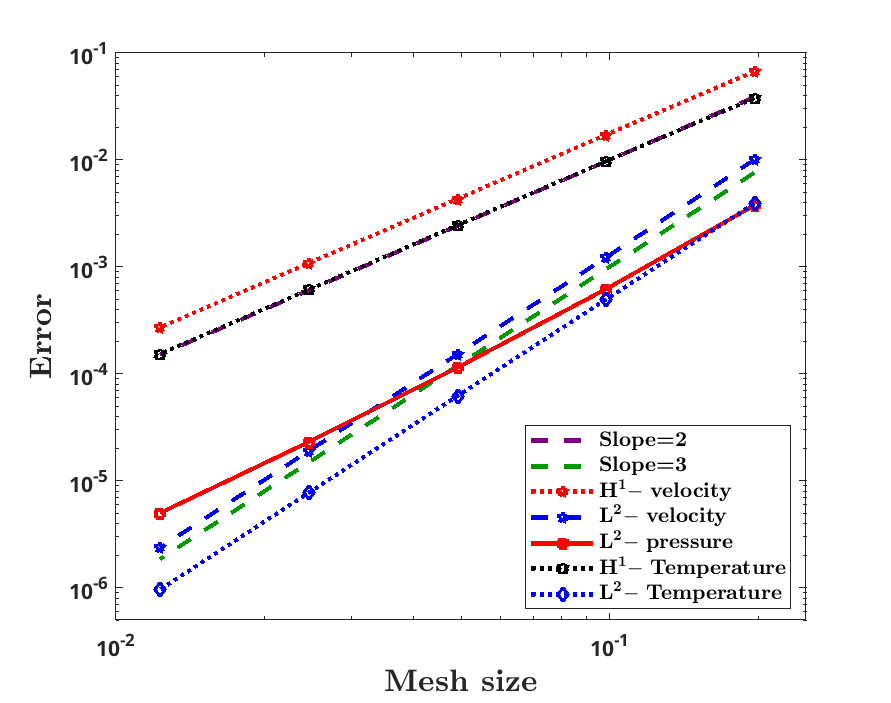}}\\
		\subfloat[$\Omega_5$.]{\includegraphics[height=4.5cm, width=6cm]{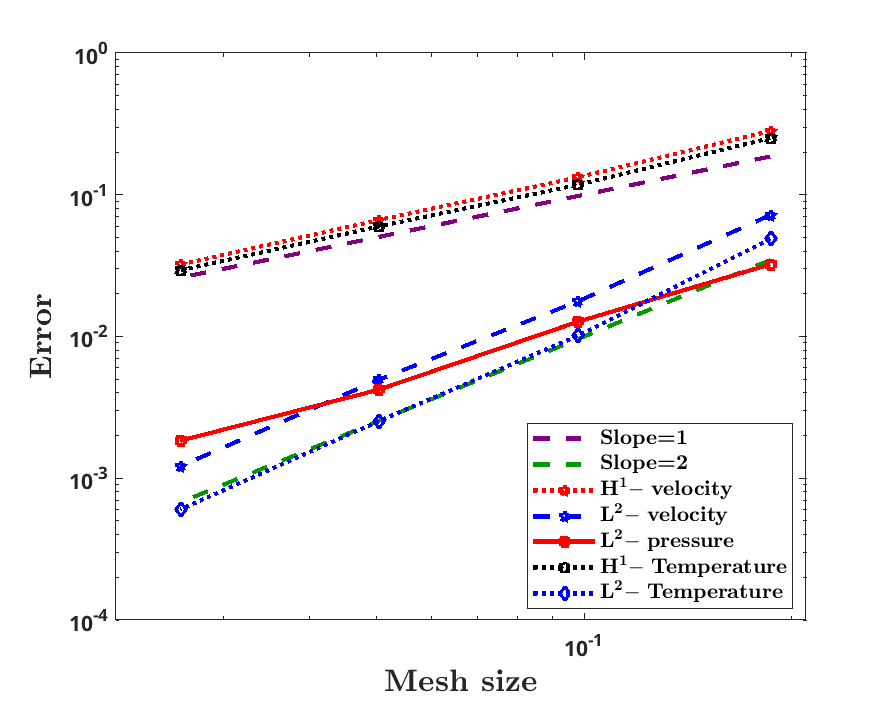}}~~
		\subfloat[$\Omega_5$.]{\includegraphics[height=4.5cm, width=6cm]{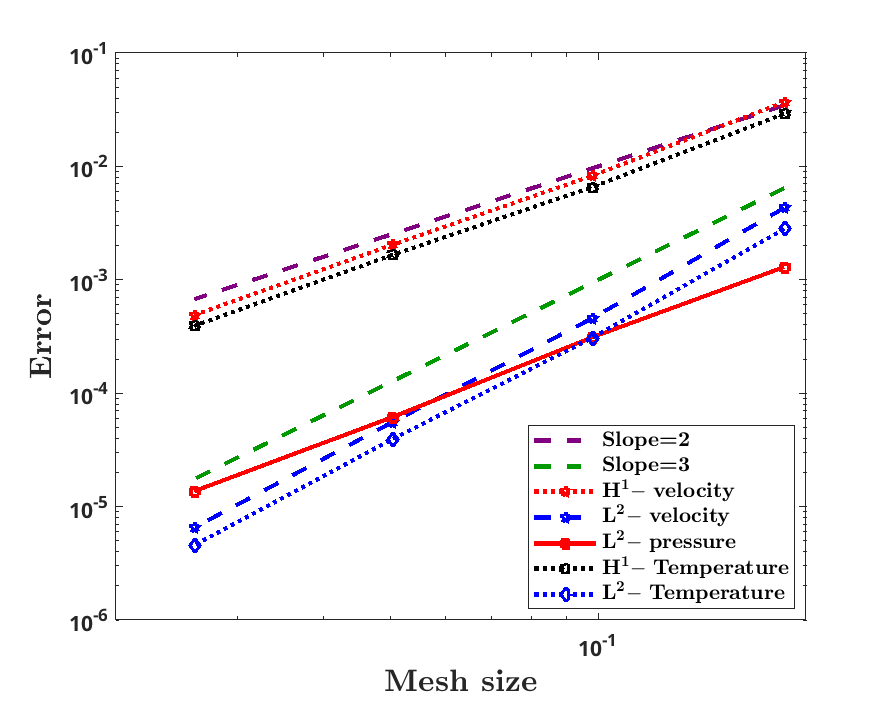}} 
	\caption{Example 1: convergence studies on $\Omega_4$ and $\Omega_5$ for VEM order $k=1$ (left) and $k=2$ (right).}
	\label{ex1convergA} 
\end{figure}

\subsection{Example 2: Convergence studies on non-convex domains}
\label{case2}

We now aim to examine the numerical behavior of the proposed method for a smooth problem with known solution on non-convex domains, as shown in Figure \ref{samp2}. In this sequel, we consider the following parameters of problem $(P)$:
\begin{align}
	\mu(\theta)= \frac{\nu}{1+ \theta^2} ,  \qquad \alpha=1,  \qquad \mathbf{g}=(0,1)^T, \qquad \Large{\text{$\kappa$}}(\theta)= \kappa \sqrt{1 +\theta^2},\nonumber 
\end{align}
where $\nu$ and $\kappa$ are positive parameters.
Furthermore, the external loads and boundary conditions can be obtained using the exact solution of problem $(P)$ given as follows 
\begin{empheq}[left= \empheqlbrace]{align}
	\mathbf{u}(x,y) &= \big[ 4y(x^2-1)^2 (y^2-1), \, -4x(x^2-1)(y^2-1)^2 \big]^T,\nonumber \\
	p(x,y) &= \pi^2 \sin(2\pi x) \sin(2\pi y), \nonumber\\
	\theta(x,y) &= \exp(-(x^2+y^2))-0.5. \nonumber 
\end{empheq} 

In the first case, we will demonstrate the convergence behavior of the proposed method on H-shaped and wrench domains for parameters $\nu=1$ and $\kappa=1$ with VEM order $k=1$ and $k=2$. The obtained numerical results are shown in Figure \ref{ex2fig1} and Tables \ref{ex2_tb1}--\ref{ex2_tb4}. The stabilized discrete solutions are almost similar to analytic solutions, as shown in Figure \ref{ex2fig1}. Moreover, the magnitude of computational errors and convergence rates using the proposed method are presented in Tables \ref{ex2_tb1}--\ref{ex2_tb4}, validating the expected convergence rates derived in the theoretical analysis. Once again, we observe that the discrete pressure exhibits a convergence rate of $\mathcal{O}(h)$ on the H-shaped domain with distorted meshes for VEM order $k=1$, while a better convergence behavior appears to emerge on the wrench-shaped domain with central Voronoi meshes.

\begin{figure}[h]
	\centering
	\subfloat[H-shaped domain.]{\includegraphics[height=3cm, width=4cm]{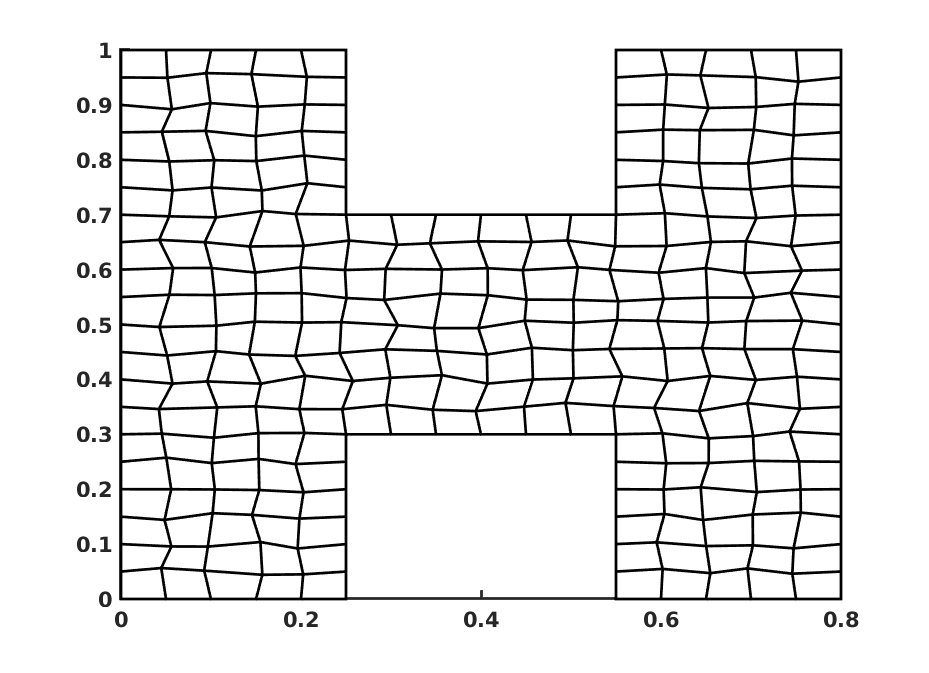}}
	\subfloat[Wrench domain.]{\includegraphics[height=3cm, width=6cm]{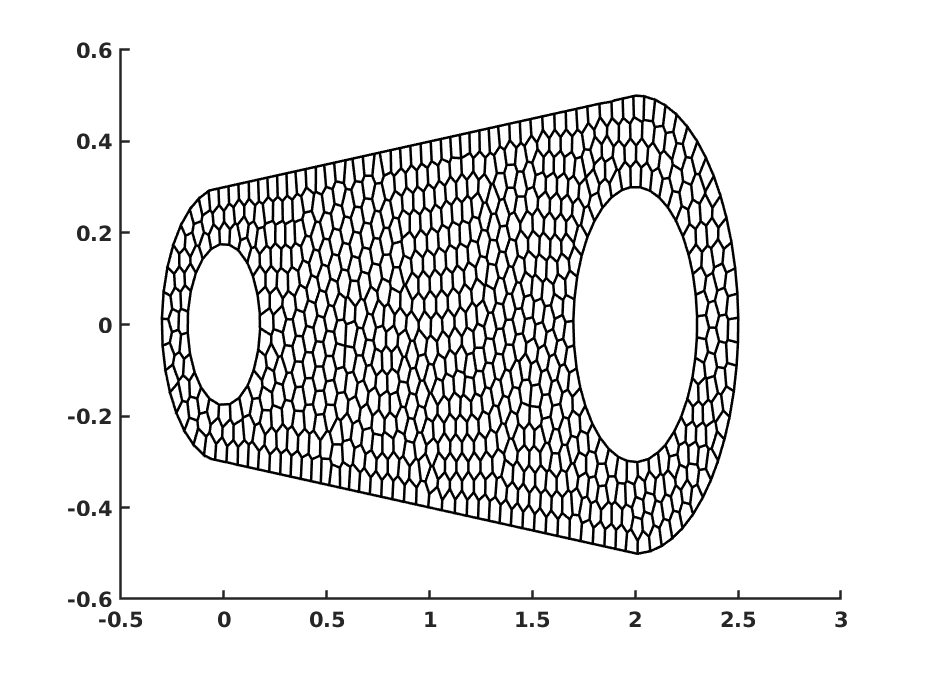}}
	\caption{Example 2: computational domains and representative meshes employed for convergence study.}
	\label{samp2} 
\end{figure}

\begin{figure}[h]
	\centering
	\subfloat[Exact velocity norm.]{\includegraphics[height=3cm, width=4cm]{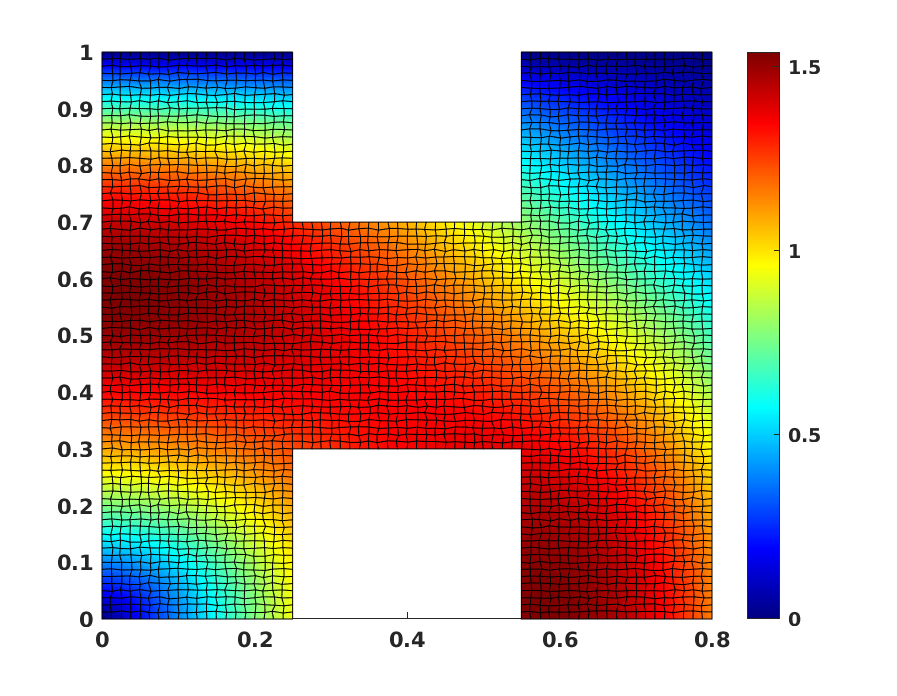}}~~~
	\subfloat[Exact pressure.]{\includegraphics[height=3cm, width=4cm]{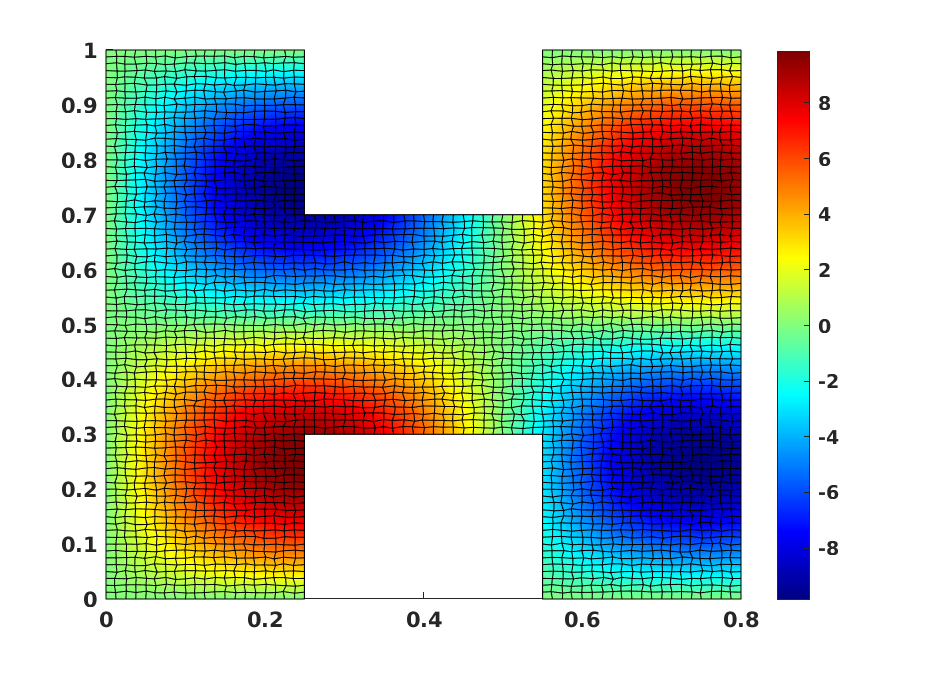}}~~~
	\subfloat[Exact temperature.]{\includegraphics[height=3cm, width=4cm]{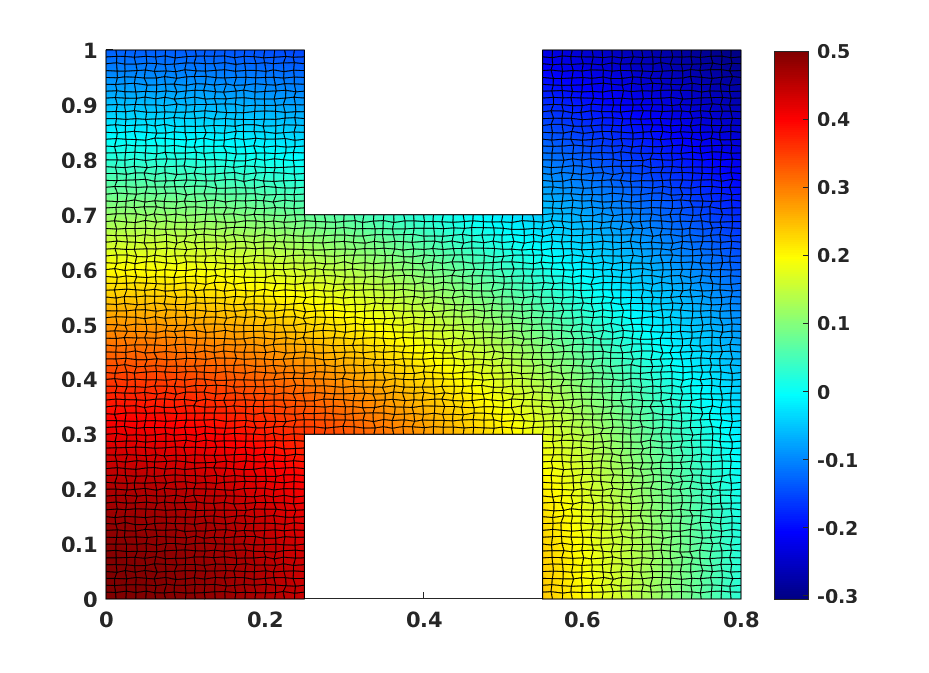}} \\
	\subfloat[Discrete velocity norm.]{\includegraphics[height=3cm, width=4cm]{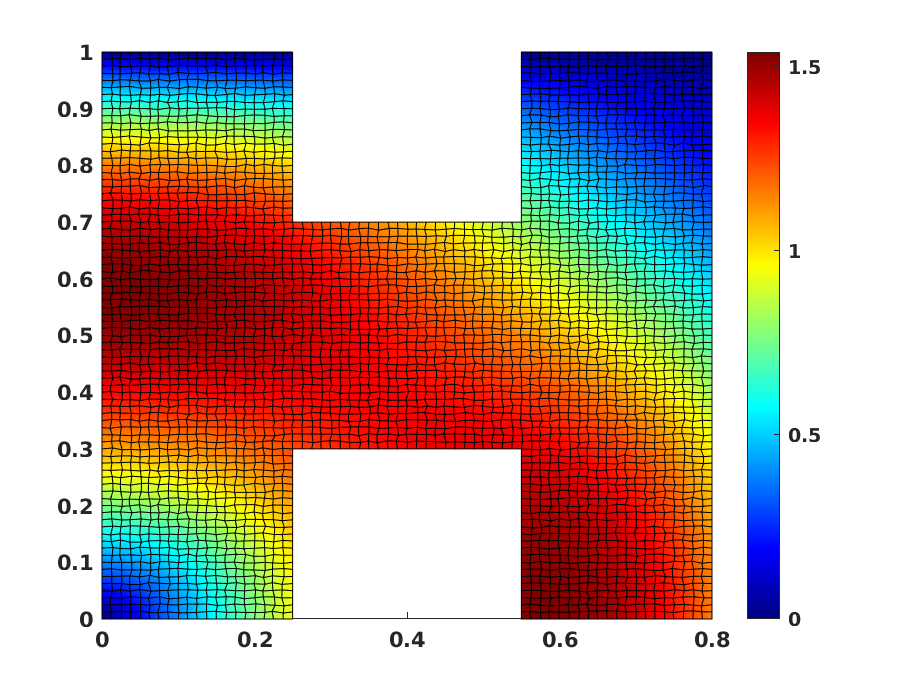}}~~~
	\subfloat[Discrete pressure.]{\includegraphics[height=3cm, width=4cm]{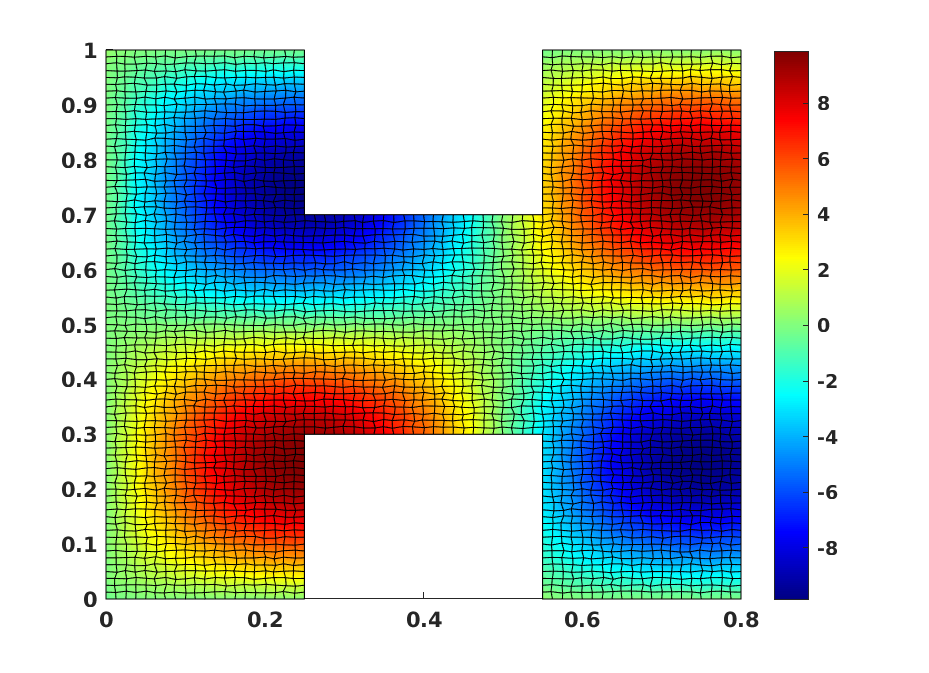}}~~~
	\subfloat[Discrete temperature.]{\includegraphics[height=3cm, width=4cm]{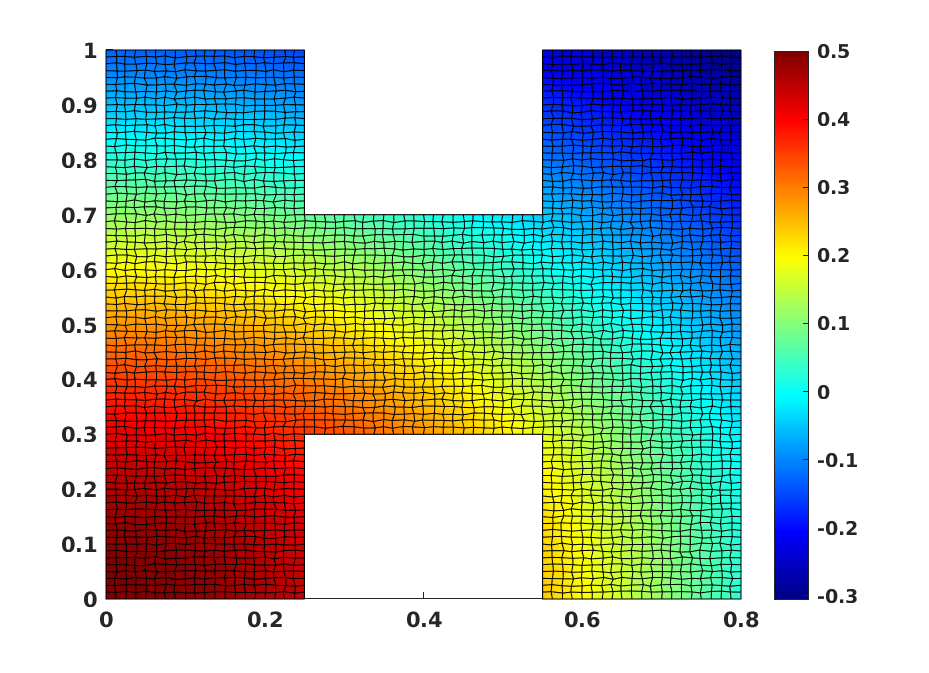}} \\
	\caption{Example 2. (Case $\nu=1$, $\kappa=1$.) Exact solutions (top) and stabilized solutions (bottom) for VEM order $k=1$ with $h=0.0229$.}
	\label{ex2fig1} 
\end{figure}

In the second case, we investigate the efficiency and robustness of the proposed method with respect to parameters $\nu$ and $\kappa$. To achieve this, we first examine the variation of computational errors by fixing $\nu = 1$ and varying $\kappa$ from 1 to $10^{-9}$, for VEM orders $k = 1$ and $k = 2$. The numerical results obtained using the proposed method are shown in Figure~\ref{ex2fig2} for both non-convex domains. Notably, as $\kappa$ decreases from 1 to $10^{-4}$, we observe an increase in the $L^2$ norm and $H^1$ semi-norm of the temperature error on both domains, while the variation of errors in the discrete velocity and pressure remains very small. Upon further reduction of $\kappa$ to $10^{-9}$, the variation of errors in the discrete temperature remains nearly constant for all $\kappa \geq 10^{-5}$. Consequently, the error curves for all virtual element triplets appear almost flat for $\kappa \leq 10^{-5}$, which demonstrates the robustness of the proposed method with respect to $\kappa$. 

Additionally, the variation of computational errors is depicted in Figure \ref{ex2fig3} by fixing $\kappa=1$ and varying $\nu$ from $1$ to $10^{-9}$ for H-shaped and wrench domains with VEM order $k=1$ and $k=2$. We observe almost identical behavior in the variation of errors as we found in the case of varying $\kappa$ with $\nu=1$. In short, the error curves become flat for all errors when $\nu \leq 10^{-4}$.

\begin{table}[t!]
	\setlength{\tabcolsep}{7pt}
	\centering
	\caption{ Example 2. Convergence studies on H-shaped domain using proposed method for VEM order $k=1$ with $\nu=1$ and $\kappa=1$.}
	\label{ex2_tb1}
	\begin{tabular}{cccccccccccc}
		\toprule
		{$h$}&  {$E^\mathbf{u}_{H^1}$} & {rate} & {$E^{\mathbf{u}}_{L^2}$} & {rate} & {$E^p_{L^2}$} & {rate} & {$E^{\theta}_{H^1}$} & {rate} & {$E^{\theta}_{L^2}$} & {rate} &{{itr}} \\
		\midrule
		$0.0910$  & 1.3806e-01 & --   & 2.62560e-02 & --   & 1.4220e-01 &--    & 2.9530e-02 & --   & 1.9487e-03 & -- & {6}\\ 
		$0.0457$ & 4.7017e-02 & 1.55 & 5.0668e-03 & 2.37 & 3.2867e-02 & 2.11 & 1.4764e-02 & 1.00 & 4.7630e-04 & 2.03 & {6}\\ 
		$0.0229$ & 2.0230e-02 & 1.22 & 1.0172e-03 & 2.32 & 1.0756e-02 & 1.61 & 7.3826e-03 & 1.00 & 1.1868e-04 & 2.01 & {6}\\ 
		$0.0115$ & 9.8703e-03 & 1.04 & 2.4474e-04 & 2.06 & 4.7502e-03 & 1.17 & 3.6959e-03 & 1.00 & 2.9769e-05 & 2.00 &{6} \\
		$0.0058$ & 4.9403e-03 & 1.00 & 6.2416e-05 & 1.97 & 2.2979e-03 & 1.05 & 1.8491e-03 & 1.00 & 7.4365e-06 & 2.00 & {6}\\ 
		\bottomrule		
	\end{tabular}
\end{table}

\begin{table}[t!]
	\setlength{\tabcolsep}{7pt}
	\centering
	\caption{Example 2. Convergence studies on wrench domain using proposed method for VEM order $k=1$
		with $\nu=1$ and $\kappa=1$.}
	\label{ex2_tb3}
	\begin{tabular}{cccccccccccc }
		\toprule
		{$h$}&  {$E^\mathbf{u}_{H^1}$} & {rate} & {$E^{\mathbf{u}}_{L^2}$} & {rate} & {$E^p_{L^2}$} & {rate} & {$E^{\theta}_{H^1}$} & {rate} & {$E^{\theta}_{L^2}$} & {rate} & {itr}  \\
		\midrule
		$1/5$  & 1.1762e-01 & --   & 1.3534e-02 & --   & 5.6078e-01 &--    & 7.9305e-02 & --   & 5.2798e-03 & -- & {8}\\ 
		$1/10$ & 4.8373e-02 & 1.28 & 2.4863e-03 & 2.44 & 3.6689e-01 & 0.61 & 3.8181e-02 & 1.06 & 1.4744e-03 & 1.84 & {8}\\ 
		$1/20$ & 2.3215e-02 & 1.06 & 6.0924e-04 & 2.03 & 1.3095e-01 & 1.47 & 1.8542e-02 & 1.04 & 4.3208e-04 & 1.77 & {8} \\ 
		$1/40$ & 1.1540e-02 & 1.01 & 1.6728e-04 & 1.87 & 4.1755e-02 & 1.65 & 9.0309e-03 & 1.04 & 1.1409e-04 & 1.92  & {8} \\
		$1/80$ & 5.5402e-03 & 1.06 & 4.4520e-05 & 1.91 & 1.3033e-02 & 1.68 & 4.3638e-03 & 1.05 & 2.8449e-05 & 2.00 & {8}\\ 
		\bottomrule		
	\end{tabular}
\end{table}

\begin{table}[t!]
	\setlength{\tabcolsep}{7pt}
	\centering
	\caption{Example 2. Convergence studies on H-shaped domain using proposed method for VEM order $k=2$ with $\nu=1$ and $\kappa=1$.}
	\label{ex2_tb2}
	\begin{tabular}{cccccccccccc }
		\toprule
		{$h$}&  {$E^\mathbf{u}_{H^1}$} & {rate} & {$E^{\mathbf{u}}_{L^2}$} & {rate} & {$E^p_{L^2}$} & {rate} & {$E^{\theta}_{H^1}$} & {rate} & {$E^{\theta}_{L^2}$} & {rate}  & {itr} \\
		\midrule
		$0.0910$  & 3.3007e-03 & --   & 1.1588e-04 & --   & 5.6861e-03 &--    & 5.7857e-04 & --   & 1.4516e-05 & -- & {6}\\ 
		$0.0457$ & 9.0300e-04 & 1.87 & 1.6217e-05 & 2.84 & 1.4335e-03 & 1.99 & 1.4496e-04 & 2.00 & 1.8229e-06 & 2.99 & {6}\\ 
		$0.0229$ & 2.2952e-04 & 1.98 & 2.0731e-06 & 2.97 & 3.5919e-04 & 2.00 & 3.6188e-05 & 2.00 & 2.2760e-07 & 3.00 & {6}\\ 
		$0.0115$ & 5.8781e-05 & 1.97 & 2.6582e-07 & 2.96 & 8.9545e-05 & 2.01 & 9.0722e-06 & 2.00 & 2.8585e-08 & 2.99 & {6}\\
		$0.0058$ & 1.4720e-05 & 2.00 & 3.3387e-08 & 2.99 & 2.2435e-05 & 2.00 & 2.2676e-06 & 2.00 & 3.5709e-09 & 3.00 & {6}\\ 
		\bottomrule
	\end{tabular}
\end{table}

\begin{table}[t!]
	\setlength{\tabcolsep}{7pt}
	\centering
	\caption{Example 2. Convergence studies on wrench domain using proposed method for VEM order $k=2$ with $\nu=1$ and $\kappa=1$.}
	\label{ex2_tb4}
	\begin{tabular}{cccccccccccc}
		\toprule
		{$h$}&  {$E^\mathbf{u}_{H^1}$} & {rate} & {$E^{\mathbf{u}}_{L^2}$} & {rate} & {$E^p_{L^2}$} & {rate} & {$E^{\theta}_{H^1}$} & {rate} & {$E^{\theta}_{L^2}$} & {rate} & {itr} \\
		\midrule
		$1/5$  & 1.8306e-02 & --   & 9.2206e-04 & --   & 1.0122e-01 &--    & 5.6642e-03 & --   & 2.2154e-04 & -- & {8} \\ 
		$1/10$ & 3.9297e-03 & 2.22 & 9.8515e-05 & 3.23 & 1.1743e-02 & 3.11 & 1.2039e-03 & 2.23 & 1.8716e-05 & 3.56 &{8}\\ 
		$1/20$ & 8.2115e-04 & 2.26 & 9.4529e-06 & 3.38 & 1.5555e-03 & 2.91 & 2.5434e-04 & 2.24 & 1.6283e-06 & 3.52 & {8}\\ 
		$1/40$ & 2.0681e-04 & 1.99 & 1.1705e-06 & 3.01 & 3.6105e-04 & 2.11 & 6.2347e-05 & 2.03 & 1.9402e-07 & 3.07 & {8}\\
		$1/80$ & 4.7739e-05 & 2.12 & 1.3094e-07 & 3.16 & 8.2928e-05 & 2.12 & 1.4513e-05 & 2.10 & 2.1262e-08 & 3.19 &{9} \\ 
		\bottomrule		
	\end{tabular}
\end{table}
\begin{figure}[h]
	\centering
	\subfloat[Order 1.]{\includegraphics[height=3cm, width=4cm]{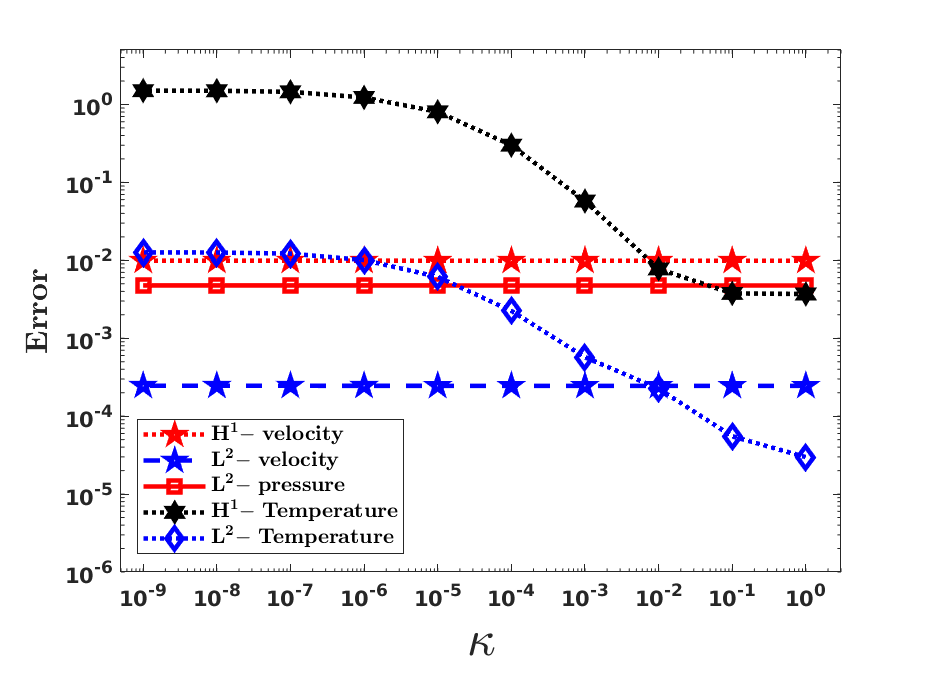}}
	\subfloat[Order 2.]{\includegraphics[height=3cm, width=4cm]{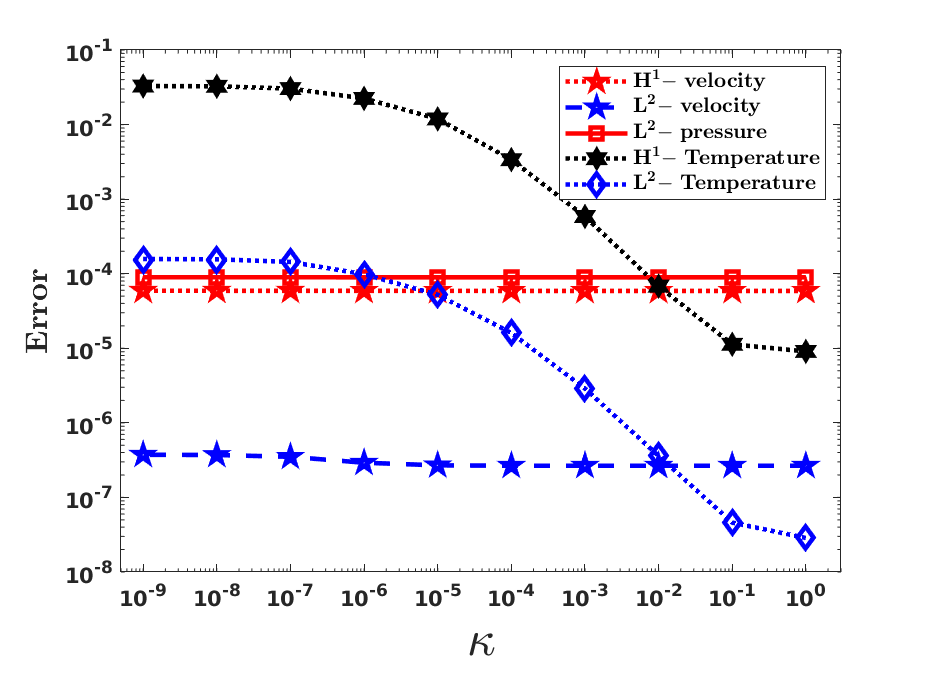}}\\
	\subfloat[Order 1.]{\includegraphics[height=3cm, width=4cm]{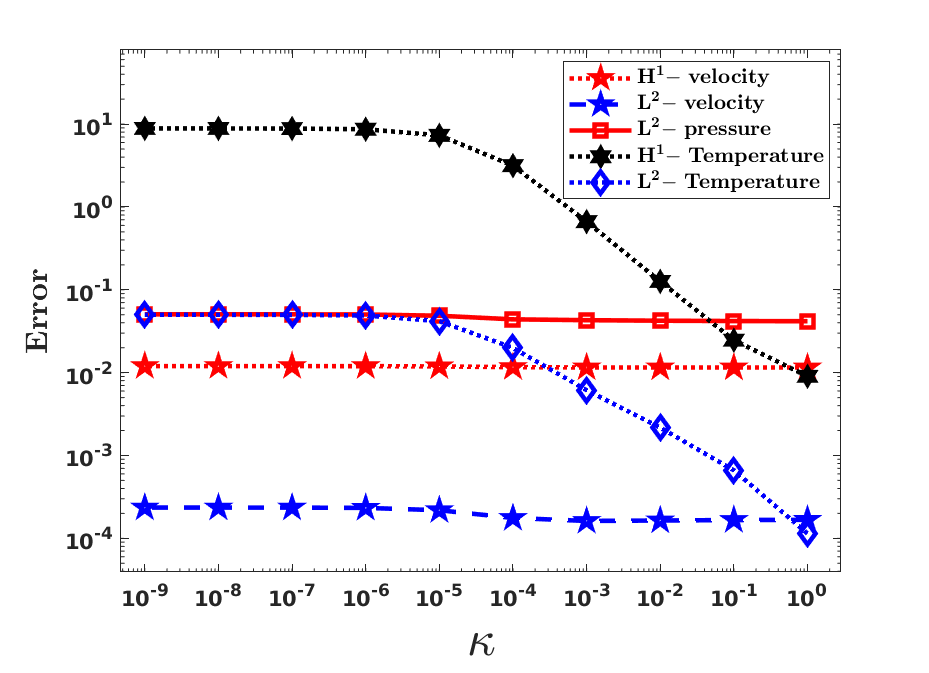}} 
	\subfloat[Order 2.]{\includegraphics[height=3cm, width=4cm]{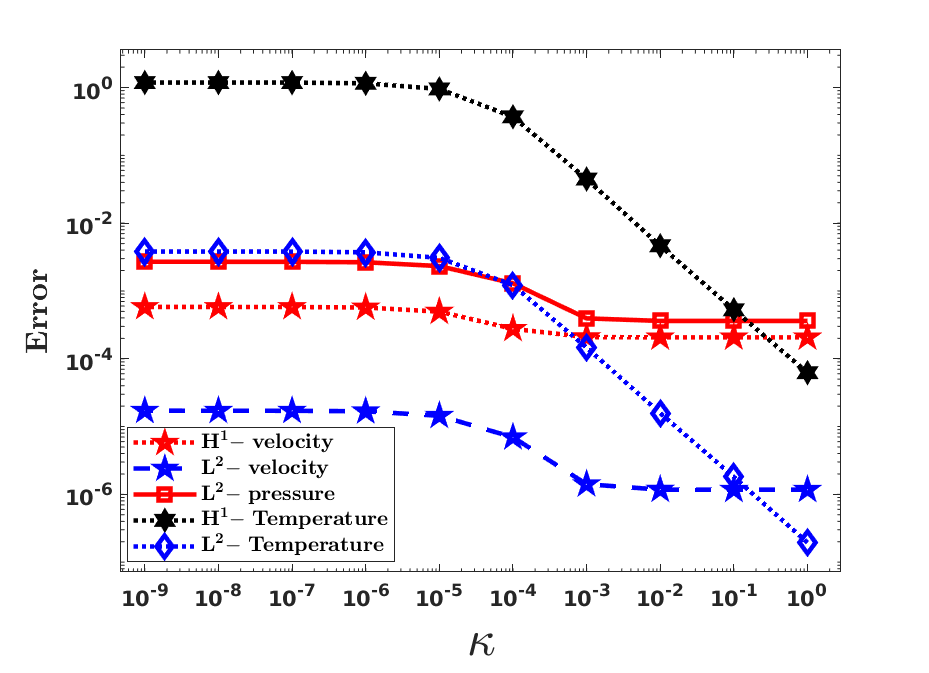}} 
	\caption{Example 2. Variation of computational error on H-shaped (top) and wrench (bottom) domains by fixing $\nu=1$ and varying $\kappa$ with mesh sizes $h=0.0115$ for H-shaped and $h=1/40$ for wrench domains.}
	\label{ex2fig2} 
	\subfloat[Order 1.]{\includegraphics[height=3cm, width=4cm]{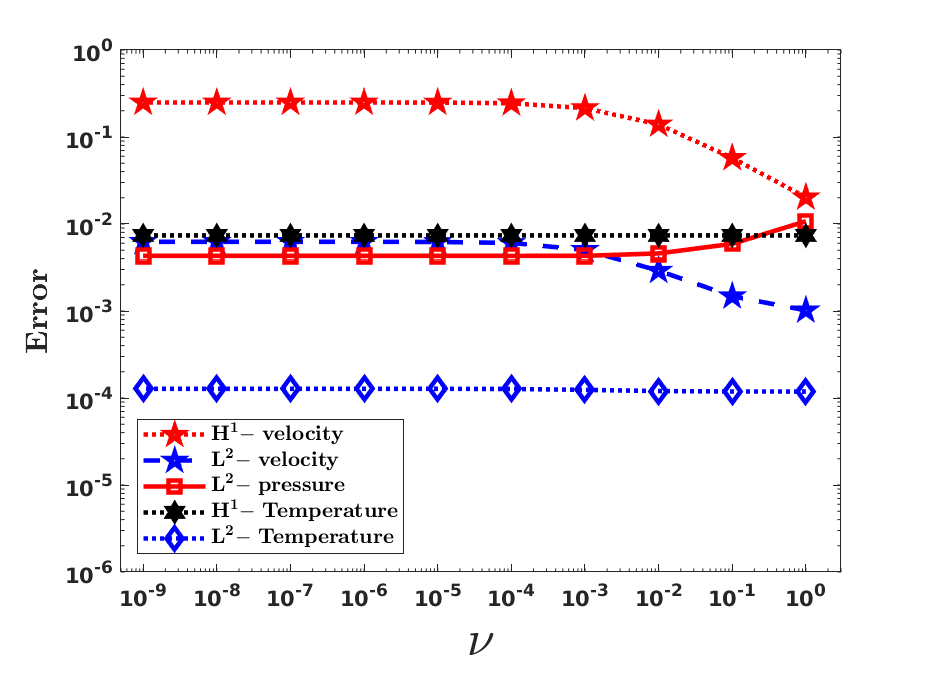}}
	\subfloat[Order 2.]{\includegraphics[height=3cm, width=4cm]{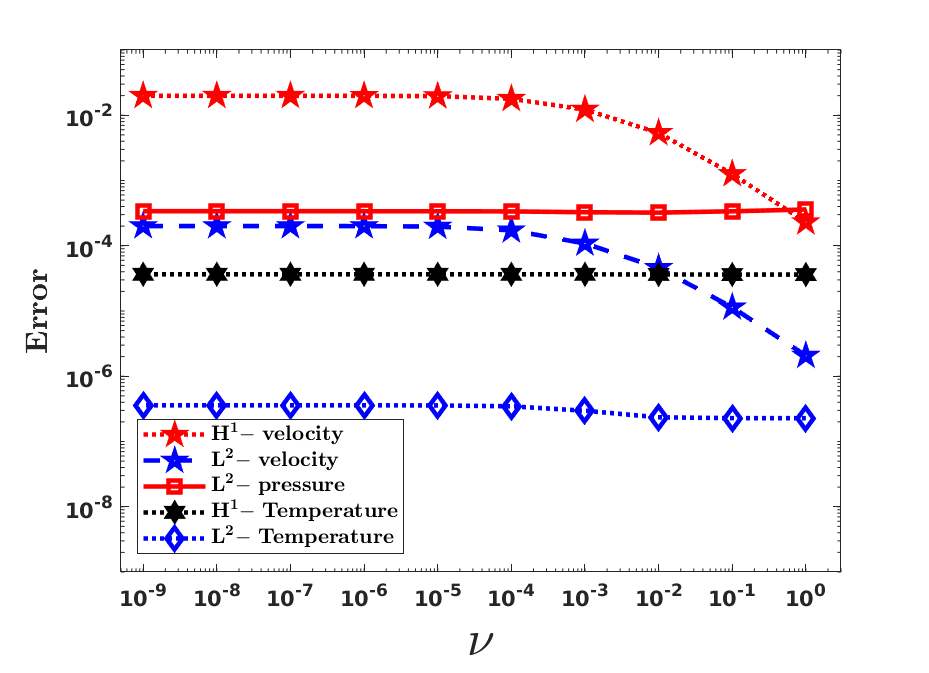}} \\
	\subfloat[Order 1.]{\includegraphics[height=3cm, width=4cm]{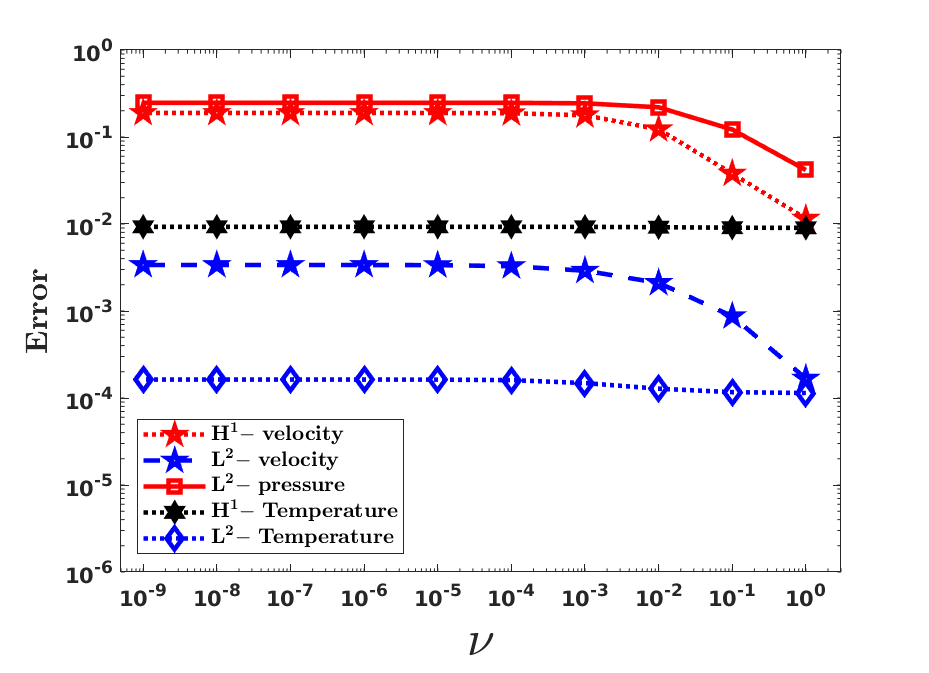}}
	\subfloat[Order 2.]{\includegraphics[height=3cm, width=4cm]{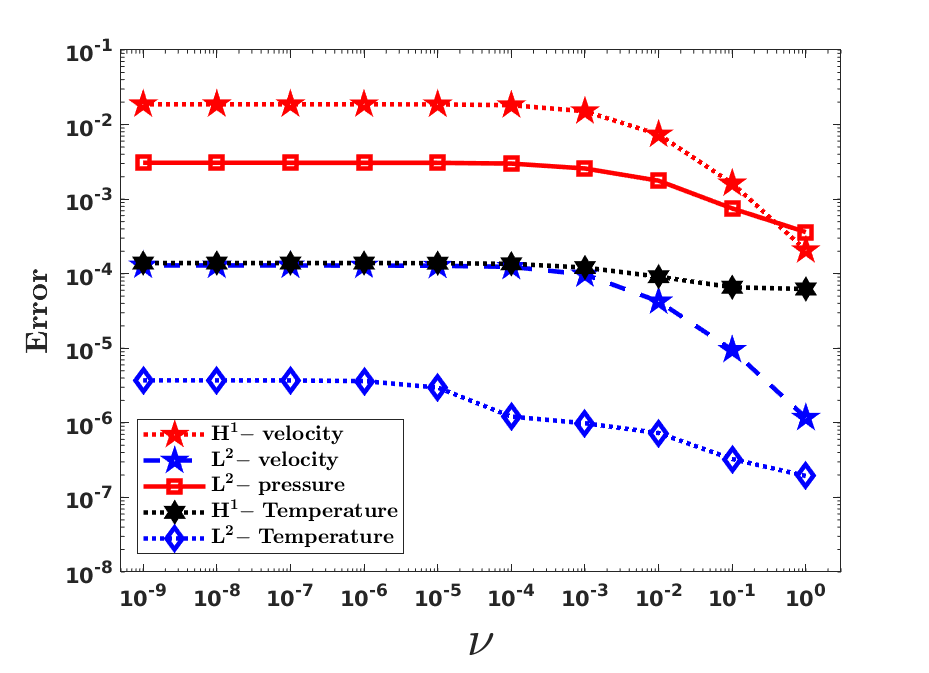}} 
	\caption{Example 2. Variation of computational error on H-shaped (top) and wrench (bottom) domains by fixing $\kappa=1$ and varying $\nu$ with mesh sizes $h=0.0115$ for H-shaped and $h=1/40$ for wrench domains.}
	\label{ex2fig3}
\end{figure}

\subsection{Example 3: A Problem with an Interior Temperature Layer}
\label{case3}

In the third example, we focus on investigating the efficiency of the proposed method for a problem with known solution having an interior temperature layer. To show this, we consider the parameters of the problem $(P)$ as follows:
\begin{align}
	\mu(\theta)= \sqrt{1+ \theta^2},  \qquad \alpha=1,  \qquad \mathbf{g}=(0,1)^T, \qquad \Large \text{$\kappa$}(\theta)=\kappa \exp(\theta), \nonumber 
\end{align}
where $\kappa$ is a positive constant. Furthermore, the external loads and boundary conditions can be determined using the exact solution of problem $(P)$, given by 
\begin{empheq}[left= \empheqlbrace]{align}
	\mathbf{u}(x,y) &= \big[ 2 x^2 y(x-1)^2(y-1)(2y-1), \, -2 xy^2(x-1)(2x-1)(y-1)^2 \big]^T,\nonumber \\
	p(x,y) &= \exp(y)(x-0.5)^3, \nonumber\\
	\theta(x,y) &= 16xy(1-x)(1-y)\Big(\frac{1}{2}+\frac{\tan^{-1}{\big(2 \kappa^{-1/2}(0.25^2-(x-0.5)^2-(y-0.5)^2)\big)}}{\pi} \Big). \nonumber 
\end{empheq} 

We remark that the exact solution of the transport temperature equation represents a hump and is characterized by circular interior layers; for more details, see \cite{vemex4}. 

The convergence study of Example 3 is conducted using $\Omega_1$ and $\Omega_4$. Following \cite{vemex4}, we examine the convergence behavior of the proposed method for $\kappa=10^{-2}$ and $10^{-4}$ with the lowest order VEM $k=1$. The magnitude of numerical errors and convergence rates are reported in Tables \ref{ex3_tb1}--\ref{ex3_tb4} for $\kappa=10^{-2}$ and $10^{-4}$. We observe that the discrete velocity and temperature exhibit convergence rates of $\mathcal{O}(h)$ and $\mathcal{O}(h^2)$ in the $H^1$ semi-norm and $L^2$ norm, respectively, confirming the theoretical results. Moreover, the discrete pressure exhibits a better convergence rate of $\mathcal{O}(h^{1.5})$ in the $L^2$ norm.

We now investigate the efficiency of the proposed method for capturing the circular interior layers for $\kappa=10^{-2}, 10^{-4}$ and $10^{-6}$. The exact and discrete temperatures are depicted in Figure \ref{ex3sol1} for $\kappa=10^{-2}, 10^{-4}$ and $10^{-6}$ employing VEM order $k=1$. We notice that the discrete temperature is almost identical to the exact temperature for $\kappa=10^{-2}$ and $10^{-4}$. Furthermore, the discrete temperature exhibits some mild oscillations near the outer circular layers for $\kappa= 10^{-6}$ with the mesh size $h=1/80$, as shown in Figure \ref{ex3sol1}(f). To eliminate these oscillations, we present the discrete solutions computed using the higher-order VEM ($k = 2$) with the mesh size $h = 1/80$ in Figure \ref{ex3sol2}. Additionally, the isotherms corresponding to $\kappa =  10^{-6}$ are depicted in Figure \ref{ex3sol3} for VEM order $k = 1$ and $k = 2$. It is noteworthy that the higher-order VEM ($k = 2$) effectively suppresses all non-physical oscillations and accurately captures the circular interior layers, as illustrated in Figures \ref{ex3sol2} and \ref{ex3sol3}(c). Thus, we can conclude that the efficiency of the proposed method increases as we increase the order of the method.

\begin{table}[t!]
	\setlength{\tabcolsep}{7pt}
	\centering
	\caption{Example 3. Convergence studies on $\Omega_1$ using proposed method for VEM order $k=1$ with $\kappa=10^{-2}$.}
	\label{ex3_tb1}
	\begin{tabular}{ccccccccccc}
  	\toprule
		{$h$}&  {$E^\mathbf{u}_{H^1}$} & {rate} & {$E^{\mathbf{u}}_{L^2}$} & {rate} & {$E^p_{L^2}$} & {rate} & {$E^{\theta}_{H^1}$} & {rate} & {$E^{\theta}_{L^2}$} & {rate}  \\
		\midrule
		$1/5$  & 4.0205e-01 & --   & 1.6808e-01 & --   & 2.6401e-01 &--    & 3.7163e-01 & --   & 1.0280e-01 & -- \\ 
		$1/10$ & 2.0187e-01 & 0.99 & 4.1987e-02 & 2.00 & 9.0583e-02 & 1.54 & 1.9369e-01 & 0.94 & 2.9486e-02 & 1.80\\ 
		$1/20$ & 1.0045e-01 & 1.01 & 1.0375e-02 & 2.02 & 2.9928e-02 & 1.60 & 9.9388e-02 & 0.96 & 8.2466e-03 & 1.83\\ 
		$1/40$ & 5.0030e-02 & 1.01 & 2.6020e-03 & 1.99 & 1.0218e-02 & 1.55 & 5.0004e-02 & 0.99 & 2.3672e-03 & 1.80\\
		$1/80$ & 2.4959e-02 & 1.00 & 6.6672e-04 & 1.97 & 3.5647e-03 & 1.52 & 2.5042e-02 & 1.00 & 6.5453e-04 & 1.86\\ 
		\bottomrule		
	\end{tabular}
\end{table}

\begin{table}[t!]
	\setlength{\tabcolsep}{7pt}
	\centering
	\caption{Example 3. Convergence studies on $\Omega_4$ using proposed method for VEM order $k=1$ with $\kappa=10^{-2}$.}
	\label{ex3_tb2}
	\begin{tabular}{ccccccccccc }
		\toprule
		{$h$}&  {$E^\mathbf{u}_{H^1}$} & {rate} & {$E^{\mathbf{u}}_{L^2}$} & {rate} & {$E^p_{L^2}$} & {rate} & {$E^{\theta}_{H^1}$} & {rate} & {$E^{\theta}_{L^2}$} & {rate}  \\
		\midrule
		$1/5$  & 4.6982e-01 & --   & 3.6279e-01 & --   & 4.2851e-01 &--    & 2.7352e-01 & --   & 4.6178e-02 & -- \\ 
		$1/10$ & 2.0107e-01 & 1.22 & 9.3554e-02 & 1.96 & 1.6717e-01 & 1.36 & 1.4172e-01 & 0.95 & 1.1505e-02 & 2.01\\ 
		$1/20$ & 8.6032e-02 & 1.23 & 1.8522e-02 & 2.34 & 5.5279e-02 & 1.60 & 7.1380e-02 & 0.99 & 2.7600e-03 & 2.06\\ 
		$1/40$ & 3.9072e-02 & 1.14 & 3.5994e-03 & 2.36 & 1.7597e-02 & 1.65 & 3.5752e-02 & 1.00 & 6.6704e-04 & 2.05\\
		$1/80$ & 1.8580e-02 & 1.07 & 8.0964e-04 & 2.15 & 5.5066e-03 & 1.68 & 1.7884e-02 & 1.00 & 1.6397e-04 & 2.02\\ 
		\bottomrule		
	\end{tabular}
\end{table}

\begin{table}[t!]
		\setlength{\tabcolsep}{7pt}
		\centering
	\caption{Example 3. Convergence studies on $\Omega_1$ using proposed method for VEM order $k=1$ with $\kappa=10^{-4}$.}
	\label{ex3_tb3}
	\begin{tabular}{ccccccccccc }
		\toprule
		{$h$}&  {$E^\mathbf{u}_{H^1}$} & {rate} & {$E^{\mathbf{u}}_{L^2}$} & {rate} & {$E^p_{L^2}$} & {rate} & {$E^{\theta}_{H^1}$} & {rate} & {$E^{\theta}_{L^2}$} & {rate}  \\
		\midrule
		$1/5$  & 4.1165e-01 & --   & 2.3286e-01 & --   & 1.2118 &--    & 9.2846e-01 & --   & 8.9681e-01 & -- \\ 
		$1/10$ & 2.0335e-01 & 1.02 & 5.2208e-02 & 2.16 & 2.6960e-01 & 2.17 & 6.6528e-01 & 0.48 & 2.2589e-01 & 1.99\\ 
		$1/20$ & 1.0057e-01 & 1.02 & 1.1357e-02 & 2.20 & 4.7818e-02 & 2.50 & 5.1114e-01 & 0.38 & 6.6937e-02 & 1.76\\ 
		$1/40$ & 5.0046e-02 & 1.01 & 2.4273e-03 & 2.23 & 1.0375e-02 & 2.21 & 3.2584e-01 & 0.65 & 2.1894e-02 & 1.61\\
		$1/80$ & 2.4962e-02 & 1.00 & 5.8673e-04 & 2.05 & 3.5372e-03 & 1.55 & 1.8099e-01 & 0.85 & 6.6456e-03 & 1.72\\ 
		$1/160$ & 1.2466e-02 & 1.00 & 1.5059e-04 & 1.96 & 1.2439e-03 & 1.51 & 9.4173e-02 & 0.94 & 1.8213e-03 & 1.87\\ 
		\bottomrule		
	\end{tabular}
\end{table}

\begin{table}[t!]
		\setlength{\tabcolsep}{7pt}
		\centering
	\caption{Example 3. Convergence studies on $\Omega_4$ using proposed method for VEM order $k=1$ with $\kappa=10^{-4}$.}
	\label{ex3_tb4}
	\begin{tabular}{ccccccccccc }
		\toprule
		{$h$}&  {$E^\mathbf{u}_{H^1}$} & {rate} & {$E^{\mathbf{u}}_{L^2}$} & {rate} & {$E^p_{L^2}$} & {rate} & {$E^{\theta}_{H^1}$} & {rate} & {$E^{\theta}_{L^2}$} & {rate}  \\
		\midrule
		$1/5$  & 4.6372e-01 & --   & 3.4995e-01 & --   & 4.9029e-01 &--    & 7.7865e-01 & --   & 3.8045e-01 & -- \\ 
		$1/10$ & 1.9998e-01 & 1.21 & 8.5833e-02 & 2.03 & 1.8172e-01 & 1.43 & 6.1302e-01 & 0.35 & 1.1699e-01 & 1.70\\ 
		$1/20$ & 8.6400e-02 & 1.21 & 1.7706e-02 & 2.28 & 7.0746e-02 & 1.36 & 4.4205e-01 & 0.47 & 5.1677e-02 & 1.18\\ 
		$1/40$ & 3.9169e-02 & 1.14 & 3.8757e-03 & 2.19 & 2.2806e-02 & 1.63 & 2.5820e-01 & 0.78 & 1.5948e-02 & 1.70\\
		$1/80$ & 1.8599e-02 & 1.07 & 9.9667e-04 & 1.96 & 7.0198e-03 & 1.69 & 1.3499e-01 & 0.94 & 4.4007e-03 & 1.86\\ 
		\bottomrule		
	\end{tabular}
\end{table}


\begin{figure}[h]
	\centering
	\subfloat[$\kappa=10^{-2}$.]{\includegraphics[height=3cm, width=5cm]{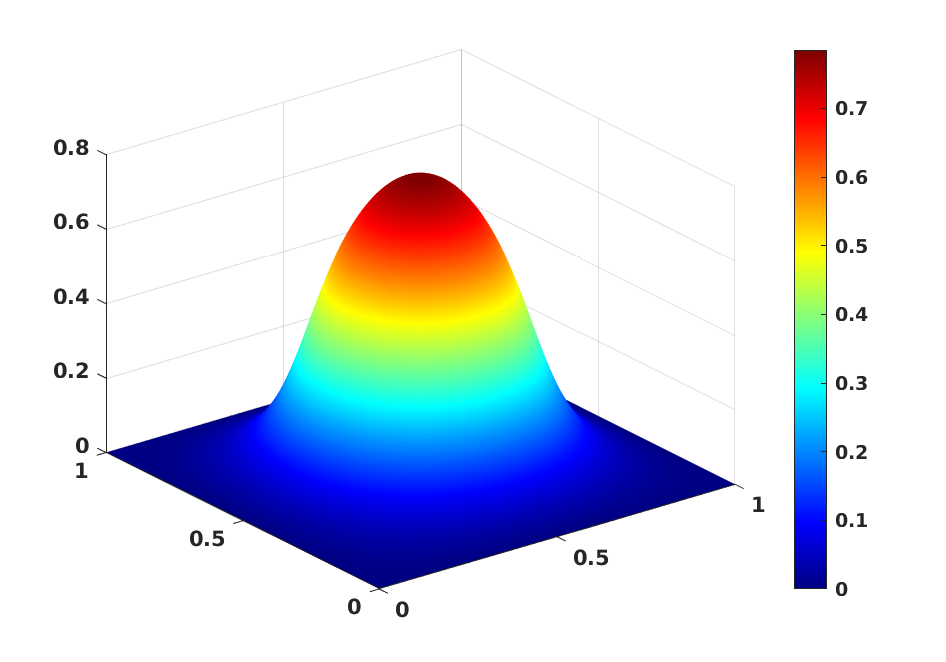}}
	\subfloat[$\kappa=10^{-4}$.]{\includegraphics[height=3cm, width=5cm]{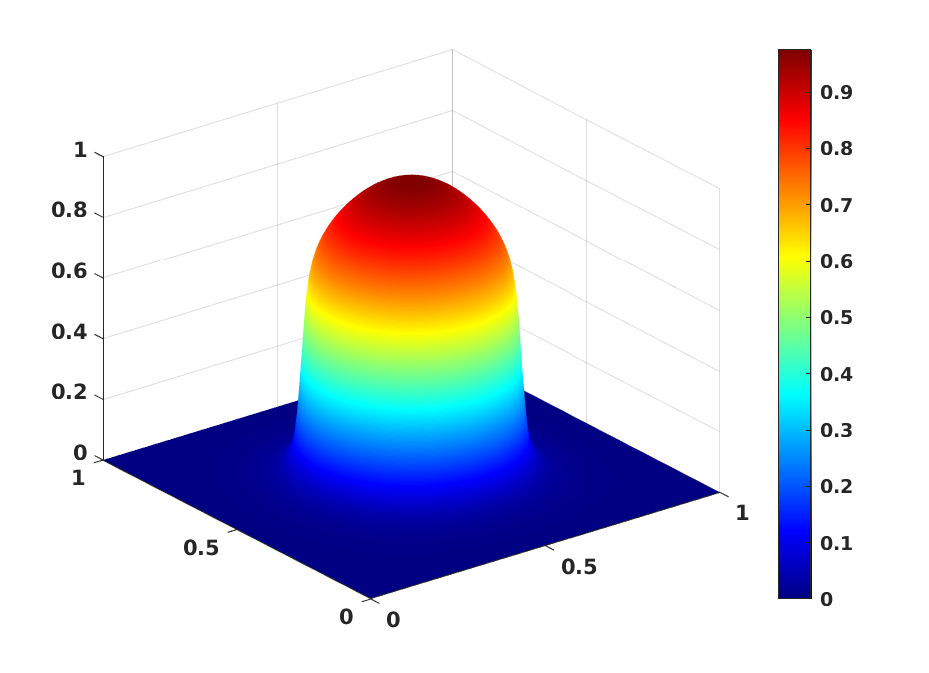}}
	\subfloat[$\kappa= 10^{-6}$.]{\includegraphics[height=3cm, width=5cm]{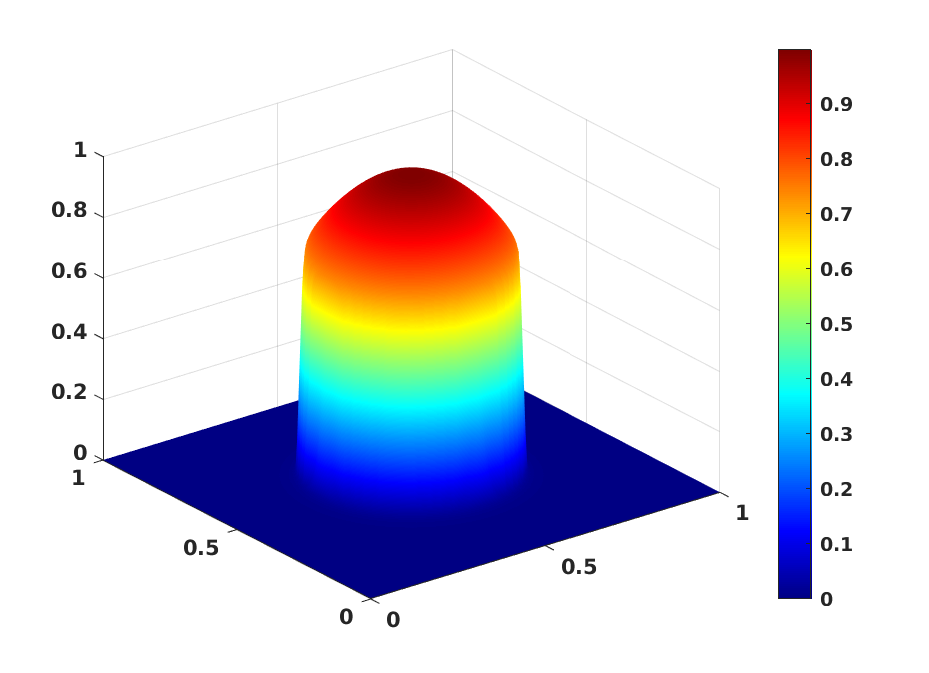}} \\
	\subfloat[$\kappa=10^{-2}$.]{\includegraphics[height=3cm, width=5cm]{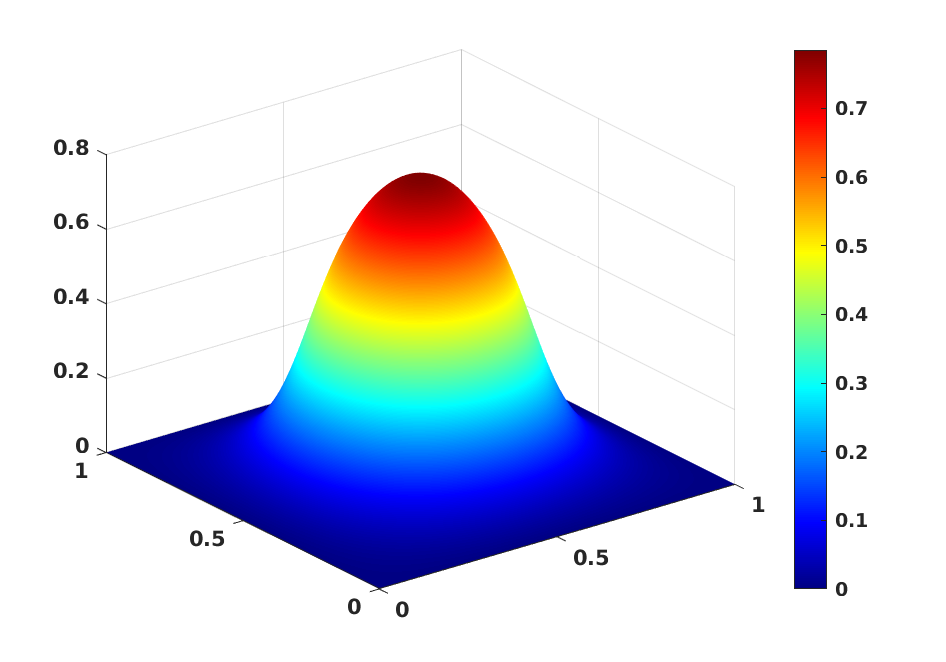}}
	\subfloat[$\kappa=10^{-4}$.]{\includegraphics[height=3cm, width=5cm]{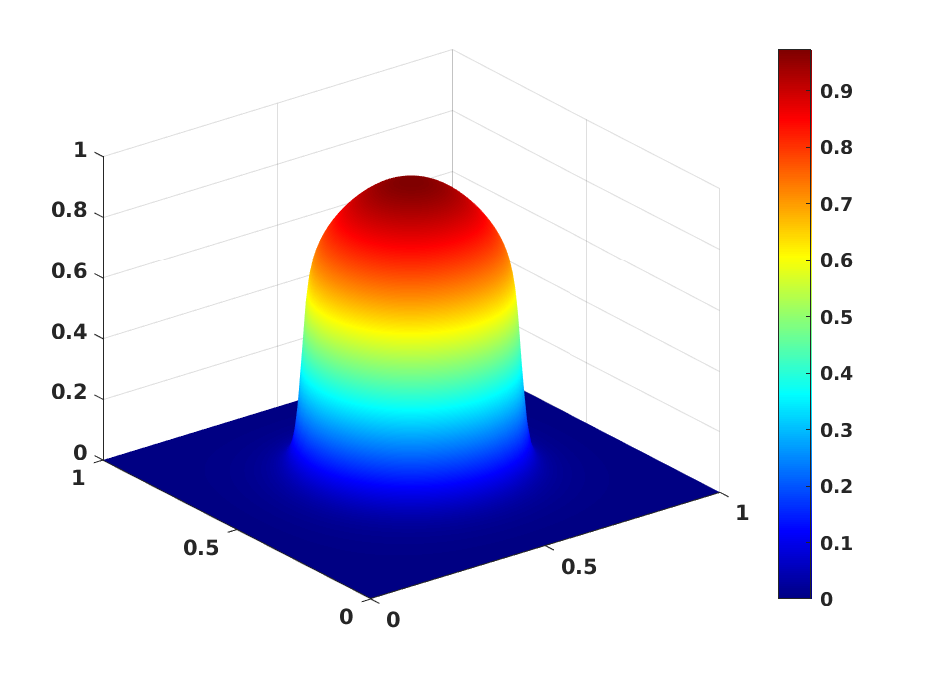}}
	\subfloat[$\kappa= 10^{-6}$.]{\includegraphics[height=3cm, width=5cm]{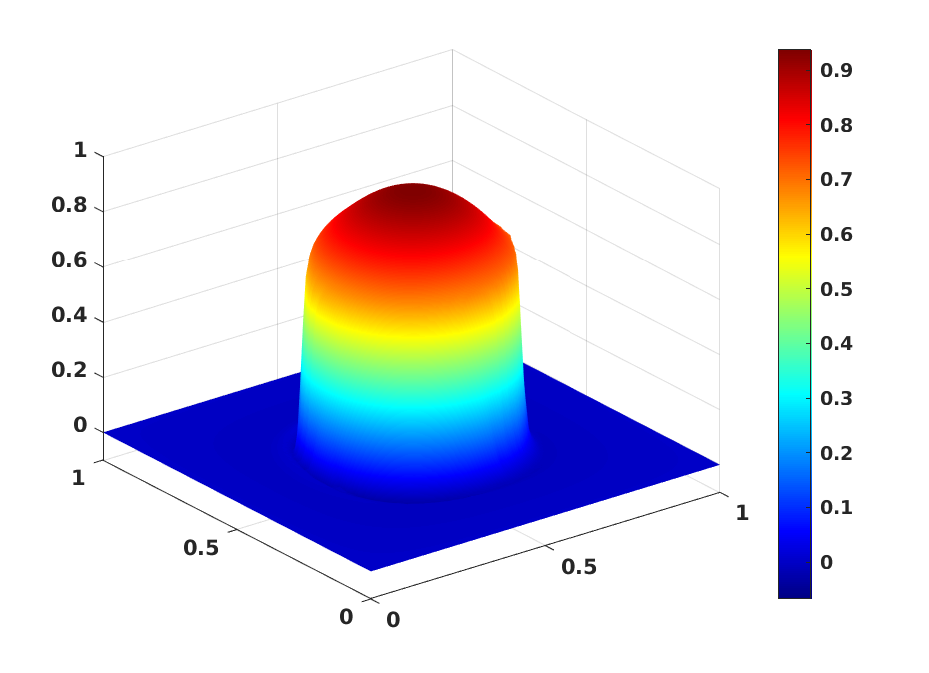}} 
	\caption{ Example 3. Exact temperature (top) and stabilized temperature (bottom) for VEM order $k=1$ with $h=1/80$.}
	\label{ex3sol1} 
\end{figure}

\begin{figure}[h]
	\centering
	\subfloat[$\kappa=10^{-6}$.]{\includegraphics[height=3cm, width=5cm]{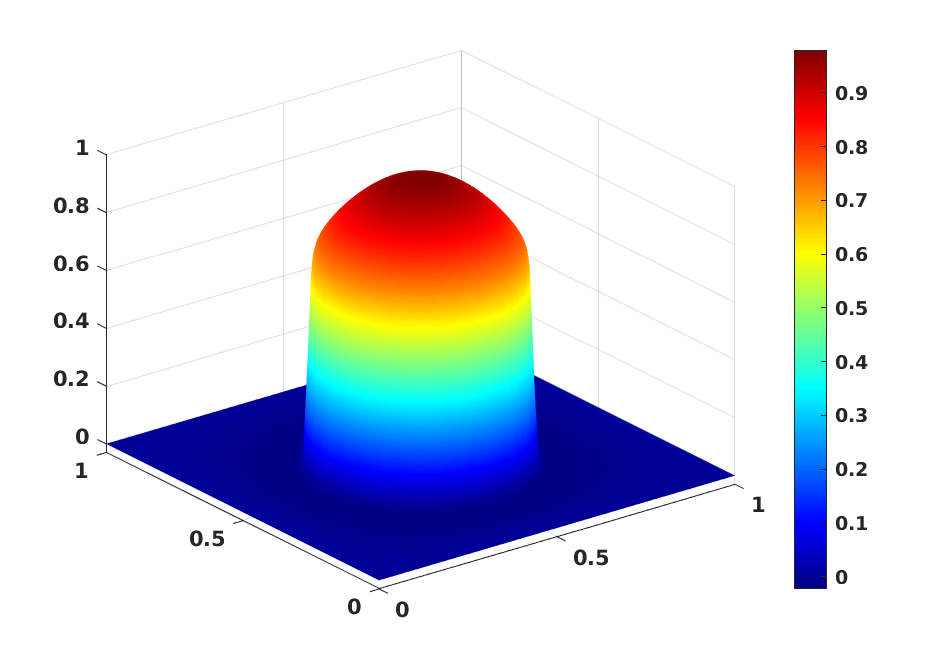}} 
	\caption{Example 3. Stabilized temperature for VEM order $k=2$ with $h=1/80$.}
	\label{ex3sol2}
	\subfloat[Exact.]{\includegraphics[height=3cm, width=5cm]{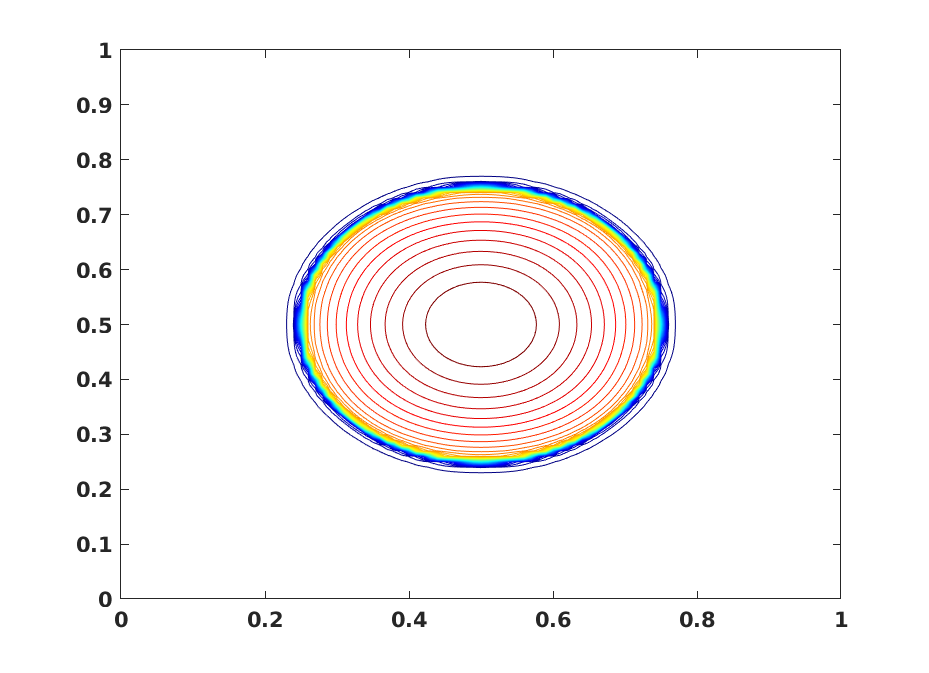}}
	\subfloat[Order 1.]{\includegraphics[height=3cm, width=5cm]{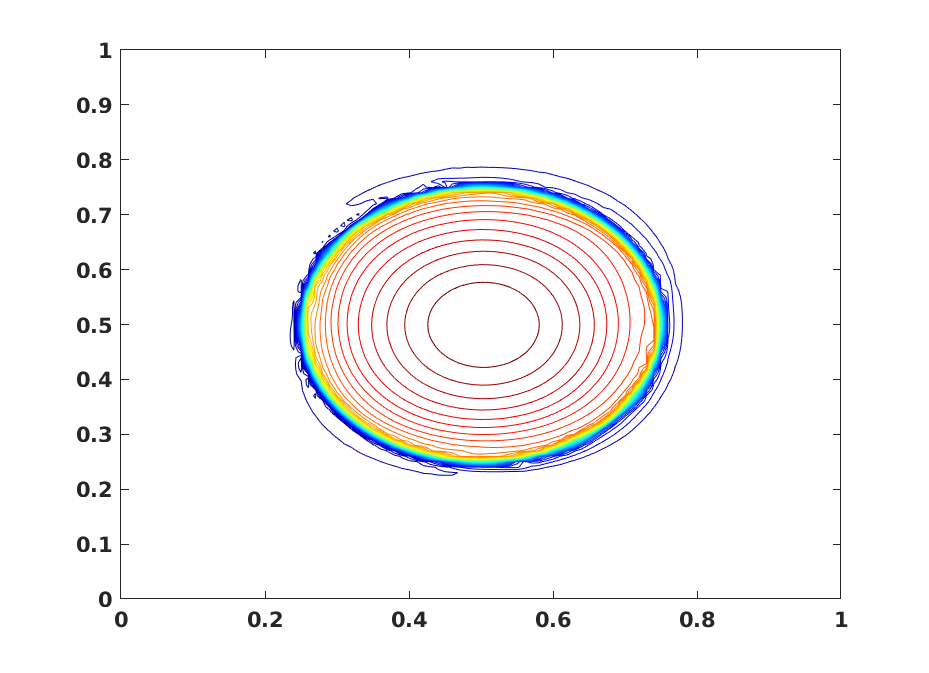}} 
	\subfloat[ Order 2.]{\includegraphics[height=3cm, width=5cm]{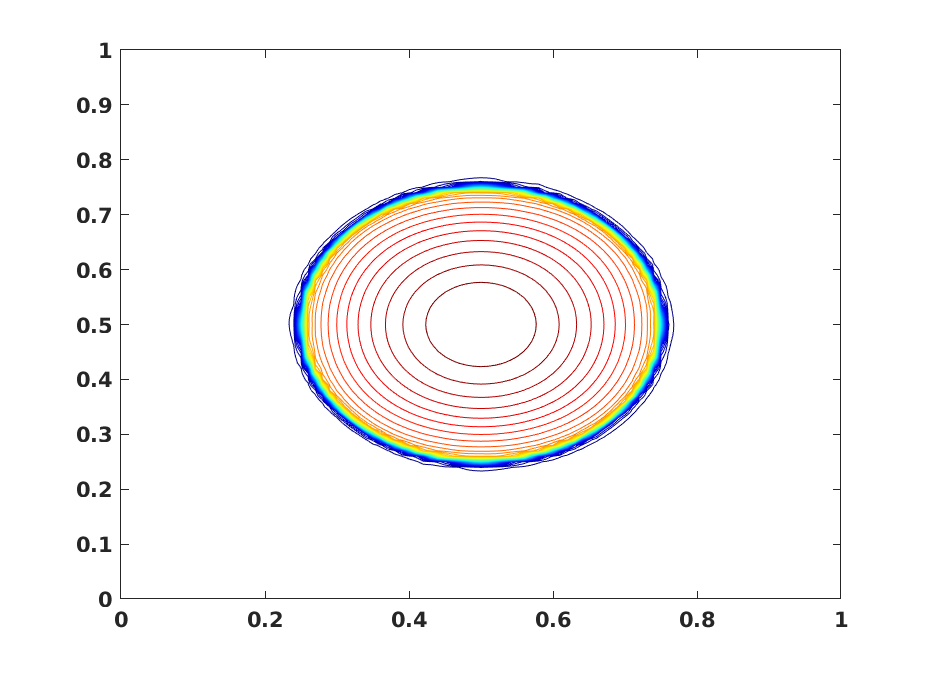}}
	\caption{ Example 3. Exact isotherms (a), stabilized isotherms for $k=1$ (b), and stabilized isotherms for $k=2$ (c), with $\kappa= 10^{-6}$ and $h=1/80$.}
	\label{ex3sol3} 
\end{figure}

\subsection{Example 4: Natural convection in a square cavity}
\label{case4}
Hereafter, we focus on showing the advantages of the proposed method in addressing the practical applications. To do this, we investigate the flow dynamics of a fluid in a unit square cavity with differentially heated walls. Recalling \cite{dalal2006natural}, we introduce the dimensionless form of the problem $(P)$: find \((\mathbf{u}, p, \theta)\) such that

\begin{equation}
	\label{eq:main_system}
	\begin{aligned}
		- \Pr \, \Delta \mathbf{u} + (\nabla \mathbf{u})\, \mathbf{u} + \nabla p &= \Pr Ra \, \theta\, \mathbf{g} && \text{in } \Omega, \\
		\nabla \cdot \mathbf{u} &= 0 && \text{in } \Omega, \\
		\mathbf{u} &= \mathbf{0} && \text{in } \partial \Omega, \\
		-  \Delta \theta + \mathbf{u} \cdot \nabla \theta &= 0 && \text{in } \Omega,\\
		\theta &= \theta_D  &&  \text{on}\,\,  \partial \Omega, 
	\end{aligned}
\end{equation}
where $\Pr$ and $Ra$ are the Prandtl and Rayleigh numbers, respectively. Following \cite{almonacid2018,colmenares2020banach,mfem2020divergence}, we consider the following Prandtl and Rayleigh numbers: $$\Pr=0.5 \quad  \text{and} \quad Ra=2000.$$ We fix $\mathbf{g}=(0,1)^T$. Furthermore, the boundary condition for temperature is given as follows:
\begin{equation*}
	\theta_D(x,y)=\begin{cases}
		0 &\, \text{if $ x=0$, or $x=1$, or $y=1$, }\\
		0.5 (1-\cos(2\pi x) )(1-y) &\, \text{if $y=0$.}\\
	\end{cases}
\end{equation*} 

\begin{figure}[h]
	\centering
	\subfloat[]{\includegraphics[height=4cm, width=5cm]{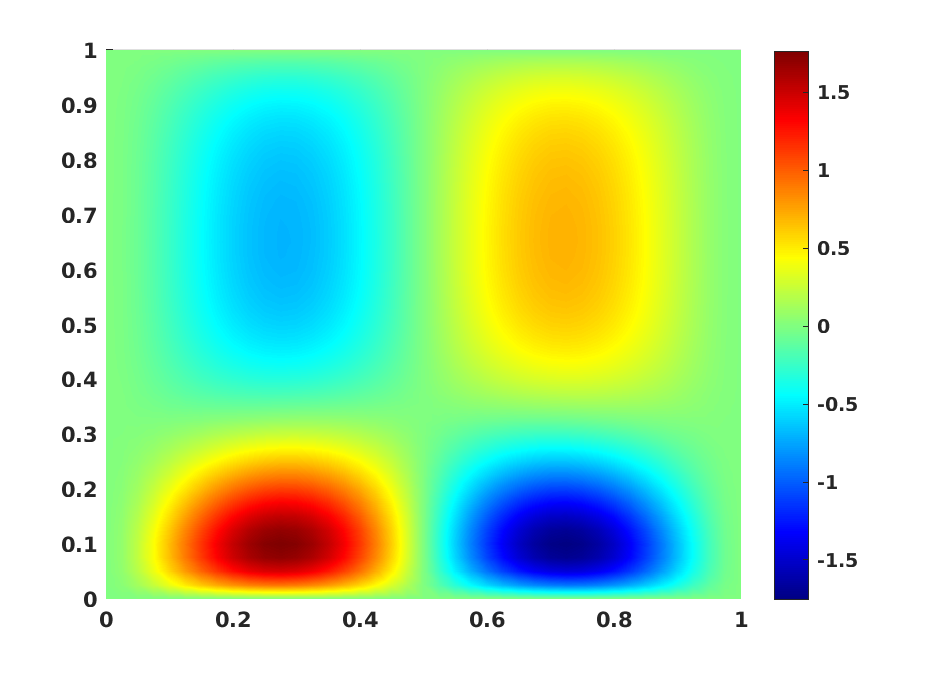}}
	\subfloat[]{\includegraphics[height=4cm, width=5cm]{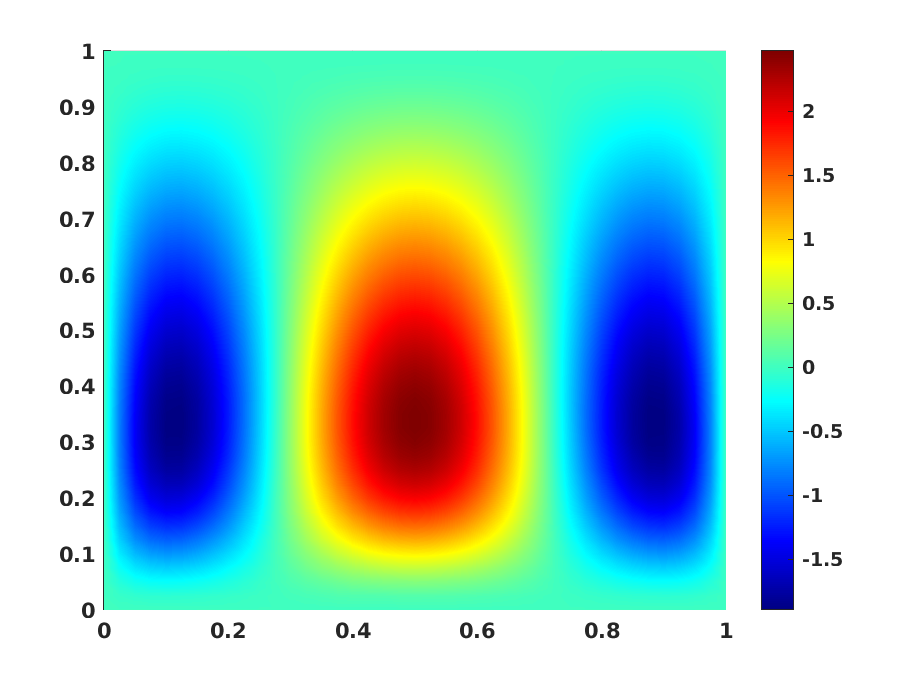}}
	\subfloat[]{\includegraphics[height=4cm, width=5cm]{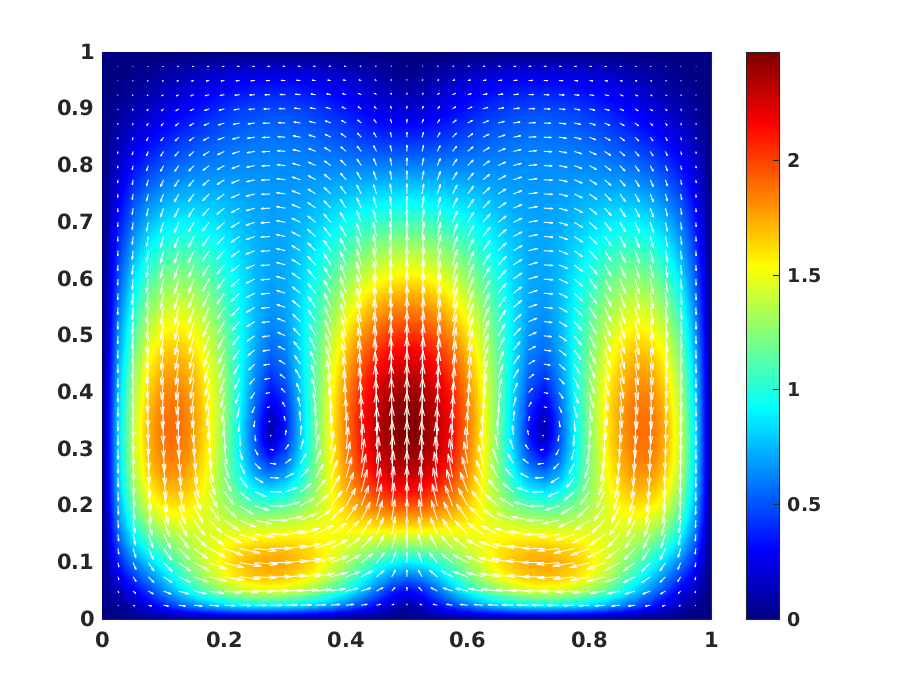}} \\
	\subfloat[]{\includegraphics[height=4cm, width=5cm]{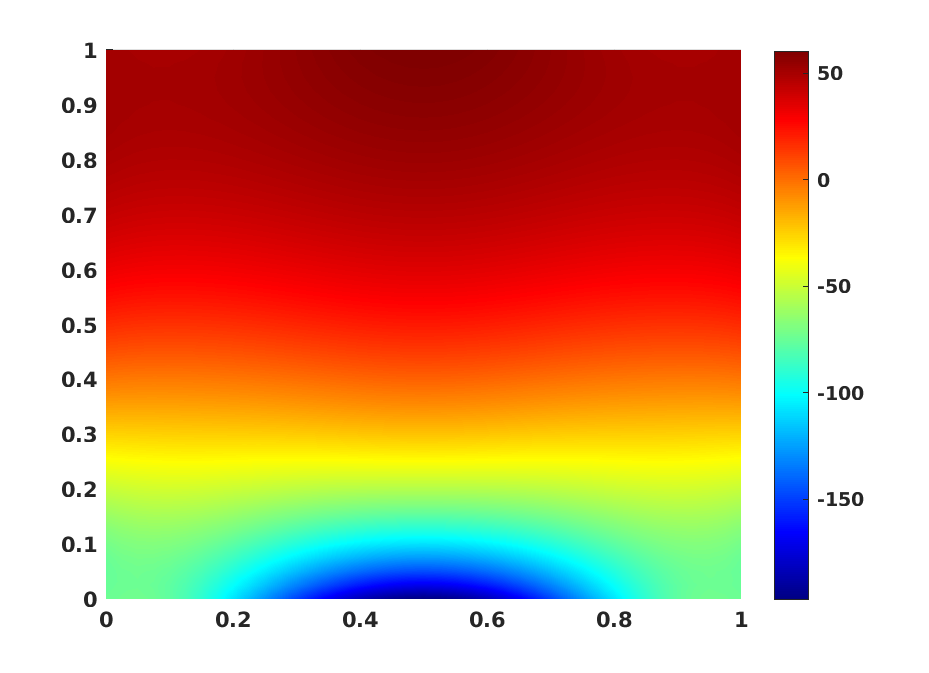}}
	\subfloat[]{\includegraphics[height=4cm, width=5cm]{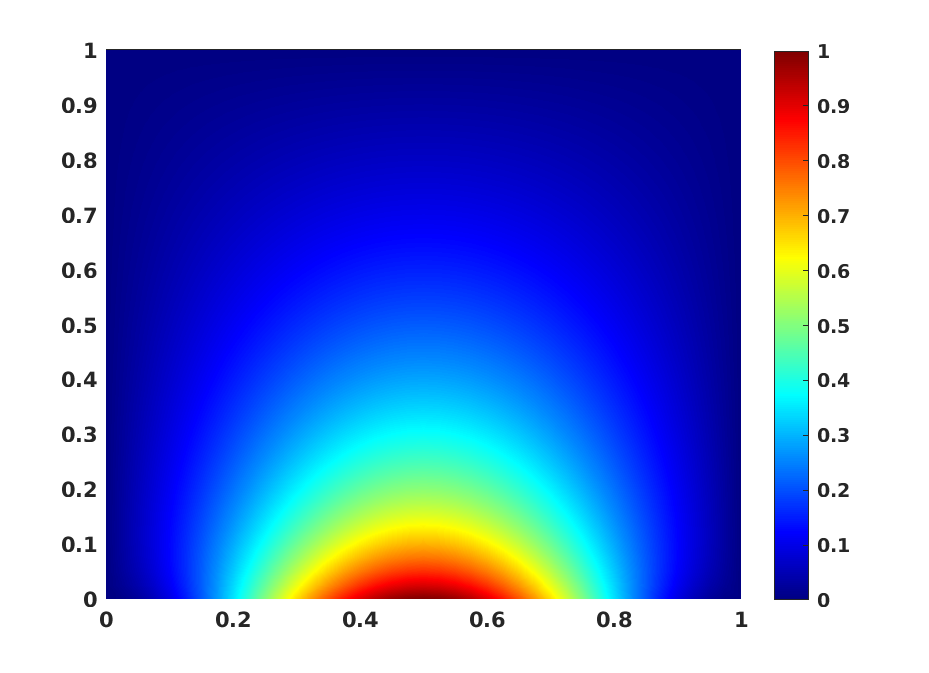}}
	\caption{ Example 4. Components of discrete velocity (a)-(b), velocity magnitude (c), discrete pressure (d), and discrete temperature (e) for VEM order $k=1$ with $\Omega_4$.}
	\label{ex4sol1} 
\end{figure}

We conduct this simulation on a non-convex mesh $\Omega_4$ with $25600$ elements and $76801$ degrees of freedom, employing lowest order VEM. The components of the discrete velocity field and their magnitude, pressure, and temperature are shown in Figure \ref{ex4sol1}. We observe that the flow dynamics within the cavity exhibit a symmetric pattern, consistent with the physical behavior reported in \cite{dalal2006natural,mfem2020divergence,colmenares2020banach}. 
\begin{figure}[htbp]
	\centering
	\begin{tikzpicture}[scale=1]
		\draw [line width=.75mm, color=red] (1.4142,1.4142) arc[start angle=45, end angle=135, radius=2];
		\draw [line width=0.75mm, color=blue] (2,0) arc[start angle=0, end angle=45, radius=2];
		\draw [line width=0.75mm, color=blue] (2,0) arc[start angle=0, end angle=-90, radius=2];
		\draw [line width=0.75mm, color=blue] (-1.4142,1.4142) arc[start angle=135, end angle=270, radius=2];
		
		\draw[thick, fill=blue, color=blue, line width=0.5mm] (0.99,0.99) circle (0.25);
		\draw[thick, fill=blue, color=blue,line width=0.5mm] (-0.99,0.99) circle (0.25);
		\draw[thick, fill=blue, color=red, line width=0.5mm] (0.99,-0.99) circle (0.25);
		\draw[thick, color=red, fill=red, line width=0.5mm] (-0.99,-0.99) circle (0.25);
		\draw[thick, ->, line width=0.5mm] (0,-0.8) -- (0,-1.6);
		\node at (0.2, -1.25) {$\mathbf{g}$};
		\def\radius{0.3}
		\def\ampl{0.05}
		\def\nwaves{5}
		\draw[thick, fill=red, color=red, line width=0.5mm, domain=0:360, samples=400, smooth, variable=\t]
		plot ({( 0.3 + \ampl*cos(\nwaves*\t)) * cos(\t)},
		{( 0.3 + \ampl*cos(\nwaves*\t)) * sin(\t)});
		\draw[dashed, line width=0.5mm] (0,0) -- (1.4142, 1.4142);
		\draw[dashed, line width=0.5mm] (0,0) -- (-1.4142, 1.4142);
		\draw[dashed, line width=0.5mm] (0,0) -- (2,0);
		\draw [line width=0.5mm, ->] (0.6,0) arc[start angle=0, end angle=45, radius=0.6];
		\node at (0.8, 0.3) {$45^\circ$};
		\node at (1.2, -0.2) {$R$};
		\draw[dashed, line width=0.5mm] (0.99,-0.99) --(0, 0);
		\node at (0.4, -0.6) {$D$};
		\node at (0, 2.2) {Heated wall};
		\node at (0, -2.2) {Cold wall};
		\node at (6,0) 	{\includegraphics[height=5cm, width=6.5cm]{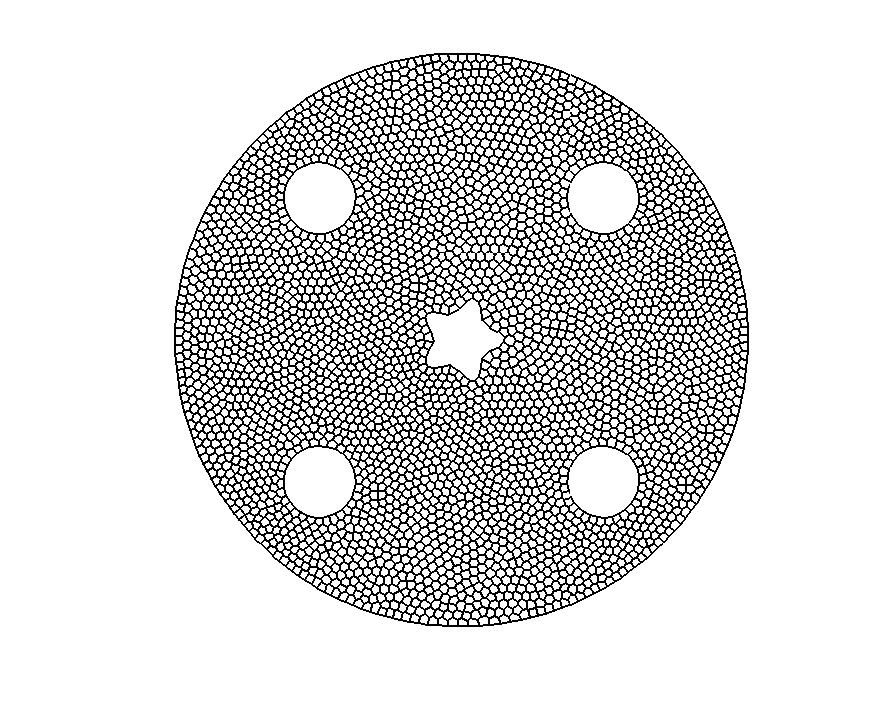}};
	\end{tikzpicture}
	\caption{Example 5. Domain geometry with boundary conditions.}
	\label{island}
\end{figure}

\subsection{Example 5: Natural convection in a circular enclosure with multiple obstacles}
\label{case5}
Finally, we discuss the natural convection in a two-dimensional slice of a shell-and-tube geometry, motivated by \cite{gharibi2025mixed}, as shown in Figure \ref{island}. The numerical experiment is conducted employing the dimensionless equation  \eqref{eq:main_system} presented in Section \ref{case4}. For computational domain $\Omega$, we consider a circular cylinder of radius $R=2$, which contains a sequence of four circular cylinders with equal radii $0.25$ and a star-shaped cylinder with variable radius $r(\phi)= 0.25+0.05\cos(5 \phi)$ where $\phi \in [0^\circ,360^\circ]$. The star-shaped cylinder is placed at the origin and $D=1.4$. Therefore, the position of the rest of the cylinders can be easily determined using Figure \ref{island}. The wall of the outer cylinder is decomposed into two parts: a heated wall and a cold wall. The outer heated wall is maintained with a Dirichlet condition $\theta_D = 1$, while the cold wall is kept with $\theta_D = 0$. The inner top two cylinders are kept at $\theta_D = 0$, whereas the bottom three are maintained with $\theta_D = 1$. Moreover, the non-slip velocity conditions are imposed for the velocity field on all boundaries. Concerning \eqref{eq:main_system}, we choose $\text{Pr} = 6.2$ and $\mathbf{g}=(0,1)^T$. We investigate the performance of the proposed method for Problem~\eqref{eq:main_system} under varying Rayleigh numbers, specifically ${Ra} =10^2$, $10^3$, $10^4$, and $4 \times 10^4$. 

\begin{figure}[h]
	\centering
	{\includegraphics[height=4cm, width=5cm]{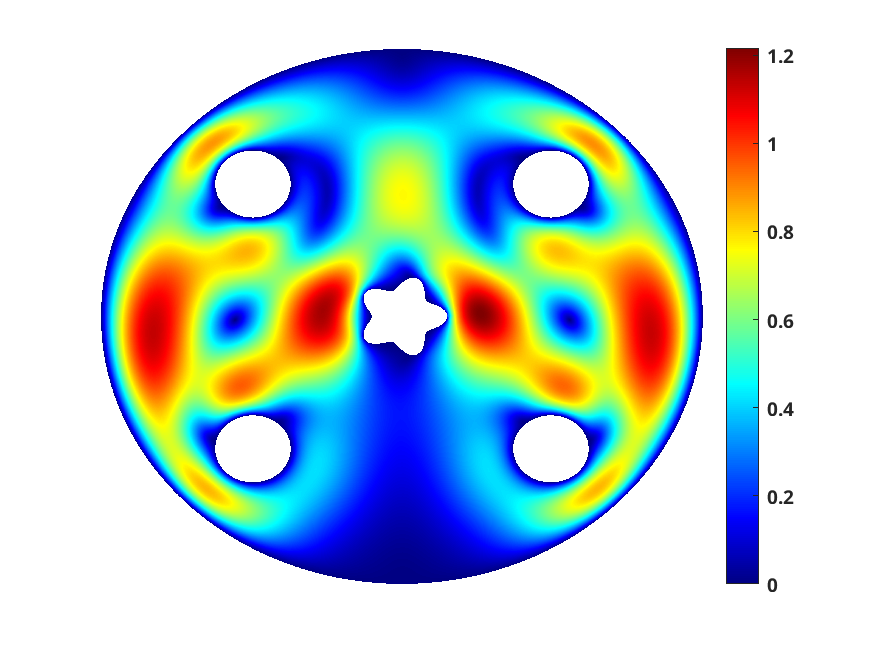}}
	{\includegraphics[height=4cm, width=5cm]{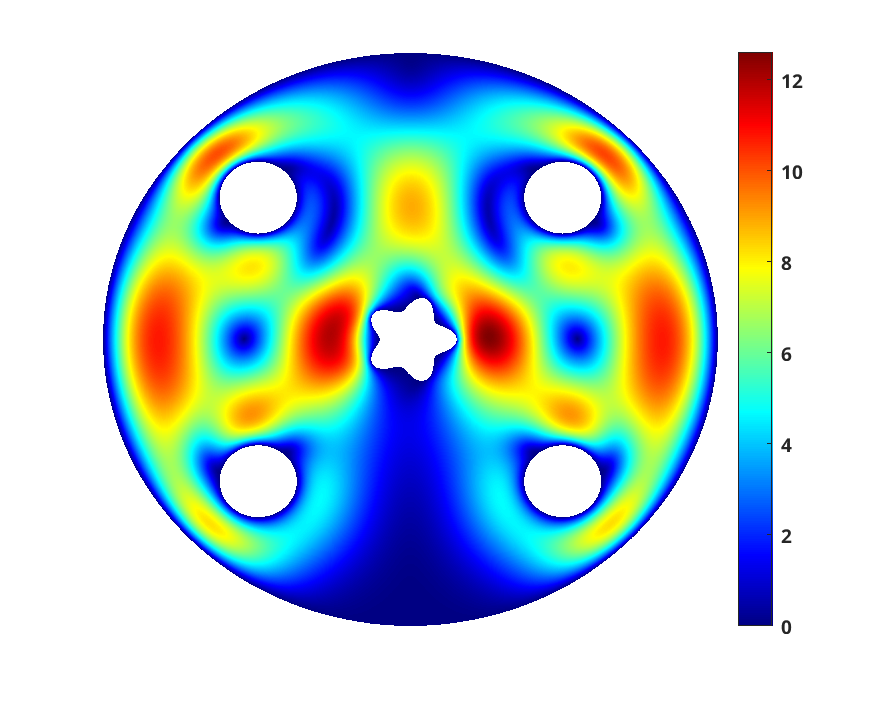}}
	{\includegraphics[height=4cm, width=5cm]{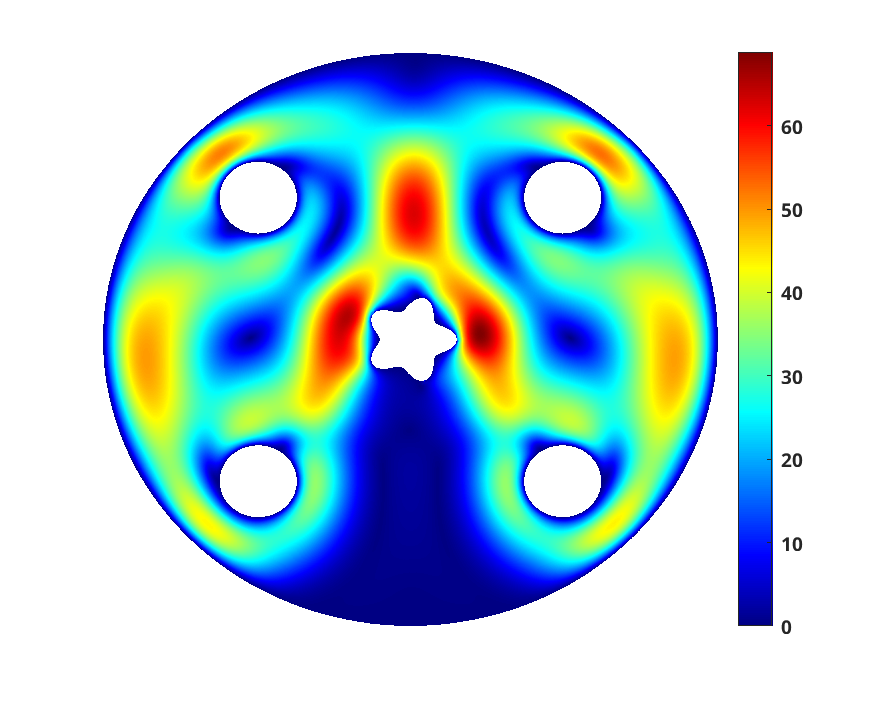}} \\
	{\includegraphics[height=4cm, width=5cm]{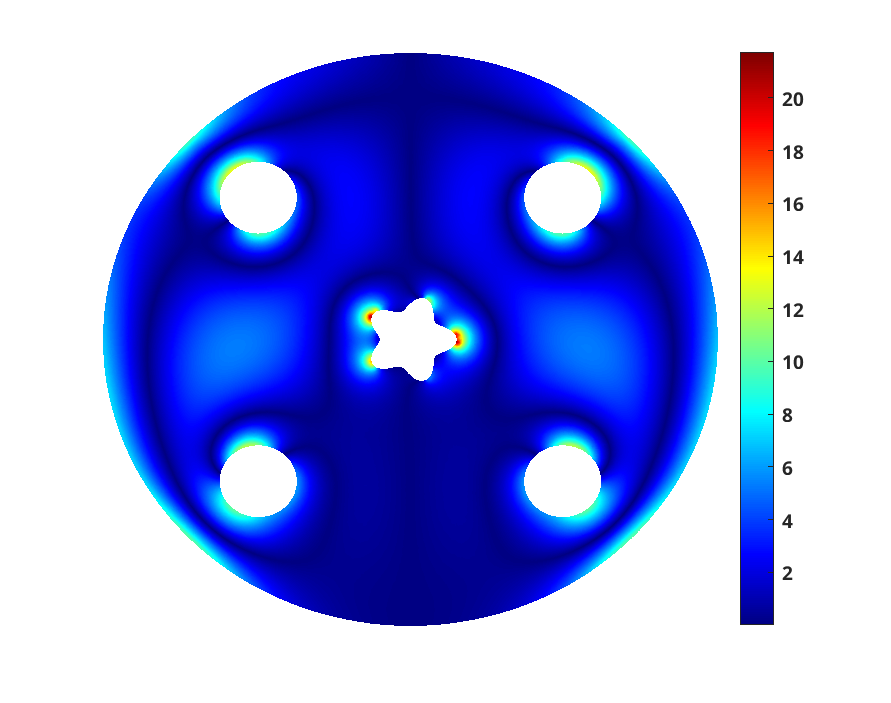}}
	{\includegraphics[height=4cm, width=5cm]{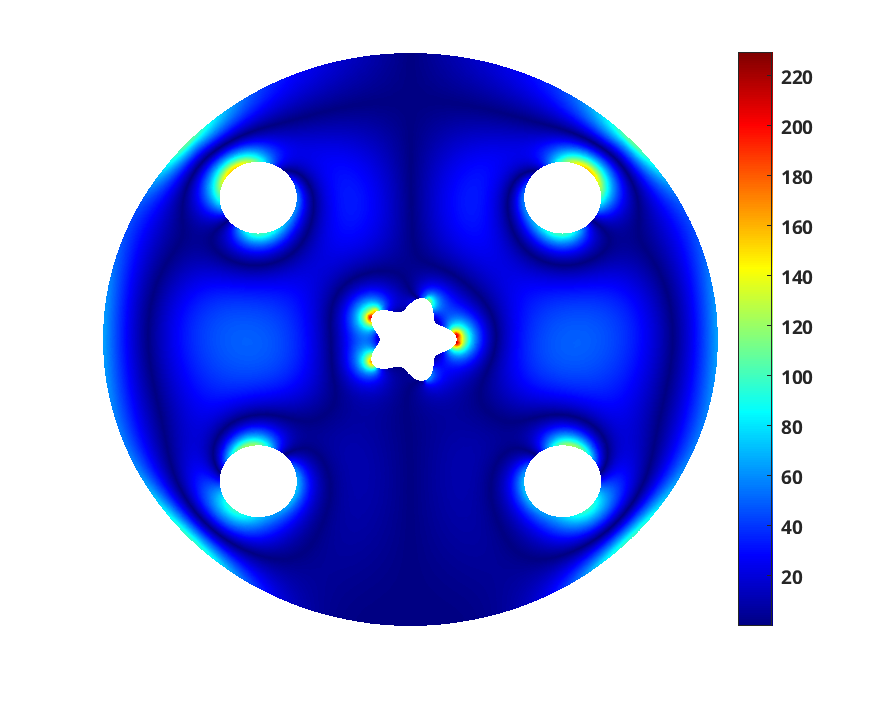}}
	{\includegraphics[height=4cm, width=5cm]{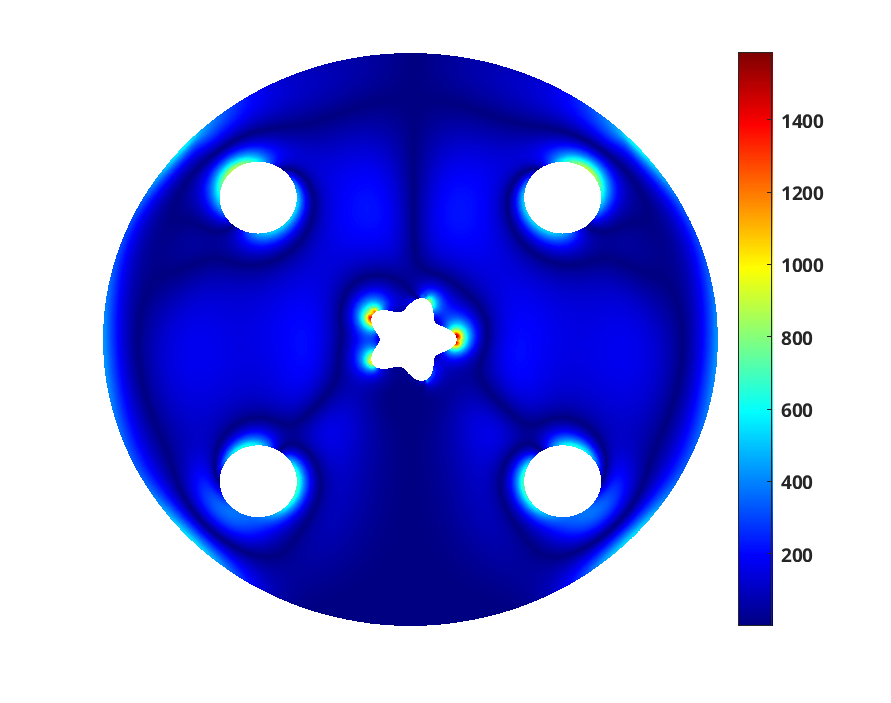}} \\
	{\includegraphics[height=4cm, width=5cm]{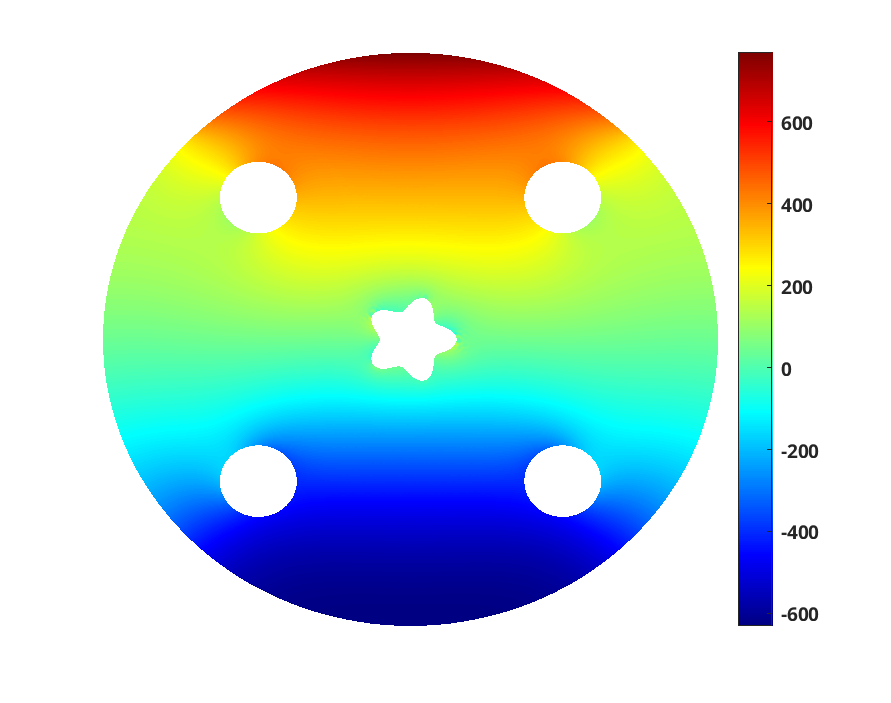}}
	{\includegraphics[height=4cm, width=5cm]{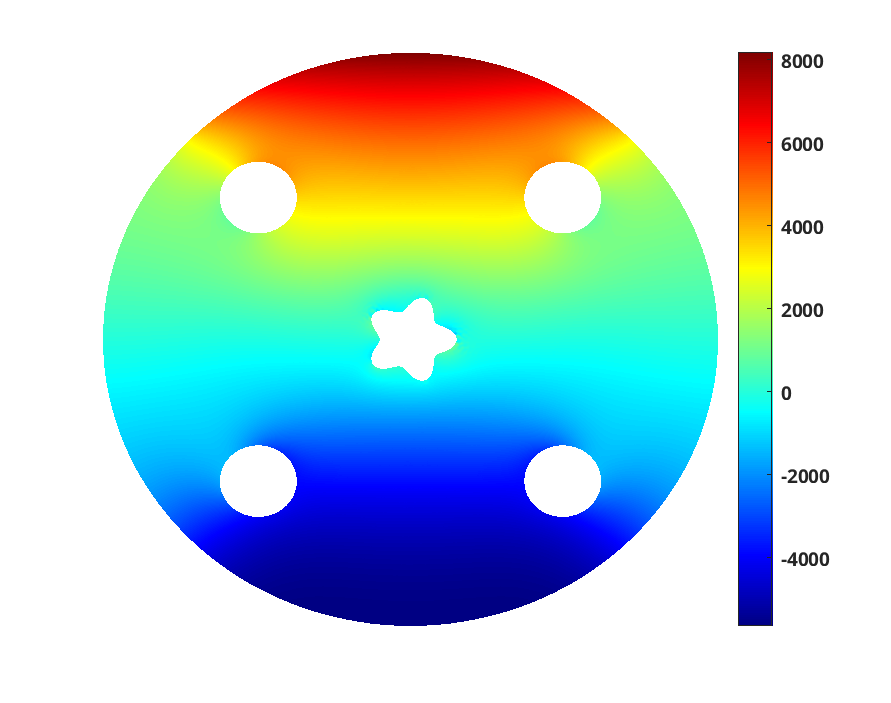}}
	{\includegraphics[height=4cm, width=5cm]{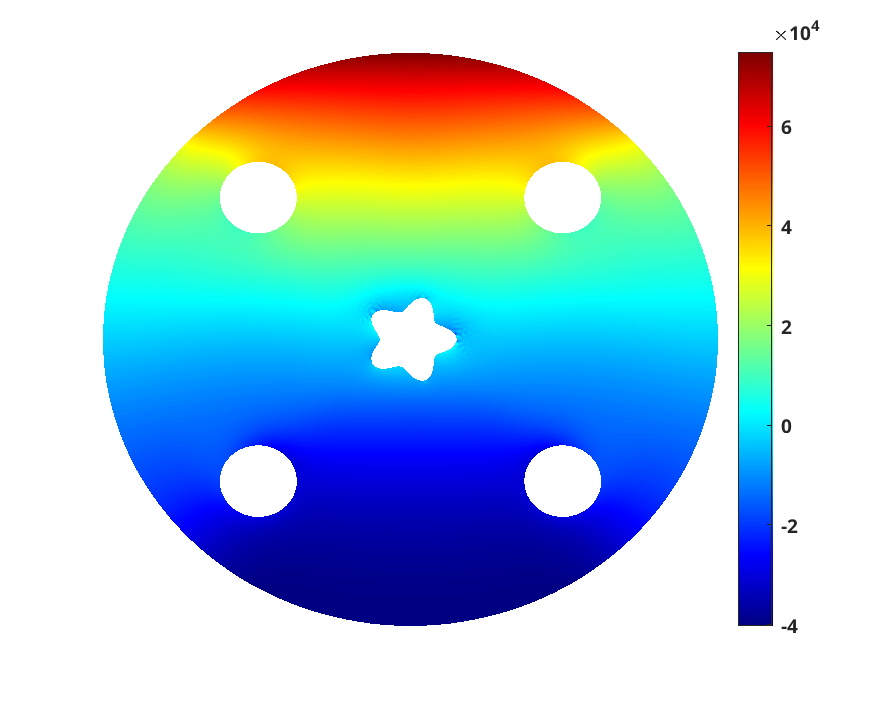}} \\
	{\includegraphics[height=4cm, width=5cm]{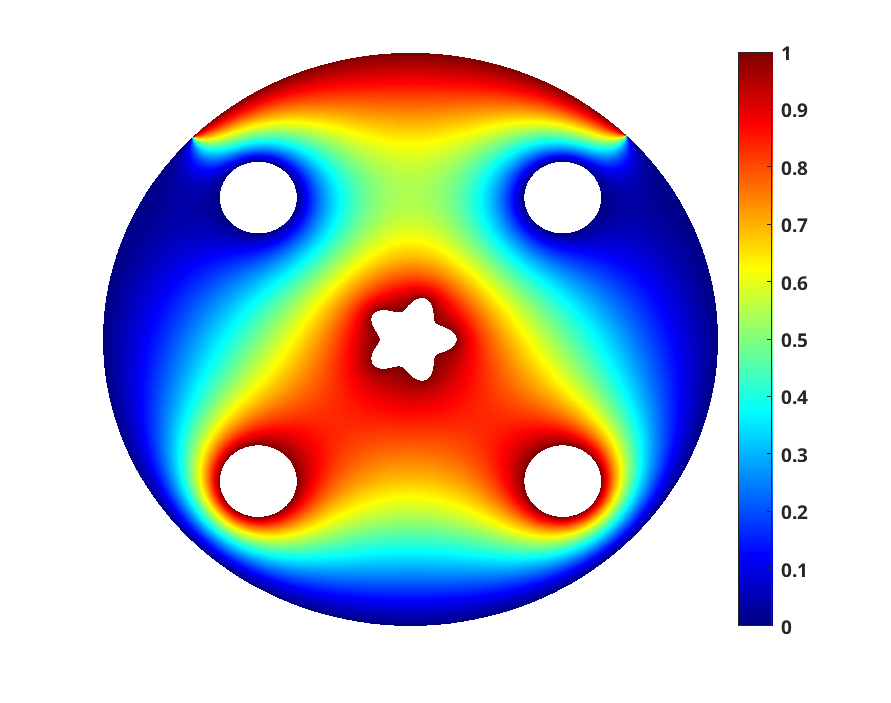}}
	{\includegraphics[height=4cm, width=5cm]{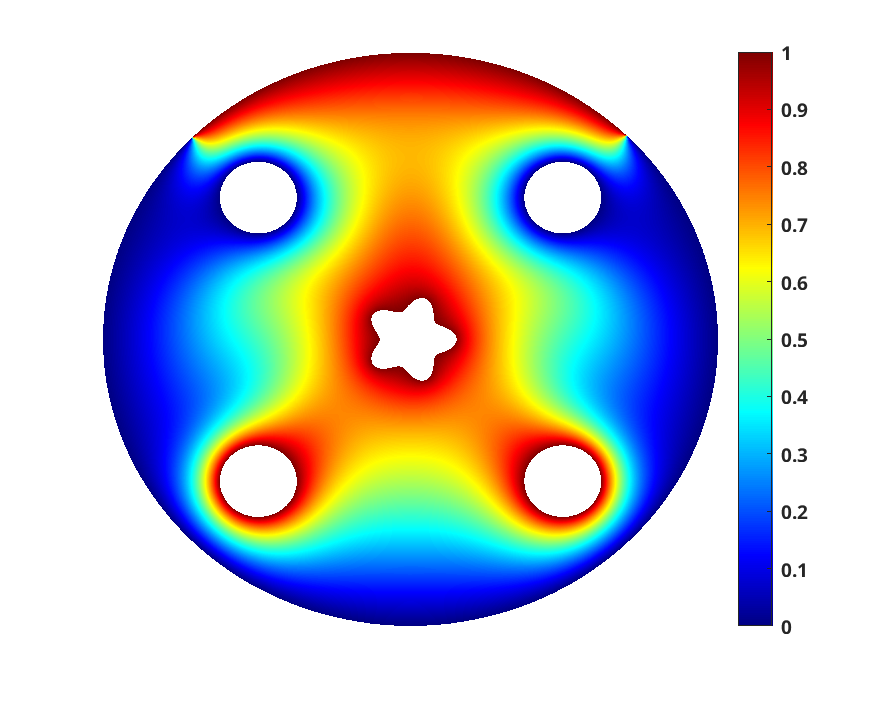}}
	{\includegraphics[height=4cm, width=5cm]{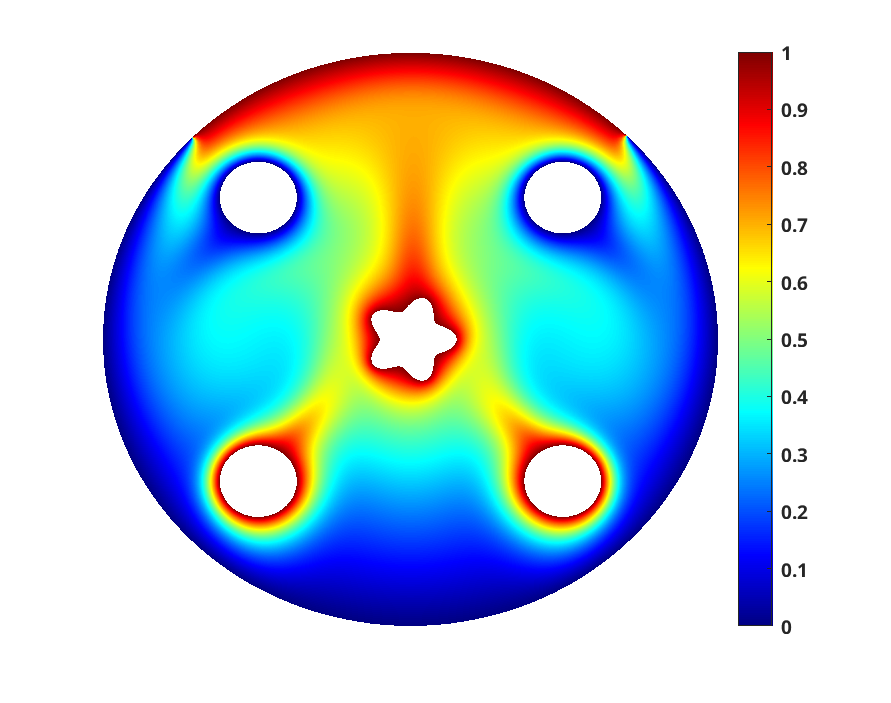}} 
	\caption{ Example 5. Snapshots of velocity and vorticity magnitudes, pressure, along with the temperature (from top to bottom, respectively) for values ${Ra} =10^2, 10^3, 10^4$ (first to third columns, respectively).}
	\label{ex5sol1} 
\end{figure}

The numerical results are computed on {Voronoi mesh shown in Figure \ref{island} with $35000$ elements.} The stabilized discrete velocity and vorticity magnitudes, pressure, as well as the stabilized temperature distributions are shown in Figures \ref{ex5sol1} and \ref{ex5sol2}, for each of the specified Rayleigh numbers. As evident from Figures \ref{ex5sol1} and \ref{ex5sol2}, the flow remains smooth and predictable at low Rayleigh numbers due to weak convective effects. However, as ${Ra}$ increases, stronger circulatory behavior emerges, resulting in more turbulent flow patterns. Correspondingly, the temperature distribution evolves from a regular, symmetric profile at ${Ra} = 10^2$ to increasingly irregular configurations for higher Rayleigh numbers, with distinct thermal plumes developing between the hot and cold regions.

\begin{figure}[h]
	\centering 
	\subfloat[Velocity magnitude.]{\includegraphics[height=4cm, width=5cm]{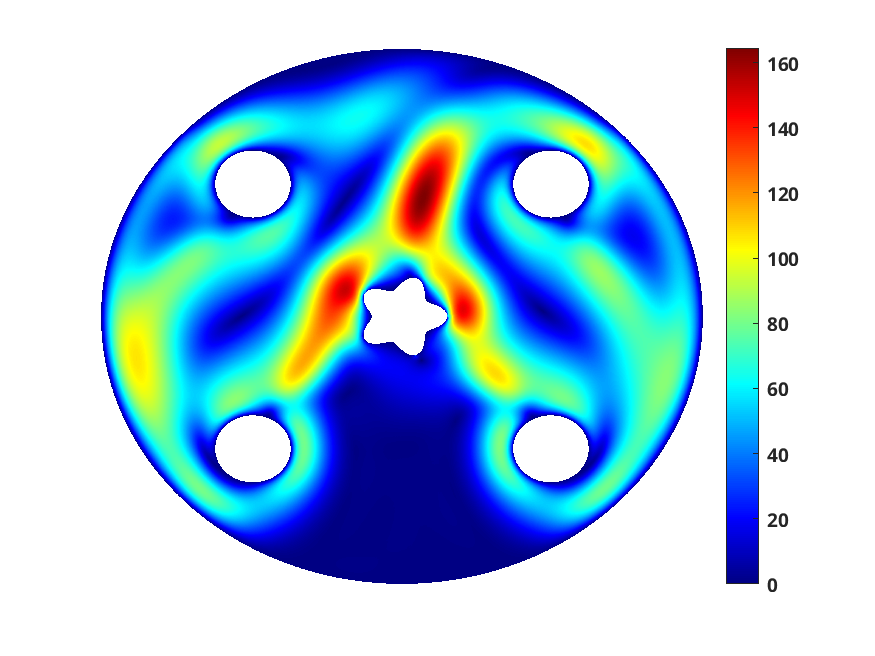}}~~~~ 
	\subfloat[Vorticity magnitude.]{\includegraphics[height=4cm, width=5cm]{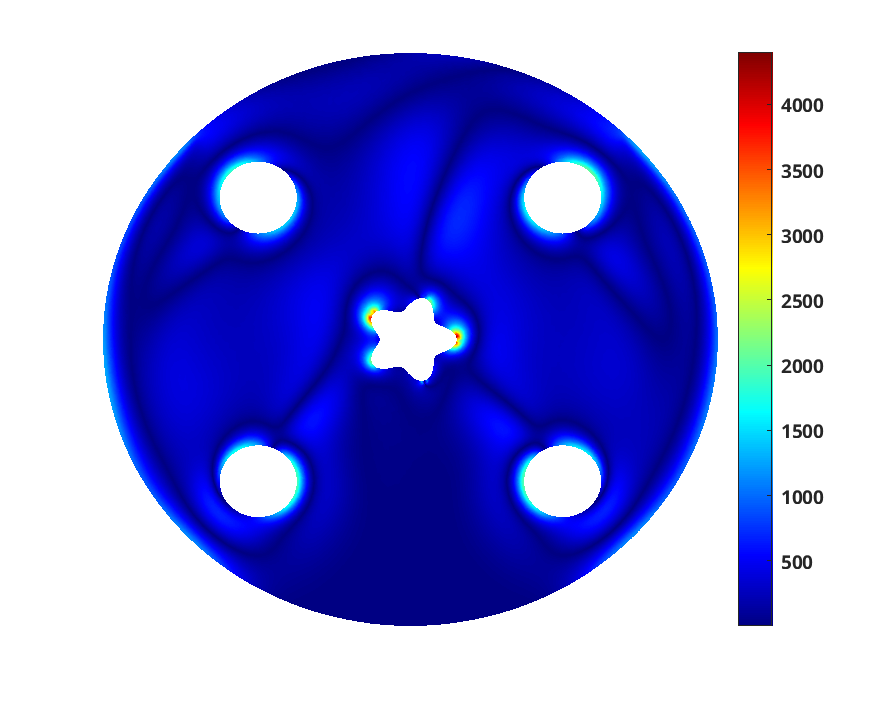}}\\
	\subfloat[Pressure.]{\includegraphics[height=4cm, width=5cm]{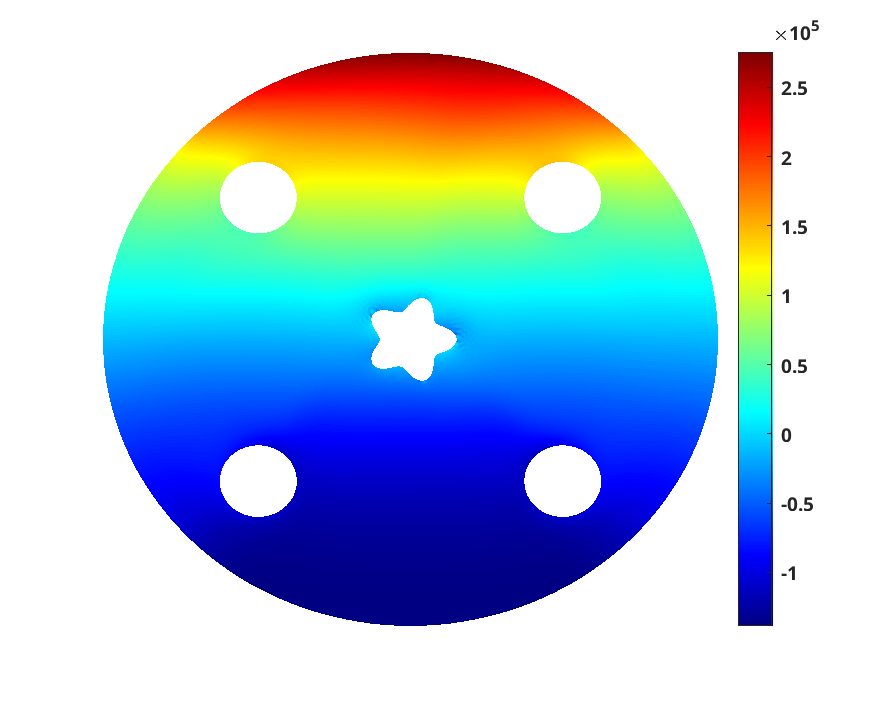}} ~~~~
	\subfloat[Temperature.]{\includegraphics[height=4cm, width=5cm]{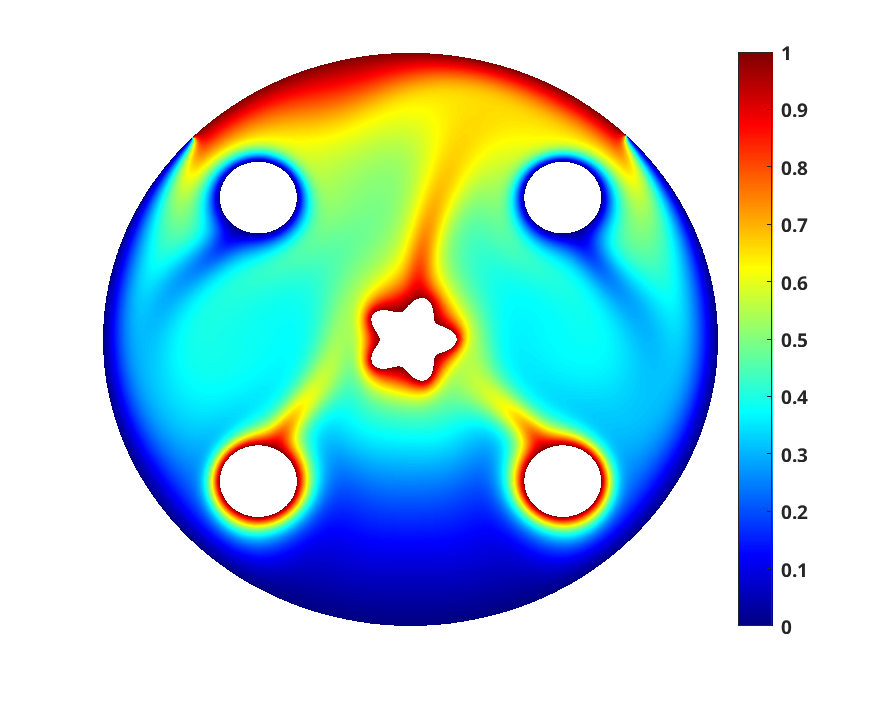}} 
	\caption{ Example 5. Snapshots of discrete solutions for ${Ra} = 4 \times 10^4$.}
	\label{ex5sol2} 
\end{figure}

\section{Conclusion} \label{sec-6}
In this work, we have developed a unified framework of the stabilized virtual element method for the generalized Boussinesq equation on polygonal meshes with temperature-dependent viscosity and thermal conductivity. To address the violation of the discrete inf-sup condition, a gradient-based local projection stabilization term is introduced in the discrete formulation. We derive the well-posedness of the continuous problem under sufficiently small datum. An equivalence relation is shown between the gradient-based and mass-based local projection stabilization methods. The well-posedness of the stabilized virtual element problem is established using the Brouwer fixed-point theorem. The error estimates are derived in the energy norm with optimal convergence rates. Numerical results confirm the theoretical findings. 

\subsection*{Data Availability}
Data sharing is not applicable to this article as no datasets were generated or analyzed.

\section*{Declarations}
\subsection*{Conflict of interest} The authors declare no conflict of interest during the current study.

\section*{Appendix}
\noindent \textbf{Proof of Lemma \ref{pressure}.}
Using the definition of $S^E_{0,k-1}(\cdot, \cdot)$, {the bound \eqref{vem-d}} and the Poincar$\acute{\text{e}}$ inequality, we infer
\begin{align}
	\sum_{E \in \Omega_h } S^E_{0,k-1}\big(q_h,q_h\big):&= \sum_{E \in \Omega_h } S^E((I-\Pi^{0,E}_{k-1})q_h, (I-\Pi^{0,E}_{k-1})q_h ) \leq {\lambda^\ast_4}\sum_{E \in \Omega_h }  \|(I-\Pi^{0,E}_{k-1})q_h\|^2_{0,E} \nonumber \\
	& \leq {\lambda^\ast_4} \sum_{E \in \Omega_h }  \|(I-\Pi^{0,E}_{k-1})(I-\Pi^{\nabla ,E}_{k-1})q_h\|^2_{0,E} \nonumber \\
	& \leq {\lambda^\ast_4} \sum_{E \in \Omega_h } C^2_p h^2_E \|\nabla (I-\Pi^{\nabla,E}_{k-1})q_h\|^2_{0,E} \nonumber \\
	& \leq { \frac{\lambda^\ast_4 C^2_p}{\lambda_{3\ast}} \sum_{E \in \Omega_h }  \tau_{2,E} S^E_p \big( q_h-\Pi^{\nabla,E}_{k-1}q_h, q_h-\Pi^{\nabla,E}_{k-1}q_h \big).}  \label{equi1}
\end{align}
Concerning the first part of $\mathcal{L}^\ast_{2,h}(\cdot, \cdot)$, we proceed as follows
\begin{align}
	\sum_{E \in \Omega_h } \|\Pi_k^{0,E} q_h - \Pi^{0,E}_{k-1} q_h\|^2_{0,E} &\leq 2 \sum_{E \in \Omega_h } \big( \| q_h - \Pi^{0,E}_k q_h\|^2_{0,E} + \| q_h - \Pi^{0,E}_{k-1} q_h\|^2_{0,E} \big) \nonumber \\
	&\leq 2 \sum_{E \in \Omega_h } \big( \| q_h - \Pi^{\nabla,E}_k q_h\|^2_{0,E} + \| q_h - \Pi^{\nabla,E}_{k-1} q_h\|^2_{0,E} \big) \nonumber \\
	&\leq 2 \sum_{E \in \Omega_h } C_p^2 h^2_E\big( \| \nabla (q_h - \Pi^{\nabla,E}_k q_h)\|^2_{0,E} + \| \nabla (q_h - \Pi^{\nabla,E}_{k-1} q_h) \|^2_{0,E} \big) \nonumber \\
	&\leq 2 \sum_{E \in \Omega_h } C_p^2 h^2_E \big( \| \nabla q_h - \boldsymbol{\Pi}^{0,E}_{k} \nabla q_h\|^2_{0,E} + \| \nabla q_h - \boldsymbol{\Pi}^{0,E}_{k-1} \nabla q_h \|^2_{0,E} \big) \nonumber \\
	&\leq 6 \sum_{E \in \Omega_h } C_p^2 h^2_E \big( \| \boldsymbol{\Pi}^{0,E}_k \nabla q_h - \boldsymbol{\Pi}^{0,E}_{k-1} \nabla q_h\|^2_{0,E} + \| \nabla q_h - \nabla \Pi^{\nabla,E}_{k-1} q_h \|^2_{0,E} \big). \label{equi2}
\end{align}
Now adding \eqref{equi1} and \eqref{equi2} and using $\tau_{2,E} \sim h^2_E$, we have the following 
\begin{align}
	\mathcal{L}^\ast_{2,h}(q_h, q_h) \leq \alpha_2 \mathcal{L}_{2,h}(q_h, q_h), \label{fpart}
\end{align}
where $\alpha_2$ depends on the Poincar$\acute{\text{e}}$ constant { $C_p$, $\lambda_{3\ast}$ and $\lambda_4^\ast$}. For the remaining part, we use the definition of {$S^E_p(\cdot, \cdot)$, the bound \eqref{vem-c} and $\tau_{2,E} \sim h^2_E$}:
\begin{align}
	\sum_{E \in \Omega_h } \tau_{2,E} S^E_p \big(q_h - \Pi^{\nabla,E}_{k-1} q_h, q_h - \Pi^{E,\nabla}_{k-1} q_h \big) &\leq {\lambda^\ast_3}\sum_{E \in \Omega_h } h^2_E \|\nabla (I- \Pi^{\nabla,E}_{k-1})q_h\|^2_{0,E} \nonumber \\
	&\leq {\lambda^\ast_3}\sum_{E \in \Omega_h } h^2_E  \| \nabla (I- \Pi^{0,E}_{k-1})q_h\|^2_{0,E}  \quad (\text{using definition of $\Pi^{\nabla,E}_{k-1}$})\nonumber \\
	&\leq {\lambda^\ast_3} \sum_{E \in \Omega_h }  C^2_{inv} \| (I- \Pi^{0,E}_{k-1})q_h\|^2_{0,E}. \qquad(\text{using Lemma \ref{inverse}}) \nonumber \\
	&\leq {\frac{\lambda^\ast_3 C_{inv}^2}{\lambda_{4\ast}} \sum_{E \in \Omega_h }  S^E_{0,k-1} \big(q_h,q_h\big).}\label{equi3}
\end{align}
We now use the triangle inequality, the definition of $L^2$-projector and Lemma \ref{inverse}:
\begin{align}
	\sum_{E \in \Omega_h } h^2_E  \| \boldsymbol{\Pi}^{0,E}_k \nabla q_h - \boldsymbol{\Pi}^{0,E}_{k-1} \nabla q_h\|^2_{0,E} &\leq 2 \sum_{E \in \Omega_h } h^2_E \big( \| \nabla q_h - \boldsymbol{\Pi}^{0,E}_{k} \nabla q_h\|^2_{0,E} + \| \nabla q_h - \boldsymbol{\Pi}^{0,E}_{k-1} \nabla q_h\|^2_{0,E} \big) \nonumber\\
	&\leq 2 \sum_{E \in \Omega_h } h^2_E \big( \| \nabla q_h - \nabla {\Pi}^{0,E}_{k}  q_h\|^2_{0,E} + \| \nabla q_h - \nabla {\Pi}^{0,E}_{k-1} q_h\|^2_{0,E} \big) \nonumber \\
	&\leq 2 \sum_{E \in \Omega_h } C^2_{inv} \big( \| q_h - \Pi^{0,E}_{k}  q_h\|^2_{0,E} + \| q_h - \Pi^{0,E}_{k-1} q_h\|^2_{0,E} \big) \nonumber \\
	&\leq 6 \sum_{E \in \Omega_h } C^2_{inv} \big( \| \Pi^{0,E}_{k} q_h - \Pi^{0,E}_{k-1}  q_h\|^2_{0,E} + \| q_h - \Pi^{0,E}_{k-1} q_h\|^2_{0,E} \big). \label{equi4}
\end{align} 
Combining \eqref{equi3} and \eqref{equi4}, we obtain
\begin{align}
	\alpha_1 \mathcal{L}_{2,h}(q_h,q_h) \leq \mathcal{L}^\ast_{2,h}(q_h,q_h), \label{spart}
\end{align}
where the constant $\alpha_1$ depends on $C_{inv}$, {$\lambda^\ast_3$ and $\lambda_{4\ast}$}. Thus, the proof of Lemma \ref{pressure} completes from \eqref{fpart} and \eqref{spart}. 



		\bibliographystyle{plain}
		\bibliography{references}

	\end{document}